\def\ol#1{\overline{#1}}
\def\wh#1{\widehat{#1}}
\def\wt#1{\widetilde{#1}}
\def\ul#1{\underline{#1}}
\theoremstyle{plain}
    \newtheorem{theorem}{Theorem}[subsection]
    \newtheorem{proposition}[theorem]{Proposition}
    \newtheorem{lemma}[theorem]{Lemma}
    \newtheorem{corollary}[theorem]{Corollary}
\theoremstyle{definition}
    \newtheorem{definition}[theorem]{Definition}
    \newtheorem{remark}[theorem]{Remark}
\numberwithin{equation}{subsection}
\def\Alphabet{1,A,B,C,D,E,F,G,H,I,J,K,L,M,N,O,P,Q,R,S,T,U,V,W,X,Y,Z}
\def\alphabet{a,b,c,d,e,f,g,h,i,j,k,l,m,n,o,p,q,r,s,t,u,v,w,x,y,z}
\def\endpiece{xxx}
\def\makeAlphabet[#1]{\expandafter\makeA#1,xxx,}
\def\makealphabet[#1]{\expandafter\makea#1,xxx,}
\def\makeA#1,{\def\temp{#1}\ifx\temp\endpiece\else%
\mkbb{#1}\mkfrak{#1}\mkbf{#1}\mkcal{#1}\mkscr{#1}\expandafter\makeA\fi}%
\def\makea#1,{\def\temp{#1}\ifx\temp\endpiece\else\mkfrak{#1}\mkbf{#1}\expandafter\makea\fi}%
\def\mkbb#1{\expandafter\def\csname bb#1\endcsname{\mathbb{#1}}}
\def\mkfrak#1{\expandafter\def\csname fr#1\endcsname{\mathfrak{#1}}}
\def\mkbf#1{\expandafter\def\csname b#1\endcsname{\mathbf{#1}}}
\def\mkcal#1{\expandafter\def\csname c#1\endcsname{\mathcal{#1}}}
\def\mkscr#1{\expandafter\def\csname s#1\endcsname{\mathscr{#1}}}
\def\makeop[#1]{\xmakeop#1,xxx,}
\def\mkop#1{\expandafter\def\csname #1\endcsname{{\mathrm{#1}}}} %
\def\xmakeop#1,{\def\temp{#1}\ifx\temp\endpiece\else\mkop{#1}\expandafter\xmakeop\fi}%
\def\M0{M^0}
\def\sLog{\sL\!\!\operatorname{og}}
\def\Log{\sLog}
\def\Gm{{\bbG_m}}
\def\isom{\cong}
\def\verk{\circ}
\def\prolim{\varprojlim}
\def\Isocda{\Isoc^{\kern-0.5mm\dagger}}
\def\isom{\cong}
\def\verk{\circ}
\def\prolim{\varprojlim}
\def\b1{\boldsymbol{1}}
\def\Cont{{\cC}}
\begin{document}

\title[Eisenstein classes and elliptic Soul\'e elements]{
Eisenstein classes, elliptic Soul\'e elements 
and the $\ell$-adic  elliptic polylogarithm}
\begin{abstract}
In this paper we study systematically the $\ell$-adic realization of
the elliptic polylogarithm  in the context of
sheaves of Iwasawa modules. This leads to a description of
the elliptic polylogarithm in terms of elliptic units. 
As an application we prove a precise 
relation between $\ell$-adic Eisenstein classes and elliptic 
Soul\'e elements. This allows to give a new proof of the formula
for the residue of the $\ell$-adic Eisenstein classes
at the cusps and the formula for the cup-product construction in \cite{Huber-Kings99},
which relies only on the explicit description of elliptic units.
This computation is the main input in the proof of Bloch-Kato's compatibility
conjecture 6.2. needed in the proof of Tamagawa number conjecture for
the Riemann zeta function.
\end{abstract}
\author{Guido Kings}
\address{Fakult\"at f\"ur Mathematik\\
Universit\"at Regensburg\\
93040 Regensburg\\
Germany}
\maketitle

\section*{Introduction}
The purpose of this paper is twofold: on the one hand we prove a new
and precise relation between $\ell$-adic Eisenstein classes and 
elliptic Soul\'e elements using a description of the integral $\ell$-adic
elliptic polylogarithm in terms of elliptic units. On the other hand this
relation will be used to give a new proof for the
cup-product construction formula, which is the main result of  \cite{Huber-Kings99} and
is the main input in \cite{Huber-pune} to obtain a proof of 
Bloch-Kato's compatibility conjecture 6.2. This new
proof uses only elementary properties of elliptic units.

The explicit description of the integral $\ell$-adic elliptic polylogarithm
in terms of elliptic units was already one of the main results in the paper
\cite{Kings01}. There we used an approach via one-motives to treat the 
logarithm sheaf. But 
the main application of the $\ell$-adic elliptic polylogarithm
is in the context of Iwasawa theory, which makes it desirable
to approach the elliptic polylogarithm 
systematically in this context. That such an approach
is possible, is already suggested in the ground-braking paper \cite{BeLe}.

In Iwasawa theory
Kato, Perrin-Riou and Colmez pointed out the usefulness to work with ``Iwasawa
cohomology'', which is continuous Galois cohomology with values in an 
Iwasawa algebra. We generalize
this idea to treat families of Iwasawa modules under a family of 
Iwasawa algebras.
The main example for this is the family of Iwasawa algebras on the moduli
scheme of elliptic curves, where one has in each fibre the Iwasawa algebra of
the Tate module of the corresponding elliptic curve.

It is the fundamental idea of
Soul\'e \cite{Soule-p-adic-regulators} that twisting of units can be used
to produce interesting cohomology classes.  Already in Kato's paper
\cite{Kato-Iwasawa-theory-p-adic-Hodge} it is implicit
that this twisting is related to 
the Iwasawa cohomology. Later Colmez used this explicitly in
\cite{Colmez-Iwasawa-repr-de-Rham}, where he used moment maps of 
$\bbQ_\ell$-measure algebras. For our approach it is crucial to 
develop this further by constructing the moment map at finite
level. We show that in the cyclotomic case one obtains the elements defined
and studied by Soul\'e and Deligne. Work by Soul\'e in the CM elliptic case 
and Kato's work in \cite{Kato-p-adic} suggest
that one should carry out Soul\'e's twisting construction also in the
modular curve case to obtain
elliptic Soul\'e elements. One of the main results in this paper is that
these elliptic Soul\'e elements are essentially the 
$\ell$-adic Eisenstein classes in \cite{Huber-Kings99}.

With the general theory of sheaves of Iwasawa modules, we obtain a concrete 
description of the elliptic polylogarithm in terms of the norm compatible
elliptic units defined and studied by Kato \cite{Kato-p-adic}. This gives strong
ties of the elliptic polylogarithm to recent developments in Iwasawa theory and
also allows many explicit computations with the $\ell$-adic elliptic polylogarithm. 

As an application of the concrete description of the elliptic polylogarithm,
we give a new proof of the residue
computation for $\ell$-adic Eisenstein classes 
on the moduli scheme for elliptic curves 
(Corollary \ref{eis-residue}).

A second application is the evaluation of the cup-product construction
used in \cite{Huber-Kings99} (and explained in this volume in \cite{Huber-pune})
to obtain  elements in the motivic cohomology 
of cyclotomic fields and to prove Conjecture 6.2 in \cite{Bloch-Kato-TNC}. 
The approach taken here, does not need any computations
of the cyclotomic polylogarithm as in \cite{Huber-Kings99}. It relies only
on  the concrete evaluation of the elliptic units at the cusps.

An overview of the main results in this paper is given in
Section \ref{main-results}.

\textbf{Acknowledgements:} It is a pleasure to thank the organizers of 
the Pune workshop for invitation and the audience 
for their interest and their questions. 
Further I would like to thank several people for helpful comments on
an earlier version of this paper:
Annette Huber's detailed reading
led to many improvements and corrections.
Ren\'e Scheider pointed out a missing factor in a formula.
The  referee pointed out several improvements in the exposition
and suggested 
to work with symmetric tensors in the construction
of the moment map. This led to a complete rewriting of the earlier version.
\tableofcontents

%
\section*{Notations}
%
We fix an integer $N\ge 3$ and let $Y(N)$ the moduli space of 
elliptic curves $\cE$ with a full level $N$-structure 
$\alpha:(\bbZ/N\bbZ)^2\isom \cE[N]$. We denote by 
$$
\pi:\cE\to Y(N)
$$
the universal elliptic curve. 
We let $X(N)$ be the smooth compactification of $Y(N)$ and denote
by $j:Y(N)\hookrightarrow X(N)$ the open immersion. If we fix an $N$-th
root of unity $\zeta_N:=e^{2\pi i/N}\in \bbC$ and consider the Tate 
curve $\cE_q$ 
with the level structure $\alpha:(\bbZ/N\bbZ)^2\to \cE_q[N]$ given
by $(a,b)\mapsto q^a\zeta_N^b$. This induces a map of schemes
$$
\Spec\bbQ(\zeta_N)((q^{1/N}))\to Y(N),
$$
which extends to 
$\Spec\bbQ(\zeta_N)[[q^{1/N}]]\to X(N)$ and a hence a map
$\infty:\Spec\bbQ(\zeta_N)\to X(N)$, whose image we call the cusp $\infty$. 

Define \'etale sheaves on $Y(N)$ by
\begin{align}
\begin{split}
\sH_r&:=(R^1\pi_*\bbZ/\ell^r\bbZ)^\vee\isom R^1\pi_*\bbZ/\ell^r\bbZ(1)\\
\sH&:=(R^1\pi_*\bbZ_\ell)^\vee\isom R^1\pi_*\bbZ_\ell(1)\\
\sH_{\bbQ_\ell}&:=(R^1\pi_*\bbQ_\ell)^\vee\isom R^1\pi_*\bbQ_\ell(1)\\
\end{split}
\end{align}
where $(.)^\vee$ denotes the $\bbZ/\ell^r\bbZ$, $\bbZ_\ell$ and
$\bbQ_\ell$ dual respectively. We denote by $\Sym^k \sH_r$, $\Sym^k\sH$
and $\Sym^k\sH_{\bbQ_\ell}$ the $k$-th symmetric power as 
$\bbZ/\ell^r\bbZ$-, $\bbZ_\ell$- and
$\bbQ_\ell$-modules respectively. In the same way we denote by 
$\TSym^k\sH_r$, $\TSym^k\sH$ and $\TSym^k\sH_{\bbQ_\ell}$
the functor  of symmetric $k$-tensors as
$\bbZ/\ell^r\bbZ$-, $\bbZ_\ell$- and
$\bbQ_\ell$-modules respectively. Note that there is a canonical 
map 
\begin{equation}
\Sym^k\sH_r\to \TSym^k\sH_r,
\end{equation}
which extends to a homomorphism of graded algebras 
$\Sym^\cdot\sH_r\to \TSym^\cdot\sH_r$
and similarly for $\sH$ and $\sH_{\bbQ_\ell}$.

As we will not only deal with $\ell$-adic sheaves, we work in the
bigger abelian category of inverse systems $\sF=(\sF_r)_{r\ge 1}$ of
\'etale sheaves modulo Mittag-Leffler-zero systems (which means to 
work in the pro-category) and define the continuous \'etale
cohomology in the sense of \cite{Jannsen-cont-et-coh}. This means that
$H^i(S,\sF)$
is the $i$-th derived functor of 
$$
\sF\mapsto \prolim_rH^0(S,\sF_r).
$$ 
More generally, one defines 
$$
R^i\pi_*\sF
$$
for a morphism $\pi:S\to T$ to be the $i$-th derived functor of 
$\sF\mapsto \prolim_r\pi_*\sF_r$. For $\ell$-adic sheaves, we also
consider  $\Ext$-groups
$$
\Ext^i_S(\sF,\sG),
$$
which are the right derived functors of $\Hom_S(\sF,-)$. 

Of crucial
importance is the following lemma:
\begin{lemma}\label{h-1-prolim}
Let $\sF=(\sF_r)_{r\ge 1}$ be a projective system with $H^0(S,\sF_r)$ finite, then
$$
H^1(S,\sF)=\prolim_rH^1(S,\sF_r).
$$
\end{lemma}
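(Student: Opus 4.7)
The plan is to derive this from Jannsen's general comparison between continuous étale cohomology and the inverse limit of ordinary étale cohomology. Recall that by \cite{Jannsen-cont-et-coh}, the functor $\sF \mapsto \varprojlim_r H^0(S,\sF_r)$ is the composition of $\sF \mapsto (H^0(S,\sF_r))_r$ with $\varprojlim_r$, and the latter has derived functors $\varprojlim^1$ (vanishing in higher degrees on the category of $\bbN$-indexed systems of abelian groups). The Grothendieck spectral sequence for this composition degenerates into a short exact sequence
\begin{equation*}
0 \to {\varprojlim_r}^{1}\, H^{i-1}(S,\sF_r) \to H^i(S,\sF) \to \varprojlim_r H^i(S,\sF_r) \to 0
\end{equation*}
for each $i \geq 0$.

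Specializing to $i = 1$, the claim reduces to showing that
\begin{equation*}
{\varprojlim_r}^{1}\, H^0(S,\sF_r) = 0.
\end{equation*}
This is where the finiteness hypothesis enters. By assumption, each $H^0(S,\sF_r)$ is a finite abelian group, so the projective system $(H^0(S,\sF_r))_{r\ge 1}$ satisfies the Mittag-Leffler condition trivially: the descending chain of images of $H^0(S,\sF_s) \to H^0(S,\sF_r)$ (as $s \to \infty$, $r$ fixed) is a chain of subgroups of a finite group and must stabilize. It is a standard fact that $\varprojlim^1$ vanishes on Mittag-Leffler systems of abelian groups, which concludes the argument.

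There is no real obstacle here; the only subtlety is being careful about the setup of continuous cohomology in the pro-category, i.e.\ that the spectral sequence above is available in the generality in which we work. Once Jannsen's framework is invoked, the finiteness assumption serves precisely to kill the $\varprojlim^1$ term, and the statement follows immediately. The lemma will be used later exactly in situations where one wants to identify Iwasawa-theoretic $H^1$ with a literal inverse limit of finite-level $H^1$'s, and the finiteness of $H^0$ at finite level is typically ensured by properness or by working with constructible sheaves on schemes of finite type over a field of finite cohomological dimension.
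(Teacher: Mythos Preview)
Your proof is correct and follows essentially the same approach as the paper: invoke Jannsen's short exact sequence relating continuous cohomology to the inverse limit of ordinary cohomology, and observe that the $\varprojlim^1$ term vanishes because finite groups trivially satisfy Mittag-Leffler. The paper's proof is a one-line reference to \cite[Lemma 1.15, Equation (3.1)]{Jannsen-cont-et-coh} together with the same Mittag-Leffler remark.
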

\begin{proof}
This follows from \cite[Lemma 1.15, Equation (3.1)]{Jannsen-cont-et-coh} as the $H^0(S,\sF_r)$ satisfy the Mittag-Leffler condition.
\end{proof}

The 
quotient category of the $\ell$-adic sheaves (or $\bbZ_\ell$-sheaves)
by the torsion sheaves is the category of $\bbQ_\ell$-sheaves.
In the case 
of projective systems of $\bbQ_\ell$-sheaves $\sF=(\sF_r)_{r\ge 0}$ we
use the ad hoc definitions
\begin{align}\label{coh-of-proj-sys-def}
\begin{split}
H^i(S,\sF)&:=\prolim_r H^i(S,\sF_r)\\
\Ext^i_S(\sG,\sF)&:=\prolim_r \Ext^i_S(\sG,\sF_r),
\end{split}
\end{align}
where $\sG$ is just a $\bbQ_\ell$-sheaf.

%
%
\section{Statement of the main results}\label{main-results}
%
%
For better orientation of the reader we give an overview of the
main results in this paper and the strategy and main ingredients 
of the proof.
%
\subsection{The residue at $\infty$ of the Eisenstein class}
%

We identify sections $t:Y(N)\to \cE$ with elements in $(\bbZ/N\bbZ)^2$
via the universal level-$N$-structure on $\cE$.
For any $(0,0)\neq t  \in (\bbZ/N\bbZ)^2$ one can 
define a so called Eisenstein class
$$
\Eis_{\bbQ_\ell}^k(t)\in H^1(Y(N),\Sym^k\sH_{\bbQ_\ell}(1))
$$
(cf. Definition \ref{Eis-def}). It is convenient to 
introduce the following notation:
For any map 
$\psi:(\bbZ/N\bbZ)^2\setminus\{(0,0)\}\to \bbQ_\ell$
we put
$$
\Eis_{\bbQ_\ell}^k(\psi):=\sum_{t\neq e}\psi(t)\Eis_{\bbQ_\ell}^k(t).
$$
It is shown in \cite{Blasius-pune}
that $ \Eis_{\bbQ_\ell}^k(\psi)$ is in fact the image of a class
in motivic cohomology under the regulator map. We are interested in
the image of $\Eis_{\bbQ_\ell}^k(\psi)$ under the residue map
$$
\res_\infty:H^1(Y(N),\Sym^k\sH_{\bbQ_\ell}(1))\to H^0(\infty, \bbQ_\ell)\isom \bbQ_\ell
$$
as defined in Definition \ref{residue-Qell-def}.
The following result was first proved in \cite{BeLe} by a completely
different method:
\begin{theorem}[See Corollary \ref{eis-residue}]
One has
$$
\res_{\infty}(\Eis^k_{\bbQ_\ell}(\psi))=
\frac{-N^k}{(k+2)k!}\sum_{(a,b)\in (\bbZ/N\bbZ)^2\setminus\{(0,0)\}}\psi(a,b)
B_{k+2}(\{\frac{a}{N}\}),
$$ 
where $B_{k+2}$ denotes the $k+2$ Bernoulli polynomial and 
$\{\frac{a}{N}\}$ is the representative in $[0,1[$ of $\frac{a}{N}$.
\end{theorem}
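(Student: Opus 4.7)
The plan is to reduce the residue computation to a local calculation at the cusp, exploiting the paper's explicit description of the integral $\ell$-adic elliptic polylogarithm in terms of norm-compatible elliptic units. By construction, the Eisenstein class $\Eis^k_{\bbQ_\ell}(t)$ is obtained by pulling the polylogarithm back along the torsion section $t\colon Y(N)\to\cE$ and applying the level-$k$ moment map to land in $H^1(Y(N),\Sym^k\sH_{\bbQ_\ell}(1))$. Thus, once the polylog is written as an Iwasawa-theoretic combination of elliptic units, $\Eis^k_{\bbQ_\ell}(t)$ becomes, after moment, an explicit class built from elliptic units evaluated along $t$.

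Next I would restrict everything to the formal neighborhood of the cusp $\infty$. The map $\Spec\bbQ(\zeta_N)((q^{1/N}))\to Y(N)$ pulls the universal elliptic curve back to the Tate curve $\cE_q$ and sends the torsion section $t=(a,b)$ to the point $q^{a/N}\zeta_N^b\in\cE_q$. Under this pullback the elliptic units entering the polylog formula become classical Siegel units in $\bbQ(\zeta_N)((q^{1/N}))^\times$. By construction $\res_\infty$ is the composition of the boundary in the localization sequence for the pair $(X(N),Y(N))$ at $\infty$ with the canonical isomorphism $H^0(\infty,\bbQ_\ell)\isom\bbQ_\ell$; hence $\res_\infty$ is computed by pulling the resulting cohomology class back to this formal neighborhood and extracting, via the Kummer sequence, the ``constant term in $q^{1/N}$'' of the associated Iwasawa-theoretic system of units.

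The computation of this constant term is then a classical identity. The level-$k$ moment of the Siegel unit attached to $(a,b)$, evaluated at the closed point $q=0$ of $\Spec\bbQ(\zeta_N)[[q^{1/N}]]$, equals the constant term in the $q$-expansion of the weight $k+2$ Eisenstein series attached to $(a,b)$, which is well known to be $\tfrac{-N^k}{(k+2)k!}B_{k+2}(\{a/N\})$. The variable $b$ drops out of the Bernoulli polynomial because the factor $\zeta_N^b$ contributes only to strictly positive powers of $q^{1/N}$ in the Siegel-unit expansion $\prod_{n\ge 0}(1-q^{(nN+a)/N}\zeta_N^b)$, and hence does not see the residue. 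Summing over $t=(a,b)$ weighted by $\psi$ then yields the stated formula.

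The main obstacle, in my view, is showing that the finite-level Iwasawa-theoretic moment map developed earlier in the paper is compatible with (i) the localization sequence at the cusp and (ii) the pullback along $\Spec\bbQ(\zeta_N)((q^{1/N}))\to Y(N)$, so that the residue is indeed computed by the constant-term operation on the Siegel-unit $q$-expansion. Once this compatibility is established, the Bernoulli polynomial emerges from the standard Laurent expansion around $q=0$ together with Hurwitz's identity for $\zeta(-k-1,x)$, and the normalizing factors $N^k$, $1/(k+2)$, $1/k!$ fall out of the finite-level moment-map construction. No input from the polylogarithm beyond its explicit description in terms of elliptic units is required, which is what makes the proof ``new'' in the sense advertised in the introduction.
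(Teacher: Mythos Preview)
Your overall architecture is the paper's: express $\Eis^k_{\bbQ_\ell}(t)$ in terms of the Iwasawa-theoretic class $\cE\cS_c^{\langle t\rangle}$ built from Kato's units $_c\vartheta_\cE$, restrict to the Tate curve, and compute the residue explicitly. But there is a genuine confusion in the middle of your argument that would prevent it from going through as written.

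You say $\res_\infty$ extracts the ``constant term in $q^{1/N}$'' of the unit system, and then identify the answer with the constant term in the $q$-expansion of the weight-$(k+2)$ Eisenstein series. Neither of these is what actually happens. For a Kummer class $\partial_r(u)$ the residue at $\infty$ returns the \emph{order} $\ord_\infty(u)$, not the value at $q=0$ (this is Lemma~\ref{compatibility-res-Kummer} in the paper). When $u={_c\vartheta_\cE}$ restricted to $\cE[\ell^r]\langle t\rangle$, that order is governed by the leading $q$-exponent in Corollary~\ref{theta-evaluation}, which is a value of $B_2$, not of $B_{k+2}$. So at this stage you only have the second Bernoulli polynomial, and your appeal to ``the constant term of the weight-$(k+2)$ Eisenstein series'' is not a step in the proof but the conclusion you are trying to reach.

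The passage from $B_2$ to $B_{k+2}$ is a separate, purely measure-theoretic step that you have suppressed. The paper takes the residue \emph{in the Iwasawa module} $\Lambda(Z\langle p(t)\rangle)$ first (Theorem~\ref{ES-c-r-res}), obtaining the Bernoulli measure $B_{2,c}^{\langle p(t)\rangle}$; then applies $\mom^k$ to this measure, and the standard moment identity \eqref{mom-of-Bernoulli} for the Bernoulli distribution yields $B_{k+2}(\{a/N\})$. The compatibility of $\res_\infty$ with $\wt\mom^k_t$ (Lemma~\ref{res-comparison}) is exactly the ``compatibility with the localization sequence'' you flag as the main obstacle, and it is what lets one commute these two operations. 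Your sketch needs this two-step mechanism made explicit; the Hurwitz/$\zeta(-k-1,x)$ identity you mention at the end is morally the same as \eqref{mom-of-Bernoulli}, but it enters after the residue, not as part of a $q$-expansion evaluation.
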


\subsection{Evaluation of the cup-product construction}
For two maps
$$
\phi,\psi:(\bbZ/N\bbZ)^2\setminus\{(0,0)\}\to \bbQ_\ell 
$$ 
we can consider the
cup-product
\begin{equation*}
\Eis_{\bbQ_\ell}^k(\phi)\cup \Eis_{\bbQ_\ell}^k(\psi)\in
H^2(Y(N),\Sym^k\sH_{\bbQ_\ell}\otimes \Sym^k\sH_{\bbQ_\ell}(2)).
\end{equation*}
The cup-product pairing $\sH_{\bbQ_\ell}\otimes \sH_{\bbQ_\ell}\to \bbQ_\ell(1)$ 
induces a pairing
$$
\Sym^k\sH_{\bbQ_\ell}\otimes \Sym^k\sH_{\bbQ_\ell}\to \bbQ_\ell(k)
$$
and we can consider the image of $\Eis_{\bbQ_\ell}^k(\psi)\cup \Eis_{\bbQ_\ell}^k(\phi)$ in $H^2(Y(N),\bbQ_\ell(k+2))$.
$$
\Eis_{\bbQ_\ell}^k(\psi)\cup \Eis_{\bbQ_\ell}^k(\phi)\in
H^2(Y(N),\bbQ_\ell(k+2)).
$$
Let 
$$
\res_\infty:H^2(Y(N),\bbQ_\ell(k+2))\to H^1(\infty,\bbQ_\ell(k+1))
$$
be the edge morphism in the Leray spectral sequence for $Rj_*$
using the isomorphism $\infty^*R^1j_*\bbQ_\ell(k+2)\isom \bbQ_\ell(k+1)$.
\begin{definition}
Let $\phi_\infty,\psi:(\bbZ/N\bbZ)^2\setminus\{(0,0)\}\to \bbQ_\ell $ be two 
maps and suppose
that $\res_\infty(\Eis_{\bbQ_\ell}^k(\phi_\infty))=1$ and
$\res_\infty(\Eis_{\bbQ_\ell}^k(\psi))=0$.
Then 
$$
\Dir_\ell(\psi):=\res_\infty(\Eis_{\bbQ_\ell}^k(\psi)\cup
\Eis_{\bbQ_\ell}^k(\phi_\infty))\in  H^1(\infty,\bbQ_\ell(k+1))
$$
is called the \emph{cup-product construction} 
(compare \cite[Definition 4.1.3.]{Huber-pune}).
\end{definition}
Note that  $\Dir_\ell(\psi)$
does not depend on the choice of $\phi_\infty$
(as follows from the formula in Theorem \ref{Dir-as-evaluation}).
The main result of this paper is:
\begin{theorem}[see Corollary \ref{Dir-formula}]\label{Eis-at-infty}
Let  $\psi:(\bbZ/N\bbZ)^2\setminus\{(0,0)\}\to \bbQ_\ell $ be a map such that
$$
\res_\infty(\Eis_{\bbQ_\ell}^k(\psi))=0.
$$
Then one has
$$
\Dir_\ell(\psi))=\frac{-1}{Nk!}\sum_{0\neq b\in\bbZ/N\bbZ}
\psi(0,b)\wt c_{k+1}(\zeta_N^b)\in H^1(\infty,\bbQ_\ell(k+1)),
$$
where $\wt c_{k+1}(\zeta_N^b)$ is the modified cyclotomic Soul\'e-Deligne element
from Definition \ref{cyc-soule-def}.
\end{theorem}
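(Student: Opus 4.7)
The plan is to evaluate $\Dir_\ell(\psi)$ by restricting the cup product of Eisenstein classes to the cusp $\infty$, and then to use the explicit description of the $\ell$-adic elliptic polylogarithm in terms of elliptic units (the main theme of the paper) to identify this restriction with cyclotomic Soul\'e--Deligne elements.

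First I would reduce the residue of the cup product to an evaluation at the cusp. Since $\res_\infty(\Eis_{\bbQ_\ell}^k(\psi)) = 0$, the class $\Eis_{\bbQ_\ell}^k(\psi)$ lifts canonically to $H^1(X(N), j_*\Sym^k\sH_{\bbQ_\ell}(1))$ via the Leray spectral sequence for $j$. Combining the compatibility of the Leray edge maps with the cup product and using the normalization $\res_\infty(\Eis_{\bbQ_\ell}^k(\phi_\infty))=1$, one obtains a Leibniz-type formula of the form
$$
\Dir_\ell(\psi) = \widetilde{\Eis_{\bbQ_\ell}^k(\psi)}|_\infty \cup \res_\infty(\Eis_{\bbQ_\ell}^k(\phi_\infty))
$$
in $H^1(\infty, \bbQ_\ell(k+1))$, where the cup product on the right uses the pairing $\Sym^k\sH_{\bbQ_\ell} \otimes \Sym^k\sH_{\bbQ_\ell} \to \bbQ_\ell(k)$ restricted to the cusp. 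Because the second factor is the unit class, this reduces the problem to computing the appropriate component of $\widetilde{\Eis_{\bbQ_\ell}^k(\psi)}|_\infty$.

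Second I would apply the main explicit description of $\Eis_{\bbQ_\ell}^k(\psi)$ developed in the paper: it is the image, under a suitable moment map, of a norm-compatible family of Siegel units indexed by the torsion sections $t$ appearing in $\psi$. Pulled back along the section for the Tate curve at $\infty$, the universal level-$N$-structure becomes $(a,b)\mapsto q^a\zeta_N^b$. The Siegel unit attached to $t = (a,b)$ with $a\neq 0$ specializes to a unit in the formal parameter $q$, and such terms contribute only to $\res_\infty(\Eis_{\bbQ_\ell}^k(\psi))$, which vanishes by hypothesis. Hence $\widetilde{\Eis_{\bbQ_\ell}^k(\psi)}|_\infty$ depends only on the sections $(0,b)$ with $b\neq 0$, which specialize precisely to the cyclotomic units $1 - \zeta_N^b$.

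Third, I would invoke the cyclotomic specialization of the moment-map construction, already treated earlier in the paper, which identifies the moment-map image of the cyclotomic unit associated to $\zeta_N^b$ with the modified Soul\'e--Deligne element $\wt c_{k+1}(\zeta_N^b)$ of Definition \ref{cyc-soule-def}. Summing over $b$ and tracking the normalizations --- the factor $k!$ coming from the canonical surjection $\Sym^k\to\TSym^k$ used to construct the moment map, the factor $1/N$ arising from the order of the residue of the logarithmic derivative of the Siegel function at $\infty$, and an overall sign --- yields the constant $-\tfrac{1}{Nk!}$ and completes the proof. The principal difficulty is step one: establishing the cup-product-vs-residue compatibility with correct signs and checking that the lift $\widetilde{\Eis_{\bbQ_\ell}^k(\psi)}$ restricts at the cusp to data that can be cleanly computed from the Tate-curve degeneration; once this compatibility is in place, the remaining steps are essentially bookkeeping from the integral description of the elliptic polylogarithm and its cyclotomic specialization.
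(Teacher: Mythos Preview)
Your overall strategy matches the paper's: reduce $\Dir_\ell(\psi)$ to the evaluation $\infty^*\Eis_{\bbQ_\ell}^k(\psi)$ (this is exactly Theorem \ref{Dir-as-evaluation}, cited from \cite{Huber-Kings99}), then use the description of Eisenstein classes via elliptic units to compute this evaluation and recognize cyclotomic Soul\'e--Deligne elements. However, step two of your proposal has a genuine gap.

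You write that the elliptic unit attached to $t=(a,b)$ with $a\neq 0$ ``specializes to a unit in the formal parameter $q$, and such terms contribute only to $\res_\infty$''. The difficulty is that the \emph{individual} classes $\Eis_{\bbQ_\ell}^k(t)$ and ${_ce_k(t)}$ all have nonzero residue at $\infty$ (Theorem \ref{res-of-ell-Soule}), so none of them lifts to $H^1(\wh{X}(N)_\infty,j_*\Sym^k\sH_{\bbQ_\ell}(1))$ and none can be evaluated at $\infty$ on its own. Only the sum weighted by $\psi$ lifts, and there is no direct way to ``evaluate the Siegel unit at $q=0$'' term by term and then sum. Your sentence is an assertion that needs justification, and the justification is the missing idea.

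The paper's device for this is to introduce an explicit auxiliary class ${_cb_k(t)}=\mom_t^k(\cB\cS_c^{\langle t\rangle})$, built from $\partial(q^{1/N})$ cup a Bernoulli section (Lemma \ref{BS-def}), satisfying two properties: (i) it has the \emph{same} residue as the elliptic Soul\'e element (Lemma \ref{BS-res}), so the difference $_c\wt{me}_k(t)$ has zero residue and can be evaluated at $\infty$ term by term; and (ii) when summed against a $\psi$ with vanishing residue it is identically zero (Proposition \ref{BS-evaluation}), because it lives in the one-dimensional space generated by the Kummer class of $q^{1/N}$. Only after this subtraction does the claim ``$a\neq 0$ contributes nothing'' become a precise statement: in the paper it follows because the scheme $\mu_{\ell^r}\langle t\rangle$ is empty when $p(t)\neq 0$ (Theorem \ref{me-evaluation}). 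Your bookkeeping of the constants is also too loose: the $1/N$ does not come from ``the order of the residue of the logarithmic derivative'' but from the interplay of the $N^k$ in $\wt\mom_t^k$ with the $-N^{k-1}$ in Definition \ref{Eis-def} via Theorem \ref{pol-eis}; and you never introduce the auxiliary integer $c$ (with $c\equiv 1\bmod N$) needed to pass between ${_ce_k(t)}$ and $\Eis_{\bbQ_\ell}^k(t)$.
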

It is explained in \cite{Huber-pune} how this theorem settles the
compatibility conjecture 6.2. in \cite{Bloch-Kato-TNC}.

The main idea in this paper (building upon our former work \cite{Kings01}) 
is to describe  a $\bbZ_\ell$-version of $\Eis_{\bbQ_\ell}^k(t)$ as
Soul\'e's twisting construction applied to elliptic units.
Then all explicit computations with the Eisenstein classes are reduced
to computations with the elliptic units. 

%
\subsection{Eisenstein classes and elliptic units}
%

We explain how the Eisenstein classes are related to elliptic 
units and in particular how one can define these classes integrally. 
For this we need to introduce some notation. 

Recall that the
Eisenstein class is associated to a non-zero $N$-torsion section 
$t:Y(N)\to \cE[N]$. Let
$\ell$ be a prime number. 
We define the $\cE[\ell^r]$-torsor $\cE[\ell^r]\langle t\rangle$ on
the modular curve $Y(N)$
by the cartesian diagram
$$
\begin{CD}
\cE[\ell^r]\langle t\rangle@>>> \cE[\ell^rN]\\
@Vp_{r,t} VV@VV[\ell^r]V\\
Y(N)@>t>>\cE[N].
\end{CD}
$$
\begin{definition}
Define the \'etale sheaf $\Lambda_r(\sH_r\langle t\rangle)$ on $Y(N)$ by
$$
\Lambda_r(\sH_r\langle t\rangle):=p_{r,t*}\bbZ/\ell^r\bbZ.
$$
If $t=e$ is the identity section we write
$\Lambda_r(\sH_r)$. 
\end{definition}
The sheaves $\Lambda_r(\sH_r\langle t\rangle)$ form an inverse 
system with respect to the trace map 
$$
\Lambda_{r+1}(\sH_{r+1}\langle t\rangle)\to
\Lambda_r(\sH_r\langle t\rangle)
$$
and we denote the resulting pro-system by 
\begin{equation}
\Lambda(\sH\langle t\rangle):=(\Lambda_r(\sH_r\langle t\rangle))_{r\ge 1}.
\end{equation}
For $t=e$ we write $\Lambda(\sH):=(\Lambda_r(\sH_r))_{r\ge 1}$.
\begin{remark}
The sheaves $\Lambda(\sH\langle t\rangle)$ form the 
main example of sheaves of Iwasawa modules mentioned in the title
of this paper. The connection is explained in Lemma 
\ref{relation-to-Iwasawa-modules}.
\end{remark}
Note that by Lemma \ref{h-1-prolim} we have
\begin{align}
\begin{split}
H^1(Y(N),\Lambda(\sH\langle t\rangle)(1))&\isom
\prolim_r H^1(Y(N),\Lambda_r(\sH_r\langle t\rangle)(1))\\
&\isom
\prolim_r H^1(\cE[\ell^r]\langle t\rangle,\bbZ/\ell^r\bbZ(1)).
\end{split}
\end{align}
Fix an auxiliary integer $c>1$, which is prime to $6\ell N$. Then
Kato has defined a norm-compatible unit 
${_c\vartheta_\cE}$ on $\cE\setminus \cE[c]$
(cf. Theorem \ref{modular-units}).
Note that for an $N$-torsion point $t\neq e$ one has 
$\cE[\ell^r]\langle t\rangle\subset \cE\setminus \cE[c]$ by our condition on $c$.
Thus, we can restrict $ {_c\vartheta_\cE}$ to an invertible function on 
$\cE[\ell^r]\langle t\rangle$.
The Kummer map (see \ref{Kummer-def}) gives a class
$$
\cE\cS^{\langle t\rangle}_{c,r}:=\partial_r({_c\vartheta_\cE})\in H^1(\cE[\ell^r]\langle t\rangle,\bbZ/\ell^r\bbZ(1))
$$
and by the norm-compatibility we can define:
\begin{definition}
Let 
$$
\cE\cS^{\langle t\rangle}_c:=\prolim_r \partial_r({_c\vartheta_\cE})\in
H^1(S, \Lambda(\sH\langle t\rangle)(1)).
$$
\end{definition}
In \ref{sheaf-mom-def} we define a moment map
$$
\mom^k_{t}:\Lambda(\sH)\to \TSym^k\sH
$$
which gives rise to a map
\begin{equation}\label{comparison-map}
H^1(S, \Lambda(\sH\langle t\rangle)(1))\xrightarrow{\mom^k_{t}}
H^1(S, \TSym^k\sH(1)).
\end{equation}
We show in Proposition \ref{mom-and-twist}, inspired by a result of Colmez \cite{Colmez-Iwasawa-repr-de-Rham}:
\begin{proposition}[see \ref{mom-and-twist}]
The element 
$$
_ce_k(t):=\mom^k_t(\cE\cS^{\langle t\rangle}_c)\in
H^1(S, \TSym^k\sH(1))
$$
coincides with Soul\'e's twisting construction (see \ref{Soule-twisting})
applied to the norm compatible elliptic units $_c\vartheta_\cE$
and is called the \emph{elliptic Soul\'e element}.
\end{proposition}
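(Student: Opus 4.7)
The plan is to verify the identification at each finite level $r$ and then pass to the inverse limit, so that the statement reduces to a comparison of two explicit constructions with the Kummer class of $_c\vartheta_\cE|_{\cE[\ell^r]\langle t\rangle}$. First I would unwind the left-hand side: since $p_{r,t}:\cE[\ell^r]\langle t\rangle\to Y(N)$ is finite \'etale, the adjunction together with the definition $\Lambda_r(\sH_r\langle t\rangle)=p_{r,t*}\bbZ/\ell^r\bbZ$ yields a canonical isomorphism
\begin{equation*}
H^1(Y(N),\Lambda_r(\sH_r\langle t\rangle)(1))\isom H^1(\cE[\ell^r]\langle t\rangle,\bbZ/\ell^r\bbZ(1)),
\end{equation*}
and under this identification $\cE\cS^{\langle t\rangle}_{c,r}$ is literally the Kummer class $\partial_r({_c\vartheta_\cE})$ on the cover. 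Passing to the limit over $r$ uses Lemma \ref{h-1-prolim} and the norm-compatibility of $_c\vartheta_\cE$.

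Next I would describe the moment map $\mom^k_t$ concretely at finite level. The sheaf $\Lambda_r(\sH_r\langle t\rangle)=p_{r,t*}\bbZ/\ell^r\bbZ$ is naturally the sheaf of $\bbZ/\ell^r\bbZ$-valued measures on the $\sH_r$-torsor $\cE[\ell^r]\langle t\rangle/Y(N)$. Writing a local section as a formal combination $\sum_{\sigma}n_\sigma[\sigma]$ over the geometric fibre, one sees from the construction in \ref{sheaf-mom-def} that
\begin{equation*}
\mom^k_{t}\bigl(\sum_\sigma n_\sigma[\sigma]\bigr)=\sum_\sigma n_\sigma (\sigma-\wt t)^{\otimes k},
\end{equation*}
where $\wt t$ denotes any lift of $t$ and $\sigma-\wt t$ is viewed as an element of $\sH_r$ (the translation-invariance making the right side independent of the choice of lift). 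In other words, $\mom^k_t$ is the $k$-th moment of the tautological $\sH_r$-valued measure.

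The third step is to match this with Soul\'e's twisting construction applied to the unit $_c\vartheta_\cE$. Under Shapiro's lemma, Soul\'e twisting at level $r$ takes the Kummer class $\partial_r({_c\vartheta_\cE})\in H^1(\cE[\ell^r]\langle t\rangle,\bbZ/\ell^r\bbZ(1))$ and combines it with the universal character $\sH_r\to\sH_r$ by applying the $k$-fold tensor; the explicit formula (cf.\ Colmez \cite{Colmez-Iwasawa-repr-de-Rham} in the $\bbQ_\ell$-setting) is exactly the $k$-th moment of the pushforward measure, i.e.\ the right-hand side of the formula above. Once both sides are expressed in the language of moments of measures on $\cE[\ell^r]\langle t\rangle$, the identification is tautological, and compatibility with the trace maps (proved by a direct calculation showing that $\mom^k_t$ and the transition maps of Soul\'e's tower both commute with pushforward along $[\ell]$) gives the equality in the inverse limit.

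The main obstacle is the rigorous bookkeeping at finite level: Colmez works with $\bbQ_\ell$-measure algebras where one can divide by $k!$ and use exponential/logarithm identifications, whereas here one must make the identification with symmetric tensors $\TSym^k\sH$ (and not $\Sym^k\sH$) integrally. This is precisely why the referee suggested symmetric tensors: the integral moment map is naturally $\TSym^k$-valued, and one has to verify that on group-like/Dirac sections it produces the correct $k$-th symmetric tensor without denominators, and that the resulting map respects the trace-transition in the tower $\{\Lambda_r(\sH_r\langle t\rangle)\}_r$.
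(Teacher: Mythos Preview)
Your overall strategy --- work at finite level, identify both sides via Shapiro/adjunction, then pass to the limit --- is exactly the paper's approach. However, your explicit description of the moment map is wrong, and this obscures what actually makes the proof work.

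You write
\[
\mom^k_{t}\Bigl(\sum_\sigma n_\sigma[\sigma]\Bigr)=\sum_\sigma n_\sigma (\sigma-\wt t)^{\otimes k},
\]
with $\wt t$ an arbitrary lift of $t$, and claim translation-invariance makes this independent of $\wt t$. It does not: for the Dirac measure $\delta_{\sigma_0}$ the right-hand side is $(\sigma_0-\wt t)^{\otimes k}$, which visibly depends on $\wt t$. The paper's moment map is instead built from the canonical section
\[
\tau_{r,t}:\cE[\ell^r]\langle t\rangle\hookrightarrow \cE[\ell^rN]\xrightarrow{[N]}\cE[\ell^r],
\]
so that on stalks $\mom^k_{r,t}(\sum_\sigma n_\sigma[\sigma])=\sum_\sigma n_\sigma ([N]\sigma)^{[k]}$. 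This is canonical (no lift needed) but depends on $N$, which is exactly why the paper remarks that $\mom^k_t$ is not independent of $N$ and introduces the modified $\wt\mom^k_t=\frac{1}{N^k}\mom^k_t$. Your formula, if it made sense, would already be $N$-independent, contradicting this.

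Once you use the correct $\tau_{r,t}^{[k]}$, the proof becomes almost tautological and there is no ``main obstacle'' of integral bookkeeping: Soul\'e's twisting $s(r,k,t)=p_{r,t*}(\partial_r(\theta_r)\cup\tau_{r,t}^{[k]})$ and the sheaf-level definition of $\mom^k_{r,t}$ (Definition~\ref{sheaf-mom-def}) use the \emph{same} section $\tau_{r,t}^{[k]}$. The paper's proof is then a single commutative diagram expressing that cup product on the cover followed by pushforward agrees with the composition $p_{r,t*}\bbZ/\ell^r\bbZ\otimes p_{r,t*}p_{r,t}^*\TSym^k\sH_r\to p_{r,t*}p_{r,t}^*\TSym^k\sH_r\xrightarrow{\tr}\TSym^k\sH_r$ coming from the projection formula --- no division by $k!$, no Colmez-style analysis, and no separate verification of trace compatibility is needed since both sides are built from the same ingredients.
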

Consider the image of $\mom^k_t(\cE\cS^{\langle t\rangle}_c)$
in $H^1(S, \TSym^k\sH_{\bbQ_\ell}(1))$. 
The isomorphism $\Sym^k\sH_{\bbQ_\ell}\to \TSym^k\sH_{\bbQ_\ell}$
induces
\begin{equation}\label{comparison-iso}
H^1(S, \Sym^k\sH_{\bbQ_\ell}(1))\isom 
H^1(S, \TSym^k\sH_{\bbQ_\ell}(1)),
\end{equation}
which allows us to consider 
$$
\wt\mom^k_t(\cE\cS^{\langle t\rangle}_c):=
\frac{1}{N^k}\mom^k_t(\cE\cS^{\langle t\rangle}_c)\in 
H^1(S, \Sym^k\sH_{\bbQ_\ell}(1)).
$$
\begin{theorem}[see Theorem \ref{pol-eis}]
With the above notation the equality 
$$
\frac{1}{N^k}{_ce_k(t)}=
\wt\mom^k_t(\cE\cS^{\langle t\rangle}_c)=
\frac{-1}{N^{k-1}}(c^2\Eis_{\bbQ_\ell}^k(t)-c^{-k}\Eis_{\bbQ_\ell}^k({[c]t}))
$$
holds in 
$H^1(S, \Sym^k\sH_{\bbQ_\ell}(1))$.
In particular, if $c\equiv 1 \mod{N}$ one has 
$$
{_ce_k(t)}=
-N(c^2-c^{-k})\Eis_{\bbQ_\ell}^k(t).
$$
\end{theorem}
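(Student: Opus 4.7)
The first equality $\frac{1}{N^k}{_ce_k(t)} = \wt\mom^k_t(\cE\cS^{\langle t\rangle}_c)$ is immediate from the definitions of ${_ce_k(t)} = \mom^k_t(\cE\cS^{\langle t\rangle}_c)$ and of the normalization $\wt\mom^k_t = N^{-k}\mom^k_t$. The content of the theorem is therefore the second equality, and my plan is to reinterpret both sides through the $\ell$-adic elliptic polylogarithm. The pro-sheaves $\Lambda(\sH\langle t\rangle)$ developed in this paper form the Iwasawa-theoretic incarnation of the logarithm sheaf $\cLog$, and $\cE\cS^{\langle t\rangle}_c$ is the $t$-specialization of the $c$-regularized $\ell$-adic polylogarithm in this incarnation (this is the main output of the rewriting of \cite{Kings01} via sheaves of Iwasawa modules). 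On the other hand, by Definition \ref{Eis-def}, the Eisenstein class $\Eis^k_{\bbQ_\ell}(t)$ is obtained from the polylogarithm by specializing at $t$ and projecting onto the $k$-th graded piece $\Sym^k\sH_{\bbQ_\ell}$. Thus the task is to match $\wt\mom^k_t$ with this projection, up to the $c$-regularization factor.

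Concretely I would carry this out in three steps. First, I would verify that $\mom^k_t$ on $\Lambda(\sH\langle t\rangle)$, composed with the canonical identification $\TSym^k\sH_{\bbQ_\ell}\isom\Sym^k\sH_{\bbQ_\ell}$ and the scaling by $1/N^k$, is precisely the projection $\cLog_{\bbQ_\ell}\to\Sym^k\sH_{\bbQ_\ell}$ followed by specialization at $t$; this is essentially the content of Proposition \ref{mom-and-twist} combined with the dictionary between $\Lambda(\sH)$ and $\cLog$. Second, I would use the defining distribution property of Kato's unit $_c\vartheta_\cE$, which relates $[c]^*({_c\vartheta_\cE})$ to $_c\vartheta_\cE$ modulo torsion; pushing this through the Kummer map and the moment map produces a contribution weighted by $c^2$ from evaluation at $t$ and a contribution weighted by $c^{-k}$ from evaluation at $[c]t$, the factor $c^{-k}$ arising because $[c]$ acts as multiplication by $c$ on $\sH$ and hence by $c^k$ on $\TSym^k\sH$, which after the required Tate twist becomes $c^{-k}$ on the dual. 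Third, I would track all normalization constants carefully in order to convert the prefactor $1/N^k$ in $\wt\mom^k_t$ into the stated $-1/N^{k-1}$, absorbing the extra factor of $N$ (and the sign) that arises when passing from the polylog projection to the Eisenstein class as normalized in Definition \ref{Eis-def}.

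The special case $c\equiv 1\pmod N$ is then immediate: one has $[c]t=t$, so $\Eis^k_{\bbQ_\ell}([c]t)=\Eis^k_{\bbQ_\ell}(t)$, the difference collapses to $(c^2-c^{-k})\Eis^k_{\bbQ_\ell}(t)$, and combining with the prefactor $-1/N^{k-1}$ and the $1/N^k$ of $\wt\mom^k_t$ produces the claimed $-N(c^2-c^{-k})\Eis^k_{\bbQ_\ell}(t)$. The main obstacle I anticipate is the first step: matching $\mom^k_t$ with the polylog projection while keeping strict control over every normalization. This requires carefully tracing the passage between symmetric tensors and symmetric powers (where factors of $k!$ and $N^k$ appear depending on the integral structure chosen), the Kummer-induced twist by $\bbZ_\ell(1)$, and the $t$-specialization; all numerical constants — the $N^k$, the $c^{-k}$, and the sign — must be bookkept consistently through this identification and through the distribution relation for $_c\vartheta_\cE$.
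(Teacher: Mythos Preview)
Your overall strategy matches the paper's: identify $\cE\cS_c^{\langle t\rangle}$ with $t^*\pol_c$ via the comparison map (Theorem~\ref{integral-pol-comparison} and Lemma~\ref{ES-Theta-comp}), match $\wt\mom^k_t$ with the projection $\pr_k$ onto $\Sym^k\sH_{\bbQ_\ell}$ (Proposition~\ref{mom-and-split}), and then invoke the relation between $t^*\pol_c^k$ and the Eisenstein classes (Proposition~\ref{varpol-Eis-comparison}). Your step~2, however, is not quite right as stated. There is no ``distribution property'' of ${_c\vartheta_\cE}$ relating $[c]^*({_c\vartheta_\cE})$ to ${_c\vartheta_\cE}$; the norm relations for ${_c\vartheta_\cE}$ involve $[d]_*$ for $d$ prime to $c$, not $[c]^*$. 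What actually produces the $c^2$ and $c^{-k}$ is a residue argument carried out entirely on the polylogarithm side: one checks that $\mult_{\sH_{\bbQ_\ell}}(\pol_c)$ and $c^2\pol - c[c]^*\pol$ have the same image under the injective residue map into $\Hom_{\cE[c]}(\sH_{\bbQ_\ell},\prod_n\Sym^n\sH_{\bbQ_\ell})$ (Proposition~\ref{pol-comparison}), and then applies $t^*$ and $\contr_{\sH_{\bbQ_\ell}}$. The factor $c^{-k}$ arises because the isomorphism $\Log_{\bbQ_\ell}\isom[c]^*\Log_{\bbQ_\ell}$ acts as $c^{k+1}$ on the graded piece $\Sym^{k+1}\sH_{\bbQ_\ell}$, not from a Tate twist on a dual. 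So your instinct about the source of the $c$-factors is correct, but the mechanism is residue-matching on $\pol_c$ versus $\pol$, not a functional equation of the theta function itself.
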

This is the desired relation between $\Eis_{\bbQ_\ell}^k(t)$ and
the elliptic Soul\'e element.

%
%
\section{Sheaves of Iwasawa modules and the moment map}
%
%
In this section we consider sheaves of Iwasawa modules
and define the moment map. As a motivation we start
by looking at the case of modules under the Iwasawa algebra.
%
\subsection{Iwasawa algebras}
%

Fix a prime number $\ell$. Let $X$ 
be a totally disconnected compact topological space of the form
$$
X=\prolim_r X_r
$$ 
with $X_r$ finite discrete.
We denote by 
\begin{equation*}
\Cont(X,\bbZ_\ell):=\{f:X\to \bbZ_\ell\mid f\mbox{ continuous}\}
\end{equation*}
the continuous $\bbZ_\ell$-valued functions on $X$ together with
the sup-norm $||-||_\infty$.
\begin{definition}
The space of $\bbZ_\ell$-valued measures on $X$ is
$$
\Lambda(X):=\Hom_{\bbZ_\ell}(\Cont(X,\bbZ_\ell), \bbZ_\ell).
$$
We also write 
$\Lambda_r(X):=\Hom_{\bbZ_\ell}(\Cont(X,\bbZ_\ell), \bbZ/\ell^r\bbZ)$
for the $\bbZ/\ell^r\bbZ$-valued measures on $X$.
For each $\mu\in \Lambda(X)$ we write 
$$
\int_Xf\mu:=\mu(f)
$$
and for $x\in X$ we let $\delta_x\in \Lambda(X)$ be the Dirac distribution
characterized by $\delta_x(f)=f(x)$.
\end{definition}
As every continuous function in $\Cont(X,\bbZ_\ell)$ is the 
uniform limit of locally constant functions, we have
$$
\Lambda(X)=\prolim_r\Hom_{\bbZ_\ell}(\Cont(X_r,\bbZ_\ell),\bbZ_\ell)
=\prolim_r\Lambda(X_r)=\prolim_r\Lambda_r(X_r).
$$
For a continuous map  $\phi:X\to Y$ one has a homomorphism
\begin{equation}
\phi_!:\Lambda(X)\to \Lambda(Y)
\end{equation}
defined by $(\phi_!\mu)(f):=\mu(f\verk \phi)$. If $U\subset X$ is
open compact one has  
$$
\Lambda(X)\isom \Lambda(U)\oplus \Lambda(X\setminus U).
$$
Define 
$$
\Lambda(X)\wh\otimes\Lambda(Y):=\prolim_r (\Lambda(X_r)\otimes_{\bbZ_\ell}
\Lambda(Y_r))
$$
then one has a canonical isomorphism
$$
\Lambda(X\times Y)\isom \Lambda(X)\wh\otimes\Lambda(Y).
$$

Let now $X=H=\prolim_r H_r$ be a profinite group. Then $\Lambda(H_r)$ 
is the group algebra of $H_r$ and $\Lambda(H)$ inherits a $\bbZ_\ell$-algebra
structure. This algebra structure can also be defined directly by using the 
convolution of measures 
\begin{equation*}
\mu*\nu:=\mult_!(\mu\otimes \nu),
\end{equation*} 
where $\mult:H\times H\to H$ is the group multiplication. As 
$\delta_g*\delta_h=\delta_{gh}$ the map $\delta:H\to \Lambda(H)^\times$,
$h\mapsto \delta_h$
is a group homomorphism.
\begin{definition}
$\Lambda(H)$ with the above $\bbZ_\ell$-algebra structure is
called the \emph{Iwasawa algebra} of $H$.
\end{definition} 
The following situation will frequently occur in the applications
in this paper. Suppose that 
$$
0\to H\to G\xrightarrow{q}T\to  0
$$
is an exact sequence of profinite groups with $T$ finite discrete. Define for $t\in T$
$$
H\langle t\rangle:=q^{-1}(t)
$$
so that $G=\bigcup_{t\in T}H\langle t\rangle$ and each $H\langle t\rangle$
is an $H$-torsor, i.e., has a simply transitive $H$-action. Then 
$$
\Lambda(G)\isom \bigoplus_{t\in T}\Lambda(H\langle t\rangle)
$$
is a $\Lambda(H)$-module and 
$\Lambda(H\langle t\rangle)$ is a free $\Lambda(H)$-module of rank one. 

\subsection{The moment map}
In this section we consider the profinite group 
$H\isom\bbZ_\ell^d$ and we write
\begin{align}
\begin{split}
H_r:=&H\otimes_{\bbZ_\ell}\bbZ_\ell/\ell^r\bbZ_\ell\isom 
(\bbZ_\ell/\ell^r\bbZ_\ell)^d\\
H_{\bbQ_\ell}:=&H\otimes_{\bbZ_\ell}\bbQ_\ell\isom
\bbQ_\ell^d.
\end{split}
\end{align}
The moment map will be a $\bbZ_\ell$-algebra homomorphism 
$$
\Lambda(H)\to \wh\TSym^\cdot H,
$$
where $\wh\TSym^\cdot H$ is the completion of the $\bbZ_\ell$-algebra
of symmetric tensors with respect to the augmentation ideal.

We start be recalling some facts about the algebra of symmetric
tensors $\TSym^\cdot H$. We remark right away that the right framework
for the moment map is the divided power algebra $\Gamma^\cdot H$,
which in our case is isomorphic to $\TSym^\cdot H$. As we are interested
in the relation with the symmetric algebra in the end, we found it more
intuitive to work with $\TSym^\cdot H$.

The algebra $\TSym^\cdot H$ is graded
$$
\TSym^\cdot H=\bigoplus_{k\ge 0}\TSym^k H
$$
and for each $h\in H$ one has the symmetric 
tensor $h^{[k]}:=h^{\otimes k}\in \TSym^kH$. This gives a divided
power structure on $\TSym^\cdot H$
and one has the formulae 
\begin{align}
\begin{split}
(g+h)^{[k]}&=\sum_{m+n=k}g^{[m]}h^{[n]}\\
h^{[m]}h^{[n]}&=\frac{(m+n)!}{m!n!}h^{[m+n]}.
\end{split}
\end{align}
The map $H\to \TSym^1H$, $h\mapsto h^{[1]}$
is an isomorphism. By the universal property of the symmetric algebra
this induces an algebra homomorphism
\begin{equation}
\Sym^\cdot  H\to \TSym^\cdot H,
\end{equation}
which is an isomorphism after tensoring with $\bbQ_\ell$. 

From the isomorphism $\Gamma^\cdot H\isom\TSym^\cdot H$ it follows 
directly that $\TSym^\cdot H$ is compatible with base change
$$
(\TSym^\cdot H)\otimes_{\bbZ_\ell}\bbZ/\ell^r\bbZ\isom
\TSym^\cdot H_r
$$
and with direct sums 
$\TSym^\cdot (H\oplus H)\isom \TSym^\cdot H\otimes \TSym^\cdot H$.

If $(e_1,\ldots,e_d)$ is a basis of $H$, then 
$$
(e_1^{[n_1]}\cdots e_d^{[n_d]}\mid n_1+\ldots+n_d=k)
$$
is a basis of $\TSym^kH$. 
Note that under the homomorphism $\Sym^kH\to \TSym^kH$ one has
$$
e_1^{n_1}\cdots e_d^{n_d}\mapsto k!e_1^{[n_1]}\cdots e_d^{[n_d]}.
$$
Let $H^\vee:=\Hom_{\bbZ_\ell}(H,\bbZ_\ell)$ be the dual $\bbZ_\ell$-module 
then one has a canonical
isomorphism
$$
\Sym^kH^\vee\isom (\TSym^k H)^\vee.
$$

Let $\TSym^+H:=\bigoplus_{k>0}\TSym^kH$ be the augmentation ideal.
We denote by 
$$
\wh\TSym^\cdot H:=\prolim_n \TSym^\cdot H/(\TSym^+H)^n
$$
the completion of $\TSym^\cdot H$ with respect to the augmentation ideal.
Similarly, we also denote by $\wh\TSym^\cdot H_r$ and 
$\wh\TSym^\cdot H_{\bbQ_\ell}$ the completions with respect to the
augmentation ideal. 
\begin{lemma}\label{TSym-inv-limit} One has 
$$
\wh\TSym^\cdot H\isom \prolim_r \wh\TSym^\cdot H_r.
$$
\end{lemma}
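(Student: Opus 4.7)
The plan is to interchange the two inverse limits and then reduce to a statement about $\ell$-adic completeness in each graded degree. First, since inverse limits commute with inverse limits, the right-hand side rewrites as
\begin{equation*}
\prolim_r \wh\TSym^\cdot H_r = \prolim_n \prolim_r \TSym^\cdot H_r/(\TSym^+H_r)^n,
\end{equation*}
matching the form $\prolim_n \TSym^\cdot H/(\TSym^+H)^n$ of the left-hand side. Thus it suffices to prove, for each fixed $n$,
\begin{equation*}
\TSym^\cdot H/(\TSym^+H)^n \isom \prolim_r \TSym^\cdot H_r/(\TSym^+H_r)^n.
\end{equation*}

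Next I would invoke the base-change identity $\TSym^k H_r \isom \TSym^k H\otimes_{\bbZ_\ell}\bbZ/\ell^r\bbZ$ recalled just before the lemma, giving in particular $\TSym^\cdot H_r\isom \TSym^\cdot H/\ell^r\TSym^\cdot H$. Because the augmentation ideal is generated by the same elements on either side, $(\TSym^+H_r)^n$ is the image of $(\TSym^+H)^n$ under this reduction, and hence
\begin{equation*}
\TSym^\cdot H_r/(\TSym^+H_r)^n \isom \bigl(\TSym^\cdot H/(\TSym^+H)^n\bigr)\otimes_{\bbZ_\ell}\bbZ/\ell^r\bbZ.
\end{equation*}
The remaining task thereby becomes to verify that $\TSym^\cdot H/(\TSym^+H)^n$ is $\ell$-adically complete and separated.

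Finally, the homogeneity of $(\TSym^+H)^n$ yields a direct sum decomposition $\TSym^\cdot H/(\TSym^+H)^n = \bigoplus_k \TSym^k H/\bigl((\TSym^+H)^n\cap\TSym^k H\bigr)$, in which each graded piece is a quotient of the free finite-rank $\bbZ_\ell$-module $\TSym^k H$, hence itself a finitely generated, and so $\ell$-adically complete, $\bbZ_\ell$-module. The main obstacle, and the point needing the most care, is that $\ell$-adic completion does not in general commute with infinite direct sums. This is resolved by using that $(\TSym^+H)^n\cap\TSym^k H = 0$ for $k<n$ (any $n$-fold product of elements of $\TSym^+H$ sits in total degree $\ge n$): the low-degree part of the quotient is just the free $\bbZ_\ell$-module $\bigoplus_{k<n}\TSym^k H$, while in the high-degree tail the compatibility conditions imposed in the inverse limit over $r$ force the degree-by-degree identification to assemble correctly, producing the stated isomorphism.
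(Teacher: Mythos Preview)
Your strategy matches the paper's: swap the two inverse limits and reduce to showing, for each fixed $n$, that $M_n:=\TSym^\cdot H/(\TSym^+H)^n$ coincides with its $\ell$-adic completion. The paper asserts this in one line, citing only freeness of $\TSym^k H$, whereas you attempt to justify it and correctly isolate the subtle point: $M_n$ is an \emph{infinite} direct sum $\bigoplus_k Q_k$ (already for $d=1,\ n=2$ one has $Q_{\ell^s}\cong\bbZ/\ell\bbZ$ for every $s\ge 1$, since $\gcd_{1\le m<\ell^s}\binom{\ell^s}{m}=\ell$), and $\ell$-adic completion does not commute with infinite direct sums in general.

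Your resolution, however, is a gap. The final sentence (``compatibility conditions\ldots force the degree-by-degree identification to assemble correctly'') is not an argument, and the underlying claim can fail: for $M=\bigoplus_{k\ge 1}\bbZ/\ell^k\bbZ$ the compatible system $x_r\in M/\ell^r M$ with $k$-th component $\ell^{k-1}$ for $k\le r$ and $0$ otherwise defines an element of $\wh M$ not coming from $M$. What is actually needed is that the torsion pieces $Q_k$ (for $k\ge n$) have \emph{bounded} $\ell$-exponent; once this is known, the torsion tail of $M_n/\ell^r M_n$ is eventually constant in $r$ and completeness is immediate. This bound does hold --- by Kummer's theorem the $\ell$-valuation of a multinomial $\binom{k}{m_1,\dots,m_n}$ is the number of base-$\ell$ carries in $m_1+\cdots+m_n$, and for any $k\ge n$ one can choose a composition into $n$ positive parts with at most $\lceil(n-1)/(\ell-1)\rceil$ carries --- but it needs to be stated and proved rather than waved past.
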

\begin{proof}
As $\TSym^kH$ is a free $\bbZ_\ell$-module, one has 
$$
\TSym^\cdot H/(\TSym^+H)^n\isom 
\prolim_r\TSym^\cdot H_r/(\TSym^+H_r)^n
$$
for all $n\ge 1$. Taking the inverse limit over $n$, the result follows.
\end{proof}
\begin{proposition}\label{mom-def} 
There is a unique homomorphism of $\bbZ_\ell$-algebras
$$
\mom: \Lambda(H)\to \wh\TSym^\cdot H,
$$
which maps $\delta_h\mapsto \sum_{k\ge 0}h^{[k]}$ and is called
the \emph{moment map}. It is the limit
$\mom=\prolim_r\mom_r$ of moment maps at finite level
\begin{align*}
\mom_r:\Lambda_r(H_r)&\to \wh\TSym^\cdot H_r\\
\mu_r&\mapsto \sum_{k\ge 0}(\sum_{h\in H_r}\mu_r(h)h^{[k]}).
\end{align*}
Let $(e_1,\ldots,e_d)$ be a basis of $H$ and $(x_1,\ldots,x_d)$
the dual basis  considered
as $\bbZ_\ell$-valued functions $x_i:H\to \bbZ_\ell$.
In terms of 
measures the moment map is given by 
$$
\mom(\mu)=\sum_{k\ge 0}\left(\sum_{n_1+\ldots+n_d=k}(\int_Hx_1^{n_1}\cdots x_d^{n_d}\mu)
e_1^{[n_1]}\cdots e_d^{[n_d]}\right).
$$
The projection
onto the $k$-th component is denoted by 
$$
\mom^k:\Lambda(H)\to \TSym^kH
$$
and by $\mom_r^k:\Lambda_r(H_r)\to \TSym^k H_r$ respectively.
\end{proposition}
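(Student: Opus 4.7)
The plan is to construct $\mom$ as the projective limit of moment maps $\mom_r$ at each finite level $r$. The first step is to observe that $\Lambda_r(H_r)$ is canonically the group algebra $(\bbZ/\ell^r\bbZ)[H_r]$: since $H_r$ is finite, $\Cont(H_r,\bbZ_\ell)$ is a free $\bbZ_\ell$-module on the indicator functions, so $\Lambda_r(H_r)$ is a free $(\bbZ/\ell^r\bbZ)$-module with basis the Dirac measures $\{\delta_h\}_{h\in H_r}$, and convolution reads $\delta_g\ast\delta_h=\delta_{g+h}$. I would then define $\mom_r\colon\Lambda_r(H_r)\to\wh\TSym^\cdot H_r$ by linearly extending $\delta_h\mapsto\sum_{k\ge 0}h^{[k]}$, viewing this element as the formal exponential of $h$ in the augmentation-adic completion.

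The only substantive verification is that $\mom_r$ respects multiplication. On Dirac measures this comes out of the divided-power addition formula $(g+h)^{[k]}=\sum_{m+n=k}g^{[m]}h^{[n]}$ already recorded in the excerpt:
$$
\mom_r(\delta_g\ast\delta_h)=\sum_{k\ge 0}(g+h)^{[k]}=\Bigl(\sum_m g^{[m]}\Bigr)\Bigl(\sum_n h^{[n]}\Bigr)=\mom_r(\delta_g)\mom_r(\delta_h),
$$
while $\mom_r(\delta_0)=1$ gives unitality. The maps $\mom_r$ are compatible with the reductions $H_{r+1}\twoheadrightarrow H_r$: on the Dirac basis they send $\delta_h\mapsto\delta_{\bar h}$ on the measure side and $h^{[k]}\mapsto\bar h^{[k]}$ on the target, and linearity does the rest. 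Combining this compatibility with Lemma~\ref{TSym-inv-limit} and $\Lambda(H)=\prolim_r\Lambda_r(H_r)$ yields the desired $\bbZ_\ell$-algebra homomorphism $\mom=\prolim_r\mom_r$.

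For the explicit dual-basis formula, write $h=\sum_ix_i(h)e_i$; the rules $(ah)^{[k]}=a^kh^{[k]}$ and the addition formula iterate to
$$
h^{[k]}=\sum_{n_1+\cdots+n_d=k}x_1(h)^{n_1}\cdots x_d(h)^{n_d}\,e_1^{[n_1]}\cdots e_d^{[n_d]}.
$$
Writing $\mu_r=\sum_h\mu_r(h)\delta_h$, applying $\mom_r$, and letting $r\to\infty$ turns the finite sums $\sum_h\mu_r(h)\prod_ix_i(h)^{n_i}$ into $\int_H x_1^{n_1}\cdots x_d^{n_d}\,\mu$, yielding the stated expression. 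Uniqueness is automatic: at each finite level $\{\delta_h\}$ is a module basis, hence a generating set of $\Lambda_r(H_r)$, so any $\bbZ_\ell$-algebra map sending $\delta_h\mapsto\sum_k h^{[k]}$ must agree with $\mom_r$, and uniqueness of $\mom$ follows by passage to the limit. The only genuinely nontrivial step is the algebra-homomorphism check, which rests entirely on the divided-power identity; everything else is routine bookkeeping with inverse limits.
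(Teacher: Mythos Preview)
Your proof is correct and follows essentially the same approach as the paper: the paper invokes the universal property of the group algebra $\Lambda_r(H_r)$ applied to the group homomorphism $h\mapsto\sum_{k\ge 0}h^{[k]}$ (which is precisely your multiplicativity check via the divided-power addition formula), then passes to the inverse limit, and derives the explicit measure formula by the same expansion of $h^{[k]}$ in the dual basis. If anything, you spell out the compatibility of the $\mom_r$ under the transition maps more carefully than the paper does.
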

\begin{remark}
The formula for the moment map in terms of measures justifies 
the name. For the application to sheaves of Iwasawa algebras 
it is the formula on finite level which is important.
\end{remark}
\begin{proof}
The map $\mom_r:\Lambda_r(H_r)\to \wh\TSym^\cdot H_r$ in the
proposition is
the algebra homomorphism induced 
by the group homomorphism $h\mapsto \sum_{k\ge 0}h^{[k]}$ and the
universal property of the group algebra $\Lambda_r(H_r)$. Taking
the inverse limit gives $\mom:\Lambda(H)\to \wh\TSym^\cdot H$.

Write $\mu\in\Lambda(H)$ as $\mu=\prolim_r\mu_r$ with $\mu_r\in\Lambda_r(H_r)$.
The dual basis $x_1,\ldots,x_d$ considered as $\bbZ/\ell^r\bbZ$-linear
maps $x_i:H_r\to \bbZ/\ell^r\bbZ$ induce polynomial functions 
$x_i^{n_i}:H_r\to \bbZ/\ell^r\bbZ$ and by definition
$$
\int_{H_r}x_1^{n_1}\cdots x_d^{n_d}\mu_r=\mu_r(x_1^{n_1}\cdots x_d^{n_d})=
\sum_{h\in H_r}\mu_r(h)x_1(h)^{n_1}\cdots x_d(h)^{n_d}.
$$
If we observe that 
$$
\sum_{{n_1+\ldots +n_d=k}}x_1(h)^{n_1}\cdots x_d(h)^{n_d}
{e_1^{[n_1]}\cdots e_d^{[n_d]}}=
(x_1(h)e_1+\cdots +x_d(h)e_d)^{[k]}=h^{[k]}
$$
we get 
$$
\sum_{{n_1+\ldots +n_d=k}}\mu_r(x_1^{n_1}\cdots x_d^{n_d})
{e_1^{[n_1]}\cdots e_d^{[n_d]}}=
\sum_{h\in H_r}\mu_r(h)h^{[k]}=\mom^k_{r}(\mu_r).
$$
This implies that for the measure $\mu=\prolim_r\mu_r\in\Lambda(H)$
we get
$$
\mom^k(\mu)=\prolim_r\mom^k_r(\mu_r)=\prolim_r
\sum_{{n_1+\ldots +n_d=k}}(\int_{H_r}x_1^{n_1}\cdots x_d^{n_d}\mu_r)
{e_1^{[n_1]}\cdots e_d^{[n_d]}},
$$
which implies the desired formula for $\mom(\mu)$.
\end{proof}
Note that the moment map is functorial. If $\varphi:H\to G$ is a group 
homomorphism one has a commutative diagram
$$
\begin{CD}
\Lambda(H)@>\mom >>\wh\TSym^\cdot H\\
@V\phi_!VV@VV\wh\TSym^\cdot(\phi)V\\
\Lambda(G)@>\mom>>\wh \TSym^\cdot G
\end{CD}
$$
It is a fact from classical Iwasawa theory that $\Lambda(H)$ is
isomorphic to a power series ring over $\bbZ_\ell$ in $d$ variables.
In particular, it is a regular local ring. Let 
$$
I(H):=\ker(\Lambda(H)\xrightarrow{\int_H } \bbZ_\ell)
$$
be the augmentation ideal. Then from the regularity of $\Lambda(H)$ it
follows that $I(H)^k/I(H)^{k+1}\isom \Sym^kH$ and the $k$-th moment map
factors
$$
\mom^k:\Lambda(H)\to \Lambda(H)/I(H)^{k+1}\to \TSym^kH.
$$
\begin{lemma}\label{sym-mom-compatibility}
The $k$-th moment map induces 
$$
\Sym^kH\isom I(H)^k/I(H)^{k+1}\hookrightarrow \Lambda(H)/I(H)^{k+1}\xrightarrow{\mom^k}\TSym^kH,
$$
which is just the canonical map. 
\end{lemma}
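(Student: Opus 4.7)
The plan is to recognize the composition as the degree-$k$ part of a graded $\bbZ_\ell$-algebra homomorphism $\Sym^\cdot H \to \TSym^\cdot H$ that agrees with the canonical map in degree $1$; by the universal property of the symmetric algebra, the two must then coincide in every degree.

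First I would make the isomorphism $\Sym^k H \isom I(H)^k/I(H)^{k+1}$ concrete. Since $\Lambda(H)$ is regular (a power series ring over $\bbZ_\ell$ in $d$ variables), the canonical identification $H \isom I(H)/I(H)^2$ given by $h \mapsto \delta_h - 1$ extends, via the universal property of $\Sym^\cdot$, to an isomorphism of graded algebras, under which $h_1 \cdots h_k$ corresponds to $(\delta_{h_1} - 1)(\delta_{h_2} - 1)\cdots(\delta_{h_k} - 1) \bmod I(H)^{k+1}$. Next, using Proposition~\ref{mom-def} and that $\mom$ is a ring homomorphism, one has $\mom(\delta_h - 1) = \sum_{j \geq 1} h^{[j]}$, which lies in the augmentation ideal of $\wh\TSym^\cdot H$; consequently
\[
\mom\bigl(I(H)^{k+1}\bigr) \subseteq (\TSym^+ H)^{k+1},
\]
so its degree-$k$ component vanishes. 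This shows that $\mom^k$ factors through $\Lambda(H)/I(H)^{k+1}$, and on $I(H)^k/I(H)^{k+1}$ one reads off
\[
\mom^k\bigl((\delta_{h_1}-1)\cdots(\delta_{h_k}-1)\bigr) = \Bigl[\prod_{i=1}^k \sum_{j \geq 1} h_i^{[j]}\Bigr]_{\deg k} = h_1^{[1]} \cdots h_k^{[1]},
\]
where the final product is taken in $\TSym^\cdot H$.

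Combining the two identifications, the composition in the lemma sends $h_1 \cdots h_k \in \Sym^k H$ to $h_1^{[1]} \cdots h_k^{[1]} \in \TSym^k H$, which is precisely the image under the canonical algebra map $\Sym^\cdot H \to \TSym^\cdot H$ extending $h \mapsto h^{[1]}$. The only step with any real content is the identification of $\bigoplus_k I(H)^k/I(H)^{k+1}$ with $\Sym^\cdot H$ as graded rings; everything else reduces to the multiplicativity of $\mom$ and a degree count. Once the regularity of $\Lambda(H)$ is invoked to produce this canonical isomorphism, the rest is purely formal.
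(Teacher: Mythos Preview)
Your proof is correct and follows essentially the same approach as the paper: both arguments reduce to the observation that $\mom^k$ sends $(\delta_{h_1}-1)\cdots(\delta_{h_k}-1) \in I(H)^k$ to the product $h_1^{[1]}\cdots h_k^{[1]}$ in $\TSym^\cdot H$. You have simply supplied more detail---explicitly identifying the isomorphism $\Sym^k H \isom I(H)^k/I(H)^{k+1}$, verifying that $\mom$ carries $I(H)^{k+1}$ into $(\TSym^+ H)^{k+1}$, and isolating the degree-$k$ term---whereas the paper compresses all of this into a single sentence.
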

\begin{proof}
The morphism $\mom^k$ maps
an element $(\delta_{h_1}-1)\cdots(\delta_{h_k}-1)\in I(H)^k$ to the
corresponding product taken in $\TSym^\cdot H$. This implies the result.
\end{proof}

Consider again the exact sequence 
$$
0\to H\to G\xrightarrow{q}T\to 0
$$
of profinite groups with $T$ a finite $N$-torsion group. 
\begin{definition}\label{mom-t-k-defn}
For 
the $H$-torsors $H\langle t\rangle =q^{-1}(t)$  define
$$
\mom_t^k:\Lambda(H\langle t\rangle)\to \TSym^k H
$$
to be the composition  
$$
\mom_t^k:\Lambda(H\langle t\rangle)\xrightarrow{[N]_!}\Lambda(H)
\xrightarrow{\mom^k}\TSym^k H,
$$
where $[N]:G\to G$ is the $N$-multiplication, which factors through $H$. 
\end{definition}
\begin{remark}
This moment map is not independent of the choice of $N$ such
that $t$ is an $N$-torsion point. 
\end{remark}

To remedy this defect consider the composition 
\begin{equation}\label{extended-mom}
\Lambda(H\langle t\rangle)\xrightarrow{\mom^k_t}\TSym^kH\to 
\TSym^k H_{\bbQ_\ell}.
\end{equation}
\begin{definition}
The \emph{modified moment map} 
$$
\wt\mom_t^k:\Lambda(H\langle t\rangle)\to \Sym^k H_{\bbQ_\ell}
$$
is the map \eqref{extended-mom} composed with the inverse of the isomorphism
$\Sym^k H_{\bbQ_\ell}\isom \TSym^k H_{\bbQ_\ell}$ divided by $N^k$, i.e., 
$$
\wt\mom_t^k:=\frac{1}{N^k}\mom_t^k.
$$
\end{definition}
The following lemma is obvious from the definition.
\begin{lemma}
The modified moment map $\wt\mom_t^k$ depends only on $t$ and not
on $N$.
\end{lemma}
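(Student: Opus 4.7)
The plan is to show that for any two positive integers $N_1, N_2$ annihilating $t$, the definitions $\tfrac{1}{N_1^k}\mom_{t,N_1}^k$ and $\tfrac{1}{N_2^k}\mom_{t,N_2}^k$ agree; since both can be compared to the common definition associated to $N_1N_2$, it suffices to treat the case $N' = MN$ for some positive integer $M$.

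First, observe that for $t$ killed by $N$, the point $t$ is also killed by $MN$, and on $G$ we have the factorisation $[MN] = [M]\circ [N]$, with both $[N]$ and $[MN]$ landing in $H \subset G$. By functoriality of push-forward of measures this yields
$$[MN]_! = [M]_! \circ [N]_! \colon \Lambda(H\langle t\rangle) \longrightarrow \Lambda(H) \longrightarrow \Lambda(H).$$
Next, apply the functoriality square of the moment map (stated after Proposition~\ref{mom-def}) to the endomorphism $[M]\colon H\to H$, obtaining $\mom^k \circ [M]_! = \TSym^k([M]) \circ \mom^k$.

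The key identification is $\TSym^k([M]) = M^k \cdot \id$ on $\TSym^k H$. This is immediate from the divided-power axiom $(Mh)^{[k]} = M^k h^{[k]}$: writing a basis element as $e_1^{[n_1]}\cdots e_d^{[n_d]}$ with $n_1+\cdots+n_d = k$, the map $\TSym^k([M])$ sends it to $(Me_1)^{[n_1]}\cdots(Me_d)^{[n_d]} = M^{n_1+\cdots+n_d}\,e_1^{[n_1]}\cdots e_d^{[n_d]} = M^k\,e_1^{[n_1]}\cdots e_d^{[n_d]}$. Combining with the previous step,
$$\mom_{t,MN}^k \;=\; \mom^k \circ [MN]_! \;=\; M^k \cdot \mom^k \circ [N]_! \;=\; M^k \cdot \mom_{t,N}^k.$$
Dividing both sides by $(MN)^k = M^k N^k$ and passing to $\TSym^k H_{\bbQ_\ell} \isom \Sym^k H_{\bbQ_\ell}$ gives $\tfrac{1}{(MN)^k}\mom_{t,MN}^k = \tfrac{1}{N^k}\mom_{t,N}^k$, as desired.

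There is no real obstacle here; the one point worth flagging is the verification that $\TSym^k$ of multiplication-by-$M$ acts as $M^k$, which could in principle be touchy over $\bbZ_\ell$ (as opposed to after inverting $k!$), but the divided-power identity $(Mh)^{[k]}=M^k h^{[k]}$ recorded in the formulae for $\TSym^\cdot H$ handles it at once on a basis.
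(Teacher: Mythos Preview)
Your proof is correct and supplies exactly the argument the paper omits: the paper simply records that the lemma ``is obvious from the definition'' and gives no further detail. Your reduction to $N'=MN$ via the common multiple $N_1N_2$, together with the functoriality of $\mom^k$ under $[M]_!$ and the identity $\TSym^k([M])=M^k\cdot\id$, is precisely what is needed to make that obviousness explicit.
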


%
\subsection{\'Etale sheaves of Iwasawa modules}
%
Consider a projective system of finite \'etale schemes 
$p_r : X_r \to S$ and let
$X := \prolim_rX_r$. 
We denote by $\lambda_r : X_{r+1} \to  X_r$ the finite \'etale maps in the
projective system. We denote by $\sX_r$ the \'etale sheaf associated to 
$X_r$ and
define an \'etale sheaf on $S$ by
\begin{equation}
\Lambda_r(\sX_r):=p_{r*}\bbZ/\ell^r\bbZ.
\end{equation}
The trace map with respect to $\lambda_r$ 
induces a morphism of sheaves $\lambda_{r*}\bbZ/\ell^{r+1}\bbZ\to \bbZ/\ell^{r+1}\bbZ$
which gives rise to 
$$
\Lambda_{r+1}(\sX_{r+1})=
p_{r*}\lambda_{r*}\bbZ/\ell^{r+1}\bbZ\to
p_{r*}\bbZ/\ell^{r+1}\bbZ\to p_{r*}\bbZ/\ell^{r}\bbZ=\Lambda_r(\sX_r),
$$
where the last map is reduction modulo $\ell^r$.
\begin{definition}
Define an inverse system of \'etale sheaves on $S$ by 
$$
\Lambda(\sX):=(\Lambda_r(\sX_r))_{r\ge 1},
$$
with the above transition maps.
\end{definition}
\begin{remark}
Note that $\Lambda(\sX)$ is not an $\ell$-adic sheaf in general.
\end{remark}
This construction is functorial in the sense that for a morphism of inverse 
systems $(f_r : X_r \to Y_r)_{r\ge 1}$ the trace map induces
\begin{equation}
f_{r!}:\Lambda_r(\sX_r)\to \Lambda_r(\sY_r)
\end{equation}
and hence a map $f_!:\Lambda(\sX)\to \Lambda(\sY)$.

We want to explain in which sense $\Lambda(\sX)$ is a sheafification of
the space of  measures $\Lambda(X)$.

Let us choose a geometric point $\ol s : \Spec \ol K \to  S $
and let $\sX_{r,\ol s}$ be the stalk of $\sX_r$ at $\ol s$.
We consider $\sX_{r,\ol s}$ as a finite set with a continuous
Galois action. Immediately from the definitions we have:
\begin{lemma}\label{relation-to-Iwasawa-modules}
The stalk of $\Lambda_r(\sX_r)$ at $\ol s$ is 
$$
\Lambda_r(\sX_r)_{\ol s}\isom \Lambda_r(\sX_{r,\ol s}).
$$
In particular, if we define 
$\Lambda(\sX)_{\ol s}:=\prolim_r\Lambda_r(\sX_r)_{\ol s}$ and
$\sX_{\ol s}:=\prolim_r\sX_{r,\ol s}$ we get
$$
\Lambda(\sX)_{\ol s}\isom \Lambda(\sX_{\ol s}),
$$
which is the space of  measures on $\sX_{\ol s}$ with a Galois action.
\end{lemma}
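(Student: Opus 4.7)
The plan is to reduce everything to the standard computation of the stalk of the direct image under a finite étale map and then invoke the measure-theoretic identification from the earlier subsection on Iwasawa algebras.

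First I would establish the isomorphism $\Lambda_r(\sX_r)_{\ol s}\isom \Lambda_r(\sX_{r,\ol s})$ at each finite level. Since $p_r:X_r\to S$ is finite étale, proper base change along $\ol s:\Spec\ol K\to S$ gives
$$
\Lambda_r(\sX_r)_{\ol s}=(p_{r*}\bbZ/\ell^r\bbZ)_{\ol s}\isom \Gamma(\sX_{r,\ol s},\bbZ/\ell^r\bbZ)=\mathrm{Maps}(\sX_{r,\ol s},\bbZ/\ell^r\bbZ).
$$
Because $\sX_{r,\ol s}$ is a finite discrete set, every $\bbZ/\ell^r\bbZ$-valued function on it is automatically continuous, so this group of maps is canonically dual to $\Cont(\sX_{r,\ol s},\bbZ_\ell)$ modulo $\ell^r$; that dual is precisely $\Lambda_r(\sX_{r,\ol s})$ by definition. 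The Galois action on the stalk agrees with the one induced from the action on $\sX_{r,\ol s}$ since both arise from functoriality of $p_{r*}$.

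Second, I would promote this to the inverse limit. The transition maps $\lambda_{r*}\bbZ/\ell^{r+1}\bbZ\to \bbZ/\ell^{r+1}\bbZ\to \bbZ/\ell^r\bbZ$ used to define $\Lambda(\sX)$ correspond under the stalk isomorphism to the summation-over-fibres map $\Lambda_{r+1}(\sX_{r+1,\ol s})\to \Lambda_r(\sX_{r,\ol s})$ for the surjection $\sX_{r+1,\ol s}\twoheadrightarrow \sX_{r,\ol s}$ (i.e.\ to $\lambda_{r!}$ in the measure-theoretic language). This is a direct check on Dirac measures and is Galois-equivariant. Passing to the limit over $r$ and invoking the general identification $\Lambda(Y)=\prolim_r\Lambda_r(Y_r)$ established in the Iwasawa algebras subsection (for any $Y=\prolim_rY_r$ with $Y_r$ finite discrete) applied to $Y=\sX_{\ol s}$ yields
$$
\Lambda(\sX)_{\ol s}=\prolim_r\Lambda_r(\sX_r)_{\ol s}\isom \prolim_r\Lambda_r(\sX_{r,\ol s})\isom \Lambda(\sX_{\ol s}).
$$

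The argument has no substantial obstacle; the only point requiring care is to make sure that the transition maps on the two sides match. This amounts to verifying, for each finite étale map $\lambda_r:X_{r+1}\to X_r$, that the trace on $p_{r*}\lambda_{r*}\bbZ/\ell^{r+1}\bbZ$ induces at the stalk the pushforward of measures along the fibrewise surjection. This is a formal consequence of the definition of the trace map for a finite étale morphism, whose stalk is summation along fibres, combined with the fact that Dirac measures on $\sX_{r+1,\ol s}$ push forward to Dirac measures on $\sX_{r,\ol s}$.
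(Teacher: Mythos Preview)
Your proof is correct and is exactly what the paper has in mind: the paper states the lemma as ``immediately from the definitions'' and gives no further argument, so your detailed verification via proper base change for finite \'etale maps and the check of transition maps is a faithful expansion of that one-line justification.
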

In the case where each $X_r=H_r\xrightarrow{p_r}S$ a finite \'etale group scheme over $S$,
so that $H:=\prolim_r H_r$ is a pro-\'etale group scheme, the
sheaves $\Lambda_r(\sH_r)$ become sheaves of $\bbZ/\ell^r\bbZ$-algebras.
In fact one has
$$
(p_r\times p_r)_*\bbZ/\ell^r\bbZ\isom \Lambda_r(\sH_r)\otimes \Lambda_r(\sH_r)
$$
and the group multiplication induces a ring structure on $\Lambda_r(\sH_r)$.
%
\subsection{The case of torsors}\label{torsor-section}
%
The following situation will occur very frequently in this paper.
Suppose we have an inverse system of finite \'etale group schemes on $S$
\begin{equation}\label{torsor-r-sequence}
0\to H_r\to G_r\xrightarrow{q_r} T\to 0
\end{equation}
where $T=T_r$ for all $r$ is an $N$-torsion group. For each section
$t:S\to T$ we define an $H_r$-torsor $H_r\langle t\rangle$ by the
cartesian diagram
\begin{equation}\label{torsor-r-def}
\begin{CD}
H_r\langle t\rangle@>>>G_r\\
@Vp_{r,t} VV@VVV\\
S@>t>>T.
\end{CD}
\end{equation}
Denote by $H:=\prolim_r H_r$, $G:=\prolim_r G_r$ and 
$H\langle t\rangle:=\prolim_r H_r\langle t \rangle$ the associate
pro-\'etale group schemes and by $\sH_r$, $\sG_r$, $\sH_r\langle t\rangle$
and $\sH$, $\sG$, $\sH\langle t\rangle$ the associated sheaves.
In particular one has an exact sequence
\begin{equation}\label{torsor-sequence}
0\to H\to G\xrightarrow{q}T\to 0
\end{equation}
and a cartesian diagram 
\begin{equation}
\begin{CD}
H\langle t\rangle@>>>G\\
@Vp_t VV@VVV\\
S@>t>>T.
\end{CD}
\end{equation}
Each $\Lambda_r(\sH_r\langle t\rangle)$ is a $\Lambda_r(\sH_r)$-module
of rank one and consequently the same is true for 
the $\Lambda(\sH)$-module $\Lambda(\sH\langle t\rangle)$.

The sheaves $\Lambda(\sH\langle t\rangle)$ are sheaves of Iwasawa 
modules under the sheaves of Iwasawa algebras $\Lambda(\sH)$. 

%
\subsection{The sheafified  moment map}
%

In 
this section we describe a sheaf version of the moment maps 
from Proposition \ref{mom-def}.

Let $p_r:H_r\to S$ be a finite \'etale group scheme which is \'etale locally
of the form 
$$
H_r\isom (\bbZ/\ell^r\bbZ)^d
$$ 
for $d\ge 1$.
As in \eqref{torsor-r-def} we consider $H_r$-torsors 
$p_{r,t}:H_r\langle t\rangle\to S$
associated to an exact sequence
$$
0\to H_r\to G_r\to T\to 0
$$
and to an $N$-torsion section $t$ of $T$. As $T$ is an $N$-torsion group
by assumption, the $N$-multiplication map $[N]:G_r\to G_r$ factors through
$H_r$ and we get a map of schemes
\begin{equation}
\tau_{r,t}:H_r\langle t\rangle\hookrightarrow G_r\xrightarrow{[N]} H_r.
\end{equation}
We interpret this as a section 
$\tau_{r,t}\in H^0(H_r\langle t\rangle,p_{r,t}^*\sH_r)$.
\begin{definition}\label{tau-r-t-def}
We let 
\begin{equation*}
\tau_{r,t}^{[k]}\in H^0(H_r\langle t\rangle,p_{r,t}^*\TSym^k\sH_r)
\end{equation*}
be the $k$-th tensor power $\tau_{r,t}$. This will also be viewed
as a map of sheaves 
\begin{equation*}
\tau_{r,t}^{[k]}:\bbZ/\ell^r\bbZ\to p_{r,t*}p_{r,t}^*\TSym^k\sH_r.
\end{equation*}
\end{definition}

Recall that for sheaves $\sF, \sG$ on $H_r\langle t\rangle$ 
one has the morphism
(given by the projection formula and adjunction)
\begin{equation}\label{cup-with-cpt-supp}
p_{r,t,!}\sF\otimes p_{r,t*}\sG\isom p_{r,t,!}(\sF\otimes p_{r,t}^*p_{r,t*}\sG)\to p_{r,t,!}(\sF\otimes \sG)
\end{equation}
and that $p_{r,t,!}=p_{r,t*}$ as $p_{r,t}$ is finite. 
\begin{definition}\label{sheaf-mom-def} 
The sheafified moment map
$$
\mom^k_{r,t}:\Lambda_r(\sH_r\langle t\rangle)\to \TSym^k\sH_r
$$
is the composition ($p:=p_{r,t}$)
\begin{multline*}
p_{*}\bbZ/\ell^r\bbZ\xrightarrow{\id\otimes \tau_{r,t}^{[k]}}
p_{*}\bbZ/\ell^r\bbZ\otimes p_{*}p^*\TSym^k\sH_r\xrightarrow{\eqref{cup-with-cpt-supp}}\\
p_*(\bbZ/\ell^r\bbZ\otimes p^*\TSym^k\sH_r)\isom p_{*} p^*\TSym^k\sH_r
\xrightarrow{\tr}\TSym^k\sH_r,
\end{multline*}
where $\tr$ is the trace map with respect to $p$.
\end{definition}
From the definition it follows that the moment maps 
$\mom_{r,t}^k$ are compatible with respect to the trace map for varying $r$.
\begin{definition}\label{wtmom-def}
Let 
\begin{equation*}
\mom^k_t:\Lambda(\sH\langle t\rangle)\to \TSym^k\sH
\end{equation*}
be the inverse limit of $\mom_{r,t}^k$. We also let 
$$
\wt{\mom}_t^k:=\frac{1}{N^k}\mom_t^k:
H^1(S,\Lambda(\sH\langle t\rangle)(1))\to
H^1(S,\Sym^k\sH_{\bbQ_\ell}(1))
$$
be the composition of the map induced by $\frac{1}{N^k}\mom^k_t$ 
in cohomology with the inverse of the canonical isomorphism
$$
H^1(S,\Sym^k\sH_{\bbQ_\ell}(1))\isom H^1(S,\TSym^k\sH_{\bbQ_\ell}(1)).
$$
\end{definition} 
On stalks the sheafified moment map coincides with the 
one defined in Definition  \ref{mom-t-k-defn}:
\begin{lemma}
Let $\ol s$ be a geometric point of $S$, then the stalk of the 
moment map
$$
(\mom^k_{r,t})_{\ol s}:\Lambda_r(\sH_r\langle t\rangle)_{\ol s}\to \TSym^k\sH_{r,\ol s}
$$
coincides with the moment map $\mom_{r,t}^k$ defined in 
Definition  \ref{mom-t-k-defn}.
\end{lemma}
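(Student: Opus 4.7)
The plan is to compute both maps on the stalk at a geometric point $\ol s$ of $S$ and check that the resulting formulas agree. The key conceptual point is that after passing to stalks the finite \'etale morphism $p = p_{r,t}$ becomes the projection from the finite set $H_r\langle t\rangle_{\ol s}$ to a point, so $p_*$ is the ``sections over the fibre'' functor and $p^*$ is the constant-on-the-fibre functor. Under this identification, $\Lambda_r(\sH_r\langle t\rangle)_{\ol s}$ is $\Lambda_r(H_r\langle t\rangle_{\ol s})$, i.e. $\bbZ/\ell^r\bbZ$-valued functions on the finite set $H_r\langle t\rangle_{\ol s}$.

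First I would unwind the stalk of the section $\tau_{r,t}^{[k]}$. By construction $\tau_{r,t}:H_r\langle t\rangle \hookrightarrow G_r \xrightarrow{[N]} H_r$, so on the stalk it is just the map of sets $x \mapsto [N]x$ from $H_r\langle t\rangle_{\ol s}$ to $H_{r,\ol s}$, interpreted as an element of $\Gamma(H_r\langle t\rangle_{\ol s}, p^*\sH_r) = \mathrm{Maps}(H_r\langle t\rangle_{\ol s}, H_{r,\ol s})$. Its $k$-fold tensor power $\tau_{r,t}^{[k]}$ is then the function $x \mapsto ([N]x)^{[k]}$ with values in $\TSym^k H_{r,\ol s}$. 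Next I would trace through the three arrows making up Definition~\ref{sheaf-mom-def} on a fixed measure $\mu$: the tensoring arrow $\id \otimes \tau_{r,t}^{[k]}$ produces the $\TSym^k H_{r,\ol s}$-valued function $x \mapsto \mu(x)\otimes ([N]x)^{[k]}$ on the fibre, the projection-formula isomorphism \eqref{cup-with-cpt-supp} becomes (on stalks) pointwise multiplication in each fibre, giving the function $x\mapsto \mu(x)([N]x)^{[k]}$, and finally the trace $\tr$ for the finite \'etale map $p$ on stalks is the sum over the fibre. Hence the stalk of the sheafified moment map sends
\[
\mu \;\longmapsto\; \sum_{x\in H_r\langle t\rangle_{\ol s}}\mu(x)\,([N]x)^{[k]}.
\]

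On the other hand, Definition~\ref{mom-t-k-defn} computes $\mom^k_{r,t}(\mu)$ as $\mom^k([N]_!\mu)$, where $[N]_!\mu \in \Lambda_r(H_{r,\ol s})$ is the measure $h\mapsto \sum_{x:[N]x=h}\mu(x)$. Using the explicit formula in Proposition~\ref{mom-def},
\[
\mom^k([N]_!\mu) \;=\; \sum_{h\in H_{r,\ol s}}([N]_!\mu)(h)\,h^{[k]} \;=\; \sum_{h}\Bigl(\sum_{x:[N]x=h}\mu(x)\Bigr)h^{[k]} \;=\; \sum_{x\in H_r\langle t\rangle_{\ol s}}\mu(x)\,([N]x)^{[k]},
\]
which is the same expression. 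The two maps therefore agree on stalks, as desired.

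The only genuinely non-routine step is the identification of the projection-formula map \eqref{cup-with-cpt-supp} and of the trace $\tr$ with pointwise multiplication and summation over the fibre, respectively; once these are in place, the calculation is formal. I expect this to be the main (modest) obstacle in writing out a complete proof.
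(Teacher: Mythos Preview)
Your proposal is correct and follows essentially the same route as the paper: both unwind the sheafified moment map on stalks by identifying $p_*$, $p^*$, the projection-formula map, and the trace with the finite-set operations ``functions on the fibre,'' ``constant on the fibre,'' pointwise multiplication, and summation, arriving at $\mu\mapsto\sum_{x}\mu(x)([N]x)^{[k]}$. The paper phrases the intermediate step via the tensor identification $p_*p^*\TSym^k\sH_{r,\ol s}\isom \Lambda_r(\sH_r\langle t\rangle_{\ol s})\otimes\TSym^k\sH_{r,\ol s}$ rather than your function-theoretic language, and leaves the final comparison with $\mom^k\circ[N]_!$ implicit, but the content is the same.
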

\begin{proof}
We have $\Lambda_r(\sH_r\langle t\rangle)_{\ol s}=\Lambda_r(\sH_r\langle t\rangle_{\ol s})$
and we let
$$
\mu_r=\sum_{\ol x\in \sH_{r}\langle t\rangle_{\ol s}}m_{\ol x}\delta_{\ol x}
$$
be an element in $\Lambda_r(\sH_r\langle t\rangle_{\ol s})$. We
identify ($p:=p_{r,t}$)
$$
p_*p^*\TSym^k\sH_{r,\ol s}\isom \Lambda_r(\sH_{r,\ol s})\otimes \TSym^k\sH_{r,\ol s}
$$
so that 
$$
(p_*\bbZ/\ell^r\bbZ\otimes p_*p^*\TSym^k\sH_r)_{\ol s}\isom
\Lambda_r(\sH_{r,\ol s})\otimes  \Lambda_r(\sH_{r,\ol s})\otimes \TSym^k\sH_{r,\ol s}.
$$
With this identification the image of $\mu_r$ under 
$\id\otimes\tau^{[k]}_{r,t} $ is given by
\begin{equation}\label{element}
(\sum_{\ol x\in \sH_{r}\langle t\rangle_{\ol s}}m_{\ol x}\delta_{\ol x})\otimes 
(\sum_{\ol y\in  \sH_{r}\langle t\rangle_{\ol s}}\delta_{\ol y}\otimes 
\tau_{r,t}^{[k]}(\ol y)).
\end{equation}
The homomorphism 
$$
\Lambda_r(\sH_{r,\ol s})\otimes  \Lambda_r(\sH_{r,\ol s})\otimes \TSym^k\sH_{r,\ol s}\xrightarrow{\eqref{cup-with-cpt-supp}}
\Lambda_r(\sH_{r,\ol s})\otimes \TSym^k\sH_{r,\ol s}
$$
maps the element in \eqref{element} to 
$$
\sum_{\ol x\in  \sH_{r}\langle t\rangle_{\ol s}}m_{\ol x}\delta_{\ol x}\otimes 
\tau_{r,t}^{[k]}(\ol x)=
\sum_{\ol x\in  \sH_{r}\langle t\rangle_{\ol s}}\mu_r(\ol x)
\delta_{\ol x}\otimes
\tau_{r,t}^{[k]}(\ol x).
$$
and the trace of this is 
$$
\sum_{\ol x\in  \sH_{r}\langle t\rangle_{\ol s}}\mu_r(\ol x)
\tau_{r,t}^{[k]}(\ol x)=
\sum_{\ol x\in  \sH_{r}\langle t\rangle_{\ol s}}\mu_r(\ol x)([N]\ol x)^{[k]}
$$
which is the desired formula.
\end{proof}

%
\subsection{Soul\'e's twisting construction and the moment map}
%
Let us consider the situation in \eqref{torsor-sequence}
$$
0\to H\to G\xrightarrow{q}T\to 0
$$
and recall the inverse system of $H_r$-torsors $H_r\langle t\rangle$. 
Denote by $\lambda_r:H_{r+1}\langle t\rangle
\to H_r\langle t\rangle$ the transition maps and by 
$p_{r,t}: H_r\langle t\rangle\to S$ the structure map.
\begin{definition}\label{norm-comp-def}
A \emph{norm-compatible function} $\theta=(\theta_r)_{r\ge 1}$ on 
$H\langle t\rangle=( H_r\langle t\rangle)_{r\ge 1}$ is an inverse
system of global invertible sections 
$$
\theta_r\in \Gm(H_r\langle t\rangle)
$$
such that $\lambda_{r*}(\theta_{r+1})=\theta_{r}$, where 
$\lambda_{r*}$ is the norm map with respect to $\lambda_r$. 
\end{definition}
\begin{definition}\label{Kummer-def}
The \emph{Kummer map}
\begin{equation*}
\partial_r:\Gm(H_r\langle t\rangle)\to H^1(H_r\langle t\rangle, \mu_{\ell^r}),
\end{equation*}
is the boundary map for the exact sequence
$$
0\to \mu_{\ell^r}\to \Gm\xrightarrow{[\ell^r]}\Gm\to 0.
$$
\end{definition}
Recall the section 
$\tau_{r,t}^{[k]}\in H^0(H_r\langle t\rangle,p_{r,t}^*\TSym^k\sH_r)$ 
from Definition \ref{tau-r-t-def}.
Soul\'e's twisting construction is now as follows. 
\begin{definition}
Let 
\begin{equation*}
s(r,k,t):=p_{r,t*}(\partial_r(\theta_r)\cup
\tau_{r,t}^{[k]})\in H^1(S,\TSym^k\sH_r(1)),
\end{equation*}
where we have written $\TSym^k\sH_r(1):=\TSym^k\sH_r\otimes \mu_{\ell^r}$
as usual.
\end{definition}

Recall from Lemma \ref{TSym-inv-limit} that
$\prolim_r\TSym^k\sH_r=\TSym^k\sH$ and denote by 
$\red_r:\TSym^k\sH_{r+1}\to \TSym^k\sH_{r}$ the reduction modulo $\ell^r$.
\begin{proposition}[Soul\'e]\label{Soule-twisting}
Under the
transition maps
$$
\red_r:H^1(S,\TSym^k\sH_{r+1}(1))\to H^1(S,\TSym^k\sH_r(1))
$$
one has $\red_r(s(r+1,k,t))=s(r,k,t)$.
In particular, one gets an element
$$
s(k,t):=\prolim_r s(r,k,t)
\in  H^1(S,\TSym^k\sH(1)).
$$
We refer to this construction as \emph{Soul\'e's twisting construction}.
\end{proposition}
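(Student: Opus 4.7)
The plan is to factor the structure map $p_{r+1,t} = p_{r,t} \circ \lambda_r$ so that the pushforward decomposes as $p_{r+1,t*} = p_{r,t*} \circ \lambda_{r*}$, and then to verify on $H_r\langle t\rangle$ the cocycle identity
$$
\red_r\bigl(\lambda_{r*}(\partial_{r+1}(\theta_{r+1}) \cup \tau_{r+1,t}^{[k]})\bigr) \;=\; \partial_r(\theta_r) \cup \tau_{r,t}^{[k]}.
$$
After applying $p_{r,t*}$ this yields exactly $\red_r(s(r+1,k,t)) = s(r,k,t)$.

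The first step is to analyze the interaction of $\red_r$ with the Kummer map. The reduction on coefficient sheaves $\mu_{\ell^{r+1}} \to \mu_{\ell^r}$ is the $\ell$-th power map, and this fits into a morphism of the two Kummer sequences where the map $\Gm \to \Gm$ is the identity on the middle term and multiplication by $\ell$ on the right. Chasing the boundary, one obtains $\red_r\bigl(\partial_{r+1}(\theta_{r+1})\bigr) = \partial_r(\theta_{r+1})$ in $H^1(H_{r+1}\langle t\rangle, \mu_{\ell^r})$. Next, using the standard compatibility of the Kummer map with proper pushforward (norm) together with the hypothesis $\lambda_{r*}\theta_{r+1} = \theta_r$ from Definition \ref{norm-comp-def}, one concludes
$$
\lambda_{r*}\bigl(\partial_r(\theta_{r+1})\bigr) \;=\; \partial_r(\lambda_{r*}\theta_{r+1}) \;=\; \partial_r(\theta_r).
$$

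The second step is to see that the sections $\tau_{r,t}^{[k]}$ are compatible under pullback along $\lambda_r$. This is immediate from Definition \ref{tau-r-t-def}: the maps $[N]\colon G_r \to G_r$ form a morphism of inverse systems commuting with the transition maps, so $\lambda_r^*\tau_{r,t} = \tau_{r+1,t} \bmod \ell^r$ as sections of $p_{r+1,t}^*\sH_{r+1}$, and taking the $k$-fold tensor product preserves this. Once these two compatibilities are in hand, one applies the projection formula to $\lambda_r$ (which is finite étale, so $\lambda_{r!} = \lambda_{r*}$):
$$
\lambda_{r*}\bigl(\partial_r(\theta_{r+1}) \cup \lambda_r^*\tau_{r,t}^{[k]}\bigr) \;=\; \lambda_{r*}\partial_r(\theta_{r+1}) \cup \tau_{r,t}^{[k]} \;=\; \partial_r(\theta_r) \cup \tau_{r,t}^{[k]}.
$$
Pushing forward along $p_{r,t}$ finishes the equality $\red_r(s(r+1,k,t)) = s(r,k,t)$, and the limit element $s(k,t)$ is then defined using Lemma \ref{h-1-prolim} (after checking that the $H^0$'s are finite, which holds since $\TSym^k\sH_r$ is constructible with finite stalks).

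The main obstacle I expect is the careful bookkeeping in the first step: aligning the two Kummer sequences for $\ell^r$ and $\ell^{r+1}$ with the correct map between them, and combining this with the norm-pushforward compatibility of the Kummer map, all while keeping track of which sheaf of roots of unity each class lives in. Everything else (the projection formula, the behavior of $\tau_{r,t}$) is formal once the diagrams are set up correctly.
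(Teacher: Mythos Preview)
Your proof is correct and follows essentially the same route as the paper: factor $p_{r+1,t*}=p_{r,t*}\circ\lambda_{r*}$, use $\red_r(\tau_{r+1,t}^{[k]})=\lambda_r^*(\tau_{r,t}^{[k]})$ together with the projection formula for $\lambda_r$, and combine norm-compatibility of the Kummer map with the hypothesis $\lambda_{r*}\theta_{r+1}=\theta_r$. One small slip: in the morphism of Kummer sequences the $\ell$-th power map sits on the \emph{middle} $\Gm$ and the identity on the right $\Gm$ (not the other way around), but your conclusion $\red_r\circ\partial_{r+1}=\partial_r$ is unaffected.
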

\begin{remark}
In general Soul\'e's twisting construction allows also to construct
elements in other Galois representations and it depends on the 
choice of elements in the Galois representation. Here we have fixed 
the tautological sections $\tau_{r,t}^{[k]}$ of $\TSym^k\sH_r$ to define
this twist. In \cite{Kings-Hornbostel} one can find more general twisting
constructions.
\end{remark}
\begin{proof} By abuse of notation we also denote by $\red_r$
any map on cohomology which reduces the coefficient module modulo $\ell^r$.
Then one has $\red_r\verk \lambda_{r*}=\lambda_{r*}\verk \red_r$.
We have $\red_r(\tau_{r+1,t}^{[k]})=\lambda_r^*(\tau_{r,t}^{[k]})$ 
and by assumption
$\red_r\verk \lambda_{r*}(\theta_{r+1})=\lambda_{r*}\verk\red_{r*}(\theta_{r+1})=\theta_r$.
Then 
\begin{align*}
\red_r(s(r+1,k,t))&= \red_r \verk p_{r+1,t*}(\partial_{r+1}(\theta_{r+1})\cup
\tau_{r+1,t}^{[k]})\\
&= p_{r+1,t*}(\red_r(\partial_{r+1}(\theta_{r+1}))\cup 
\red_r(\tau_{r+1,t}^{[k]}))\\
&=p_{r,t*}\verk \lambda_{r*}(\red_r(\partial_{r+1}(\theta_{r+1}))\cup 
\lambda_r^*(\tau_{r,t}^{[k]}))\\
&=p_{r,t*}(\lambda_{r*}\verk\red_r(\partial_{r+1}(\theta_{r+1}))\cup
\tau_{r,t}^{[k]})\\
&=p_{r,t*}(\partial_r(\theta_r)\cup \tau_{r,t}^{[k]})\\
&=s(r,k,t).
\end{align*}
\end{proof}
The following identification is fundamental for the whole paper.
\begin{lemma}
Let  $p_{r,t}:H_r\langle t\rangle\to S$ be the $H_r$-torsor as above,
then one has a canonical isomorphism
$$
H^i(H_r\langle t\rangle, \mu_{\ell^r})\isom
H^i(S,\Lambda_r(\sH_r\langle t\rangle)(1)).
$$
\end{lemma}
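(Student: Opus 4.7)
The plan is to exploit the fact that $p_{r,t}\colon H_r\langle t\rangle\to S$ is a finite étale morphism. Since finite étale pushforward is exact on étale sheaves of torsion abelian groups, we have $R^jp_{r,t*}=0$ for $j>0$ and $Rp_{r,t*}\bbZ/\ell^r\bbZ=p_{r,t*}\bbZ/\ell^r\bbZ=\Lambda_r(\sH_r\langle t\rangle)$ sitting in a single degree.

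Next I would apply the projection formula. The sheaf $\mu_{\ell^r}$ is locally constant on $S$, and its pullback along $p_{r,t}$ is the corresponding sheaf $\mu_{\ell^r}$ on $H_r\langle t\rangle$. Combined with exactness of $p_{r,t*}$ this yields a natural isomorphism
\begin{equation*}
\Lambda_r(\sH_r\langle t\rangle)(1)=p_{r,t*}\bbZ/\ell^r\bbZ\otimes\mu_{\ell^r}
\xrightarrow{\ \sim\ } p_{r,t*}\bigl(\bbZ/\ell^r\bbZ\otimes p_{r,t}^*\mu_{\ell^r}\bigr)=p_{r,t*}\mu_{\ell^r}.
\end{equation*}

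Finally I would invoke the Leray spectral sequence
\begin{equation*}
E_2^{i,j}=H^i\bigl(S,R^jp_{r,t*}\mu_{\ell^r}\bigr)\Longrightarrow H^{i+j}\bigl(H_r\langle t\rangle,\mu_{\ell^r}\bigr),
\end{equation*}
which collapses by the vanishing of the higher direct images noted above. Combining this degeneration with the projection-formula isomorphism of the preceding paragraph gives
\begin{equation*}
H^i\bigl(S,\Lambda_r(\sH_r\langle t\rangle)(1)\bigr)\isom H^i\bigl(S,p_{r,t*}\mu_{\ell^r}\bigr)\isom H^i\bigl(H_r\langle t\rangle,\mu_{\ell^r}\bigr),
\end{equation*}
as required. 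There is no real obstacle here; the only point to be careful about is that the projection formula and Leray degeneration both need $p_{r,t}$ to be finite étale, which holds by construction of $H_r\langle t\rangle$ as a pullback of the finite étale group scheme $G_r\to T$ along the section $t$.
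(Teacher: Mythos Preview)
Your argument is correct and is essentially the paper's own proof, which simply says ``As $p_{r,t}$ is finite this follows from the Leray spectral sequence.'' You have merely spelled out the projection-formula identification $p_{r,t*}\mu_{\ell^r}\isom\Lambda_r(\sH_r\langle t\rangle)(1)$ and the degeneration step that the paper leaves implicit.
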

\begin{proof}
As $p_{r,t}$ is finite this follows from the Leray spectral sequence.
\end{proof}
With this identification we can rewrite the Kummer map and one gets
a commutative diagram
\begin{equation}
\begin{CD}
\Gm(H_{r+1}\langle t\rangle)@>\partial_{r+1}>> H^1(S, \Lambda_{r+1}(\sH_{r+1}\langle t\rangle)(1))\\
@V\lambda_{r*}VV@VV\lambda_{r*}V\\
\Gm(H_{r}\langle t\rangle)@>\partial_{r}>> 
H^1(S, \Lambda_{r}(\sH_{r}\langle t\rangle)(1)),
\end{CD}
\end{equation}
where the $\lambda_{r*}$ on the right hand side is induced by the
trace map $\lambda_{r!}:\Lambda_{r+1}(\sH_{r+1}\langle t\rangle)\to 
\Lambda_{r}(\sH_{r}\langle t\rangle)$. This diagram allows to 
consider the inverse limit of the $\partial_r(\theta_r)$:
\begin{definition}
The norm-compatible functions $\theta=(\theta_r)_{r\ge 1}$ define an
element 
$$
\cS^{\langle t\rangle}:=\prolim_r\partial_r(\theta_r)\in H^1(S,\Lambda(\sH\langle t\rangle)(1))
=\prolim_r   H^1(S,\Lambda_r(\sH_r\langle t\rangle)(1)).
$$
We also let $\cS_r^{\langle t\rangle}:=\partial_r(\theta_r)$.
\end{definition}
With this preliminaries we can finally explain the crucial relation between
the moment map and Soul\'e's twisting  construction.
\begin{proposition}\label{mom-and-twist} The homomorphism 
$$
\mom^k_{r,t}: H^1(S, \Lambda_r(\sH_r\langle t\rangle)(1))
\to H^1(S, \TSym^k\sH_r(1))
$$
induced by the moment map $\mom^k_{r,t}$ coincides with the composition
\begin{multline*}
H^1(S, \Lambda_r(\sH_r\langle t\rangle)(1))\isom
H^1(H_r\langle t\rangle, \bbZ/\ell^r\bbZ(1))
\xrightarrow{\cup \tau_r^{[k]}}\\
H^1(H_r\langle t\rangle, p_{r,t}^*\TSym^k\sH_r(1))\xrightarrow{p_{r,t*}}H^1(S, \TSym^k\sH_r(1)).
\end{multline*}
In particular, one has $\mom_{r,t}^k(\cS_r^{\langle t\rangle})=s(r,k,t)$ 
and in 
the limit
$$
\mom_t^k(\cS^{\langle t\rangle})=s(k,t).
$$
\end{proposition}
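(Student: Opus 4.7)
The plan is to show that the displayed composition on the right is precisely the map induced in cohomology by the sheafified moment map of Definition \ref{sheaf-mom-def}; once this is established, the equalities $\mom^k_{r,t}(\cS_r^{\langle t\rangle}) = s(r,k,t)$ and the inverse-limit assertion follow essentially by inspection of the definitions.

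First I would invoke the finite étaleness of $p := p_{r,t}$: since $p$ is finite étale, $p_*$ is exact with no higher direct images, so the Leray spectral sequence yields natural isomorphisms $H^i(H_r\langle t\rangle, \sF) \isom H^i(S, p_*\sF)$ for every abelian sheaf $\sF$ on $H_r\langle t\rangle$. Applied to $\sF = p^*\TSym^k\sH_r(1)$ this realizes the cohomological pushforward $p_{r,t*}$ as the map induced by the trace sheaf morphism $p_*p^*\TSym^k\sH_r \to \TSym^k\sH_r$, which is exactly the last arrow in Definition \ref{sheaf-mom-def}.

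Next I would analyze the cup product with $\tau_{r,t}^{[k]}$. By adjunction the section $\tau_{r,t}^{[k]} \in H^0(H_r\langle t\rangle, p^*\TSym^k\sH_r)$ corresponds to the sheaf morphism $\bbZ/\ell^r\bbZ \to p_*p^*\TSym^k\sH_r$ already singled out in Definition \ref{tau-r-t-def}. Under the Leray identification, cupping a class in $H^1(H_r\langle t\rangle, \bbZ/\ell^r\bbZ(1))$ with this section corresponds, on coefficients, to the composition
\[
p_*\bbZ/\ell^r\bbZ \xrightarrow{\id \otimes \tau_{r,t}^{[k]}} p_*\bbZ/\ell^r\bbZ \otimes p_*p^*\TSym^k\sH_r \xrightarrow{\eqref{cup-with-cpt-supp}} p_*p^*\TSym^k\sH_r,
\]
i.e., the initial segment of the sheafified moment map. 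Combining with the previous step identifies the right-hand composition of the proposition with the map on $H^1$ induced by $\mom^k_{r,t}$.

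With the first assertion in hand, specializing to $\cS_r^{\langle t\rangle} = \partial_r(\theta_r)$ yields $\mom^k_{r,t}(\cS_r^{\langle t\rangle}) = p_{r,t*}(\partial_r(\theta_r) \cup \tau_{r,t}^{[k]}) = s(r,k,t)$ directly from the definition of $s(r,k,t)$. Passing to the limit then uses the compatibility of the $\mom^k_{r,t}$ with the trace transition maps (remarked immediately after Definition \ref{sheaf-mom-def}), the analogous compatibility for $s(r,k,t)$ proved in Proposition \ref{Soule-twisting}, and Lemma \ref{h-1-prolim} to commute $H^1$ with $\prolim_r$. The main work is bookkeeping: verifying that the Leray identification intertwines the cohomological cup product and pushforward with the projection-formula map \eqref{cup-with-cpt-supp} and the trace sheaf map respectively. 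Both are standard for finite étale morphisms, so I expect no serious obstacles beyond adherence to consistent sign and adjunction conventions.
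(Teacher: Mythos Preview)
Your proposal is correct and follows essentially the same approach as the paper: both arguments reduce the claim to the compatibility, under the Leray isomorphism for the finite \'etale map $p_{r,t}$, between the cup product on $H_r\langle t\rangle$ and the cup product on $S$ composed with the projection-formula map \eqref{cup-with-cpt-supp} and the trace. The paper records this compatibility as a single commutative diagram while you spell out the steps in prose, but the content is the same.
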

\begin{proof} Let $p:=p_{r,t}$ then the result follows 
from the commutative diagram
$${\small
\xymatrix{H^1(H_r\langle t\rangle, \mu_{\ell^r})\times 
H^0(H_r\langle t\rangle,  p^*\TSym^k\sH_r)
\ar[r]^{\cup}\ar[dd]^\isom&
H^1(H_r\langle t\rangle, p^*\TSym^k\sH_r(1))\ar[d]^\isom\\
&H^1(S,p_*(\bbZ/\ell^r\bbZ\otimes p^*\TSym^k\sH_r(1)))\\
H^1(S, p_*\mu_{\ell^r})\times H^0(S,p_*p^*\TSym^k\sH_r)
\ar[r]^{\cup}&
H^1(S, p_*\bbZ/\ell^r\bbZ\otimes p_*p^*\TSym^k\sH_r(1))\ar[u]_{\eqref{cup-with-cpt-supp}}
}}
$$
\end{proof}

\section{Three examples}

\subsection{The Bernoulli measure and its moments}
\label{Zell-ex}
Let $N>1$ and $t\in \bbZ/N\bbZ$ and consider for each $r\ge 0$ the
exact sequence
$$
0\to \bbZ/\ell^r\bbZ\to \bbZ/\ell^rN\bbZ\xrightarrow{q_r}\bbZ/N\bbZ\to 0,
$$
where $q_r$ is reduction modulo $N$. We let 
$Z_r:=\bbZ/\ell^r\bbZ$ and $Z:=\bbZ_\ell$. In the notation of \eqref{torsor-sequence}
we have $H_r= \bbZ/\ell^r\bbZ$, $G_r= \bbZ/\ell^rN\bbZ$ and
$T=\bbZ/N\bbZ$.
We define 
$$
Z_r\langle t\rangle:=q_r^{-1}(t)=
\{x\in \bbZ/\ell^rN\bbZ\mid x\equiv t \mod{N} \}
$$
so that $Z_r\langle t\rangle =H_r\langle t\rangle$ in the notation
of \eqref{torsor-sequence}. We denote by 
$$
Z\langle t\rangle :=\prolim_r Z_r\langle t\rangle
$$ 
the inverse limit.
As before, each $Z_r\langle t\rangle$ is an $Z_r$-torsor.
Recall that $Z\langle 0\rangle=Z=\bbZ_\ell$ and that 
$\Lambda(Z\langle t\rangle)$
is a free rank one $\Lambda(Z)$-module.

Let us define the Bernoulli measure in $\Lambda(Z\langle t\rangle)$. We 
choose, as usual, an auxiliary $c\in\bbZ$ with $(c,\ell N)=1$
to make the Bernoulli distribution integral (for the properties
of the Bernoulli numbers we refer to 
\cite[Ch. 2, \S 2]{Lang-cyclotomic-fields}).
\begin{definition}\label{Bernoulli-c-r-def}
Denote by $B_k(x)$ the $k$-th Bernoulli polynomial. The map  
\begin{align}
\begin{split}
B_{2,c,r}^{\langle t\rangle}:Z_r\langle t\rangle&\to \bbZ/\ell^r\bbZ\\
x&\mapsto \frac{\ell^rN}{2}(c^2B_2(\{\frac{x}{\ell^rN}\})-
B_2(\{\frac{cx}{\ell^rN}\})),
\end{split}
\end{align}
where for an element $x\in \bbR/\bbZ$ we write $\{ x\}$ for its 
representative in $[0,1[$, defines an element
$$
B_{2,c,r}^{\langle t\rangle}\in\Lambda_r(Z_r\langle t\rangle).
$$
\end{definition}
By the distribution property of the Bernoulli polynomials 
the $B_{2,c,r}$ are compatible under the trace map
$\Lambda_{r+1}(Z_{r+1}\langle t\rangle)\to \Lambda_r(Z_r\langle t\rangle)$
and give rise to a measure 
\begin{equation}\label{Bernoulli-measure-def}
B_{2,c}^{\langle t\rangle}:=\prolim_r B_{2,c,r}^{\langle t\rangle}\in \Lambda(Z\langle t\rangle). 
\end{equation}
We want to compute the moments of the Bernoulli measure.
Choose $e=1\in \bbZ_\ell$ as a basis and let $x=\id:\bbZ_\ell\to \bbZ_\ell$
be the dual basis. By standard congruences for 
Bernoulli polynomials (see e.g. \cite[Theorem 2.1]{Lang-cyclotomic-fields})
we have
\begin{align}\label{mom-of-Bernoulli}
\begin{split}
\mom^k_t (B_{2,c}^{\langle t\rangle})&=\int_{Z\langle t\rangle}x^kdB_{2,c}^{\langle t\rangle}\\
&=\frac{N^{k+1}}{c^k(k+2)}(c^{k+2}B_{k+2}(\{\frac{t}{N}\})-
B_{k+2}(\{\frac{ct}{N}\})).
\end{split}
\end{align}
Note that if $c\equiv 1\mod{N}$ we get 
\begin{equation}
\mom^k_t (B_{2,c}^{\langle t\rangle})=\frac{N^{k+1}(c^{k+2}-1)}{c^k(k+2)}B_{k+2}(\{\frac{t}{N}\}).
\end{equation}

%
\subsection{Modified cyclotomic Soul\'e-Deligne elements} \label{cyc-soule}
%
We review the cyclotomic elements defined by Soul\'e \cite{Soule-p-adic-regulators}
and Deligne \cite{Deligne-groupe-fondamental} from
our perspective. In the literature two kinds of Soul\'e-Deligne elements
are in use. There are the ones used in Iwasawa theory 
obtained by using the norm of the field extension 
$\bbQ(\mu_{\ell^rN})/\bbQ(\mu_N)$ and the ones which
come from motivic cohomology via the regulator map. These are obtained
by the trace map from $[\ell^r]:\mu_{\ell^rN}\to \mu_N$. The relation between
these two elements is essentially an Euler factor (see the discussion 
in \cite{Huber-pune}). We treat here only the later elements originating from
motivic cohomology.

Let $N> 1$ and consider the exact sequence of
finite \'etale group schemes over a base $S$
$$
0\to \mu_{\ell^r}\to \mu_{\ell^rN}\to \mu_N\to 0.
$$
With the notations in \eqref{torsor-r-sequence} we have
$H_r=\mu_{\ell^r}$, $G_r=\mu_{\ell^rN}$ and $T=\mu_N$. 
For each $1\neq \alpha\in \mu_N(S)$ we define as in \eqref{torsor-r-def}
the $\mu_{\ell^r}$-torsor 
$\mu_{\ell^r}\langle \alpha \rangle$ by
the cartesian diagram
\begin{equation}
\begin{CD}
\mu_{\ell^r}\langle \alpha \rangle
@>>>\mu_{\ell^rN}\\
@Vp_{r,\alpha} VV@VVV\\
S@>\alpha>>\mu_N.
\end{CD}
\end{equation}
The inverse limit of these $\mu_{\ell^r}$-torsors is denoted by
$$
\sT\langle \alpha \rangle:=\prolim_r \mu_{\ell^r}\langle \alpha \rangle
$$
and we use the same notation for the associated sheaves.

On $\Gm\setminus\{1\}$ we have the invertible function
\begin{align*}
\Xi:\Gm\setminus\{1\}&\to \Gm\\
z&\mapsto 1-z
\end{align*}
and it is well-known that $\Xi$ is norm-compatible: if $[\ell^r]$
denotes the $\ell^r$-multiplication on $\Gm$ one has
\begin{equation}
[\ell^r]_*(\Xi)=\Xi.
\end{equation}
Thus we can restrict $\Xi$ to $\mu_{\ell^r}\langle\alpha \rangle$
to get norm-compatible functions $\theta_r$ in the notation of 
Definition \ref{norm-comp-def}.
\begin{definition}
We let 
$$
\cC\cS_r^{\langle\alpha\rangle}:=\partial_r(\Xi)\in 
H^1(S,\Lambda_r(\mu_{\ell^r}\langle\alpha \rangle)(1))
$$
and define 
$$
\cC\cS^{\langle\alpha\rangle}:=\prolim_r
\cC\cS_r^{\langle\alpha\rangle}\in 
H^1(S,\Lambda(\sT\langle\alpha \rangle)(1))
$$
\end{definition}
The section $\tau_{r,\alpha}$ from Definition \ref{tau-r-t-def} is the map
$$
\tau_{r,\alpha}:\mu_{\ell^r}\langle\alpha\rangle\hookrightarrow\mu_{\ell^rN}\xrightarrow{[N]}
\mu_{\ell^r}
$$
and its $k$-th tensor
power gives
\begin{equation}
\tau^{[k]}_{r,\alpha}\in H^0(\mu_{\ell^r}\langle\alpha\rangle,\bbZ/\ell^r\bbZ(k)).
\end{equation}
\begin{definition}
Let $1\neq \alpha\in \mu_N(S)$.
We denote by 
\begin{equation*}
\wt c_{k+1,r}(\alpha):=
p_{r,\alpha *}(\partial_r(\Xi)\cup \tau^{[k]}_{r,\alpha})\in H^1(S,\bbZ/\ell^r\bbZ(k+1))
\end{equation*}
the element $s(r,k,\alpha)$ obtained by Soul\'e's twisting construction.
\end{definition}

Note that for $S:=\Spec \bbQ(\mu_{\ell^rN})$ the section
$\tau^{[k]}_{r,\alpha}$ is given by
$\beta\mapsto (\beta^N)^{\otimes k}$
for $\beta\in \mu_{\ell^r}\langle\alpha \rangle(S)$. Moreover, one has 
$$
\mu_{\ell^r}\langle \alpha\rangle(S)=
\{\beta\in \mu_{\ell^rN}(S)\mid \beta^{\ell^r}
=\alpha\}.
$$
It follows that over $S:=\Spec \bbQ(\mu_{\ell^rN})$ the element
$\wt c_{k+1,r}(\alpha)$
is given explicitly by
\begin{equation}
\wt c_{k+1,r}(\alpha)=\sum_{\beta\in \mu_{\ell^r}\langle \alpha\rangle(S)}
\partial_r(1-\beta)\cup(\beta^{N})^{\otimes k}.
\end{equation}

\begin{definition}\label{cyc-soule-def}
For $1\neq\alpha\in \mu_N(S)$ and  $k\ge 1$ 
the \emph{modified cyclotomic Soul\'e-Deligne element}
is
$$
\wt c_{k+1}(\alpha):=\prolim_r \wt c_{k+1,r}(\alpha)
\in H^1(S,\bbZ_{\ell}(k+1)).
$$
Moreover, for a function $\psi:\mu_N(S)\to \bbZ_\ell$ we let
$$
\wt c_{k+1}(\psi):=
\sum_{\alpha\in\mu_N(S) }\psi(\alpha)\wt c_{k+1}(\alpha).
$$
\end{definition}
From the general result in Proposition \ref{mom-and-twist} we
get the following relation between $\cC\cS^{\langle \alpha\rangle}$
and $\wt c_{k+1}(\alpha)$ under the moment map
$$
\mom_\alpha^k:H^1(S,\Lambda(\sT\langle\alpha \rangle)(1))\to
H^1(S,\bbZ_{\ell}(k+1)).
$$
\begin{proposition}\label{mom-of-CS}
In $H^1(S,\bbZ/{\ell^r}\bbZ(k+1))$ one has
$$
\mom_{r,\alpha}^k (\cC\cS_r^{\langle \alpha\rangle})=
\wt c_{k+1,r}(\alpha)
$$
and in the limit
$$
\mom_{\alpha}^k (\cC\cS^{\langle \alpha\rangle})=
\wt c_{k+1}(\alpha)
$$
\end{proposition}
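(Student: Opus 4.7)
The proof is essentially a direct specialization of Proposition~\ref{mom-and-twist} to the cyclotomic setting, so the plan is simply to verify that the data involved match the hypotheses of that general result and then invoke it. First I would set, in the notation of Section~\ref{torsor-section}, $H_r := \mu_{\ell^r}$, $G_r := \mu_{\ell^r N}$, $T := \mu_N$, and $t := \alpha$, so that the $H_r$-torsor $H_r\langle t\rangle$ of \eqref{torsor-r-def} becomes exactly the torsor $\mu_{\ell^r}\langle\alpha\rangle$ defined in Section~\ref{cyc-soule}. Then the tautological section $\tau_{r,t}$ of Definition~\ref{tau-r-t-def} is precisely the composition $\mu_{\ell^r}\langle\alpha\rangle\hookrightarrow \mu_{\ell^r N}\xrightarrow{[N]}\mu_{\ell^r}$ appearing in the definition of $\tau_{r,\alpha}$, and the two definitions of $\tau_{r,\alpha}^{[k]}$ agree.

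Next I would observe that the norm-compatibility $[\ell^r]_*(\Xi) = \Xi$ recalled in Section~\ref{cyc-soule} is exactly the condition $\lambda_{r*}(\theta_{r+1}) = \theta_r$ of Definition~\ref{norm-comp-def}, once we restrict $\Xi$ to the tower of torsors $\mu_{\ell^r}\langle\alpha\rangle$; here one uses that the transition maps $\lambda_r\colon\mu_{\ell^{r+1}}\langle\alpha\rangle\to\mu_{\ell^r}\langle\alpha\rangle$ are induced by the $\ell$-th power map on $\mu_{\ell^{r+1}N}$. Thus the family $(\Xi|_{\mu_{\ell^r}\langle\alpha\rangle})_{r\ge 1}$ is a norm-compatible function in the sense of Definition~\ref{norm-comp-def}, and by construction $\cC\cS_r^{\langle\alpha\rangle} = \partial_r(\Xi) = \cS_r^{\langle\alpha\rangle}$ and $\wt c_{k+1,r}(\alpha) = s(r,k,\alpha)$.

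With these identifications in hand, the finite-level identity
$$
\mom_{r,\alpha}^k(\cC\cS_r^{\langle\alpha\rangle}) = \wt c_{k+1,r}(\alpha)
$$
is immediate from Proposition~\ref{mom-and-twist}. For the statement in the inverse limit, I would then apply Proposition~\ref{Soule-twisting}, which asserts that the classes $s(r,k,\alpha)$ are compatible under the reduction maps $\red_r$, together with the fact (already used in the construction of $\cC\cS^{\langle\alpha\rangle}$) that the maps $\partial_r$ assemble into a morphism of pro-systems via the trace maps $\lambda_{r*}$. Taking $\prolim_r$ on both sides of the finite-level equality then yields
$$
\mom_\alpha^k(\cC\cS^{\langle\alpha\rangle}) = \wt c_{k+1}(\alpha).
$$

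There is essentially no obstacle in this proof beyond the bookkeeping of identifying the general notation of Section~\ref{torsor-section} and Proposition~\ref{mom-and-twist} with the concrete cyclotomic objects of Section~\ref{cyc-soule}; the substantive work (the agreement of the moment-map construction with Soulé's twisting, and the compatibility in $r$) has already been carried out in Propositions~\ref{Soule-twisting} and~\ref{mom-and-twist}. The only place where one must be slightly careful is in checking that the transition maps on the torsor side really are induced by $[\ell]$ and hence that $\Xi$ restricts to a norm-compatible family, but this is formal once the Cartesian diagram defining $\mu_{\ell^r}\langle\alpha\rangle$ is unpacked.
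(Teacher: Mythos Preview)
Your proposal is correct and matches the paper's approach exactly: the paper states this proposition immediately after the sentence ``From the general result in Proposition~\ref{mom-and-twist} we get the following relation\ldots'' and gives no further proof, so the entire content is precisely the specialization you carry out. Your write-up is in fact more detailed than the paper's (which leaves the identifications implicit), but the logical route is identical.
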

For later use we need a variant of $\cC\cS_r^{\langle \alpha\rangle}$.
Fix an integer $c>1$ which is prime to $\ell N$. The
function 
\begin{equation}\label{Xi-def}
_c\Xi:\Gm\setminus \mu_c\to \Gm
\end{equation}
defined by $z\mapsto \frac{(1-z)^{c^2}}{1-z^c}$ is 
norm-compatible and one has $_c\Xi=\Xi^{c^2}([c]^*\Xi)^{-1}$.
Note that the $[c]$-multiplication maps 
$$
[c]:\mu_{\ell^r}\langle \alpha\rangle\isom 
\mu_{\ell^r}\langle \alpha^c\rangle.
$$
\begin{definition}\label{CS-def}
Let 
$$
\cC\cS_{c,r}^{\langle\alpha\rangle}:=
\partial_r({_c\Xi})=c^2\cC\cS_r^{\langle\alpha\rangle}-
[c]^*\cC\cS_r^{\langle\alpha^c\rangle}
$$
in $H^1(S,\Lambda_r(\mu_{\ell^r}\langle\alpha \rangle)(1))$ and define
$$
\cC\cS_{c}^{\langle\alpha\rangle}:=\prolim_r
\cC\cS_{c,r}^{\langle\alpha\rangle}\in
H^1(S,\Lambda(\sT\langle\alpha \rangle)(1)).
$$
\end{definition}
We compute the moments of $\cC\cS_{c}^{\langle\alpha\rangle}$.
\begin{proposition}\label{mom-of-S}
Let $k\ge 1$  then 
$$
\mom^k_\alpha(\cC\cS_c^{\langle\alpha\rangle})=
c^2\wt c_{k+1}(\alpha)-c^{-k}\wt c_{k+1}(\alpha^c).
$$
In particular, for $c\equiv 1\mod{N}$ one has 
$$
\mom^k_\alpha(\cC\cS_c^{\langle\alpha\rangle})=\frac{c^{k+2}-1}{c^k}
\wt c_{k+1}(\alpha).
$$
\end{proposition}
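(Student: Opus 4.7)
The plan is to reduce the computation to Proposition \ref{mom-of-CS} via the defining decomposition
$$
\cC\cS_{c,r}^{\langle\alpha\rangle} = c^2\,\cC\cS_r^{\langle\alpha\rangle} - [c]^{*}\cC\cS_r^{\langle\alpha^c\rangle}
$$
from Definition \ref{CS-def}. Applying $\mom^k_{r,\alpha}$ (which is $\bbZ/\ell^r\bbZ$-linear), the first summand immediately yields $c^2\,\wt c_{k+1,r}(\alpha)$ by Proposition \ref{mom-of-CS}. The content of the proof lies in showing that the second summand contributes $c^{-k}\wt c_{k+1,r}(\alpha^c)$ after passage through the moment map, which is exactly what the twist by $[c]$ on the torsor predicts.

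To handle the second summand, I would use Proposition \ref{mom-and-twist} to rewrite $\mom^k_{r,\alpha}$ as cup-product with the tautological section $\tau_{r,\alpha}^{[k]}$ followed by $p_{r,\alpha*}$. The key input is that $[c]\colon \mu_{\ell^r}\langle\alpha\rangle \xrightarrow{\sim} \mu_{\ell^r}\langle\alpha^c\rangle$ is an isomorphism of $S$-schemes satisfying $p_{r,\alpha^c}\verk [c]=p_{r,\alpha}$, and that the composition with $[N]$ defining $\tau$ gives the identity
$$
[c]^{*}\tau_{r,\alpha^c} = c\cdot\tau_{r,\alpha}\qquad\text{in }H^0(\mu_{\ell^r}\langle\alpha\rangle,p_{r,\alpha}^{*}\sH_r),
$$
so that $[c]^{*}\tau_{r,\alpha^c}^{[k]}=c^k\tau_{r,\alpha}^{[k]}$. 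Since $(c,\ell)=1$, the factor $c^k$ is invertible modulo $\ell^r$, so one may rewrite $\tau_{r,\alpha}^{[k]} = c^{-k}[c]^{*}\tau_{r,\alpha^c}^{[k]}$ and apply the projection formula together with $[c]_*[c]^{*}=\id$ (true because $[c]$ is an isomorphism, hence $[c]_*\bbZ/\ell^r\bbZ=\bbZ/\ell^r\bbZ$). This gives
$$
p_{r,\alpha*}\bigl([c]^{*}\partial_r(\Xi)\cup\tau_{r,\alpha}^{[k]}\bigr)
= c^{-k}\,p_{r,\alpha^c*}\bigl(\partial_r(\Xi)\cup\tau_{r,\alpha^c}^{[k]}\bigr)
= c^{-k}\,\wt c_{k+1,r}(\alpha^c).
$$

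Combining the two contributions yields $\mom^k_{r,\alpha}(\cC\cS_{c,r}^{\langle\alpha\rangle}) = c^2\wt c_{k+1,r}(\alpha) - c^{-k}\wt c_{k+1,r}(\alpha^c)$; passing to the inverse limit over $r$ (permissible by Lemma \ref{h-1-prolim} and the definitions of $\mom^k_\alpha$, $\cC\cS_c^{\langle\alpha\rangle}$, $\wt c_{k+1}$) gives the first formula. The ``in particular'' statement is immediate: when $c\equiv 1\pmod N$, one has $\alpha^c=\alpha$ since $\alpha\in\mu_N$, so $\wt c_{k+1}(\alpha^c)=\wt c_{k+1}(\alpha)$, and $c^2-c^{-k}=(c^{k+2}-1)/c^k$.

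The only real subtlety is the compatibility $[c]^{*}\tau_{r,\alpha^c}^{[k]}=c^k\tau_{r,\alpha}^{[k]}$ together with the correct use of the projection formula; everything else is formal from Propositions \ref{mom-of-CS} and \ref{mom-and-twist}.
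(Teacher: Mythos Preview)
Your proof is correct and follows essentially the same approach as the paper: decompose via Definition \ref{CS-def}, apply Proposition \ref{mom-of-CS} to the first summand, and for the second summand use that $[c]$ is an isomorphism together with the scaling $[c]^{*}\tau_{r,\alpha^c}=c\,\tau_{r,\alpha}$ and $[c]_*[c]^{*}=\id$. The paper packages your computation with the tautological sections as the commutativity of the functoriality square for the moment map (with $[c]_!$ on top and multiplication by $c^k$ on the bottom), but the content is identical.
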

\begin{proof}
There is a commutative diagram
$$
\xymatrix{\Lambda_r(\mu_{\ell^r}\langle\alpha\rangle)\ar[r]^{[c]_!}
\ar[d]_{\mom^k_{r,\alpha}}&\Lambda_r(\mu_{\ell^r}\langle\alpha^c\rangle)
\ar[d]^{\mom^k_{r,\alpha^c}}\\
\bbZ/\ell^r\bbZ(k)\ar[r]^{[c]^k}&
\bbZ/\ell^r\bbZ(k)
}
$$
and  as $[c]:\mu_{\ell^r}\langle \alpha\rangle\isom 
\mu_{\ell^r}\langle \alpha^c\rangle$ is an isomorphism, one has 
$[c]_*[c]^*=\id$ and one computes
$$
c^k\mom^k_{r,\alpha}([c]^*\cC\cS_r^{\langle\alpha^c\rangle})
=\mom^k_{r,\alpha^c}([c]_*[c]^*\cC\cS_r^{\langle\alpha^c\rangle})=
\mom^k_{r,\alpha^c}(\cC\cS_r^{\langle\alpha^c\rangle})=\wt c_{k+1,r}(\alpha^c)
$$
with Proposition \ref{mom-of-CS} and the result follows.
\end{proof}
%
\subsection{Elliptic Soul\'e elements}
%
We use the theory of norm-compatible 
elliptic units as developed by Kato. 

First we fix an analytic uniformization of $Y(N)(\bbC)$ which
is the same as in \cite{Kato-p-adic}.  Note that
$\sigma\in\GL_2(\bbZ/N\bbZ)$ acts from the left on $Y(N)$ by 
$\sigma\alpha(v):=\alpha(v\sigma)$ for all $v\in (\bbZ/N\bbZ)^2$.
Let $\bbH:=\{\tau\in \bbC\mid \Im \tau >0\}$ be the upper half plane, then
one has an analytic uniformization
\begin{align}
\begin{split}
\nu:(\bbZ/N\bbZ)^\times \times (\Gamma(N)\backslash \bbH)&\xrightarrow{\isom}Y(N)(\bbC)\\
(a,\tau)&\mapsto (\bbC/(\bbZ\tau+\bbZ),\alpha),
\end{split}
\end{align}
where $\Gamma(N):=\ker(\SL_2(\bbZ)\to \SL_2(\bbZ/N\bbZ))$ and $\alpha$
is the level structure given by $(v_1,v_2)\mapsto \frac{av_1\tau+v_2}{N}$.

Recall the main theorem from \cite{Kato-p-adic}.
\begin{theorem}[Kato \cite{Kato-p-adic} 1.10.]\label{modular-units}
Let $\cE$ be an elliptic curve over a scheme $S$ and $c$ be an integer prime
to $6$, then there exists a unit $_c\vartheta_{\cE}\in \cO({\cE}\setminus {\cE}[c])^\times$
such that 
\begin{enumerate}
\item $\div _c\vartheta_{\cE}=c^2(0)- {\cE}[c]$
\item $[d]_*{ _c\vartheta_{\cE}}={_c\vartheta_{\cE}}$ for all $d$ prime to $c$
\item If $\varphi:{\cE}\to {\cE}'$ is an isogeny of elliptic curves over $S$ with
$\deg \varphi$ prime to $c$, then 
$$
\varphi_*(_c\vartheta_{\cE})={_c\vartheta_{{\cE}'}}.
$$
\item For $\tau\in \bbH$ and $z\in \bbC\setminus c^{-1}(\bbZ\tau +\bbZ)$
let $_c\vartheta(\tau,z)$ be the value at $z$ of $_c\vartheta_{\cE}$ for the elliptic curve 
${\cE}=\bbC/(\bbZ\tau +\bbZ)$ over $\bbC$. Then
$$
 _c\vartheta(\tau,z)=q_\tau^{(c^2-1)/12}(-q_z)^{(c-c^2)/2}
 \frac{(1-q_z)^{c^2}}{1-q_z^c}\wt\gamma_{q_\tau}(q_z)^{c^2}
 \wt\gamma_{q_\tau}(q_z^c)^{-1}
$$
where $q_\tau:=e^{2\pi i\tau}$, $q_z:=e^{2\pi i z}$ and
$$
\wt\gamma_{q_\tau}(t):=\prod_{n\ge 1}(1-q_\tau^nq_z)
\prod_{n\ge 1}(1-q_\tau^nq_z^{-1}).
$$
\end{enumerate}
\end{theorem}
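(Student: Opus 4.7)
The plan is to construct $_c\vartheta_{\cE}$ first in the complex analytic setting via theta functions, descend algebraically using the $q$-expansion principle and the Tate curve, and then verify the listed properties. Since $c$ is prime to $6$, in particular $c$ is odd, so pairing $P$ with $-P$ in $\cE[c]$ (with $0$ the unique fixed point) gives $\sum_{P\in\cE[c]}P = 0$ in the group law. Thus the divisor $D_c := c^2[0]-\cE[c]$ has degree zero and trivial Abel--Jacobi image, so $D_c$ is principal on every elliptic curve and determines $_c\vartheta_{\cE}$ up to a scalar in $\cO(S)^\times$. The real work is to pin down this scalar compatibly in families and to verify the norm relation.

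Analytically, over $\bbH$ I would put $_c\vartheta(\tau,z)$ equal to a ratio of Jacobi theta functions
$$
 _c\vartheta(\tau,z) \;=\; c_0(\tau)\,\frac{\theta_{11}(\tau,z)^{c^2}}{\theta_{11}(\tau,cz)}
$$
with an elementary normalizing prefactor $c_0(\tau)$ chosen so that the product expansion yields the exact formula of property (4). The point of the exponents is that the quasi-periodicity factors of $\theta_{11}$ (which are exponential linear in $z$) cancel between numerator (weight $c^2$, argument $z$) and denominator (weight $1$, argument $cz$); hence $_c\vartheta(\tau,z)$ descends from $\bbC$ to $\cE_\tau$. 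The divisor computation is then immediate: the zero of order $c^2$ at $z=0$ in the numerator and the simple zeros along $c^{-1}(\bbZ\tau+\bbZ)$ in the denominator give exactly $D_c$, proving (1). Expanding both theta factors as products in $q_z$ and $q_\tau$ gives the explicit formula in (4); the prefactor $q_\tau^{(c^2-1)/12}(-q_z)^{(c-c^2)/2}$ absorbs the powers of $q_\tau$ and the signs coming from $\theta_{11}$, and is integral precisely because $12\mid c^2-1$ and $2\mid c(c-1)$.

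Next I would algebraize. The formula in (4) already has integral $q$-expansion on the Tate curve $\cE_q$, which by the $q$-expansion principle identifies $_c\vartheta(\tau,z)$ with a unit on the universal $\cE\setminus\cE[c]$ over a sufficiently fine level cover of the moduli stack; descent along $Y(N)\to \overline{\mathcal M}_{1,1}$ gives the object on an arbitrary elliptic curve over any base, and pullback provides property (3) (isogeny compatibility) as a tautology of the construction. The Tate-curve formula (4) holds by construction.

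The hard step is property (2), the distribution/norm relation. A divisor calculation shows $[d]_*{_c\vartheta_{\cE}}$ and ${_c\vartheta_{\cE}}$ have the same divisor on $\cE\setminus\cE[c]$: the map $[d]$ with $(d,c)=1$ is a bijection on $\cE[c]$ and fixes $0$, so
$$
\div\bigl([d]_*{_c\vartheta_{\cE}}\bigr)=[d]_*\bigl(c^2[0]-\cE[c]\bigr)=c^2[0]-\cE[c].
$$
Thus the two functions differ by a global unit on $S$. To show this unit is $1$ one reduces to the universal case and compares $q$-expansions at the cusp $\infty$ using the explicit formula (4); equivalently, this is the classical distribution relation for Siegel units
$$
\prod_{Q:\,dQ=P}{_c\vartheta(\tau,Q)}={_c\vartheta(\tau,P)}\quad\text{for }(d,c)=1,
$$
which comes from regrouping the product expansion of $\theta_{11}$ over $d$-torsion translates. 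I expect this to be the main obstacle: getting the prefactor $c_0(\tau)$ exactly right so that the distribution relation holds on the nose rather than only up to a root of unity is the entire purpose of the seemingly strange normalizing factors $q_\tau^{(c^2-1)/12}$ and $(-q_z)^{(c-c^2)/2}$ appearing in (4), and verifying this cancellation is the nontrivial combinatorial heart of the proof.
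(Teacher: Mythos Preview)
The paper does not prove this theorem at all: it is stated as a citation of Kato's result \cite{Kato-p-adic}~1.10 and is used as a black box thereafter. There is therefore no ``paper's own proof'' to compare against.

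That said, your sketch is broadly the standard route and is close in spirit to Kato's original argument. A few remarks. First, your claim that property~(3) becomes a ``tautology of the construction'' after algebraization is too quick: the isogeny $\varphi$ need not be a multiplication map, and showing $\varphi_*({_c\vartheta_\cE})={_c\vartheta_{\cE'}}$ for a general isogeny of degree prime to $c$ requires either a separate computation on the Tate curve or a characterization of ${_c\vartheta}$ by divisor plus norm-compatibility (Kato in fact characterizes ${_c\vartheta}$ uniquely by (1) together with $[a]_*{_c\vartheta}={_c\vartheta}$ for two coprime integers $a$, which then forces all the other compatibilities). Second, you are right that the entire subtlety lives in pinning down the constant: the divisor condition alone only determines ${_c\vartheta}$ up to $\cO(S)^\times$, and it is the norm relations that single out a canonical choice. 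Your plan to verify the distribution relation by direct manipulation of the product expansion works, but it is cleaner to argue as Kato does: show that any two functions with divisor $D_c$ and satisfying $[a]_*f=f$ for one $a>1$ prime to $c$ must agree, since their ratio is a unit on $S$ fixed by multiplication by $a^{c^2-1}$ under the norm, forcing it to be $1$ when $S$ is reduced (and then by flat base change in general).
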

Note that $\wt\gamma$ differs from Kato's $\gamma$.
\begin{corollary}\label{theta-evaluation}
Let $t=\frac{a\tau}{N}+\frac{b}{N}\in \bbC/(\bbZ\tau+\bbZ)$ be an 
$N$-torsion point, $a,b\in \bbZ$ and let $\zeta_N:=e^{\frac{2\pi i}{N}}$, then 
$$
_c\vartheta(\tau,t)=
q_\tau^{\frac{1}{2}(c^2B_2(\{\frac{a}{N}\})-B_2(\{\frac{ca}{N}\}))}
(-\zeta_N^b)^{\frac{c-c^2}{2}}\frac{(1-q_\tau^{\frac{a}{N}} \zeta_N^b)^{c^2}}
{(1-q_\tau^{\frac{ca}{N}} \zeta_N^{cb})}
\frac{\wt\gamma_{q_\tau}(q_\tau^{\frac{a}{N}}\zeta_N^b)^{c^2}}
{\wt\gamma_{q_\tau}(q_\tau^{\frac{ca}{N}}\zeta_N^{cb})}
$$
\end{corollary}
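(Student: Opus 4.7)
The proof is by direct substitution into the formula of Theorem \ref{modular-units}(4). Setting $q_z = e^{2\pi i t} = q_\tau^{a/N}\zeta_N^b$ and consequently $q_z^c = q_\tau^{ca/N}\zeta_N^{cb}$, the factors $(1-q_z)^{c^2}$, $1-q_z^c$, $\wt\gamma_{q_\tau}(q_z)^{c^2}$, and $\wt\gamma_{q_\tau}(q_z^c)^{-1}$ pass verbatim to the corresponding factors on the right-hand side, so no work is required there.

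The only step requiring attention is the prefactor $q_\tau^{(c^2-1)/12}(-q_z)^{(c-c^2)/2}$. Since $(c-c^2)/2 = c(1-c)/2$ is always an integer, we may split
$$(-q_z)^{(c-c^2)/2} = q_\tau^{\,a(c-c^2)/(2N)} \cdot (-\zeta_N^b)^{(c-c^2)/2},$$
which already produces the root-of-unity factor $(-\zeta_N^b)^{(c-c^2)/2}$ appearing in the claim. Combining with $q_\tau^{(c^2-1)/12}$, the total $q_\tau$-exponent arising from the prefactor is $\frac{c^2-1}{12} + \frac{a(c-c^2)}{2N}$.

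It remains to identify this exponent with the Bernoulli-polynomial expression on the right-hand side. Using $B_2(x) = x^2 - x + \frac{1}{6}$, a purely algebraic expansion yields the identity
$$\frac{c^2-1}{12} + \frac{a(c-c^2)}{2N} = \frac{1}{2}\Bigl(c^2 B_2(\tfrac{a}{N}) - B_2(\tfrac{ca}{N})\Bigr),$$
which is the desired formula in the case $0\le a<N$ and $0\le ca<N$, i.e.\ whenever $\{a/N\} = a/N$ and $\{ca/N\} = ca/N$. For general $a$ and $c$ the integer shift $\lfloor ca/N\rfloor$ between $B_2(ca/N)$ and $B_2(\{ca/N\})$ is compensated by a reindexing of the product defining $\wt\gamma_{q_\tau}(q_\tau^{ca/N}\zeta_N^{cb})$: the factors with $n \le \lfloor ca/N\rfloor$ contain negative $q_\tau$-powers and can be rewritten via $1 - X = -X(1 - X^{-1})$, extracting precisely the missing $q_\tau$-shift together with a sign. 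I expect the only genuine obstacle in carrying this out to be the bookkeeping of these signs and integer shifts; everything else is routine substitution.
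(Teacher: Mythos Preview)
Your approach is the same as the paper's: direct substitution of $q_z=q_\tau^{a/N}\zeta_N^b$ into Kato's product formula, followed by the identity
\[
c^2B_2(\tfrac{a}{N})-B_2(\tfrac{ca}{N})=\frac{(c-c^2)a}{N}+\frac{c^2-1}{6},
\]
which is exactly what the paper writes down (this is the whole proof there). Your additional paragraph on the discrepancy between $B_2(ca/N)$ and $B_2(\{ca/N\})$ and the compensating reindexing of $\wt\gamma_{q_\tau}$ goes beyond what the paper addresses; the paper simply records the identity with the $\{\cdot\}$'s in place and leaves it at that. Note, however, that your reindexing would also move the arguments of the $(1-\cdot)$ and $\wt\gamma$ factors to the fractional representatives, whereas the corollary keeps them at the raw exponents $q_\tau^{ca/N}$, so strictly speaking your fix proves a reformulated (equivalent) identity rather than the displayed one verbatim. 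Since the only downstream use is the $q_\tau$-order at $\infty$, where your compensation argument is exactly what is needed, this is harmless.
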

\begin{proof}
This follows from Theorem \ref{modular-units} by writing
$q_z=q_\tau^{\frac{a}{N}}\zeta_N^b$ and a straightforward computation
using $B_2(x)=x^2-x+\frac{1}{6}$, so that
$$
c^2B_2(\{\frac{a}{N}\})-B_2(\{\frac{ca}{N}\})=\frac{(c-c^2)a}{N}+\frac{c^2-1}{6}.
$$
\end{proof}
For the elliptic curve
$\pi:\cE\to S$ and an integer $N>1$ consider the exact sequence
of finite \'etale group schemes
$$
0\to \cE[\ell^r]\to \cE[\ell^r N]\to \cE[N]\to 0.
$$
In the notation of \eqref{torsor-r-sequence} we have
$H_r=\cE[\ell^r]$, $G_r=\cE[\ell^rN]$ and $T=\cE[N]$.

For each section 
$t\in \cE[N](S)$ one has the $\cE[\ell^r]$-torsor
$H_r\langle t\rangle=\cE[\ell^r]\langle t\rangle$ defined
by the cartesian diagram
$$
\xymatrix{
\cE[\ell^r]\langle t\rangle\ar[r]\ar[d]_{p_{r,t}}&
\cE[\ell^rN]\ar[r]\ar[d]&
\cE\ar[d]^{[\ell^r]}\\
S\ar[r]^t&\cE[N]\ar[r]&\cE.
}
$$
We denote by $\sH_r$ and $\sH_r\langle t\rangle$ the sheaves associated
to $\cE[\ell^r]$ and $\cE[\ell^r]\langle t\rangle$ respectively. We also
define
\begin{align*}
\sH&:=(\sH_r)_{r\ge 1}\\
\sH\langle t\rangle&:=(\sH_r\langle t\rangle)_{r\ge 1}.
\end{align*}
Let $c>1$ be an integer
with $(c,6\ell N)=1$ and consider the function
$$
{_c\vartheta_\cE}:\cE\setminus \cE[c]\to \Gm.
$$
By Theorem \ref{modular-units} this function is norm-compatible
$$
[\ell^r]_*({_c\vartheta_\cE})={_c\vartheta_\cE}.
$$
Note that for $t\neq e$ one has 
$\cE[\ell^r]\langle t\rangle\subset \cE\setminus \cE[c]$, by our condition on $c$.
Thus, we can restrict $ {_c\vartheta_\cE}$ to an invertible function,
called $\theta_r$ in Definition \ref{norm-comp-def}, on 
$\cE[\ell^r]\langle t\rangle$.
\begin{definition}\label{ES-def}
Let 
$$
\cE\cS_{c,r}^{\langle t\rangle}:=\partial_r({_c\vartheta_\cE})\in
H^1(S,\Lambda_r(\sH_r\langle t\rangle)(1))
$$
and in the limit
$$
\cE\cS_c^{\langle t\rangle}:=\prolim_r\cE\cS_r^{\langle t\rangle}
\in 
H^1(S,\Lambda(\sH\langle t\rangle)(1)).
$$
\end{definition}
The section $\tau_{r,t}$ from 
Definition \ref{tau-r-t-def} is 
given by
\begin{equation*}
\tau_{r,t}:\cE[\ell^r]\langle t\rangle\hookrightarrow \cE[\ell^rN]\xrightarrow{[N]}
\cE[\ell^r].
\end{equation*}
Its $k$-tensor power gives
$$
\tau^{[k]}_{r,t}\in H^0(\cE[\ell^r]\langle t\rangle,\TSym^k\sH_r).
$$
Soul\'e's twisting construction allows now to define:
\begin{definition}\label{e-k-def} With the above notations let
\begin{equation*}
_c e_{k,r}(t):=
p_{r,t*}(\partial_r({_c\vartheta_\cE})\cup \tau_{r,t}^{[k]})\in
H^1(S,\TSym^k\sH_r(1))
\end{equation*}
and
\begin{equation*}
_c e_{k}(t):= \prolim_r {_c e}_{k,r}(t)\in H^1(S,\TSym^k\sH(1)).
\end{equation*}
We call $_c e_{k}(t)$ the \emph{elliptic Soul\'e element}.
For a function $\psi:(\cE[N](S)\setminus \{e\})\to \bbZ_\ell $ we let
$$
_c e_{k}(\psi):=\sum_{t\in \cE[N](S)\setminus \{e\}}\psi(t)_c e_{k}(t).
$$
\end{definition}

Suppose that $S$ is a scheme such that
the group scheme $\cE[\ell^rN]$ is isomorphic
to $(\bbZ/\ell^rN\bbZ)^2$ (for example $S=Y(\ell^r N)$).
Then one has 
$\sH_{r}\isom(\bbZ/\ell^r\bbZ)^2$ and the
pull-back of $_c e_{k,r}(t)$ to $S$
is given explicitly by
$$
_c e_{k,r}(t)=\sum_{[\ell^r]Q=t}\partial_r({_c\vartheta_\cE}(Q))\cup ([N]Q)^{\otimes k}\in H^1(S,\TSym^k(\bbZ/\ell^r\bbZ)^2(1)).
$$
The moment map is in our context
$$
\mom_{r,t}^k:H^1(S,\Lambda_r(\sH_r\langle t\rangle)(1))\to
H^1(S,\TSym^k\sH_r(1))
$$
or in the limit
$$
\mom_t^k:H^1(S,\Lambda(\sH\langle t\rangle)(1))\to
H^1(S,\TSym^k\sH(1)).
$$
From the general result Proposition \ref{mom-and-twist} we get:
\begin{proposition}\label{ell-soule-as-mom}
One has
$$
\mom_{r,t}^k(\cE\cS^{\langle t\rangle}_{c,r})={_c e}_{k,r}(t)
$$
and
$$
\mom_t^k(\cE\cS^{\langle t\rangle}_{c})={_c e}_{k}(t).
$$
\end{proposition}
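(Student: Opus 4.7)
The plan is to apply Proposition \ref{mom-and-twist} directly, with the norm-compatible family of functions $\theta_r := {_c\vartheta_\cE}|_{\cE[\ell^r]\langle t\rangle}$ in the sense of Definition \ref{norm-comp-def}. First I would verify this norm-compatibility hypothesis: the transition maps $\lambda_r:\cE[\ell^{r+1}]\langle t\rangle\to \cE[\ell^r]\langle t\rangle$ are the restrictions of $[\ell]:\cE\to\cE$, so Theorem \ref{modular-units}(2) applied with $d=\ell$ (which is prime to $c$ by hypothesis $(c,6\ell N)=1$) gives $\lambda_{r*}(\theta_{r+1})=\theta_r$. Note that this restriction makes sense at all: for $t\neq e$ one has $\cE[\ell^r]\langle t\rangle\subset \cE\setminus\cE[c]$ by the coprimality condition on $c$, as already observed in the paragraph preceding Definition \ref{ES-def}.

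Once norm-compatibility is in hand, the identifications are purely tautological: Definition \ref{ES-def} gives
$$
\cE\cS^{\langle t\rangle}_{c,r}=\partial_r(\theta_r)=\cS_r^{\langle t\rangle}
$$
in the general notation of the preceding subsection, and Definition \ref{e-k-def} gives
$$
{_c e}_{k,r}(t)=p_{r,t*}(\partial_r(\theta_r)\cup \tau_{r,t}^{[k]})=s(r,k,t)
$$
in the notation of Soulé's twisting construction (Proposition \ref{Soule-twisting}). Consequently, the first equality of the proposition,
$$
\mom^k_{r,t}(\cE\cS^{\langle t\rangle}_{c,r})={_c e}_{k,r}(t),
$$
is an instance of the identity $\mom^k_{r,t}(\cS^{\langle t\rangle}_r)=s(r,k,t)$ established in Proposition \ref{mom-and-twist}.

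For the second equality I would simply pass to the inverse limit over $r$. The compatibility of the finite-level moment maps under the trace maps, noted immediately after Definition \ref{sheaf-mom-def}, together with the norm-compatibility of $\theta_r$ and Proposition \ref{Soule-twisting}, imply that $\cE\cS_c^{\langle t\rangle}=\prolim_r \cE\cS^{\langle t\rangle}_{c,r}$ is mapped by $\mom_t^k=\prolim_r \mom^k_{r,t}$ to $\prolim_r {_c e}_{k,r}(t)={_c e}_k(t)$. This is precisely the content of the final assertion of Proposition \ref{mom-and-twist}. There is no real obstacle here: the proposition is a direct specialization of the general moment--twist identity of Proposition \ref{mom-and-twist}, with the only substantive input being Kato's norm-compatibility statement for $_c\vartheta_\cE$.
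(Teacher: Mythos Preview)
Your proof is correct and follows exactly the approach indicated in the paper, which simply states that the result follows from the general Proposition \ref{mom-and-twist}. If anything, you have spelled out the verification (norm-compatibility via Theorem \ref{modular-units} and the tautological identification of $\cE\cS^{\langle t\rangle}_{c,r}$ and ${_c e}_{k,r}(t)$ with the abstract $\cS_r^{\langle t\rangle}$ and $s(r,k,t)$) more carefully than the paper itself does.
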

For later use we note:
\begin{lemma}\label{e-of-minus-t}
With the above notations, one has the relation
$$
{_c e}_{k}(-t)=(-1)^k{_c e}_{k}(t).
$$
\end{lemma}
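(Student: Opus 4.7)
The plan is to exploit the $[-1]$-multiplication on $\cE$, which restricts to an isomorphism $\iota_r:\cE[\ell^r]\langle t\rangle\isom\cE[\ell^r]\langle -t\rangle$ of $S$-schemes satisfying $p_{r,-t}\verk\iota_r=p_{r,t}$. Because $\iota_r$ is an isomorphism, $\iota_{r*}$ is the inverse of $\iota_r^*$ on cohomology, whence $p_{r,-t*}=p_{r,t*}\verk\iota_r^*$. It therefore suffices to determine how $\iota_r^*$ acts on each of the two factors appearing in the cup product defining ${_ce_{k,r}(-t)}$, and to pass to the inverse limit using Proposition \ref{Soule-twisting} together with Definition \ref{e-k-def}.

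For the unit factor, I would invoke Theorem \ref{modular-units}(2) with $d=-1$ (which is coprime to $c$): this gives $[-1]_*({_c\vartheta_\cE})={_c\vartheta_\cE}$, and since $[-1]$ is an involution, its norm on invertible functions coincides with its pullback, so ${_c\vartheta_\cE}$ is $[-1]$-invariant. Restricting to $\cE[\ell^r]\langle -t\rangle$ and applying the Kummer map then yields
$$
\iota_r^*\bigl(\partial_r({_c\vartheta_\cE}|_{\cE[\ell^r]\langle -t\rangle})\bigr)=\partial_r({_c\vartheta_\cE}|_{\cE[\ell^r]\langle t\rangle}).
$$
For the tautological factor, recall from Definition \ref{tau-r-t-def} that $\tau_{r,-t}$ is the composition $\cE[\ell^r]\langle -t\rangle\hookrightarrow\cE[\ell^rN]\xrightarrow{[N]}\cE[\ell^r]$. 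Precomposing with $\iota_r$ replaces this by $[-N]$ restricted to $\cE[\ell^r]\langle t\rangle$, i.e.\ by $-\tau_{r,t}$, so on symmetric tensors $\iota_r^*(\tau_{r,-t}^{[k]})=(-\tau_{r,t})^{\otimes k}=(-1)^k\tau_{r,t}^{[k]}$ in $p_{r,t}^*\TSym^k\sH_r$.

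Combining these two computations via the naturality of cup product and Kummer map under pullback gives
$$
{_ce_{k,r}(-t)}=p_{r,-t*}\bigl(\partial_r({_c\vartheta_\cE})\cup\tau_{r,-t}^{[k]}\bigr)=p_{r,t*}\bigl(\partial_r({_c\vartheta_\cE})\cup(-1)^k\tau_{r,t}^{[k]}\bigr)=(-1)^k\,{_ce_{k,r}(t)},
$$
and passing to the inverse limit in $r$ proves the lemma. The only delicate point is sign bookkeeping: the factor $(-1)^k$ enters solely from $\iota_r^*\tau_{r,-t}=-\tau_{r,t}$, while the invariance of ${_c\vartheta_\cE}$ under $[-1]$ and the naturality of the remaining constructions contribute no signs.
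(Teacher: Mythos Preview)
Your proof is correct and follows essentially the same idea as the paper: both arguments rest on the $[-1]$-invariance of ${_c\vartheta_\cE}$ (from Theorem \ref{modular-units}(2) with $d=-1$) together with the fact that $[-1]$ acts by $(-1)^k$ on $\TSym^k\sH_r$. The only cosmetic difference is that the paper phrases this via the moment map and the commutative square
\[
\begin{CD}
\Lambda_r(\sH_r\langle -t\rangle)@>{[-1]_!}>> \Lambda_r(\sH_r\langle t\rangle)\\
@V\mom_{-t}^kVV@VV\mom_t^kV\\
\TSym^k\sH_r@>(-1)^k>>\TSym^k\sH_r,
\end{CD}
\]
whereas you unwind the equivalent Soul\'e twisting description from Definition \ref{e-k-def} directly; Proposition \ref{mom-and-twist} identifies the two viewpoints.
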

\begin{proof}
The norm-compatibility of ${_c\vartheta_\cE}$ implies
$[-1]_*{_c\vartheta_\cE}={_c\vartheta_\cE}$ and hence
$$
[-1]_*\cE\cS^{\langle -t\rangle}_{c,r}=\cE\cS^{\langle t\rangle}_{c,r}.
$$
The claim follows from the commutative diagram
$$
\begin{CD}
\Lambda_r(\sH_r\langle -t\rangle)@>{[-1]_!}>> \Lambda_r(\sH_r\langle t\rangle)\\
@V\mom_{-t}^kVV@VV\mom_t^kV\\
\TSym^k\sH_r@>(-1)^k>>\TSym^k\sH_r.
\end{CD}
$$
\end{proof}

%
%
\section{Eisenstein classes, elliptic Soul\'e elements and
the integral $\ell$-adic elliptic polylogarithm}
%
%
In this section we compare the elliptic Soul\'e elements
with the Eisenstein classes. The idea consists in writing the
Eisenstein classes as specializations of the elliptic polylogarithm
and then to define an integral version of the elliptic polylogarithm
which is directly related to the elliptic Soul\'e elements.

%
\subsection{A brief review of the elliptic logarithm sheaf}\label{log-section}
%

We give a brief review of the elliptic polylogarithm and refer for more
details to \cite{Huber-pune}, the original source \cite{BeLe} or to
the appendix A in \cite{Huber-Kings99}.

Let $\pi:\cE\to S$ be a family of elliptic curves with unit 
section $e:S\to \cE$. We let 
\begin{equation}
\Lambda:=\bbZ/\ell^r\bbZ, \bbZ_\ell, \bbQ_\ell
\end{equation}
and we consider lisse sheaves of $\Lambda$-modules.

A $\Lambda$-sheaf 
$\sG$ is unipotent of length $n$ with respect to $\pi$, if it
has a filtration $\sG=A^0\sG\supset A^\sG\supset \ldots\supset A^{n+1}\sG=0$
such that $A^k\sG/A^{k+1}\sG\isom \pi^*\sF^k$ for a lisse
$\Lambda$-sheaf $\sF^k$ on $S$. 
Beilinson and Levin show:
\begin{proposition}[\cite{BeLe} Proposition 1.2.6] There is
a $n$-unipotent sheaf $\Log_{\Lambda}^{(n)}$ together with a section
$1^{(n)}\in \Gamma(S,e^*\Log_{\Lambda}^{(n)})$ such that for any $n$-unipotent
$\Lambda$-sheaf $\sG$ the homomorphism
\begin{align*}
\pi_*\ul\Hom_\cE(\Log_{\Lambda}^{(n)},\sG)&\to e^*\sG\\
\phi& \mapsto \phi\verk 1^{(n)}
\end{align*}
is an isomorphism. The pair $(\Log_{\Lambda}^{(n)},1^{(n)})$ is unique up to 
unique isomorphism.
\end{proposition}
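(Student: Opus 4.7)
I would argue by induction on $n$, treating uniqueness first by a formal Yoneda argument, constructing $\Log_\Lambda^{(n)}$ explicitly as an iterated extension, and finally verifying the universal property by reduction to the case $\sG=\pi^*\sF$.

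For uniqueness, suppose $(\sL_1,1_1)$ and $(\sL_2,1_2)$ both satisfy the universal property. Applying the universal property of $\sL_1$ with target $\sG=\sL_2$ produces a unique $\phi:\sL_1\to\sL_2$ with $\phi\verk 1_1=1_2$, and symmetrically a unique $\psi:\sL_2\to\sL_1$ with $\psi\verk 1_2=1_1$. The composition $\psi\verk\phi\in\pi_*\ul\Hom_\cE(\sL_1,\sL_1)$ and $\id_{\sL_1}$ both map $1_1$ to $1_1$; applying the universal property of $\sL_1$ to $\sG=\sL_1$ forces $\psi\verk\phi=\id$, and similarly $\phi\verk\psi=\id$.

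For existence I would induct on $n$. The base case $n=0$ takes $\Log^{(0)}:=\Lambda$ and $1^{(0)}:=1$; a $0$-unipotent sheaf has the form $\pi^*\sF$, and since $\pi$ is smooth proper with geometrically connected fibres one has $\pi_*\pi^*\sF=\sF=e^*\pi^*\sF$. For the inductive step I would construct $\Log^{(n)}$ as a non-split extension
\[
0\to \pi^*\TSym^n\sH\to \Log^{(n)}\to \Log^{(n-1)}\to 0
\]
whose class $\varepsilon_n\in\Ext^1_\cE(\Log^{(n-1)},\pi^*\TSym^n\sH)$ is the ``universal'' one. For $n=1$ this class is the image of $\id_\sH$ under the Leray edge map
\[
\End(\sH)=H^0(S,\sH\otimes\sH^\vee)\cong H^0(S,\sH\otimes R^1\pi_*\Lambda)\hookrightarrow H^1(\cE,\pi^*\sH),
\]
using the canonical identification $R^1\pi_*\Lambda\cong \sH^\vee$ from the Kummer/Tate module description. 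For general $n$ I would either iterate the construction inductively, taking $\varepsilon_n$ to be a universal element of the appropriate $\Ext^1$, or equivalently define $\Log^{(n)}$ as the length-$n$ truncation of the divided-power algebra on $\Log^{(1)}$. Pulling back along $e$ splits each extension (since $e^*\pi^*=\id$ and $\Ext^1_S(\sF_1,\sF_2)$ computed on $S$ loses no information), yielding $e^*\Log^{(n)}\cong\bigoplus_{k=0}^{n}\TSym^k\sH$, and I would let $1^{(n)}$ be the element $1$ in the degree-$0$ summand.

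Verification of the universal property then proceeds by a double induction on $n$ and on the length of the target $\sG$: the five-lemma applied to the long exact sequence obtained from a filtration on $\sG$ reduces to the case $\sG=\pi^*\sF$, where the map becomes
\[
\pi_*(\Log^{(n),\vee}\otimes \pi^*\sF)\cong \pi_*\Log^{(n),\vee}\otimes \sF\to e^*\Log^{(n),\vee}\otimes\sF=\sF,
\]
and one is reduced to showing $\pi_*\Log^{(n),\vee}\cong\Lambda$ with vanishing higher direct images in the relevant range. This is the main obstacle: applying $R\pi_*$ to the dual of the defining extension and running the resulting spectral sequence, the connecting maps
\[
R^{i}\pi_*\pi^*(\TSym^k\sH)^\vee\to R^{i+1}\pi_*\Log^{(n-1),\vee}
\]
are governed precisely by cup product with $\varepsilon_n$. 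The universal choice of $\varepsilon_n$ as the identity is exactly what forces these connecting maps to be isomorphisms, inductively killing the relevant higher $R^i\pi_*$ and leaving $\pi_*\Log^{(n),\vee}=\Lambda$. This cohomological step is essentially the content of Beilinson--Levin's original computation and is the delicate part of the proof.
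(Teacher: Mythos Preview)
The paper does not give its own proof of this proposition: it is simply quoted from \cite{BeLe}, Proposition 1.2.6, and is used as a black box in the review of the elliptic logarithm sheaf in Section \ref{log-section}. There is therefore no proof in the paper to compare your proposal against.

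Your sketch is a reasonable outline of the argument one finds in Beilinson--Levin. The uniqueness argument is correct and standard. For existence, your inductive construction via extensions by $\pi^*\Sym^n\sH$ (note the paper uses $\Sym$ rather than $\TSym$ for the graded pieces of $\Log_\Lambda$) and the identification of the extension class with the identity in $\End(\sH)$ is the right idea. The reduction of the universal property to the vanishing/computation of $R^i\pi_*$ of the dual is also on the right track, and you correctly identify the delicate step as showing that the universal choice of extension class makes the connecting maps isomorphisms. One point to be careful about: your claim that pulling back along $e$ splits each extension over $\Lambda=\bbZ/\ell^r\bbZ$ or $\bbZ_\ell$ is not automatic in general---the paper only asserts the splitting $e^*\Log_{\bbQ_\ell}^{(n)}\cong\prod_{k=0}^n\Sym^k\sH_{\bbQ_\ell}$ over $\bbQ_\ell$ (see the discussion after the proposition). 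What is canonical integrally is only the section $1^{(n)}$, not a full splitting.
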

Obviously, as any $n-1$-unipotent sheaf is also $n$-unipotent, one
has transition maps $\Log_{\Lambda}^{(n)}\to \Log_{\Lambda}^{(n-1)}$ which map 
$1^{(n)}\mapsto 1^{(n-1)}$. 
\begin{definition}
The pro-sheaf $(\Log_{\Lambda},1):=(\Log_{\Lambda}^{(n)},1^{(n)})$ with the above transition 
maps is called the \emph{elliptic logarithm}
sheaf.
\end{definition}
We review some facts about $\Log_\Lambda$. Let 
$\sH_{\Lambda}:=\ul\Hom_S(R^1\pi_*\Lambda,\Lambda)$, 
then one has exact sequences
\begin{equation}
0\to \pi^*\Sym^n\sH_{\Lambda}\to \Log_{\Lambda}^{(n)}\to \Log_{\Lambda}^{(n-1)}\to 0,
\end{equation}
which in the case that $\Lambda=\bbQ_\ell$ induce an isomorphism
\begin{equation}
e^*\Log_{\bbQ_\ell}^{(n)}\isom \prod_{k=0}^n\Sym^k\sH_{\bbQ_\ell},
\end{equation}
which maps $1^{(n)}$ to $1\in \bbQ_\ell$. Also in the case 
$\Lambda=\bbQ_\ell$ the sheaf $\Log_{\bbQ_\ell}$ admits an action 
of $\sH_{\bbQ_\ell}$
\begin{equation}\label{mult-def}
\mult:
\pi^*\sH_{\bbQ_\ell}\otimes \Log_{\bbQ_\ell}\to \Log_{\bbQ_\ell},
\end{equation}
which on the associated graded pieces $\pi^*\Sym^k\sH_{\bbQ_\ell}$ 
is just the usual multiplication with $\sH_{\bbQ_\ell}$
$$
\pi^*\sH_{\bbQ_\ell}\otimes\pi^*\Sym^k\sH_{\bbQ_\ell}\to
\pi^*\Sym^{k+1}\sH_{\bbQ_\ell}.
$$
The most important fact about the logarithm sheaf is the vanishing 
of its higher direct images except the second one. 
\begin{proposition}[\cite{BeLe}, Lemma 1.2.7]\label{log-coh}
One has
$$
R^i\pi_*\Log_{\Lambda}=\begin{cases}
0&\mbox{  if }i\neq 2\\
R^2\pi_*\Lambda\isom \Lambda(-1) &\mbox{  if }i= 2.
\end{cases}
$$
\end{proposition}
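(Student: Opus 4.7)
The plan is to pass to the finite-level pro-system $(\Log_\Lambda^{(n)})_n$, use the filtration short exact sequences
$$
0\to \pi^*\Sym^n\sH_\Lambda\to \Log_{\Lambda}^{(n)}\to \Log_{\Lambda}^{(n-1)}\to 0
$$
together with the known direct images $R^0\pi_*\Lambda=\Lambda$, $R^1\pi_*\Lambda\isom \sH_\Lambda(-1)$, $R^2\pi_*\Lambda\isom \Lambda(-1)$ on the elliptic curve, and then pass to the inverse limit. By the continuous-cohomology formalism recalled in the introduction, $R^i\pi_*\Log_\Lambda$ is the $i$-th derived functor of $\prolim_n\pi_*\Log_\Lambda^{(n)}$, so the proof reduces to computing $R^i\pi_*\Log_\Lambda^{(n)}$ compatibly in $n$ and verifying that the resulting pro-systems are either Mittag-Leffler zero (for $i=0,1$) or eventually constant with value $\Lambda(-1)$ (for $i=2$).

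First I would apply $R\pi_*$ to the filtration sequence and use the projection formula to identify $R^j\pi_*\pi^*\Sym^n\sH_\Lambda$ with $\Sym^n\sH_\Lambda\otimes R^j\pi_*\Lambda$. Inductively on $n$ this yields long exact sequences in which the only new terms are $\Sym^n\sH_\Lambda$, $\Sym^n\sH_\Lambda\otimes\sH_\Lambda(-1)$ and $\Sym^n\sH_\Lambda(-1)$ in degrees $0,1,2$. The transition maps $R^j\pi_*\Log_\Lambda^{(n)}\to R^j\pi_*\Log_\Lambda^{(n-1)}$ are compatible with these sequences, and the crucial point is to identify the connecting homomorphisms with the canonical (Kodaira–Spencer / universal extension) classes that, via the multiplication map \eqref{mult-def} in the $\bbQ_\ell$-case and its integral analogue, encode the $\sH$-action on $\Log$.

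Next I would carry out the vanishing argument in each degree. For $i=0$, the section $1^{(n)}\in e^*\Log_\Lambda^{(n)}$ implements an isomorphism $e^*\Log^{(n)}\isom\prod_{k\le n}\Sym^k\sH_\Lambda$ and the transition maps send $1^{(n)}\mapsto 1^{(n-1)}$; combined with the fact that any section of $\Log^{(n)}$ is determined by its restriction along $e$ (the defining universal property) and must lie in higher and higher powers of the augmentation filtration under the transitions, the pro-system $(R^0\pi_*\Log^{(n)})_n$ is pro-zero. For $i=1$, the long exact sequence expresses $R^1\pi_*\Log^{(n)}$ as an extension controlled by the connecting maps from $R^0\pi_*\Log^{(n-1)}$ and $\Sym^n\sH_\Lambda\otimes\sH_\Lambda(-1)$, and the same multiplicative/universal-class argument shows that after composing enough transitions the cohomology classes are annihilated, so this pro-system is also Mittag-Leffler zero. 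For $i=2$, one checks that the map $\Sym^n\sH_\Lambda(-1)=R^2\pi_*\pi^*\Sym^n\sH_\Lambda\to R^2\pi_*\Log^{(n)}$ is zero (equivalently, the preceding connecting map $R^1\pi_*\Log^{(n-1)}\to \Sym^n\sH_\Lambda(-1)$ is surjective), so the transition $R^2\pi_*\Log^{(n)}\to R^2\pi_*\Log^{(n-1)}$ is an isomorphism; starting from $\Log^{(0)}=\Lambda$ with $R^2\pi_*\Lambda=\Lambda(-1)$, this gives $R^2\pi_*\Log_\Lambda\isom\Lambda(-1)$ in the limit.

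The main obstacle will be the identification of the connecting homomorphisms with the canonical classes and the consequent surjectivity/vanishing needed to kill the $R^0$ and $R^1$ pro-systems. For $\Lambda=\bbQ_\ell$ this is transparent from the $\sH_{\bbQ_\ell}$-action \eqref{mult-def}, but for $\Lambda=\bbZ/\ell^r\bbZ$ or $\bbZ_\ell$ one does not have such a module structure on the nose, so one must argue via the universal property of $\Log^{(n)}$ (using that any morphism $\Log^{(n)}\to\pi^*\sF$ is determined by $e^*$ and hence vanishes after enough transitions) and via the fact that $\pi^*\Sym^n\sH_\Lambda$ is killed by Poincaré duality when paired against sections coming from $\Log^{(n-1)}$. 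The rest of the proof is then a bookkeeping exercise with the long exact sequences and the Mittag-Leffler condition, invoking Lemma \ref{h-1-prolim} to pass derived functors through the inverse limit.
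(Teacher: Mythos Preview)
The paper does not supply its own proof of this proposition; it simply cites \cite{BeLe}, Lemma 1.2.7. Your outline is essentially the Beilinson--Levin argument: apply $R\pi_*$ to the filtration sequences, identify the connecting maps via the extension class of $\Log^{(1)}$, and show that the resulting pro-systems $(R^i\pi_*\Log^{(n)})_n$ are Mittag--Leffler zero for $i=0,1$ and constant equal to $\Lambda(-1)$ for $i=2$.

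One point in your write-up is imprecise and would need correction. In the $i=0$ step you invoke ``any section of $\Log^{(n)}$ is determined by its restriction along $e$ (the defining universal property)''. The universal property goes the other way: it identifies $\pi_*\ul\Hom(\Log^{(n)},\sG)$ with $e^*\sG$, i.e.\ it controls morphisms \emph{out of} $\Log^{(n)}$, not sections of $\Log^{(n)}$. The correct mechanism is the one you mention later under ``main obstacle'': the connecting map $\Lambda=R^0\pi_*\Log^{(0)}\to R^1\pi_*\pi^*\sH\isom\sH\otimes\sH(-1)\isom\End(\sH)$ is cup product with the extension class of $\Log^{(1)}$, and since $\Log^{(1)}$ is by construction the universal extension this class is the identity, hence the connecting map is injective. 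This is what forces the transition $R^0\pi_*\Log^{(1)}\to R^0\pi_*\Log^{(0)}$ to be zero, and iterating through the filtration gives the Mittag--Leffler vanishing. The same identification of the boundary with multiplication by $\id_\sH$ (and its symmetric powers) is what drives the $i=1$ and $i=2$ computations; you should make this explicit rather than leaving it as an obstacle, since without it the argument does not close.
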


Another important fact about the logarithm sheaf is the splitting 
principle, which we formulate as follows:
\begin{proposition}[\cite{BeLe} 1.2.10 (vi), \cite{Huber-Kings99} Corollary A.2.6.]\label{log-splitting}
Let $\varphi:\cE\to \cE'$ be an isogeny and denote by 
$\Log'_{\bbQ_\ell}$ the logarithm sheaf of $\cE'$. Then 
one has an isomorphism 
$$
\Log_{\bbQ_\ell}\isom \varphi^*\Log'_{\bbQ_\ell}.
$$
In particular, for each section $t\in \ker\varphi(S)$ one has a canonical
isomorphism 
$$
t^*\Log_{\bbQ_\ell}\isom {e'}^*\Log'_{\bbQ_\ell}\isom e^*\Log_{\bbQ_\ell}=
\prod_{n\ge 0}\Sym^n\sH_{\bbQ_\ell}.
$$
\end{proposition}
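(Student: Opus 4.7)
My plan is to exploit the universal property of the logarithm sheaf. First I verify that $\varphi^*\Log'^{(n)}_{\bbQ_\ell}$ is $n$-unipotent on $\cE$: pulling back the unipotent filtration of $\Log'^{(n)}_{\bbQ_\ell}$ along $\varphi$ yields a filtration whose graded pieces are $\varphi^*\pi'^*\Sym^k\sH'_{\bbQ_\ell}=\pi^*\Sym^k\sH'_{\bbQ_\ell}$, using the identity $\pi'\circ\varphi=\pi$. Next, since $\varphi\circ e=e'$, I have a canonical identification
\[
e^*\varphi^*\Log'^{(n)}_{\bbQ_\ell}\isom (e')^*\Log'^{(n)}_{\bbQ_\ell},
\]
and hence a distinguished section $\widetilde{1}^{(n)}$ corresponding to $1'^{(n)}$.

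By the universal property of $(\Log^{(n)}_{\bbQ_\ell},1^{(n)})$ (Proposition 1.2.6 of \cite{BeLe}) there is a unique morphism
\[
f^{(n)}:\Log^{(n)}_{\bbQ_\ell}\to \varphi^*\Log'^{(n)}_{\bbQ_\ell}
\]
sending $1^{(n)}$ to $\widetilde{1}^{(n)}$, and these are compatible as $n$ varies. To see that $f^{(n)}$ is an isomorphism I proceed by induction on $n$. The case $n=0$ is trivial since both sheaves are $\bbQ_\ell$. For the induction step, uniqueness in the universal property applied to the quotient $\varphi^*\Log'^{(n-1)}_{\bbQ_\ell}$ forces $f^{(n)}$ to be strict with respect to the unipotent filtrations, so it induces a morphism on the graded pieces
\[
\pi^*\Sym^n\sH_{\bbQ_\ell}\to \pi^*\Sym^n\sH'_{\bbQ_\ell}.
\]
The crucial input is that over $\bbQ_\ell$ the isogeny $\varphi$ induces an isomorphism $\sH_{\bbQ_\ell}\isom \sH'_{\bbQ_\ell}$, because $\ker\varphi$ is finite and hence the induced map on integral Tate modules has finite kernel and cokernel, which disappear after $\otimes_{\bbZ_\ell}\bbQ_\ell$. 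Tracing through the normalisations (both maps send $1$ to $1$) shows the induced map on the $n$-th graded piece is exactly this canonical isomorphism. The five lemma, combined with the inductive hypothesis on $\Log^{(n-1)}_{\bbQ_\ell}\to \varphi^*\Log'^{(n-1)}_{\bbQ_\ell}$, then gives that $f^{(n)}$ is an isomorphism.

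For the ``in particular'' claim, let $t\in\ker\varphi(S)$. Then $\varphi\circ t=e'$, so applying $t^*$ to the isomorphism just constructed gives
\[
t^*\Log_{\bbQ_\ell}\isom t^*\varphi^*\Log'_{\bbQ_\ell}=(e')^*\Log'_{\bbQ_\ell}\isom \prod_{n\ge 0}\Sym^n\sH'_{\bbQ_\ell}\isom \prod_{n\ge 0}\Sym^n\sH_{\bbQ_\ell}=e^*\Log_{\bbQ_\ell},
\]
where the last isomorphism uses again $\sH_{\bbQ_\ell}\isom\sH'_{\bbQ_\ell}$ from the isogeny.

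The main obstacle I expect is the strictness of $f^{(n)}$ with respect to the unipotent filtrations; without this the five lemma cannot be applied directly. The fix is to invoke uniqueness in the universal property at each level of the filtration simultaneously, so that the universal map automatically commutes with the transition morphisms $\Log^{(n)}_{\bbQ_\ell}\to\Log^{(n-1)}_{\bbQ_\ell}$, and then to identify the graded-piece map by unwinding the normalisation of the canonical section, which is the only free parameter.
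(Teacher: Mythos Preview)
The paper does not supply its own proof of this proposition; it is quoted from \cite{BeLe} 1.2.10\,(vi) and \cite{Huber-Kings99} Corollary~A.2.6. Your argument via the universal property is the standard route taken in those references and is essentially correct: $\varphi^*\Log'^{(n)}_{\bbQ_\ell}$ is visibly $n$-unipotent, the universal property produces $f^{(n)}$, and uniqueness forces compatibility with the transition maps $\Log^{(n)}_{\bbQ_\ell}\to\Log^{(n-1)}_{\bbQ_\ell}$, so that one does get an induced map on each graded piece and the five lemma applies once that map is known to be an isomorphism.

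The one genuine soft spot is your identification of the map on the $n$-th graded piece. The sections $1^{(n)}$ and $\widetilde 1^{(n)}$ split the \emph{top} quotient $\bbQ_\ell$, so ``both send $1$ to $1$'' constrains only the top of the filtration, not the bottom subsheaf $\pi^*\Sym^n\sH_{\bbQ_\ell}$; the canonical section is \emph{not} the only free parameter here. The clean fix is to treat $n=1$ by an extension-class computation: the class of $\Log^{(1)}_{\bbQ_\ell}$ in $\Ext^1_\cE(\bbQ_\ell,\pi^*\sH_{\bbQ_\ell})$ maps to $\id_{\sH_{\bbQ_\ell}}$ under the Leray edge map to $\Hom_S(\sH_{\bbQ_\ell},\sH_{\bbQ_\ell})$, and pulling back $[\Log'^{(1)}_{\bbQ_\ell}]$ along $\varphi$ sends $\id_{\sH'_{\bbQ_\ell}}$ to $\varphi_*\in\Hom_S(\sH_{\bbQ_\ell},\sH'_{\bbQ_\ell})$. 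Since $f^{(1)}$ is a morphism of extensions, the induced map $\sH_{\bbQ_\ell}\to\sH'_{\bbQ_\ell}$ is forced to be $\varphi_*$, which is an isomorphism over $\bbQ_\ell$ exactly because the isogeny has finite kernel. For $n\ge 2$ you then use the $\sH_{\bbQ_\ell}$-module structure $\mult$ on $\Log_{\bbQ_\ell}$ recalled just before the proposition (which is what the paper itself invokes in the remark immediately following, noting that for $\varphi=[N]$ the graded map is multiplication by $N^k$): compatibility of $f$ with $\mult$ again follows from uniqueness in the universal property, and then the graded map is $\Sym^n(\varphi_*)$. With this adjustment your proof is complete.
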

Note that in the case where $\varphi=[N]$ the isomorphism in the proposition
induces the multiplication by $[N]^k$ on the graded pieces
$\pi^*\Sym^k\sH_{\bbQ_\ell}$ of $\Log_{\bbQ_\ell}$.

%
\subsection{The elliptic polylogarithm and Eisenstein classes}
%
The Leray spectral sequence together with Proposition \ref{log-coh} and 
the localization sequence give
an isomorphism 
\begin{equation}\label{ext-for-pol-def}
\Ext^1_{\cE\setminus \{e\}}(\pi^*\sH_{\bbQ_\ell},\Log_{\bbQ_\ell}(1))\isom
\Hom_S(\sH_{\bbQ_\ell},\prod_{n\ge 1}\Sym^n\sH_{\bbQ_\ell})
\end{equation}
(see \cite[A.3]{Huber-Kings99} or \cite{Huber-pune}).
\begin{definition}\label{pol-def}
The \emph{(small) elliptic polylogarithm} is the class
\begin{equation*}
\pol\in \Ext^1_{\cE\setminus \{e\}}(\pi^*\sH_{\bbQ_\ell},\Log_{\bbQ_\ell}(1)),
\end{equation*}
which maps to the canonical inclusion 
$\sH_{\bbQ_\ell}\hookrightarrow\prod_{n\ge 1}\Sym^n\sH_{\bbQ_\ell}$ under the
above isomorphism \eqref{ext-for-pol-def}. 
\end{definition}

Consider a non-zero $N$-torsion section $t\in \cE(S)$. 
If we use the isomorphism
$t^*\Log_{\bbQ_\ell}\isom \prod_{n\ge 0}\Sym^n\sH_{\bbQ_\ell}$ from
Proposition \ref{log-splitting} we get
$$
t^*\pol=(t^*\pol^n)_{n\ge 0}\in \Ext^1_S(\sH_{\bbQ_\ell},\prod_{n\ge 0}\Sym^n\sH_{\bbQ_\ell}(1)).
$$
To get classes in $H^1(S,\Sym^n\sH_{\bbQ_\ell}(1))$ we use the map
\begin{equation}\label{contr-def}
\contr_{\sH_{\bbQ_\ell}}:
\Ext^1_S(\sH_{\bbQ_\ell},\prod_{n\ge 0}\Sym^n\sH_{\bbQ_\ell}(1))\to
\Ext^1_S({\bbQ_\ell},\prod_{n\ge 1}\Sym^{n-1}\sH_{\bbQ_\ell}(1))
\end{equation}
defined by first tensoring an extension with $\sH_{\bbQ_\ell}^\vee$,
where $\sH_{\bbQ_\ell}^\vee$ is the dual of $\sH_{\bbQ_\ell}$, and then 
compose with the contraction map
$$
\sH_{\bbQ_\ell}^\vee\otimes\Sym^n\sH_{\bbQ_\ell}
\to \Sym^{n-1}\sH_{\bbQ_\ell}
$$
mapping $h^\vee\otimes h_1\otimes\cdots \otimes h_n$ to 
$\frac{1}{n+1}\sum_{j=1}^n h^\vee(h_j)h_1\otimes\cdots \wh{h_j}\cdots\otimes h_n$.
\begin{definition}\label{Eis-def}
Let $N>1$ and $t\in \cE[N](S)$ be a non-zero $N$-torsion point,
then
$$
\Eis_{\bbQ_\ell}^k(t):=-N^{k-1}\contr_{\sH_{\bbQ_\ell}}(t^*\pol^{k+1})
\in H^1(S,\Sym^k\sH_{\bbQ_\ell}(1))
$$
is called the \emph{$k$-th Eisenstein class}. If $\psi:(\cE[N](S)\setminus \{e\})\to \bbQ$
is a map, we define
$$
\Eis_{\bbQ_\ell}^k(\psi):=
\sum_{t\in \cE[N](S)\setminus \{e\}}\psi(t)\Eis_{\bbQ_\ell}^k(t).
$$
\end{definition}
\begin{remark}
The factor $-N^{k-1}$ is for historical reasons as the Eisenstein classes were
originally defined in a different way by Beilinson 
(see \cite[Theorem 7.3]{Beilinson-modular-curves})
\end{remark}

%
\subsection{A variant of the elliptic polylogarithm}
%

For the comparison of the Eisenstein classes with the elliptic 
Soul\'e elements
a slight variant of the elliptic polylogarithm is useful. 

The localization sequence for $\Log_{\bbQ_\ell}$ on $\cE$ and
the closed subscheme $\cE[c]$ for $c> 1$ gives
\begin{equation}\label{c-loc-seq}
0\to H^1(\cE\setminus \cE[c],\Log_{\bbQ_\ell}(1))\xrightarrow{\res}
H^0(\cE[c], \Log_{\bbQ_\ell}\mid_{\cE[c]})\to 
\bbQ_\ell\to 0
\end{equation}
because $H^1(\cE,\Log_{\bbQ_\ell}(1))=0$ and 
$H^2({\cE},\Log_{\bbQ_\ell}(1))\isom H^2(\cE,\bbQ_\ell)\isom\bbQ_\ell$ by 
Proposition  \ref{log-coh}. In $H^0(\cE[c], \Log_{\bbQ_\ell}\mid_{\cE[c]})$ 
we have an element which maps to $0$ in $\bbQ_\ell$ as follows:
We have
$$
H^0(\cE[c],\bbQ_\ell)\subset  H^0(\cE[c], \Log_{\bbQ_\ell}\mid_{\cE[c]})
$$
and consider $q:\cE[c]\to S$ and the section $e:S\to \cE[c]$. These morphisms
induce 
$$
e_*:H^0(S,\bbQ_\ell)\to H^0(\cE[c],\bbQ_\ell)
$$
and 
$$
q^*:H^0(S,\bbQ_\ell)\to H^0(\cE[c],\bbQ_\ell).
$$
\begin{definition}
Let $1\in H^0(S,\bbQ_\ell)$ be the constant section which is identically
$1$ on $S$. Then we let 
$$
c^2e_*(1)-q^*(1)\in H^0(\cE[c],\bbQ_\ell)\subset 
H^0(\cE[c], \Log_{\bbQ_\ell}\mid_{\cE[c]}).
$$
\end{definition}
Note that $c^2e_*(1)-q^*(1)$ maps to $0\in \bbQ_\ell$ under the
map in \eqref{c-loc-seq}. We can now define the variant of the
elliptic polylogarithm.

\begin{definition}\label{varpol-def}
The elliptic polylogarithm $\pol_c$ associated to $c^2e_*(1)-q^*(1)$
is the cohomology class
\begin{equation*}
\pol_{c}\in 
H^1(\cE\setminus \cE[c],\Log_{\bbQ_\ell}(1))
\end{equation*}
with $\res(\pol_c)=c^2e_*(1)-q^*(1)\in H^0(\cE[c], \Log_{\bbQ_\ell}\mid_{\cE[c]})$. 
\end{definition}
This cohomology class is related to $\pol$ as follows. Write
$$
H^1(\cE\setminus \cE[c],\Log_{\bbQ_\ell}(1))\isom 
\Ext^1_{\cE\setminus \cE[c]}(\bbQ_\ell,\Log_{\bbQ_\ell}(1))
$$
and define a map 
\begin{equation}\label{mult-sH-def}
\mult_{\sH_{\bbQ_\ell}}:
\Ext^1_{\cE\setminus \cE[c]}(\bbQ_\ell,\Log_{\bbQ_\ell}(1))\to
\Ext^1_{\cE\setminus \cE[c]}(\pi^*\sH_{\bbQ_\ell},\Log_{\bbQ_\ell}(1))
\end{equation}
by first tensoring an extension with $\pi^*\sH_{\bbQ_\ell}$
and then push-out with 
$\mult:\pi^*\sH_{\bbQ_\ell}\otimes\Log_{\bbQ_\ell}\to \Log_{\bbQ_\ell}$ 
from Equation \eqref{mult-def}. This gives 
$$
\mult_{\sH_{\bbQ_\ell}}( \pol_{c})\in
\Ext^1_{\cE\setminus \cE[c]}(\pi^*\sH_{\bbQ_\ell},\Log_{\bbQ_\ell}(1)).
$$
On the other hand consider 
$$
[c]^*\pol\in \Ext^1_{\cE\setminus \cE[c]}
(\pi^*\sH_{\bbQ_\ell},[c]^*\Log_{\bbQ_\ell}(1))
$$
and use the isomorphism $\Log_{\bbQ_\ell}\isom [c]^*\Log_{\bbQ_\ell}$ 
from Proposition \ref{log-splitting} to obtain a class in 
$\Ext^1_{\cE\setminus \cE[c]}(\pi^*\sH_{\bbQ_\ell},\Log_{\bbQ_\ell}(1))$.
Restriction of  $\pol$ to $\cE\setminus \cE[c]$ gives another class in
$\Ext^1_{\cE\setminus \cE[c]}(\pi^*\sH_{\bbQ_\ell},\Log_{\bbQ_\ell}(1))$.
\begin{proposition}\label{pol-comparison}
There is an equality 
$$
\mult_{\sH_{\bbQ_\ell}}(\pol_{c})=
c^2\pol\mid_{\cE\setminus \cE[c]}-c[c]^*\pol
$$
in 
$\Ext^1_{\cE\setminus \cE[c]}(\pi^*\sH_{\bbQ_\ell},\Log_{\bbQ_\ell}(1))$.
\end{proposition}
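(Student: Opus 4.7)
The plan is to identify both sides of the claimed identity by comparing their residues along $\cE[c]$, using the injectivity of the residue map
\[
\res: \Ext^1_{\cE\setminus \cE[c]}(\pi^*\sH_{\bbQ_\ell}, \Log_{\bbQ_\ell}(1)) \to H^0(\cE[c], \sH_{\bbQ_\ell}^\vee \otimes \Log_{\bbQ_\ell}|_{\cE[c]}).
\]
First I would establish this injectivity. By the projection formula together with Proposition \ref{log-coh}, one has $R^i\pi_*(\pi^*\sH_{\bbQ_\ell}^\vee \otimes \Log_{\bbQ_\ell}(1)) = 0$ for $i \neq 2$, so Leray forces $H^1(\cE, \pi^*\sH_{\bbQ_\ell}^\vee \otimes \Log_{\bbQ_\ell}(1)) = 0$; the localization sequence for $\cE[c] \subset \cE$ then yields the claimed injectivity. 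The verification thus reduces to matching residues pointwise at each $t \in \cE[c]$, where I identify $t^*\Log_{\bbQ_\ell} \isom \prod_{n\ge 0} \Sym^n \sH_{\bbQ_\ell}$ via Proposition \ref{log-splitting} applied to $\varphi = [c]$.

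For the LHS, I would exploit that $\mult_{\sH_{\bbQ_\ell}}$ is induced on cohomology by the sheaf morphism $\mult': \Log_{\bbQ_\ell} \to \pi^*\sH_{\bbQ_\ell}^\vee \otimes \Log_{\bbQ_\ell}$ adjoint to \eqref{mult-def}, which commutes with the boundary of the localization sequence. Hence $\res(\mult_{\sH_{\bbQ_\ell}}(\pol_c)) = \mult'(\res(\pol_c)) = \mult'(c^2 e_*(1) - q^*(1))$ by Definition \ref{varpol-def}. At $t \in \cE[c]$, the section $c^2 e_*(1) - q^*(1)$ evaluates to the scalar $c^2\delta_{t,e} - 1$ in the $\Sym^0 = \bbQ_\ell$ component of $t^*\Log_{\bbQ_\ell}$. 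Since the restriction of $\mult'$ to $\Sym^0$ sends $1$ to $\id_{\sH_{\bbQ_\ell}} \in \sH_{\bbQ_\ell}^\vee \otimes \Sym^1 \sH_{\bbQ_\ell}$, the residue of the LHS at $t$ is $(c^2\delta_{t,e} - 1)\,\id_{\sH_{\bbQ_\ell}}$.

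For the RHS I would compute the two summands separately. Since $\pol$ is defined on $\cE \setminus \{e\} \supset \cE \setminus \cE[c]$, its residue at $t \in \cE[c]$ vanishes for $t \ne e$ and equals $\id_{\sH_{\bbQ_\ell}}$ (the canonical inclusion into the $\Sym^1$ factor) at $t = e$, by Definition \ref{pol-def}. For $[c]^*\pol$, étaleness of $[c]$ at each $t$ implies that the residue in $\sH_{\bbQ_\ell}^\vee \otimes t^*[c]^*\Log_{\bbQ_\ell} = \sH_{\bbQ_\ell}^\vee \otimes e^*\Log_{\bbQ_\ell}$ is the pullback of the residue of $\pol$ at $e$, namely $\id_{\sH_{\bbQ_\ell}}$; transporting this through the splitting $\Log_{\bbQ_\ell} \isom [c]^*\Log_{\bbQ_\ell}$, whose inverse scales the $\Sym^k$ piece by $c^{-k}$ (per the remark following Proposition \ref{log-splitting}), contributes a factor $c^{-1}$ on the $\Sym^1$ piece, so $\res_t([c]^*\pol) = c^{-1}\,\id_{\sH_{\bbQ_\ell}}$ for every $t \in \cE[c]$. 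Combining, the residue of the RHS at $t$ is $c^2 \delta_{t,e}\,\id_{\sH_{\bbQ_\ell}} - \id_{\sH_{\bbQ_\ell}} = (c^2\delta_{t,e} - 1)\,\id_{\sH_{\bbQ_\ell}}$, matching the LHS. The main obstacle will be the careful tracking of these scaling factors: the $c^{-1}$ on the $\Sym^1$ piece coming from the splitting isomorphism of Proposition \ref{log-splitting} is exactly what produces the asymmetric coefficients $c^2$ and $c$ in the claimed identity, so one must be vigilant about orientations and normalizations in applying the splitting principle.
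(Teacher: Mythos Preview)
Your proposal is correct and follows essentially the same approach as the paper: both arguments use the injectivity of the residue map into $\Hom_{\cE[c]}(\sH_{\bbQ_\ell},\prod_{n\ge 0}\Sym^n\sH_{\bbQ_\ell})$ and verify that each side has residue $c^2 e_*(\id)-q^*(\id)$, with the key observation being that the splitting isomorphism $\Log_{\bbQ_\ell}\isom [c]^*\Log_{\bbQ_\ell}$ acts by $c$ on the $\Sym^1$ piece, so $[c]^*\pol$ has residue $c^{-1}\id$. Your write-up is somewhat more explicit in justifying the injectivity via Proposition~\ref{log-coh} and the Leray spectral sequence, but the substance is identical.
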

\begin{proof}
As in Equation \eqref{ext-for-pol-def} we have 
$$
\Ext^1_{\cE\setminus \cE[c]}(\pi^*\sH_{\bbQ_\ell},\Log_{\bbQ_\ell}(1))\subset
\Hom_{\cE[c]}(\sH_{\bbQ_\ell},\prod_{n\ge 0}\Sym^n\sH_{\bbQ_\ell})
$$
and we have to show that the images of the elements
$\mult_{\sH_{\bbQ_\ell}}(\pol_{c})$ and
$c^2\pol\mid_{\cE\setminus \cE[c]}-c[c]^*\pol$ in the right hand side
are the same. One has two maps 
$$
e_*:\Hom_S(\sH_{\bbQ_\ell},\sH_{\bbQ_\ell})\to
\Hom_{\cE[c]}(\sH_{\bbQ_\ell},\sH_{\bbQ_\ell})
$$
and 
$$
q^*:\Hom_S(\sH_{\bbQ_\ell},\sH_{\bbQ_\ell})\to
\Hom_{\cE[c]}(\sH_{\bbQ_\ell},\sH_{\bbQ_\ell}).
$$
It follows from the definition
of $\mult_{\sH_{\bbQ_\ell}}(\pol_{c})$ and
$c^2\pol\mid_{\cE\setminus \cE[c]}-c[c]^*\pol$ that both elements map
to 
$$
c^2e_*(\id)-q^*(\id)\in 
\Hom_{\cE[c]}(\sH_{\bbQ_\ell},\sH_{\bbQ_\ell})
$$ 
(note that the identification $\Log_{\bbQ_\ell}\isom [c]^*\Log_{\bbQ_\ell}$
is multiplication by $c$ on $\sH_{\bbQ_\ell}$ so that the residue of
$[c]^*\pol$ is $\frac{1}{c}\id_{\cE[c]}$).
\end{proof}
\subsection{The variant of the elliptic polylogarithm and Eisenstein classes}
We are going to explain how specializations of $\pol_c$ are related to
the Eisenstein classes.

Let $(c,N)=1$ and recall from Definition \ref{varpol-def} the class
$$
\pol_{c}\in 
\Ext^1_{\cE\setminus \cE[c]}(\bbQ_\ell,\Log_{\bbQ_\ell}(1)).
$$
If we pull this back along a non-zero $N$-torsion section $t\in \cE[N](S)$,
we get, using again $t^*\Log_{\bbQ_\ell}\isom\prod_{n\ge 0}\Sym^n\sH_{\bbQ_\ell}$,
$$
t^*\pol_{c}\in 
\Ext^1_S(\bbQ_\ell, \prod_{n\ge 0}\Sym^n\sH_{\bbQ_\ell}(1))
$$
and the $k$-th component gives a class
$$
t^*\pol_{c}^k\in H^1(S,\Sym^k\sH_{\bbQ_\ell}(1)).
$$
\begin{proposition}\label{varpol-Eis-comparison}
In $H^1(S,\Sym^k\sH_{\bbQ_\ell}(1))$ we have the equality
$$
t^*\pol_{c}^k=
\frac{-1}{N^{k-1}}(c^2\Eis_{\bbQ_\ell}^k(t)-c^{-k}\Eis_{\bbQ_\ell}^k({[c]t})).
$$
In particular, for $c\equiv 1\mod{N}$ one has
$$
t^*\pol_{c}^k=
\frac{-1}{N^{k-1}}\frac{c^{k+2}-1}{c^k}\Eis_{\bbQ_\ell}^k(t).
$$
\end{proposition}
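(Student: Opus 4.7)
The plan is to deduce the comparison from Proposition~\ref{pol-comparison} by pulling back along $t$, decomposing via the splitting $t^*\Log_{\bbQ_\ell}\isom\prod_{n\ge 0}\Sym^n\sH_{\bbQ_\ell}$ of Proposition~\ref{log-splitting}, extracting the $(k+1)$-th component, and applying $\contr_{\sH_{\bbQ_\ell}}$. Since $(c,N)=1$, the section $t$ factors through $\cE\setminus\cE[c]$, and pulling back Proposition~\ref{pol-comparison} gives
\[
t^*\mult_{\sH_{\bbQ_\ell}}(\pol_c)=c^2\,t^*\pol-c\,t^*[c]^*\pol
\]
in $\Ext^1_S(\sH_{\bbQ_\ell},\prod_{n\ge 0}\Sym^n\sH_{\bbQ_\ell}(1))$.

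Passing to the $(k+1)$-th component requires two inputs. On the left, the multiplication $\mult$ of \eqref{mult-def} restricts on graded pieces to the usual multiplication $\sH_{\bbQ_\ell}\otimes\Sym^k\sH_{\bbQ_\ell}\to\Sym^{k+1}\sH_{\bbQ_\ell}$, so the $(k+1)$-th component of the left hand side is $\mult_{\sH_{\bbQ_\ell}}(t^*\pol_c^k)$, where now $\mult_{\sH_{\bbQ_\ell}}$ denotes the induced map $\Ext^1_S(\bbQ_\ell,\Sym^k\sH_{\bbQ_\ell}(1))\to\Ext^1_S(\sH_{\bbQ_\ell},\Sym^{k+1}\sH_{\bbQ_\ell}(1))$. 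On the right, the remark after Proposition~\ref{log-splitting} says that the identification $\Log_{\bbQ_\ell}\isom[c]^*\Log_{\bbQ_\ell}$ acts as multiplication by $c^n$ on $\Sym^n\sH_{\bbQ_\ell}$; comparing this with the splitting for the torsion section $[c]t$, the $(k+1)$-th component of $t^*[c]^*\pol$ equals $c^{-(k+1)}([c]t)^*\pol^{k+1}$. Absorbing the factor $c$ then yields
\[
\mult_{\sH_{\bbQ_\ell}}(t^*\pol_c^k)=c^2\,t^*\pol^{k+1}-c^{-k}\,([c]t)^*\pol^{k+1}.
\]

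Applying $\contr_{\sH_{\bbQ_\ell}}$ and using the fact that the composition $\contr_{\sH_{\bbQ_\ell}}\circ\mult_{\sH_{\bbQ_\ell}}$ is the identity on $H^1(S,\Sym^k\sH_{\bbQ_\ell}(1))$ (a direct calculation from the definitions, combining the coevaluation $\bbQ_\ell\to\sH_{\bbQ_\ell}\otimes\sH_{\bbQ_\ell}^\vee$ with the contraction map), the left hand side becomes $t^*\pol_c^k$. By Definition~\ref{Eis-def} the right hand side equals $-N^{-(k-1)}(c^2\Eis_{\bbQ_\ell}^k(t)-c^{-k}\Eis_{\bbQ_\ell}^k([c]t))$, which is the desired formula. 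The case $c\equiv 1\pmod N$ follows because $[c]t=t$ and $c^2-c^{-k}=(c^{k+2}-1)/c^k$. The main technical obstacle is the careful tracking of the splitting principle: one must verify that the composition of the splitting for $t$ with the isomorphism $\Log_{\bbQ_\ell}\isom[c]^*\Log_{\bbQ_\ell}$ and the inverse splitting for $[c]t$ is precisely the twist by $c^n$ on the $n$-th graded piece, which is what produces the shift $c^{-(k+1)}$ and hence the coefficient $c^{-k}$ in the final formula.
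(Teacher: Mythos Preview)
Your argument is correct and follows essentially the same route as the paper's proof: both start from Proposition~\ref{pol-comparison}, pull back along $t$, extract the $(k+1)$-th graded piece while tracking the $c^{k+1}$-scaling coming from the isomorphism $\Log_{\bbQ_\ell}\isom[c]^*\Log_{\bbQ_\ell}$, and conclude via $\contr_{\sH_{\bbQ_\ell}}\circ\mult_{\sH_{\bbQ_\ell}}=\id$. The only cosmetic difference is the order of operations (you pull back before extracting components, the paper does the reverse), which does not affect the substance.
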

\begin{proof}
According to Proposition \ref{pol-comparison} we have
$$
\mult_{\sH_{\bbQ_\ell}}(\pol_{c}^k)=
c^2\pol^{k+1}\mid_{\cE\setminus \cE[c]}-c[c]^*\pol^{k+1}.
$$
Taking the pull-back along $t$ of the right hand side and
applying the map $\contr_{\sH_{\bbQ_\ell}}$ gives 
$$
\frac{-1}{N^{k-1}}(c^2\Eis_{\bbQ_\ell}^k(t)-c^{-k}\Eis_{\bbQ_\ell}^k({[c]t}))
$$
(note that the isomorphism $\Log_{\bbQ_\ell}\isom [c]^*\Log_{\bbQ_\ell}$
is multiplication by $c^{k+1}$ on $\Sym^{k+1}\sH_{\bbQ_\ell}$ by
the remark after Proposition \ref{log-splitting} so that we have to divide
by $c^{k+1}$).
Thus it remains to show that
$$
\contr_{\sH_{\bbQ_\ell}}
(t^*\mult_{\sH_{\bbQ_\ell}}( \pol_{c}^k))=
t^*\pol_{c}^k.
$$
But obviously we have 
$\contr_{\sH_{\bbQ_\ell}}\verk t^*\mult_{\sH_{\bbQ_\ell}}=
\contr_{\sH_{\bbQ_\ell}}\verk\mult_{\sH_{\bbQ_\ell}}t^*$, where
the last $\mult_{\sH_{\bbQ_\ell}}$ is now on $\prod_{n\ge 0}\Sym^n\sH_{\bbQ_\ell}$,
which gives 
$$
\contr_{\sH_{\bbQ_\ell}} (t^*\mult_{\sH_{\bbQ_\ell}}( \pol_{c}^k))=
\contr_{\sH_{\bbQ_\ell}}\verk\mult_{\sH_{\bbQ_\ell}}( t^*\pol_{c}^k).
$$
A direct computation shows that 
$\contr_{\sH_{\bbQ_\ell}}\verk\mult_{\sH_{\bbQ_\ell}}$ is the identity map. 
This gives the desired result. 
\end{proof}

\subsection{Sheaves of Iwasawa modules and the elliptic 
logarithm sheaf}
In this section we relate the elliptic logarithm sheaf to 
a certain sheaf of Iwasawa modules. 

Write $\cE_r:=\cE$ with structure map $\pi_{r*}:\cE_r\to S$
and identity section $e_r:S\to \cE_r$. Let 
$$
p_r:=[\ell^r]:\cE_r\to \cE
$$
be the $\ell^r$-multiplication map.
\begin{definition}
Let $\Lambda_n=\bbZ/\ell^n\bbZ$, 
then the \emph{geometric elliptic logarithm sheaf}
with coefficients in $R$
is the inverse system 
$$
\sL_{\Lambda_n}:=(p_{r*}\Lambda_n)
$$
where the transition maps are the trace maps 
$p_{r+1*}\sL_n\to p_{r*}\sL_n$.
Define a ring sheaf by 
$$
\sR_{\Lambda_n}:=e^*\sL_{\Lambda_n}
$$
and let $1_n\in \sR_{\Lambda_n}$ be the identity section.
\end{definition} 
As $\cE_r$ is an $\cE[\ell^r]$-torsor over $\cE$, the sheaf 
$\sL_{\Lambda_n}$ is an $\pi^*\sR_{\Lambda_n}$-module, which
is locally free of rank one. Denote by $\sI_{\Lambda_n}\subset\sR_{\Lambda_n}$
the augmentation ideal and by $\sI_{\Lambda_n}^k$ its $k$-th power. We define
\begin{equation}
\sL^{(k)}_{\Lambda_n}:=\sL_{\Lambda_n}\otimes_{\pi^*\sR_{\Lambda_n}} \pi^*(\sR_{\Lambda_n}/\sI_{\Lambda_n}^{k+1}).
\end{equation}
The first main result in this section is the following theorem:
\begin{theorem}\label{log-lambda-n-comp}
There is a canonical isomorphism 
$$
\sL_{\Lambda_n}^{(k)}\xrightarrow{\isom}\Log_{\Lambda_n}^{(k)}
$$
which maps $1_n$ to $1^{(k)}_{\Lambda_n}$. Here $\Log_{\Lambda_n}^{(k)}$ denotes
the constant inverse system. 
\end{theorem}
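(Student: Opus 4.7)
The approach is to verify that $\sL^{(k)}_{\Lambda_n}$, together with its tautological unit section, satisfies the universal property characterizing $\Log^{(k)}_{\Lambda_n}$ (Proposition 1.2.6 of Beilinson-Levin). This produces a canonical morphism which I then show is an isomorphism by induction, comparing graded pieces.

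First, I analyze the structure of $\sR_{\Lambda_n}$ and its augmentation ideal. Because $p_r:\cE_r\to\cE$ is a finite \'etale torsor under $\cE[\ell^r]$, the sheaf $e^*p_{r*}\Lambda_n$ is the group algebra $\Lambda_n[\cE[\ell^r]]$, and taking the inverse limit in $r$ gives stalks of $\sR_{\Lambda_n}$ which are the Iwasawa algebras $\Lambda(\sH_{\ol{s}})\otimes\Lambda_n$ in the sense of Lemma \ref{relation-to-Iwasawa-modules}. By the regularity of the Iwasawa algebra recalled just before Lemma \ref{sym-mom-compatibility}, the augmentation filtration yields natural identifications $\sI^j_{\Lambda_n}/\sI^{j+1}_{\Lambda_n}\isom\Sym^j\sH_{\Lambda_n}$. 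Since $\sL_{\Lambda_n}$ is locally a free $\pi^*\sR_{\Lambda_n}$-module of rank one (as $p_r$ is an $\cE[\ell^r]$-torsor over $\cE$), the sheaf $\sL^{(k)}_{\Lambda_n}$ inherits a filtration with graded pieces $\pi^*\Sym^j\sH_{\Lambda_n}$ for $0\le j\le k$, so it is $k$-unipotent in the sense of Section \ref{log-section}.

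Next, the unit section $1_n\in\sR_{\Lambda_n}=e^*\sL_{\Lambda_n}$ descends to a section $1_n\in\Gamma(S,e^*\sL^{(k)}_{\Lambda_n})$. Invoking the universal property of $\Log^{(k)}_{\Lambda_n}$ applied to the $k$-unipotent sheaf $\sL^{(k)}_{\Lambda_n}$, I obtain a unique morphism $\phi:\Log^{(k)}_{\Lambda_n}\to\sL^{(k)}_{\Lambda_n}$ with $e^*\phi(1^{(k)}_{\Lambda_n})=1_n$. I then show $\phi$ is an isomorphism by induction on $k$: the case $k=0$ is tautological, and for the inductive step I compare the short exact sequence $0\to\pi^*\Sym^k\sH_{\Lambda_n}\to\Log^{(k)}_{\Lambda_n}\to\Log^{(k-1)}_{\Lambda_n}\to 0$ with the filtration-quotient sequence for $\sL^{(k)}_{\Lambda_n}$ and apply the five lemma, the quotient map being an isomorphism by induction.

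The main technical obstacle is verifying that the map $\phi$ induces the identity on the top graded piece $\pi^*\Sym^k\sH_{\Lambda_n}$. This requires interpreting the universal property at the level of graded quotients, and amounts to matching the natural embedding $\sI^k_{\Lambda_n}/\sI^{k+1}_{\Lambda_n}\hookrightarrow\sR_{\Lambda_n}/\sI^{k+1}_{\Lambda_n}$ (via its description in Lemma \ref{sym-mom-compatibility}) with the canonical inclusion of the top graded piece in $\Log^{(k)}_{\Lambda_n}$ determined by the section $1^{(k)}_{\Lambda_n}$. Once this compatibility is established at finite level $r$ and then passed to the inverse limit (using that $\Log^{(k)}_{\Lambda_n}$ is a constant inverse system while the transition maps on $\sL$ are the trace maps), the isomorphism follows.
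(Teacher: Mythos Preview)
Your approach is correct in spirit but differs from the paper's, and the difference is instructive. You construct only one direction, $\phi:\Log_{\Lambda_n}^{(k)}\to\sL_{\Lambda_n}^{(k)}$, via the universal property of $\Log$, and then attempt to prove it is an isomorphism by a five-lemma induction on the unipotent filtration. The paper instead constructs morphisms in \emph{both} directions: the map $\sL_{\Lambda_n}\to\Log_{\Lambda_n}^{(k)}$ corresponding to $1^{(k)}_{\Lambda_n}$ comes not from the universal property of $\Log$ but from Proposition~\ref{unip-char}, which shows that the pro-sheaf $\sL_{\Lambda_n}$ itself pro-represents the functor $\sG\mapsto e^*\sG$ on lisse $\Lambda_n$-sheaves. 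This map factors through $\sL_{\Lambda_n}^{(k)}$ because $\Log_{\Lambda_n}^{(k)}$ is $k$-unipotent; the map in the other direction is your $\phi$. That the two composites are the identity then follows immediately from the uniqueness clauses in the two universal properties, with no need to analyze graded pieces.

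The ``technical obstacle'' you flag, namely that $\phi$ induces the identity on $\pi^*\Sym^k\sH_{\Lambda_n}$, is genuine and you do not actually discharge it: you say what would have to be matched but do not carry out the matching. It can be done, but it is exactly the kind of computation that the paper's two-universal-properties argument is designed to avoid. What the paper's route buys is that the isomorphism becomes a formal consequence of two representability statements; what your route would buy, if completed, is an explicit identification of the graded pieces along the way, which is closer to the content of Lemma~\ref{sym-mom-compatibility}. Either argument is acceptable, but you should either finish the graded-piece check or, more simply, invoke Proposition~\ref{unip-char} to produce the inverse map as the paper does.
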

For the proof we need a characterization of lisse $\Lambda_n$-sheaves on $\cE$.
\begin{proposition}\label{unip-char}
Let $\sG$ be a lisse $\Lambda_n$-sheaf on $\cE$. Then there is an integer
$s$ such that
$$
\pi_*\ul\Hom_\cE(p_{s*}{\Lambda_n},\sG)\isom e^*\sG.
$$
In particular, the functor $\sG\mapsto e^*\sG$ induces 
an equivalence between the category of lisse $\Lambda_n$-sheaves on $\cE$
and lisse $\Lambda_n$-sheaves on $S$ with a continuous action of
$\sR_{\Lambda_n}$.
\end{proposition}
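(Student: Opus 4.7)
The plan is to translate everything into \'etale $\pi_1$-representation theory and to exploit the fact that $p_s=[\ell^s]:\cE\to\cE$ is a Galois \'etale cover with group $\cE[\ell^s]$. Fix geometric points $\bar s$ of $S$ and $\bar\eta$ of $\cE$ over $\bar s$; then a lisse $\Lambda_n$-sheaf $\sG$ on $\cE$ corresponds to a finite $\Lambda_n$-module $M$ with continuous $\pi_1(\cE,\bar\eta)$-action. The unit section $e$ splits the homotopy exact sequence
\[
1\to\pi_1(\cE_{\bar s})\to\pi_1(\cE,\bar\eta)\to\pi_1(S,\bar s)\to 1,
\]
so $e^*\sG$ corresponds to $M$ regarded as a $\pi_1(S)$-module via this splitting, while the open subgroup of $\pi_1(\cE,\bar\eta)$ corresponding to the cover $p_s$ is $[\ell^s]_*\pi_1(\cE,\bar\eta)$, whose geometric part is $\ell^s\cdot\pi_1(\cE_{\bar s})$.

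To establish the stated isomorphism, I would first rewrite the left-hand side: since $p_s$ is finite \'etale one has $p_s^!=p_s^*$ and $p_{s*}=p_{s!}$, so adjunction gives $\ul\Hom_\cE(p_{s*}\Lambda_n,\sG)\isom p_{s*}p_s^*\sG$ and hence
\[
\pi_*\ul\Hom_\cE(p_{s*}\Lambda_n,\sG)\isom\pi_*p_s^*\sG
\]
(using $\pi\circ p_s=\pi$). The core step is then to choose $s$ so large that the geometric monodromy subgroup $\pi_1(\cE_{s,\bar s})=\ell^s\pi_1(\cE_{\bar s})$ acts trivially on $M$; since $M$ is finite, the $\pi_1(\cE_{\bar s})$-action factors through a finite quotient whose $\ell$-adic part is killed by $\ell^s$ for $s\gg 0$. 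Granting this, $p_s^*\sG\isom\pi^*(e^*\sG)$, and the projection formula together with $\pi_*\Lambda_n\isom\Lambda_n$ (proper smooth morphism with geometrically connected fibers) yields $\pi_*p_s^*\sG\isom e^*\sG$.

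For the equivalence of categories, I would use \'etale descent along the Galois cover $p_s:\cE_s\to\cE$ with group $\cE[\ell^s]$: lisse $\Lambda_n$-sheaves on $\cE$ correspond to $\cE[\ell^s]$-equivariant lisse $\Lambda_n$-sheaves on $\cE_s$. For $\sG$ as above, base changing the isomorphism $p_s^*\sG\isom\pi^*(e^*\sG)$ along $e$ (and using $e^*p_{s*}\Lambda_n\isom q_{s*}\Lambda_n$, where $q_s:\cE[\ell^s]\to S$ is the structure map) turns the equivariant structure on $p_s^*\sG$ into an action of $q_{s*}\Lambda_n$ on $e^*\sG$; letting $s$ vary produces the continuous $\sR_{\Lambda_n}$-action. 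Conversely, a continuous $\sR_{\Lambda_n}$-action on a lisse $\sF$ on $S$ factors through $q_{s*}\Lambda_n$ for some $s$, endowing $\pi^*\sF$ with a $\cE[\ell^s]$-equivariance that descends along $p_s$ to a lisse sheaf on $\cE$; the two functors are verified to be mutually quasi-inverse by unwinding definitions.

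The principal technical hurdle is the monodromy-trivialization step, which requires that the $\pi_1(\cE_{\bar s})$-action on $M$ be controlled by its $\ell$-adic part---automatic in the setting where the proposition is applied (the proof of Theorem \ref{log-lambda-n-comp}), since the sheaves $\sL_{\Lambda_n}^{(k)}$ and $\Log_{\Lambda_n}^{(k)}$ have pro-$\ell$ monodromy by construction. Everything else reduces to formal applications of finite \'etale descent together with standard adjunction and projection-formula manipulations.
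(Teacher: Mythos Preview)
Your argument tracks the paper's proof closely. Both you and the paper first use finite-\'etale adjunction to rewrite $\pi_*\ul\Hom_\cE(p_{s*}\Lambda_n,\sG)$ as $\pi_{s*}p_s^*\sG$, then argue that for $s$ large $p_s^*\sG$ is pulled back from $S$, i.e.\ $p_s^*\sG\isom\pi_s^*e_s^*p_s^*\sG$, and finally use $\pi_{s*}\pi_s^*\isom\id$ to conclude. You are right to flag the monodromy-trivialization step: the paper simply asserts ``as $\sG$ is a lisse $\Lambda_n$-sheaf, there is an $s$ such that $p_s^*\sG$ comes from $S$'' without further comment, and your observation that this really needs the geometric monodromy to be pro-$\ell$ is well taken. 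As you note, this holds for the unipotent sheaves actually used in the proof of Theorem~\ref{log-lambda-n-comp}.

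The one substantive difference is in the inverse functor. You build it by Galois descent along $p_s$: equip $\pi^*\sF$ with an $\cE[\ell^s]$-equivariance coming from the $q_{s*}\Lambda_n$-action and descend. The paper instead writes down the explicit formula
\[
\sF\longmapsto \pi^*\sF\otimes_{\pi^*\sR_{\Lambda_n}}\sL_{\Lambda_n},
\]
which is more concrete and is exactly what gets used immediately afterward to produce the map $\sL_{\Lambda_n}\to\Log_{\Lambda_n}^{(k)}$. Your descent picture is correct and amounts to the same thing (the $\cE[\ell^s]$-equivariant structure is precisely a module structure over $\pi^*(q_{s*}\Lambda_n)$, and tensoring with $\sL_{\Lambda_n}$ over $\pi^*\sR_{\Lambda_n}$ is the associated-bundle construction), but the explicit tensor formula is worth recording since it is what the subsequent argument actually manipulates.
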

\begin{proof}
As $p_r$ is finite \'etale, one has
$$
\pi_*\ul\Hom_{\cE}(p_{r*}\Lambda_n,\sG)=
\pi_{r*}\ul\Hom_{\cE_r}(\Lambda_n,p_r^*\sG)=\pi_{r*}p_r^*\sG.
$$
As $\sG$ is a lisse $\Lambda_n$-sheaf, there is an $s$ such that
$p_s^*\sG$ comes from $S$, which means $p_s^*\sG\isom \pi_s^*e_s^*p_s^*\sG$.
This implies that 
$$
\pi_*\ul\Hom_{\cE}(p_{s*}\Lambda_n,\sG)=\pi_{s*}\pi_s^*e_s^*p_s^*\sG \isom
e^*\sG.
$$
This isomorphism allows to define a continuous action of $\sR_{\Lambda_n}$
on $e^*\sG$ (where continuous means that 
the action factors through some $e^*p_{s*}\Lambda_n$). The inverse functor is
given by $\sF\mapsto \pi^*\sF\otimes_{\pi^*\sR_{\Lambda_n}}\sL_{\Lambda_n}$.
\end{proof}
\begin{proof}[Proof of Theorem \ref{log-lambda-n-comp}]
From Proposition \ref{unip-char} we get a morphism of pro-sheaves
$\sL_{\Lambda_n}\to \Log_{\Lambda_n}^{(k)}$
corresponding to $1^{(k)}_{\Lambda_n}$. It also follows that 
$\sL_{\Lambda_n}^{(k)}$ is $k$-unipotent because the $\sR_{\Lambda_n}$-module
structure on $e^*\sL_{\Lambda_n}^{(k)}$ factors through
$\sR_{\Lambda_n}/\sI_{\Lambda_n}^{k+1}$. In particular, the
above morphism factors through $\sL_{\Lambda_n}^{(k)}$. Moreover,
by the definition of 
$\Log_{\Lambda_n}^{(k)}$ we get also a morphism 
$\Log_{\Lambda_n}^{(k)}\to \sL_{\Lambda_n}^{(k)}$
corresponding to $1_n$. It is straightforward to check that
these two morphisms are inverse to each other. 
\end{proof}
\begin{definition}
Define the pro-sheaf $\sL$  by
$$
\sL:=(p_{r*}\Lambda_r)_{r\ge 1}
$$
where the transition maps are induced by the trace maps and the
reduction modulo $\ell^r$. We also let $\sR:=e^*\sL$ with unit section $1$ and
denote by $\sI\subset\sR$ the augmentation ideal. Finally, let
$$
\sL^{(k)}:=\sL\otimes_{\pi^*\sR}\pi^*(\sR/\sI^{k+1}).
$$
\end{definition}
\begin{theorem}\label{log-comp}
There is a canonical isomorphism 
$$
\sL^{(k)}\xrightarrow{\isom}\Log_{\bbZ_\ell}^{(k)}
$$
which maps $1$ to $1^{(k)}$, where $\Log_{\bbZ_\ell}^{(k)}$
is  $(\Log_{\Lambda_r}^{(k)})_{r\ge 1}$.
\end{theorem}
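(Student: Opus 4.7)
The plan is to adapt the proof of Theorem \ref{log-lambda-n-comp} to the varying-coefficient setting, by constructing mutually inverse morphisms of pro-systems $\phi : \Log_{\bbZ_\ell}^{(k)} \to \sL^{(k)}$ and $\psi : \sL^{(k)} \to \Log_{\bbZ_\ell}^{(k)}$, each pinned down by the requirement of carrying the canonical section to the canonical section.

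For $\phi$, I would first verify that each $\sL^{(k)}_r$ is a $k$-unipotent lisse $\Lambda_r$-sheaf on $\cE$: the stalk $e^*\sL^{(k)}_r = \sR_r/\sI_r^{k+1}$ is the regular module over itself, equipped with the $\sI_r$-adic filtration whose graded pieces are annihilated by $\sI_r$, and transporting this filtration via the equivalence of Proposition \ref{unip-char} gives $\sL^{(k)}_r$ the structure of a $k$-unipotent sheaf. The universal property of $\Log_{\Lambda_r}^{(k)}$ then yields a unique $\phi_r : \Log_{\Lambda_r}^{(k)} \to \sL^{(k)}_r$ sending $1^{(k)}_{\Lambda_r}$ to $1_r$. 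Since the transition maps on both pro-systems carry canonical sections to canonical sections, the uniqueness in the universal property forces the $\phi_r$ to assemble into a morphism of pro-systems.

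For $\psi$, I would use the composition
$$
\sL^{(k)} \longrightarrow \sL_{\Lambda_r}^{(k)} \xrightarrow{\sim} \Log_{\Lambda_r}^{(k)},
$$
in which the first arrow is induced in the pro-category by the coefficient reductions $\Lambda_{r'} \twoheadrightarrow \Lambda_r$ for $r' \ge r$, and the second is the canonical iso of Theorem \ref{log-lambda-n-comp}. Functoriality of this latter iso under the tower of reductions $\Lambda_{r+1} \twoheadrightarrow \Lambda_r$ would then glue these into a single pro-morphism $\psi : \sL^{(k)} \to \Log_{\bbZ_\ell}^{(k)}$ sending $1$ to $1^{(k)}$. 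Both compositions $\phi \circ \psi$ and $\psi \circ \phi$ fix the distinguished section after applying $e^*$, so the uniqueness in the universal property (or, on the other side, in Proposition \ref{unip-char}) forces them to be the identity.

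The main obstacle is to verify the functoriality step in the definition of $\psi$, namely that the identification of Theorem \ref{log-lambda-n-comp} commutes with reduction of coefficients $\Lambda_{r+1} \twoheadrightarrow \Lambda_r$. This reduces to a naturality check for the universal property of the logarithm sheaf and should go through, but has to be spelled out carefully. A secondary subtlety is that the trivializing integer $s$ furnished by Proposition \ref{unip-char} depends on the sheaf under consideration, so individual components $\phi_r$, $\psi_r$ are only naturally defined after re-indexing in the pro-category, something that is absorbed harmlessly by working modulo Mittag-Leffler-zero systems.
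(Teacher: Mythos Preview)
Your proposal is correct and follows essentially the same approach as the paper: both arguments construct mutually inverse morphisms by combining the finite-level comparison of Theorem~\ref{log-lambda-n-comp} (for $\psi$) with the $k$-unipotence of $\sL^{(k)}$ and the universal property of the logarithm sheaf (for $\phi$), then check the two composites fix the canonical section. The paper organizes the construction of $\psi$ slightly differently---it first produces $\sL\to\Log_{\bbZ_\ell}^{(k)}$ and then factors through $\sL^{(k)}$---but the substance is the same, and your identified ``main obstacle'' (compatibility of Theorem~\ref{log-lambda-n-comp} with coefficient reduction) is exactly what the paper uses implicitly; your secondary worry about the integer $s$ is not actually an issue, since the universal property of $\Log_{\Lambda_r}^{(k)}$ gives $\phi_r$ directly without any re-indexing.
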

\begin{proof}
There is a surjective morphism
$\sL\to \sL_{\Lambda_n}$ and we get with Theorem \ref{log-lambda-n-comp}
a map
$$
\sL\to \sL_{\Lambda_n}\to \sL_{\Lambda_n}^{(k)}\isom \Log_{\Lambda_n}^{(k)},
$$
which induces a morphism $\sL\to \Log_{\bbZ_\ell}^{(k)}$. As already the
map $\sL\to \sL_{\Lambda_n}^{(k)}$ factors through $\sL^{(k)}$, we
get the desired morphism $\sL^{(k)}\to\Log_{\bbZ_\ell}^{(k)}$, which
is surjective by construction. From the isomorphism 
$\sL^{(k)}\otimes_{\bbZ_\ell}\Lambda_n\isom \sL_{\Lambda_n}^{(k)}$
one deduces that $\sL^{(k)}$ is $k$-unipotent. By the definition of
$\Log_{\bbZ_\ell}^{(k)}$ we get a morphism in the other direction
$\Log_{\bbZ_\ell}^{(k)}\to \sL^{(k)}$ and one checks directly that
this is inverse to $\sL^{(k)}\to\Log_{\bbZ_\ell}^{(k)}$.
\end{proof}
We now discuss the relation between the sheaves $\sL$ and the
sheaves of Iwasawa modules. 
\begin{proposition}\label{log-and-iwasawa}
Let $t:S\to \cE$ be an $N$-torsion section, then
$$
t^*\sL\isom \Lambda(\sH\langle t\rangle).
$$
\end{proposition}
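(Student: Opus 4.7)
The plan is to identify $t^*\sL$ termwise with $\Lambda(\sH\langle t\rangle)$ by means of proper (or finite \'etale) base change, and then check that the identification respects the transition maps that define both pro-systems.

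First I would observe that since $t\colon S\to \cE$ is an $N$-torsion section and $p_r=[\ell^r]\colon \cE_r=\cE\to \cE$, the fibre product $\cE_r\times_\cE S$ consists of those points $x\in \cE$ with $[\ell^r]x=t$; such $x$ automatically satisfies $[\ell^r N]x=[N]t=e$, so $x\in\cE[\ell^r N]$. Comparing with the defining cartesian diagram for $\cE[\ell^r]\langle t\rangle$, this identifies the two pullbacks and yields a cartesian square
\begin{equation*}
\begin{CD}
\cE[\ell^r]\langle t\rangle @>\wt{t}>> \cE_r\\
@Vp_{r,t}VV @VVp_r V\\
S @>t>> \cE.
\end{CD}
\end{equation*}
Since $p_r$ is finite, the base change isomorphism gives $t^*p_{r*}\Lambda_r \isom p_{r,t*}\wt t^*\Lambda_r = p_{r,t*}\Lambda_r$, i.e.\ $t^*\sL_r\isom \Lambda_r(\sH_r\langle t\rangle)$ at each finite level.

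Next I would check that these level-$r$ isomorphisms assemble into an isomorphism of pro-systems. The transition map $p_{r+1*}\Lambda_{r+1}\to p_{r*}\Lambda_r$ factors as $p_{r*}\verk [\ell]_*\Lambda_{r+1}\xrightarrow{\tr} p_{r*}\Lambda_{r+1}\to p_{r*}\Lambda_r$, and on the $\Lambda(\sH\langle t\rangle)$ side the transition is the analogous trace composed with reduction along $\lambda_r\colon \cE[\ell^{r+1}]\langle t\rangle\to\cE[\ell^r]\langle t\rangle$. The point is that $\lambda_r$ sits in a cartesian square over the $[\ell]$-multiplication on $\cE$, so compatibility of the trace map with base change (for finite \'etale morphisms) makes the two transition systems commute with the base-change isomorphism constructed above.

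I do not expect a serious obstacle here: once the crucial geometric identification of the pullback with $\cE[\ell^r]\langle t\rangle$ is in place, the rest is formal. The only mild subtlety to watch out for is verifying that the two traces (the one giving the transition in $\sL$ and the one defining the transition in $\Lambda(\sH\langle t\rangle)$) agree under base change; this is standard compatibility of the trace with cartesian squares of finite \'etale morphisms, and it is precisely what makes the identification go through at the level of pro-systems rather than just at each finite level.
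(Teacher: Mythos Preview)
Your proposal is correct and follows essentially the same approach as the paper: identify the fibre product $\cE_r\times_\cE S$ with $\cE[\ell^r]\langle t\rangle$ via the cartesian square, then invoke finite base change to get $t^*p_{r*}\Lambda_r\isom p_{r,t*}\Lambda_r$. The paper's proof is just a terse version of yours --- it writes down the cartesian diagram and says ``the result follows from the definitions'' --- whereas you have spelled out both why the diagram is cartesian and why the transition maps agree, which is a welcome elaboration but not a different argument.
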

\begin{proof}
From the commutative diagram
$$
\begin{CD}
\cE[\ell^r]\langle t\rangle@>>> \cE_r\\
@Vp_{r,t}VV@VV[\ell^r]=p_r V\\
S@>t>>\cE
\end{CD}
$$
we get $t^*p_{r*}\Lambda_r\isom p_{r,t*}\Lambda_r$ and the result
follows from the definitions.
\end{proof}
Finally, we relate the moment map for $\Lambda(\sH\langle t\rangle)$
to the splitting principle for $\Log_{\bbQ_\ell}$. It follows
from Proposition \ref{log-and-iwasawa}
and Theorem \ref{log-comp} that we have a morphism
\begin{equation}\label{comp-k-map}
H^1(S, \Lambda(\sH\langle t\rangle)(1))\to 
H^1(S,t^*\Log_{\bbZ_\ell}^{(k)}(1))
\to
H^1(S,t^*\Log_{\bbQ_\ell}^{(k)}(1)),
\end{equation}
where the last morphism is the canonical map.
\begin{definition}\label{comp-def}
The \emph{comparison map} is the inverse limit over $k$ 
of the maps in \eqref{comp-k-map}:
$$
\comp:
H^1(S, \Lambda(\sH\langle t\rangle)(1))\to
H^1(S,t^*\Log_{\bbQ_\ell}(1)).
$$
\end{definition}  
Recall from
Proposition \ref{log-splitting} the isomorphism
$$
t^*\Log_{\bbQ_\ell}\isom \prod_{n\ge 0}\Sym^n\sH_{\bbQ_\ell}.
$$
\begin{proposition}\label{mom-and-split} 
Let $t$ be an $N$-torsion section $t:S\to \cE$.
There is a commutative diagram
$$
\xymatrix{
H^1(S,\Lambda(\sH\langle t\rangle)(1) )\ar[r]^{\comp}\ar[ddr]_{\wt\mom^k_t}&
H^1(S, t^*\Log_{\bbQ_\ell}(1) )\ar[d]^{\isom}\\
&H^1(S, \prod_{n\ge 0}\Sym^n\sH_{\bbQ_\ell}(1))\ar[d]^{\pr_k}\\
&
H^1(S, \Sym^k\sH_{\bbQ_\ell}(1)),
}
$$
with $\wt\mom_t^k$ as in Definition \ref{wtmom-def}. 
\end{proposition}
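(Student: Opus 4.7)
The plan is to verify the commutativity of the diagram at the level of sheaf morphisms and then descend to cohomology. By Proposition \ref{log-and-iwasawa} together with Theorem \ref{log-comp}, the sheaf $t^*\sL^{(k)}$ is canonically isomorphic to $\Lambda(\sH\langle t\rangle)/I(\sH)^{k+1}\Lambda(\sH\langle t\rangle)$ on one side and to $t^*\Log^{(k)}_{\bbZ_\ell}$ on the other, so the comparison map $\comp$ is induced by the natural quotient at each finite level. The claim thus reduces to identifying the composition of sheaf maps
\[
\Lambda(\sH\langle t\rangle)_{\bbQ_\ell}\longrightarrow t^*\Log_{\bbQ_\ell}\xrightarrow{\;\sim\;}\prod_{n\ge 0}\Sym^n\sH_{\bbQ_\ell}\xrightarrow{\pr_k}\Sym^k\sH_{\bbQ_\ell}
\]
with $\wt\mom^k_t$ composed with the canonical isomorphism $\TSym^k\sH_{\bbQ_\ell}\cong\Sym^k\sH_{\bbQ_\ell}$.

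Next I would analyze the splitting isomorphism. By Proposition \ref{log-splitting} applied to the isogeny $[N]\colon\cE\to\cE$, using $[N]\circ t=e$, the splitting factors as $t^*\Log_{\bbQ_\ell}\cong e^*\Log_{\bbQ_\ell}\cong\prod_n\Sym^n\sH_{\bbQ_\ell}$, where the first isomorphism is induced by $\Log_{\bbQ_\ell}\cong[N]^*\Log_{\bbQ_\ell}$. By the remark following Proposition \ref{log-splitting}, this isomorphism acts as multiplication by $N^k$ on the graded piece $\pi^*\Sym^k\sH_{\bbQ_\ell}$. Translated to $\sL=[\ell^r]_*\Lambda_r$ via base change for the finite \'etale map $[\ell^r]$ along $[N]$, the stalk at $t$ of the splitting isomorphism identifies with the pushforward $[N]_!\colon\Lambda(\sH\langle t\rangle)_{\bbQ_\ell}\to\Lambda(\sH)_{\bbQ_\ell}$, up to the scaling by $N^k$ on the $k$-th graded piece that is precisely cancelled by the factor $\tfrac{1}{N^k}$ in Definition \ref{wtmom-def} of $\wt\mom^k_t$.

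Finally, at $e$ the composition $\Lambda(\sH)\twoheadrightarrow\Lambda(\sH)/I(\sH)^{k+1}\cong e^*\Log^{(k)}_{\bbQ_\ell}\cong\prod_{n\le k}\Sym^n\sH_{\bbQ_\ell}$ projected to the $k$-th component coincides with $\mom^k$, by Lemma \ref{sym-mom-compatibility} together with the matching of canonical splittings (the moment map $\mom$ provides the canonical splitting of the $I$-adic filtration after tensoring with $\bbQ_\ell$, and this is the splitting picked out by $\Log_{\bbQ_\ell}$). Combining with the identification of the splitting from the previous step, the overall composition on the $k$-th factor equals $\tfrac{1}{N^k}\mom^k\circ[N]_!=\tfrac{1}{N^k}\mom^k_t=\wt\mom^k_t$ by Definitions \ref{mom-t-k-defn} and \ref{wtmom-def}, as required. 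The main obstacle is the rigorous identification in the second paragraph: showing that the canonical splitting iso $[N]^*\sL_{\bbQ_\ell}\cong\sL_{\bbQ_\ell}$ (coming from the universal property of $\Log$) corresponds on stalks at $t$ to the pushforward $[N]_!$ after base change for the fiber product $\cE\times_{[N],\cE,[\ell^r]}\cE$, with the precise $N^k$ scaling on the $k$-th graded piece; once this is established, the rest is functorial bookkeeping.
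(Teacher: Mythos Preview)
Your approach is essentially the same as the paper's: reduce to $t=e$ via $[N]$, then use Lemma \ref{sym-mom-compatibility} at the identity. The one place where you diverge in presentation is the step you yourself flag as ``the main obstacle.'' You phrase the reduction in terms of stalks and base change for the fibre product $\cE\times_{[N],\cE,[\ell^r]}\cE$, and worry about making this rigorous. The paper sidesteps this entirely: it simply writes down the commutative square
\[
\begin{CD}
\Lambda(\sH\langle t\rangle)@>>> t^*\Log_{\bbZ_\ell}^{(k)}\\
@V[N]_!VV@VV[N]_*V\\
\Lambda(\sH)@>>>e^*\Log_{\bbZ_\ell}^{(k)}
\end{CD}
\]
which is immediate from the functoriality of the comparison map (Theorem \ref{log-comp}) under the isogeny $[N]$, and then observes that the splitting $t^*\Log_{\bbQ_\ell}\cong\prod_n\Sym^n\sH_{\bbQ_\ell}$ is by definition the composition $t^*\Log_{\bbQ_\ell}\xrightarrow{[N]_*}e^*\Log_{\bbQ_\ell}\xleftarrow{[N]_*}\prod_n\Sym^n\sH_{\bbQ_\ell}$, the inverse of the second arrow contributing the factor $\tfrac{1}{N^k}$ on the $k$-th piece. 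No delicate base-change computation is needed; the square above is pure functoriality, and the scaling is already recorded in the remark after Proposition \ref{log-splitting}. So your obstacle dissolves once you replace the stalk argument by this square.
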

\begin{proof}
As the diagram
$$
\begin{CD}
\Lambda(\sH\langle t\rangle)@>>> t^*\Log_{\bbZ_\ell}^{(k)}\\
@V[N]_!VV@VV[N]_*V\\
\Lambda(\sH)@>>>e^*\Log_{\bbZ_\ell}^{(k)}
\end{CD}
$$
commutes, it suffices to treat the case $t=0$. But recall
that the identification $t^*\Log_{\bbQ_\ell}\isom \prod_{n\ge 0}\Sym^n\sH_{\bbQ_\ell}$ is the composition
$$
\begin{CD}
t^*\Log_{\bbQ_\ell}@>[N]_*>\isom> e^*\Log_{\bbQ_\ell}@<[N]_*<\isom<\prod
_{n\ge 0}\Sym^n\sH_{\bbQ_\ell}.
\end{CD}
$$ 
This introduces a factor $\frac{1}{N^k}$ in front of $\Sym^k\sH_{\bbQ_\ell}$.
Let $I(\sH)\subset \Lambda(\sH)$
be the augmentation ideal. Then the isomorphism 
$$
\Lambda(\sH)/I(\sH)^{k+1}\isom e^*\Log_{\bbZ_\ell}^{(k)}
$$
induces isomorphisms of the associated graded pieces, which
are the $\Sym^n\sH$ for $n=0,\ldots,k$. Therefore 
\begin{multline*}
H^1(S,\Sym^k\sH(1))\to H^1(S, \Lambda(\sH)/I(\sH)^{k+1}(1))\isom
H^1(S,e^*\Log_{\bbZ_\ell}^{(k)})\\
\to
H^1(S,e^*\Log_{\bbQ_\ell}^{(k)})
\xrightarrow{\pr_k}
H^1(S,\Sym^k\sH_{\bbQ_\ell}(1))
\end{multline*}
is just the comparison map for $\Sym^k\sH$. It therefore follows from 
Lemma \ref{sym-mom-compatibility} that the diagram
$$
\xymatrix{
H^1(S, \Lambda(\sH)(1))\ar[r]\ar[dr]_{\wt\mom_t^k}& H^1(S,e^*\Log_{\bbQ_\ell}(1))\ar[d]^{\pr_k}\\
&H^1(S,\Sym^k\sH_{\bbQ_\ell}(1))
}
$$
commutes.
\end{proof}

%
\subsection{The elliptic polylogarithm and elliptic units}
%
In this section we describe the elliptic polylogarithm in terms
of Kato's norm compatible elliptic units. This will result
in a comparison of the Eisenstein classes with the elliptic Soul\'e elements.

Let $c$ be a positive integer
with $(c,6\ell N)=1$. We continue to write $\Lambda_r:=\bbZ/\ell^r\bbZ$
and $\cE_r:=\cE$ which we consider as an \'etale cover over 
$\cE$ via $p_r:=[\ell^r]:\cE_r\to\cE$.
This induces a morphism
$$
p_r:\cE_r\setminus  \cE[\ell^r c]\to \cE\setminus \cE[c].
$$
On $\cE\setminus \cE[\ell^r c]$ we have the elliptic unit ${ _c\vartheta_{\cE}}$
from Theorem \ref{modular-units}.
We denote by 
\begin{equation*}
\Theta_{c,r}:=\partial_r ({ _c\vartheta_{\cE}})\in
H^1(\cE\setminus \cE[\ell^rc],\Lambda_r(1))\isom 
H^1(\cE\setminus\cE[c],\sL_{\Lambda_r}(1))
\end{equation*}
the image of ${ _c\vartheta_{\cE}}$ under the Kummer map $\partial_r$.
As the functions ${ _c\vartheta_{\cE}}$ are norm-compatible, we can
pass to the inverse limit.
\begin{definition}
We denote by 
\begin{align*}
\begin{split}
\Theta_c:=\prolim_r \Theta_{c,r}\in
&\prolim_r H^1(\cE\setminus \cE[c],\sL_{\Lambda_r}(1))\\
&=H^1(\cE\setminus \cE[c], \sL(1))
\end{split}
\end{align*}
the inverse limit of the classes $\Theta_{c,r}$.
\end{definition}
Recall from Definition \ref{ES-def} the class 
$$
\cE\cS^{\langle t\rangle}_c\in
H^1(S, \Lambda(\sH\langle t\rangle)(1))
$$
and from Proposition \ref{log-and-iwasawa} the isomorphism
$t^*\sL\isom \Lambda(\sH\langle t\rangle)$.
\begin{lemma}\label{ES-Theta-comp}
Let  $t:S\to\cE$ be an $N$-torsion section. Then
$$
t^*\Theta_c=\cE\cS_c^{\langle t\rangle}\in
H^1(S,\Lambda(\sH\langle t\rangle)(1)).
$$
\end{lemma}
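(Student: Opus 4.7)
The plan is to reduce to finite level $r$ and then identify both classes as the Kummer image of the same invertible function via compatible base change, so that the equality in the limit will be immediate.

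First, I would fix $r\ge 1$ and check that the condition $(c,N)=1$ places the relevant sections inside the appropriate open subsets. Since $t$ is an $N$-torsion section and $(c,N)=1$, one has $t(S)\subset \cE\setminus\cE[c]$. Similarly, since $p_r^{-1}(\cE[c])=\cE[\ell^rc]$ and by the very definition of $\cE[\ell^r]\langle t\rangle$ as $p_r^{-1}(t)$, there is a cartesian diagram of finite \'etale maps
$$
\begin{CD}
\cE[\ell^r]\langle t\rangle @>j>> \cE\setminus\cE[\ell^rc] \\
@Vp_{r,t}VV @VVp_rV \\
S @>t>> \cE\setminus\cE[c].
\end{CD}
$$
By proper (equivalently finite) base change, pullback along $t$ of the sheaf $\sL_{\Lambda_r}|_{\cE\setminus\cE[c]}=p_{r*}\Lambda_r$ agrees with $p_{r,t*}\Lambda_r=\Lambda_r(\sH_r\langle t\rangle)$; this is precisely the finite-level version of the isomorphism $t^*\sL\isom\Lambda(\sH\langle t\rangle)$ from Proposition \ref{log-and-iwasawa}.

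Next, I would invoke the Leray isomorphism (as used just before Definition \ref{ES-def}) to identify
$$
H^1(\cE\setminus\cE[c],\sL_{\Lambda_r}(1))\isom H^1(\cE\setminus\cE[\ell^rc],\Lambda_r(1)),
$$
and its $t$-pulled-back analogue
$$
H^1(S,\Lambda_r(\sH_r\langle t\rangle)(1))\isom H^1(\cE[\ell^r]\langle t\rangle,\Lambda_r(1)).
$$
Under these identifications, the base change isomorphism for $t^*$ becomes the ordinary restriction $j^*$ on cohomology with $\Lambda_r$-coefficients. The key naturality to invoke is that the Kummer boundary $\partial_r$ (associated with the Kummer sequence for $\bbG_m$) commutes with arbitrary pullbacks of schemes; concretely,
$$
j^*\partial_r({_c\vartheta_\cE})=\partial_r(j^*{_c\vartheta_\cE})=\partial_r({_c\vartheta_\cE}\mid_{\cE[\ell^r]\langle t\rangle}).
$$
The right-hand side is by definition $\cE\cS_{c,r}^{\langle t\rangle}$, while the left-hand side is $t^*\Theta_{c,r}$ transported through the identifications above. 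Hence $t^*\Theta_{c,r}=\cE\cS_{c,r}^{\langle t\rangle}$ in $H^1(S,\Lambda_r(\sH_r\langle t\rangle)(1))$.

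Finally, I would check that these finite-level identifications are compatible with the transition maps. The transition on the $\Theta$-side is induced by the trace $p_{r+1*}\Lambda_{r+1}\to p_{r*}\Lambda_r$ combined with reduction, which under the Leray identification becomes the norm map attached to the \'etale cover $\cE\setminus\cE[\ell^{r+1}c]\to \cE\setminus\cE[\ell^rc]$; on the $\cE\cS$-side the transition is the analogous norm for $\cE[\ell^{r+1}]\langle t\rangle\to\cE[\ell^r]\langle t\rangle$. Norm-compatibility of ${_c\vartheta_\cE}$ (Theorem \ref{modular-units}) together with naturality of $\partial_r$ under trace ensures both projective systems are compatible. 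Taking $\prolim_r$ and using Lemma \ref{h-1-prolim} then yields $t^*\Theta_c=\cE\cS_c^{\langle t\rangle}$. The only place where one must be careful is the cleanly stating the base-change/naturality of the Kummer map through the identification of cohomology on the \'etale cover with cohomology of the pushforward sheaf; everything else is formal.
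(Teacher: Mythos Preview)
Your proposal is correct and follows essentially the same approach as the paper's proof, which consists of the single sentence ``We have $t^*\Theta_c=\partial_r({_c\vartheta_\cE}\mid_{\cE[\ell^r]\langle t\rangle})$ so that the formula is clear from the definitions.'' You have simply unpacked in detail the base-change and Kummer-map naturality that the paper leaves implicit; nothing in your argument deviates from this route.
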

\begin{proof}
We have $t^*\Theta_c=\partial_r({_c\vartheta_\cE}\mid_{\cE[\ell^r]\langle t\rangle})$
so that the formula is clear from the definitions.
\end{proof}
As in  Definition \ref{comp-def} one can define a comparison homomorphism
\begin{equation}\label{Ext-comparison}
\comp:H^1(\cE\setminus \cE[c],\sL(1))\to
H^1(\cE\setminus \cE[c],\Log_{\bbQ_\ell}(1)).
\end{equation}
Recall from Definition \ref{varpol-def} the class
$$
\pol_{c}\in 
H^1(\cE\setminus \cE[c],\Log_{\bbQ_\ell}(1)).
$$
\begin{theorem}\label{integral-pol-comparison}
Let $(c,6\ell N)=1$, then 
$$
\comp(\Theta_c)= \pol_{c}\in 
H^1(\cE\setminus \cE[c],\Log_{\bbQ_\ell}(1)).
$$
\end{theorem}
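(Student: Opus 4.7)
The strategy is to exploit the injectivity of $\res$ in the localization sequence~\eqref{c-loc-seq}: by definition, $\pol_c$ is the unique preimage of $c^2 e_*(1) - q^*(1)$, so it suffices to show $\res(\comp(\Theta_c)) = c^2 e_*(1) - q^*(1)$. Since $\comp$ arises from the sheaf morphism $\sL \to \Log_{\bbZ_\ell}$ of Theorem~\ref{log-comp} (composed with base change to $\bbQ_\ell$), it commutes with the residue maps on the two sides, so I need only compute $\res(\Theta_c) \in H^0(\cE[c], \sL|_{\cE[c]})$ and then apply $\comp$ stalkwise at each $R \in \cE[c]$.

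At finite level $r$, the Leray isomorphism identifies $\Theta_{c,r}$ with the Kummer class $\partial_r({}_c\vartheta_\cE)$ on $\cE_r \setminus \cE[\ell^r c]$, whose residue in $H^0(\cE[\ell^r c], \Lambda_r)$ is the reduction of $\div({}_c\vartheta_\cE) = (c^2-1)(0) - \sum_{R\in \cE[c]\setminus\{e\}}(R)$ by Theorem~\ref{modular-units}(1). Because $(c,\ell)=1$ implies $\cE[\ell^r]\cap \cE[c] = \{e\}$, for each $R \in \cE[c]$ the fibre $p_r^{-1}(R) = \cE[\ell^r]\langle R\rangle$ contains a unique $c$-torsion point $P_R^{(r)}$, and I would rewrite the residue via the second Leray isomorphism $H^0(\cE[\ell^r c],\Lambda_r) \cong H^0(\cE[c],\sL_{\Lambda_r}|_{\cE[c]})$ in the form
\[
\res(\Theta_{c,r})_R = v_R({}_c\vartheta_\cE)\cdot \delta_{P_R^{(r)}} \in \Lambda_r(\cE[\ell^r]\langle R\rangle) = R^*\sL_{\Lambda_r},
\]
with $v_R({}_c\vartheta_\cE) = c^2-1$ at $R=e$ and $v_R({}_c\vartheta_\cE) = -1$ otherwise. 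Since the trace map $[\ell]_*$ sends $\delta_{P_R^{(r+1)}}$ to $\delta_{P_R^{(r)}}$, these classes assemble into the desired $\res(\Theta_c)$ in the limit.

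It remains to check that $\delta_{P_R^{(r)}}$ maps to the unit section $1 \in R^*\Log_{\bbZ_\ell}$ under $\comp$. For $R=e$ this is built into Theorem~\ref{log-comp}, and for $R\neq e$ it follows from the canonical base-change isomorphism $[c]^*\sL \cong \sL$ combined with the splitting principle of Proposition~\ref{log-splitting} applied to $\varphi=[c]$: the isomorphism $R^*\sL \cong e^*\sL$ sends $\delta_{P_R^{(r)}}$ to $\delta_{cP_R^{(r)}} = \delta_e$, which then maps to $1 \in e^*\Log_{\bbQ_\ell}$. Consequently $\comp(\res(\Theta_c))_R = v_R({}_c\vartheta_\cE)\cdot 1$, agreeing with $(c^2 e_*(1) - q^*(1))|_R$, and the theorem follows by injectivity of $\res$. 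The main technical obstacle is the bookkeeping in paragraphs two and three, namely verifying that Leray for the finite étale map $p_r$ intertwines the classical Kummer residue with the $\sL_{\Lambda_r}$-valued residue on $\cE\setminus\cE[c]$, and that the two splittings of $R^*\Log \cong e^*\Log$ — the one from Theorem~\ref{log-comp} and the one from Proposition~\ref{log-splitting} — agree via the étale base change $[c]^*\sL \cong \sL$.
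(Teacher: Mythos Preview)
Your proposal is correct and follows essentially the same route as the paper: reduce to a residue computation via the injectivity of $\res$ in the localization sequence, compute $\res(\Theta_{c,r})$ at finite level from the divisor $c^2(0)-\cE[c]$ of ${}_c\vartheta_\cE$, and verify it lands on $c^2e_*(1)-q^*(1)$ after applying $\comp$.

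The only difference is one of explicitness. The paper's proof is very terse: it simply asserts that the residue lies in $H^0(\cE[c],\Lambda_r)\subset H^0(\cE[c],\sL_{\Lambda_r})$ and that this subspace is carried by $\comp$ into $H^0(\cE[c],\bbQ_\ell)\subset H^0(\cE[c],\Log_{\bbQ_\ell}|_{\cE[c]})$, without unpacking either inclusion. Your Dirac-delta description $v_R\cdot\delta_{P_R^{(r)}}$ is exactly what makes the first inclusion concrete (the section $\Lambda_r\hookrightarrow p_{r*}\Lambda_r|_{\cE[c]}$ comes from the canonical section $[\ell^r]^{-1}:\cE[c]\to\cE[c]\subset\cE_r$, which exists because $(c,\ell)=1$), and your use of $[c]^*\sL\cong\sL$ together with Proposition~\ref{log-splitting} for $\varphi=[c]$ is precisely what is needed to justify the second. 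The technical point you flag---that the trivialization of $R^*\Log_{\bbQ_\ell}$ coming from Theorem~\ref{log-comp} via $\delta_{P_R^{(r)}}\mapsto 1$ agrees with the one from the splitting principle---is genuine and is not spelled out in the paper either; it follows from the fact that both the isomorphism $\sL^{(k)}\cong\Log^{(k)}_{\bbZ_\ell}$ of Theorem~\ref{log-comp} and the isomorphism $\Log_{\bbQ_\ell}\cong[c]^*\Log_{\bbQ_\ell}$ of Proposition~\ref{log-splitting} are characterized by the universal property (they send unit section to unit section), hence are automatically compatible with the base-change isomorphism $\sL\cong[c]^*\sL$.
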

\begin{proof}
Consider the commutative diagram
$$
\xymatrix{
H^1(\cE\setminus \cE[c],\sL(1))\ar[r]^{\comp}\ar[d]_{\res}&
H^1(\cE\setminus \cE[c],\Log_{\bbQ_\ell}(1))\ar@^{(->}[d]^\res\\
H^0(\cE[c],\sL\mid_{\cE[c]})\ar[r]^{\comp}&
H^0(\cE[c],\Log_{\bbQ_\ell}\mid_{\cE[c]}).
}
$$
By definition of $\pol_{c}$ its image in 
$H^0(\cE[c],\Log_{\bbQ_\ell}\mid_{\cE[c]})$ is the element
$$
c^2e_*(1)-q^*(1)\in H^0(\cE[c],\bbQ_\ell)\subset
H^0(\cE[c],\Log_{\bbQ_\ell}\mid_{\cE[c]}).
$$
To conclude the proof of Theorem \ref{integral-pol-comparison} it
suffices to compute the image of $\comp(\Theta_c)$ in
$H^0(\cE[c],\Log_{\bbQ_\ell}\mid_{\cE[c]})$.
For this we work at finite level and use the commutative diagram
$$
\xymatrix{
H^1(\cE\setminus \cE[c\ell^r],\Lambda_r(1))\ar[r]^/.7em/\res\ar[d]^\isom &
H^0(\cE[c\ell^r],\Lambda_r)\ar[d]^\isom\\
H^1(\cE\setminus \cE[c],\sL_{\Lambda_r}(1))\ar[r]^/.7em/\res&
H^0(\cE[c],\sL_{\Lambda_r}).
}
$$
The residue of ${ _c\vartheta_{\cE}}$ is 
$$
c^2e_*(1)-q^*(1)\in H^0(\cE[c],\Lambda_r)
\subset H^0(\cE[c],\sL_{\Lambda_r})
$$
and taking the inverse limit over $r$ 
shows that $\comp(\Theta_c)$ agrees with
$\pol_{c}$ in 
$H^0(\cE[c],\Log_{\bbQ_\ell}\mid_{\cE[c]})$.
\end{proof}
%
\subsection{Eisenstein classes and elliptic Soul\'e elements}
%
In this section we finally prove the comparison result between
Eisenstein classes and elliptic Soul\'e elements which is fundamental
for the whole paper.

It follows from Theorem \ref{integral-pol-comparison} that for
$t\in \cE[N](S)\setminus \{e\}$ one has
$$
t^*\comp(\Theta_c)=t^*\pol_c\in 
\prod_{n\ge 0}H^1(S,\Sym^n\sH_{\bbQ_\ell}(1)).
$$
Denote by 
$$
\pr_k:\prod_{n\ge 0}H^1(S,\Sym^n\sH_{\bbQ_\ell}(1))\to
H^1(S,\Sym^n\sH_{\bbQ_\ell}(1))
$$
the projection onto the $k$-th component.
\begin{theorem}\label{pol-eis}
Let $t\in \cE[N](S)$ be a non-zero $N$-torsion section.
Then one has
$$
\frac{1}{N^k}{_c{e}_k(t)}=
\wt\mom^k_t(\cE\cS_c^{\langle t\rangle})=
\frac{-1}{N^{k-1}}
(c^2\Eis_{\bbQ_\ell}^k(t)-c^{-k}\Eis_{\bbQ_\ell}^k({[c]t})).
$$
\end{theorem}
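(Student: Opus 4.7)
The proof is essentially a chain of equalities assembled from results already available, so my plan is to identify each link and verify that they compose correctly.

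First, the leftmost equality $\frac{1}{N^k}{_ce_k(t)} = \wt\mom^k_t(\cE\cS_c^{\langle t\rangle})$ is immediate: by Proposition \ref{ell-soule-as-mom} we have $\mom_t^k(\cE\cS_c^{\langle t\rangle}) = {_ce_k(t)}$, and by Definition \ref{wtmom-def} the modified moment map is $\wt\mom^k_t = \frac{1}{N^k}\mom^k_t$ (composed with the canonical isomorphism $\Sym^k\sH_{\bbQ_\ell}\isom\TSym^k\sH_{\bbQ_\ell}$). So this equality costs nothing.

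The substantive content is the second equality. The plan is to pull back the global identity of Theorem \ref{integral-pol-comparison}, $\comp(\Theta_c) = \pol_c$, along the $N$-torsion section $t:S\to\cE\setminus\cE[c]$ (where the containment uses $(c,N)=1$). Using naturality of the comparison map $\comp$ with respect to $t^*$, together with Lemma \ref{ES-Theta-comp} which identifies $t^*\Theta_c = \cE\cS_c^{\langle t\rangle}$, this gives the identity
\begin{equation*}
\comp\bigl(\cE\cS_c^{\langle t\rangle}\bigr) = t^*\pol_c
\end{equation*}
in $H^1(S,t^*\Log_{\bbQ_\ell}(1))$.

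Now I project onto the $k$-th graded piece via $\pr_k$ using the splitting $t^*\Log_{\bbQ_\ell}\isom\prod_{n\ge 0}\Sym^n\sH_{\bbQ_\ell}$ from Proposition \ref{log-splitting}. On the right, by definition $\pr_k(t^*\pol_c) = t^*\pol_c^k$, and Proposition \ref{varpol-Eis-comparison} evaluates this to $\frac{-1}{N^{k-1}}(c^2\Eis^k_{\bbQ_\ell}(t) - c^{-k}\Eis^k_{\bbQ_\ell}([c]t))$. On the left, the key input is Proposition \ref{mom-and-split}, which says exactly that the composition $\pr_k \circ \comp$ equals $\wt\mom^k_t$. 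Applied to $\cE\cS_c^{\langle t\rangle}$ this yields $\wt\mom^k_t(\cE\cS_c^{\langle t\rangle})$. Equating the two sides gives the desired formula.

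I do not expect any serious obstacle: every ingredient is in place, and the only thing to watch is the bookkeeping on the two factors of $1/N^k$ and $1/N^{k-1}$, which are already absorbed into the definitions of $\wt\mom^k_t$ and the Eisenstein class $\Eis^k_{\bbQ_\ell}$ (Definition \ref{Eis-def}). The closest thing to a subtle point is the naturality of $\comp$ under the pullback $t^*$, but this is built into Definition \ref{comp-def} since $\comp$ is defined via the canonical maps $\Lambda(\sH\langle t\rangle)\to t^*\Log_{\bbZ_\ell}^{(k)}\to t^*\Log_{\bbQ_\ell}^{(k)}$ which are pullbacks of the global maps on $\cE\setminus\cE[c]$.
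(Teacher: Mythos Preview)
Your proposal is correct and follows essentially the same approach as the paper's own proof, which invokes Theorem \ref{integral-pol-comparison}, Lemma \ref{ES-Theta-comp}, and Proposition \ref{varpol-Eis-comparison} in exactly the same way. Your write-up is in fact more explicit than the paper's, since you spell out the role of Proposition \ref{mom-and-split} in identifying $\pr_k\circ\comp$ with $\wt\mom^k_t$, which the paper leaves implicit.
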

\begin{proof}
As $\pr_k(t^*\comp(\Theta_c))=\pr_k(t^*\pol_c)$ by Theorem
\ref{integral-pol-comparison}, this follows from Lemma \ref{ES-Theta-comp} 
together with
Proposition \ref{varpol-Eis-comparison}.
\end{proof}
\begin{lemma}\label{Eis-parity}
The Eisenstein class $\Eis_{\bbQ_\ell}^k(t)$ is of parity $(-1)^k$, i.e.,
one has 
$$
\Eis_{\bbQ_\ell}^k(-t)=(-1)^k\Eis_{\bbQ_\ell}^k(t).
$$
In particular, $\Eis_{\bbQ_\ell}^k(\psi)=0$ if $\psi$ is not of
parity $(-1)^k$, where we say that $\psi$ has parity $(-1)^k$, if 
$\psi(-t)=(-1)^k\psi(t)$.
\end{lemma}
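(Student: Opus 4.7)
The plan is to deduce both assertions of the lemma formally from Theorem \ref{pol-eis} combined with Lemma \ref{e-of-minus-t}, avoiding any direct manipulation of the elliptic polylogarithm under $[-1]$.

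First I will choose an auxiliary integer $c>1$ with $(c,6\ell N)=1$ and $c\equiv 1\bmod N$; such a $c$ exists by Dirichlet's theorem (or simply because $N\ge 3$ ensures that $\{1+aN : a\ge 1\}$ contains integers coprime to $6\ell N$). Under this choice one has $[c]t=t$ and $[c](-t)=-t$, so Theorem \ref{pol-eis} specializes to the clean formula
\[
{_ce_k(t)} \;=\; -\,N(c^2-c^{-k})\,\Eis_{\bbQ_\ell}^k(t),
\]
and the scalar $-N(c^2-c^{-k}) = -N(c^{k+2}-1)/c^k$ is nonzero in $\bbQ_\ell$. Writing the same identity for $-t$ in place of $t$ and dividing the two gives
\[
\Eis_{\bbQ_\ell}^k(-t) \;=\; \frac{_ce_k(-t)}{_ce_k(t)}\,\Eis_{\bbQ_\ell}^k(t) \;=\; (-1)^k\,\Eis_{\bbQ_\ell}^k(t),
\]
where the last equality is Lemma \ref{e-of-minus-t}. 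This establishes the parity assertion.

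For the second statement I will decompose an arbitrary $\psi:(\cE[N](S)\setminus\{e\})\to \bbQ_\ell$ as $\psi=\psi_{+}+\psi_{-}$ with $\psi_{\pm}(-t)=\pm(-1)^k\psi_{\pm}(t)$; this is possible over $\bbQ_\ell$ (and automatic on $2$-torsion sections for the ``wrong parity'' part $\psi_{-}$ when $k$ is odd). It then suffices to show $\Eis_{\bbQ_\ell}^k(\psi_{-})=0$. Performing the change of summation variable $t\mapsto -t$ and applying the parity just proved to both $\psi_{-}$ and $\Eis^k$ gives
\[
\Eis_{\bbQ_\ell}^k(\psi_{-}) \;=\; \sum_{t}\psi_{-}(-t)\,\Eis_{\bbQ_\ell}^k(-t) \;=\; -\Eis_{\bbQ_\ell}^k(\psi_{-}),
\]
so $\Eis_{\bbQ_\ell}^k(\psi_{-})=0$ in the $\bbQ_\ell$-vector space $H^1(S,\Sym^k\sH_{\bbQ_\ell}(1))$, as required.

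There is no real obstacle here: the content has been packaged into Theorem \ref{pol-eis} and Lemma \ref{e-of-minus-t}, and the only minor point to check is that a $c\equiv 1\bmod N$ coprime to $6\ell N$ can indeed be chosen so that Theorem \ref{pol-eis} yields an honest nonzero proportionality between $_ce_k(t)$ and $\Eis_{\bbQ_\ell}^k(t)$; once this is noted, the proof reduces to a two-line computation.
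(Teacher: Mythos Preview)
Your proof is correct and follows exactly the same approach as the paper, which simply states that the lemma follows from Lemma \ref{e-of-minus-t} and Theorem \ref{pol-eis} for $c\equiv 1\bmod N$. Your version just spells out the details; the only cosmetic point is that the phrase ``dividing the two'' is informal for cohomology classes --- what you mean (and what works) is that the same nonzero scalar $-N(c^2-c^{-k})$ relates ${_ce_k}(\pm t)$ to $\Eis_{\bbQ_\ell}^k(\pm t)$, so Lemma \ref{e-of-minus-t} transfers directly.
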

\begin{proof}
This follows from Lemma \ref{e-of-minus-t} and Theorem \ref{pol-eis}
for $c\equiv 1\mod{N}$. 
\end{proof}

%
%
\section{The residue at $\infty$ of the elliptic Soul\'e elements}
%
%
In this section we will compute the residue at $\infty$ of the
elliptic Soul\'e elements and hence
of the Eisenstein classes.

%
\subsection{Definition of the residue at $\infty$}
%
We are going to describe several variants of the map
$\res_\infty$.

Let $\zeta_N=e^{2\pi i/N}\in \bbC$ and consider
over  $\Spec\bbQ(\zeta_N)((q^{1/N}))$ the Tate curve 
$\cE_q$ with the level structure $\alpha:(\bbZ/N\bbZ)^2\to \cE_q[N]$ given
by $(a,b)\mapsto q^a\zeta_N^b$. The corresponding map of schemes
$\Spec\bbQ(\zeta_N)((q^{1/N}))\to Y(N)$
induces $\Spec\bbQ(\zeta_N)[[q^{1/N}]]\to X(N)$ and a hence a map
$$
\infty:\Spec\bbQ(\zeta_N)\to X(N),
$$
whose image we call the cusp $\infty$. 
%

Let $\wh{X}(N)_\infty$ be the completion of $X(N)$ at $\infty$, which
can be identified 
via the above map with $\Spec\bbQ(\zeta_N)[[q^{1/N}]]$.
We denote by $\wh{Y}(N)_\infty$ the generic
fibre of $\wh{X}(N)_\infty$ so that
$\wh{Y}(N)_\infty\isom \Spec\bbQ(\zeta_N)((q^{1/N}))$. One has a commutative
diagram
\begin{equation}\label{localization-diag}
\begin{CD}
\wh{Y}(N)_\infty@>j>>\wh{X}(N)_\infty@<\infty<<\infty\\
@VVV@VVV@VV=V\\
Y(N)@>j>>X(N) @<\infty<<\infty.
\end{CD}
\end{equation}
Note that by purity one has a canonical isomorphism 
$\infty^*R^1j_*\bbZ/\ell^r\bbZ(1)\isom \bbZ/\ell^r\bbZ$ and
that $R^2j_*\bbZ/\ell^r\bbZ(1)=0$.
\begin{definition}
We define the residue map 
$$
\res_\infty:H^i(\wh{Y}(N),\bbZ/\ell^r\bbZ(1))\to 
H^{i-1}(\infty, \bbZ/\ell^r\bbZ)
$$
to be the morphism
induced by the edge morphism of the Leray spectral sequence for $Rj_*$.
\end{definition}
Consider the Tate curve $\cE_q$ over $\wh{Y}(N)_\infty$. For each
$r\ge 1$ one has an exact sequence and a commutative diagram
\begin{equation}\label{schemes-over-completion}
\xymatrix{0\ar[r]&\mu_{\ell^r N}\ar[r]&
\cE[\ell^rN]\ar[r]^{p_r}\ar[d]^{[\ell^r]}&\bbZ/\ell^rN\bbZ\ar[r]
\ar[d]^{[\ell^r]}&0\\
&&\cE[N]\ar[r]^p&\bbZ/N\bbZ
}
\end{equation}
and $p_r$ induces a finite morphism
\begin{equation}\label{p-r-def}
p_r:\cE[\ell^r]\langle t\rangle\to Z_r\langle p(t)\rangle
\end{equation}
and hence a morphism of sheaves 
$$
p_{r!}:\Lambda_r(\sH_r\langle t\rangle)\to \Lambda_r(Z_r\langle p(t)\rangle).
$$
Note that $\Lambda_r(Z_r\langle p(t)\rangle)$ is a constant sheaf over
$\wh{X}(N)_\infty$ so that we can consider the composition
\begin{multline}\label{res-composition}
H^1(\wh{Y}(N)_\infty,\Lambda_r(\sH_r\langle t\rangle)(1))\xrightarrow{p_{r*}}
H^1(\wh{Y}(N)_\infty,\Lambda_r(Z_r\langle t\rangle)(1))
\\\xrightarrow{\res_\infty}
H^0(\infty, \Lambda_r(Z_r\langle p(t)\rangle))\isom 
\Lambda_r(Z_r\langle p(t)\rangle).
\end{multline}
\begin{definition}\label{res-r-def}
We define 
$$
\res_\infty:H^1(\wh{Y}(N)_\infty,\Lambda_r(\sH_r\langle t\rangle)(1))
\to H^0(\infty, \Lambda_r(Z_r\langle p(t)\rangle))
\isom \Lambda_r(Z_r\langle p(t)\rangle)
$$
to be the composition of the maps in \eqref{res-composition}.
We also denote by
$$
\res_\infty:H^1(\wh{Y}(N)_\infty,\Lambda(\sH\langle t\rangle)(1))
\to  \Lambda(Z\langle p(t)\rangle)
$$
the inverse limit.
\end{definition}
Over $\wh{Y}(N)_\infty$ one has also the exact sequence
\begin{equation}
0\to\bbZ_\ell(1)\xrightarrow{\iota} \sH\xrightarrow{p} \bbZ_\ell\to 0.
\end{equation}
\begin{proposition}\label{monodromy}
The subsheaf $\iota(\bbZ_\ell(1))\subset  \sH$ 
are the invariants of monodromy. In particular,  
$\iota:\bbZ_\ell(1)\to\sH$ induces an isomorphism 
$\bbZ_\ell(1)\isom \infty^*j_*\sH$ and $p:\sH\to \bbZ_\ell$ induces
$$
\infty^*R^1j_*\sH(1)\isom \infty^*R^1j_*\bbZ_\ell(1)\isom\bbZ_\ell.
$$
\end{proposition}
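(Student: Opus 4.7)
The plan is to use the equivalence between \'etale sheaves on the henselian trait $\wh{X}(N)_\infty = \Spec \bbQ(\zeta_N)[[q^{1/N}]]$ and continuous Galois representations. Writing $K = \bbQ(\zeta_N)((q^{1/N}))$, $k = \bbQ(\zeta_N)$, $G_K = \Gal(K^{\mathrm{sep}}/K)$, and letting $I_K \subset G_K$ denote the inertia subgroup, I would first identify $\infty^* j_* \sF = \sF^{I_K}$ and $\infty^* R^1 j_* \sF = H^1(I_K, \sF)$ as $G_k$-modules (a standard consequence of the Leray spectral sequence for $j$ on the trait). Since the residue characteristic is zero, $I_K$ is tame and its maximal pro-$\ell$ quotient $I_K^{(\ell)}$ is canonically isomorphic to $\bbZ_\ell(1)$ via the tame character; in particular $I_K$ acts trivially on each $\bbZ_\ell(n)$ and has $\ell$-cohomological dimension one.

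The first step is to show $\sH^{I_K} = \iota(\bbZ_\ell(1))$. The extension class of $0 \to \bbZ_\ell(1) \to \sH \to \bbZ_\ell \to 0$ in $H^1(G_K, \bbZ_\ell(1))$ is, by the Tate uniformization $\cE_q = \Gm/q^\bbZ$ and the Kummer sequence, the Kummer class of the Tate parameter $q$. Its restriction to $I_K$ lies in $H^1(I_K, \bbZ_\ell(1)) \isom \bbZ_\ell$ and is a generator, since $q$ is a uniformizer and the tame character is the Kummer image of a uniformizer. Equivalently, a topological generator $\sigma$ of $I_K^{(\ell)}$ acts on $\sH$ by a unipotent matrix $\bigl(\begin{smallmatrix}1 & t_\ell(\sigma) \\ 0 & 1\end{smallmatrix}\bigr)$ with nonzero off-diagonal entry, so the $I_K$-invariants consist exactly of the vectors in $\iota(\bbZ_\ell(1))$. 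This yields the first isomorphism.

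The second step is the computation of $H^1(I_K, \sH(1))$. Tensoring the extension with $\bbZ_\ell(1)$ and taking the long exact $I_K$-cohomology sequence of $0 \to \bbZ_\ell(2) \to \sH(1) \to \bbZ_\ell(1) \to 0$, and using the trivial action on $\bbZ_\ell(n)$, the identification $H^1(I_K, \bbZ_\ell(n)) \isom \bbZ_\ell(n-1)$, and the vanishing $H^i(I_K, -) = 0$ for $i \ge 2$, one obtains
\[
\bbZ_\ell(2) \to \sH(1)^{I_K} \to \bbZ_\ell(1) \xrightarrow{\delta} \bbZ_\ell(1) \to H^1(I_K, \sH(1)) \to \bbZ_\ell \to 0.
\]
The connecting map $\delta$ is cup product with the restriction of the extension class to $I_K$, so by the first step it is an isomorphism. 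Consequently $H^1(I_K, \sH(1)) \to H^1(I_K, \bbZ_\ell(1)) = \bbZ_\ell$, which is the map induced by $p$, is an isomorphism. This gives the final identification.

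The main obstacle is the identification of the extension class of $\sH$ with the Kummer class of the Tate parameter $q$, together with the verification that its restriction to inertia is a generator (and not merely a nonzero element) of $H^1(I_K, \bbZ_\ell(1))$. This is what forces $\delta$ to be an isomorphism rather than simply injective, and everything else follows formally from the long exact sequence.
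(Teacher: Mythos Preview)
Your proof is correct and follows essentially the same route as the paper: both arguments reduce to the long exact sequence for $\infty^*Rj_*$ (equivalently, the $I_K$-cohomology sequence) applied to $0\to\bbZ_\ell(1)\to\sH\to\bbZ_\ell\to 0$, and both need the connecting map to be an isomorphism. The only difference is in how the first step is justified: the paper simply cites SGA7~I, Expos\'e~IX, Proposition~2.2.5 and (2.2.5.1) for the statement that $\iota(\bbZ_\ell(1))$ are exactly the monodromy invariants (equivalently, that the monodromy operator $N:\bbZ_\ell\to\bbZ_\ell$ is an isomorphism), whereas you unpack this via the Tate uniformization, identifying the extension class of $\sH$ with the Kummer class of the Tate parameter $q$ and observing that the restriction to inertia is a generator because $q$ is a uniformizer. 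Your argument is thus more self-contained; the paper's is shorter but relies on the reader knowing the precise content of the SGA7 reference (which, in particular, gives the integral statement, not merely the rational one). A minor cosmetic point: the paper runs the long exact sequence on $\sH$ and twists at the end, while you twist first and work with $\sH(1)$; this makes no difference.
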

\begin{proof}
That $\iota(\bbZ_\ell(1))\subset  \sH$ 
are the invariants of monodromy is \cite[Expos\'e IX, Proposition 2.2.5
and (2.2.5.1)]{SGA7I}. From the long exact sequence for $\infty^*Rj_*$ we
get
$$
0\to \infty^*j_* \bbZ_\ell\to \infty^*R^1j_* \bbZ_\ell(1)\to
\infty^*R^1j_*\sH\to  \infty^*R^1j_* \bbZ_\ell\to 0.
$$
As $ \infty^*j_* \bbZ_\ell\isom \bbZ_\ell$ and
$\bbZ_\ell\isom \infty^*R^1j_* \bbZ_\ell(1)$, the first map is an
isomorphism and
one gets $\infty^*R^1j_*\sH\isom\infty^*R^1j_*\bbZ_\ell\isom \bbZ_\ell(-1)$.
\end{proof}

\begin{corollary}\label{monodromy-sym}
Over $\wh{Y}(N)_\infty$ the maps 
$\Sym^k\bbZ_\ell(1)\to \Sym^k\sH$ induced by  $\iota$ and 
$\Sym^k\sH\to \Sym^k\bbZ_\ell$ induced by $p$ give rise to isomorphisms 
\begin{align*}
\bbZ_\ell(k)\isom \infty^*j_*\Sym^k\sH&&\mbox{ and }&&
\infty^*R^1j_*\Sym^k\sH(1)\isom \bbZ_\ell.
\end{align*}
\end{corollary}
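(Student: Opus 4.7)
The plan is to prove the Corollary by induction on $k$, using Proposition \ref{monodromy} as the base case $k=1$ (the case $k=0$ is trivial since $\Sym^0\sH=\bbZ_\ell$). The inductive input is the natural short exact sequence on $\wh{Y}(N)_\infty$
$$0 \to \bbZ_\ell(1)\otimes\Sym^{k-1}\sH \xrightarrow{\iota\,\cdot\,} \Sym^k\sH \xrightarrow{\Sym^k p} \bbZ_\ell \to 0,$$
in which the first map is multiplication by the saturated subbundle $\iota(\bbZ_\ell(1))\subset\sH$ inside the symmetric algebra, and its image equals the kernel of $\Sym^k p$ by inspection in a local trivialisation.

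Applying $\infty^*Rj_*$ and using that tensoring with the locally constant sheaf $\bbZ_\ell(1)$ commutes with $j_*$ and $R^1j_*$ (projection formula), the inductive hypothesis yields
$$\infty^*j_*(\bbZ_\ell(1)\otimes\Sym^{k-1}\sH)\isom\bbZ_\ell(k),\quad \infty^*R^1j_*(\bbZ_\ell(1)\otimes\Sym^{k-1}\sH)\isom\bbZ_\ell.$$
Combined with $\infty^*j_*\bbZ_\ell\isom\bbZ_\ell$ and $\infty^*R^1j_*\bbZ_\ell\isom\bbZ_\ell(-1)$ (obtained from purity and a Tate twist), the long exact sequence becomes
$$0 \to \bbZ_\ell(k) \to \infty^*j_*\Sym^k\sH \to \bbZ_\ell \xrightarrow{\delta} \bbZ_\ell \to \infty^*R^1j_*\Sym^k\sH \to \bbZ_\ell(-1) \to 0.$$
Since $p\circ\iota=0$, the map $\infty^*j_*\Sym^k\sH\to\bbZ_\ell$ vanishes on the image of $\bbZ_\ell(k)$; the direct monodromy check that the inertia invariants of $\Sym^k\sH_{\bar\eta}$ are generated by $e_1^k$ (for $e_1$ a local generator of $\iota(\bbZ_\ell(1))$) then identifies $\infty^*j_*\Sym^k\sH$ with $\bbZ_\ell(k)$ via $\iota$, giving the first iso and forcing $\delta$ to be injective.

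For the second isomorphism it suffices to show $\delta$ is surjective, after which twisting by $\bbZ_\ell(1)$ and using the identification $\bbZ_\ell(1)\otimes\bbZ_\ell(-1)\isom\bbZ_\ell$ produces the map $\infty^*R^1j_*\Sym^k\sH(1)\isom\bbZ_\ell$ compatibly with the $p$-induced map to $\infty^*R^1j_*\bbZ_\ell(1)\isom\bbZ_\ell$. I would verify surjectivity of $\delta$ by explicit boundary computation: lift $1\in\bbZ_\ell$ to the local section $e_2^k$ of $\Sym^k\sH$, apply $\sigma-1$ to obtain the class $\sum_{j\ge 1}\binom{k}{j}e_1^{j}e_2^{k-j}\in F^1\Sym^k\sH=\iota\cdot\Sym^{k-1}\sH$, and track it through the inductive identification of $\infty^*R^1j_*(\bbZ_\ell(1)\otimes\Sym^{k-1}\sH)$ with $\bbZ_\ell$ given by the $p$-induced map on $\Sym^{k-1}\sH$. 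The main obstacle is careful bookkeeping of the Tate twists arising in the various identifications and matching the cyclotomic generator $\sigma$ of inertia with the chosen splitting $e_1\in\bbZ_\ell(1)\subset\sH$ at the cusp, so that the boundary $\delta$ becomes the identification predicted by the inductive hypothesis.
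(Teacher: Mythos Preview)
Your overall strategy coincides with the paper's: both argue by induction on $k$ starting from Proposition~\ref{monodromy}, feeding a short exact sequence into the long exact sequence for $\infty^*Rj_*$. The paper uses the sequence
\[
0\to \bbZ_\ell(k)\xrightarrow{\iota}\Sym^k\sH\to \Sym^{k-1}\sH\to 0,
\]
placing the line $\bbZ_\ell(k)$ as the subobject and the inductive term as the quotient, whereas you use the ``multiplication by $\iota$'' sequence with $\bbZ_\ell(1)\otimes\Sym^{k-1}\sH$ as sub and $\bbZ_\ell$ as quotient. These are dual filtration steps and lead to the same shape of long exact sequence; with either choice the $j_*$ assertion (monodromy invariants equal $\bbZ_\ell(k)$) is immediate, and the $R^1j_*$ assertion reduces to showing that the connecting map $\delta$ is an isomorphism. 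The paper's proof is a one-line sketch that does not spell out this boundary analysis; you are more explicit, which is a virtue.

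A word of caution about your boundary computation, however. Carrying it through as you describe, one lifts $1$ to $e_2^k$, obtains $(\sigma-1)e_2^k=\sum_{j\ge 1}\binom{k}{j}e_1^{j}e_2^{k-j}$, untwists by $e_1$, and then applies $\Sym^{k-1}p$; only the $j=1$ term survives, giving $\binom{k}{1}=k$ rather than $1$. Thus $\delta$ is multiplication by $k$, which is not a unit when $\ell\mid k$. In fact a direct check shows the matrix of $\sigma-1$ on $\Sym^k\sH$ (restricted to $\langle e_1 e_2^{k-1},\dots,e_1^k\rangle$) is lower triangular with diagonal $k,k-1,\dots,1$, so the coinvariants carry $\ell$-torsion of order the $\ell$-part of $k!$. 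Over $\bbQ_\ell$ this is harmless and the corollary holds as stated, which is all that is needed for the residue map in Definition~\ref{residue-Qell-def}; but your proposed verification of surjectivity of $\delta$ over $\bbZ_\ell$ will not succeed when $\ell\le k$, and the same caveat applies to the paper's terse argument.
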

\begin{proof}
This follows by induction on $k$ from Proposition \ref{monodromy} and
the exact sequence
$$
0\to \bbZ_\ell(k)\xrightarrow{\iota}\Sym^k\sH\to 
\Sym^{k-1}\sH
\to 0.
$$
\end{proof}
\begin{definition}\label{residue-Qell-def}
The \emph{residue} at $\infty$ 
is
the morphism
\begin{equation*}
H^i(\wh{Y}(N)_\infty,\Sym^k\sH_{\bbQ_\ell}(1))\to
H^{i-1}(\infty,\bbQ_\ell)
\end{equation*}
induced from the edge morphism of the Leray spectral sequence for $Rj_*$
and the isomorphism $\infty^*R^1j_*\Sym^k\sH_{\bbQ_\ell}(1)\isom \bbQ_\ell$.
\end{definition}
In the same way one defines a residue at $\infty$
\begin{equation}\label{residue-Qell-def-variant}
\res_\infty:H^i(Y(N),\Sym^k\sH_{\bbQ_\ell}(1))\to
H^{i-1}(\infty,\bbQ_\ell),
\end{equation}
which obviously factors through  the residue map on
$H^i(\wh{Y}(N)_\infty,\Sym^k\sH_{\bbQ_\ell}(1))$.

The residue maps defined in Definition \ref{res-r-def} on finite level
and in Definition \ref{residue-Qell-def} with $\bbQ_\ell$-coefficients 
are compatible in the following sense.

\begin{lemma}\label{res-comparison}
There is a commutative diagram
$$
\begin{CD}
H^1(\wh{Y}(N)_\infty,\Lambda(\sH\langle t\rangle)(1))@>{\res_\infty}>>
H^0(\infty, \Lambda(Z\langle p(t)\rangle))\\
@V{\wt\mom_t^k}VV@VV{\wt\mom_{p(t)}^k}V\\
H^1(\wh{Y}(N)_\infty, \Sym^k\sH_{\bbQ_\ell}(1)))@>{\res_\infty}>>
H^0(\infty, \bbQ_\ell)\isom \bbQ_\ell.
\end{CD}
$$
Moreover, if one uses the isomorphism 
$H^0(\infty, \Lambda(Z\langle p(t)\rangle))\isom \Lambda(Z\langle p(t)\rangle)$
the map $\wt\mom_{p(t)}^k$ is the composition
$$
\wt\mom_{p(t)}^k:\Lambda(Z\langle p(t)\rangle)\xrightarrow{\mom_{p(t)}^k}
\bbZ_\ell\xrightarrow{\frac{1}{N^kk!}}\bbQ_\ell.
$$
\end{lemma}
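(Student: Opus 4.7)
The plan is to decompose the diagram into two pieces along the intermediate sheaf $\Lambda(Z\langle p(t)\rangle)$, pushed forward from $\Lambda(\sH\langle t\rangle)$ via $p_{r!}$, and to use functoriality of the moment map together with naturality of the edge morphism of the Leray spectral sequence.

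\textbf{Step 1 (functoriality of the moment map).} The short exact sequence $0\to\bbZ_\ell(1)\to\sH\to\bbZ_\ell\to 0$ gives a map $p:\sH\to\bbZ_\ell=Z$ compatible with the quotient map $p:\cE[N]\to\bbZ/N\bbZ$ from \eqref{schemes-over-completion}. The induced map $p_r:\cE[\ell^r]\langle t\rangle\to Z_r\langle p(t)\rangle$ is a morphism of $\cE[\ell^r]$-equivariant torsors over the homomorphism $p:\cE[\ell^r]\to Z_r$. First I would check that, by the sheaf version of the functoriality property of the moment map recorded for the pointwise case just after Proposition \ref{mom-def}, the square
$$
\begin{CD}
\Lambda_r(\sH_r\langle t\rangle) @>\mom^k_{r,t}>> \TSym^k\sH_r \\
@Vp_{r!}VV @VV\TSym^k(p)V \\
\Lambda_r(Z_r\langle p(t)\rangle) @>\mom^k_{r,p(t)}>> \TSym^k Z_r
\end{CD}
$$
commutes; this is a direct consequence of Definition \ref{sheaf-mom-def} together with the observation that $\TSym^k(p)\circ\tau_{r,t}^{[k]}=\tau_{r,p(t)}^{[k]}\circ p_r$ as sections. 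Passing to the inverse limit and then tensoring with $\bbQ_\ell$ gives a corresponding commutative square involving $\wt\mom^k_t$ and $\wt\mom^k_{p(t)}$ modulo the normalising factor in Step 3.

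\textbf{Step 2 (compatibility of residue with $p_!$).} Under the quotient $\sH\to\bbZ_\ell$ we get, by Corollary \ref{monodromy-sym}, the identifications $\infty^*R^1j_*\Sym^k\sH_{\bbQ_\ell}(1)\isom\bbQ_\ell$ and the analogous statement for $\bbZ_\ell$ constant. Since the residue $\res_\infty$ is by definition the edge morphism of the Leray spectral sequence for $Rj_*$ composed with this identification, naturality of that edge morphism with respect to the map of sheaves induced by $p$ yields a commutative square
$$
\begin{CD}
H^1(\wh Y(N)_\infty,\Lambda(\sH\langle t\rangle)(1)) @>p_!>> H^1(\wh Y(N)_\infty,\Lambda(Z\langle p(t)\rangle)(1)) \\
@V\res_\infty VV @VV\res_\infty V \\
H^0(\infty,\Lambda(Z\langle p(t)\rangle)) @= H^0(\infty,\Lambda(Z\langle p(t)\rangle)).
\end{CD}
$$
The analogous square with $\Sym^k\sH_{\bbQ_\ell}(1)$ on the left and $\bbQ_\ell$ on the right also commutes for the same reason.

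\textbf{Step 3 (identification of $\wt\mom^k_{p(t)}$ and assembly).} By Definition \ref{wtmom-def}, $\wt\mom^k_{p(t)}=\frac{1}{N^k}\mom^k_{p(t)}$ composed with the inverse of the comparison $\Sym^k Z_{\bbQ_\ell}\isom\TSym^k Z_{\bbQ_\ell}$. For $Z=\bbZ_\ell$ both are canonically identified with $\bbZ_\ell$ via the generator $1$ respectively $1^{[k]}$, and the comparison sends $1\mapsto k!\cdot 1^{[k]}$; hence the inverse on $\bbQ_\ell$ is multiplication by $(k!)^{-1}$, giving the stated formula $\wt\mom^k_{p(t)}=\frac{1}{N^kk!}\mom^k_{p(t)}$. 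Combining Steps 1 and 2, the outer diagram of the lemma commutes after post-composition with $\TSym^k(p)$ and the isomorphism of Corollary \ref{monodromy-sym}; this is exactly the content of the statement.

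The only subtle point, and what I expect to be the main technical obstacle, is verifying that the identification $\infty^*R^1j_*\Sym^k\sH(1)\isom\bbZ_\ell$ used in the definition of $\res_\infty$ on the bottom row matches the one obtained from $\Lambda(Z\langle p(t)\rangle)$ on the top row; this amounts to showing that the natural map $\Lambda(\sH)/I(\sH)^{k+1}\to\TSym^k\sH$ (at $\infty$) is compatible on associated gradeds with the splittings provided by monodromy invariants in Proposition \ref{monodromy}, a check that is essentially Lemma \ref{sym-mom-compatibility} applied over the completion.
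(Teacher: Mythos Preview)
Your proposal is correct and follows essentially the same approach as the paper: functoriality of the moment map with respect to $p$ (your Step~1 is exactly the commutative square the paper writes down) together with the observation that $\Sym^k\bbZ_\ell\to\TSym^k\bbZ_\ell$ sends the generator to $k!$ times the generator (your Step~3). The paper's proof is terser because it treats your Step~2 and your final ``subtle point'' as immediate from the definitions: the top residue map $\res_\infty$ on $H^1(\wh Y(N)_\infty,\Lambda(\sH\langle t\rangle)(1))$ is \emph{defined} in Definition~\ref{res-r-def} as the composition through $p_{r!}$, so there is nothing to verify there, and likewise the bottom residue is defined via the isomorphism of Corollary~\ref{monodromy-sym}, which is itself induced by $\TSym^k p$.
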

\begin{proof} The functoriality of the moment map gives 
\[
\begin{CD}
\Lambda(\sH\langle t\rangle)@>p_!>> \Lambda(Z\langle p(t)\rangle)\\
@V\mom^k_tVV@VV\mom^k_{p(t)}V\\
\TSym^k\sH@>\TSym^kp>>\TSym^k\bbZ_\ell\isom \bbZ_\ell
\end{CD}
\]
and the lemma follows from the definitions if one observes
that the canonical map $\Sym^k\bbZ_\ell\to \TSym^k\bbZ_\ell$
maps the generator of $\Sym^k\bbZ_\ell$ to $k!$ times the generator
of $\TSym^k\bbZ_\ell$.
\end{proof}
Finally, we treat the compatibility of the Kummer map and the residue map.
The scheme $Z_r\langle p(t)\rangle$ over $\wh{Y}(N)_\infty$ 
is the disjoint union of copies of $\Spec\bbQ(\zeta_N)((q^{1/N}))$.
An invertible function on $ Z_r\langle p(t)\rangle$ is therefore just
a collection of units in $\bbQ(\zeta_N)((q^{1/N}))$ and one can speak
of the order of the unit in the uniformizing parameter $ q^{1/N}$.
If we denote by $\bbZ[Z_r\langle p(t)\rangle]$ the abelian group of maps 
$\varphi:Z_r\langle p(t)\rangle\to \bbZ$ one gets a homomorphism
\begin{equation}
\ord_\infty: \Gm(Z_r\langle p(t)\rangle)\to \bbZ[Z_r\langle p(t)\rangle].
\end{equation}
The norm with respect to the finite morphism 
$p_r: \cE[\ell^r]\langle t\rangle\to Z_r\langle p(t)\rangle$ induces a homomorphism 
$$
p_{r*}:\Gm(\cE[\ell^r]\langle t\rangle)\to \Gm(Z_r\langle t\rangle).
$$ 
With these notations we have:
\begin{lemma}\label{compatibility-res-Kummer}
The following diagram commutes:
$$
\xymatrix{
\Gm(\cE[\ell^r]\langle t\rangle)
\ar[r]^{\ord_\infty\verk p_*}\ar[d]^{\partial_r}&
\bbZ[Z_r\langle p(t)\rangle]\ar[d]\\
H^1(\wh{Y}(N)_\infty,\Lambda_r(\sH_r\langle t\rangle)(1))\ar[r]^/0,7cm/{\res_\infty}&
\Lambda_r[Z_r\langle p(t)\rangle].
}
$$
Here the right vertical arrow reduces the coefficients modulo $\ell^r$.
\end{lemma}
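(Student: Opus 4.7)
The plan is to reduce the commutativity of the diagram to a classical statement about Kummer classes and valuations over a complete discretely valued field. The reduction proceeds in two steps, both of which are essentially bookkeeping around known compatibilities.

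First, I would exploit the functoriality of the Kummer map under the finite étale pushforward $p_r: \cE[\ell^r]\langle t\rangle\to Z_r\langle p(t)\rangle$. For a unit $f\in \Gm(\cE[\ell^r]\langle t\rangle)$, the standard compatibility of the Kummer sequence with trace gives
$$p_{r*}\bigl(\partial_r(f)\bigr)=\partial_r\bigl(p_{r*}f\bigr)\in H^1\bigl(\wh Y(N)_\infty,\Lambda_r(Z_r\langle p(t)\rangle)(1)\bigr),$$
where on the right $p_{r*}f$ denotes the norm of the invertible function $f$ along $p_r$. Since by Definition \ref{res-r-def} the residue $\res_\infty$ on $\Lambda_r(\sH_r\langle t\rangle)(1)$ is defined precisely as $\res_\infty\verk p_{r*}$, this identity reduces the commutativity of the stated diagram to the analogous diagram with $\cE[\ell^r]\langle t\rangle$ replaced by $Z_r\langle p(t)\rangle$ and $f$ replaced by $g=p_{r*}f$.

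Second, the scheme $Z_r\langle p(t)\rangle$ over $\wh Y(N)_\infty$ is a disjoint union of copies of $\wh Y(N)_\infty\isom \Spec\bbQ(\zeta_N)((q^{1/N}))$, and the sheaf $\Lambda_r(Z_r\langle p(t)\rangle)$ is the corresponding constant sheaf. Hence the claim splits componentwise and reduces to the single assertion that for a unit $g$ in the complete discretely valued field $K=\bbQ(\zeta_N)((q^{1/N}))$ with valuation ring $R=\bbQ(\zeta_N)[[q^{1/N}]]$, the composition
$$K^\times\xrightarrow{\partial_r} H^1(K,\mu_{\ell^r})\xrightarrow{\res_\infty} H^0(\infty,\bbZ/\ell^r\bbZ)\isom \bbZ/\ell^r\bbZ$$
equals $\ord_\infty\bmod\ell^r$. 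To see this, write $g=u\cdot(q^{1/N})^n$ with $u\in R^\times$. Then $\partial_r(u)$ extends to a class on $\Spec R$ and hence has vanishing residue by the localization sequence, while $\partial_r(q^{1/N})$ has residue equal to $1$ under the purity isomorphism $\infty^*R^1j_*\mu_{\ell^r}\isom \bbZ/\ell^r\bbZ$ built into the definition of $\res_\infty$. This yields $\res_\infty(\partial_r g)=n\bmod \ell^r=\ord_\infty(g)\bmod\ell^r$.

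The main obstacle is the first step: one must verify that the multiplicative norm along $p_r$ corresponds under the Kummer isomorphism to the sheaf-theoretic trace $p_{r*}\bbZ/\ell^r\bbZ(1)\to \bbZ/\ell^r\bbZ(1)$ defining the map on the coefficient sheaves (via the identification $\Lambda_r(\sH_r\langle t\rangle)\isom p_{r*}\bbZ/\ell^r\bbZ$). This is a standard fact once carefully unwound from the definitions—it is essentially the compatibility of the trace in Galois cohomology with the norm on $H^1(-,\mu_{\ell^r})\isom -^\times/(-^\times)^{\ell^r}$—but it requires some care to match the conventions built into Definition \ref{res-r-def}. The second step is classical and reduces to a direct computation of the tame symbol via Kummer on $\Spec R$.
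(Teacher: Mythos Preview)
Your proposal is correct and is precisely an unpacking of the paper's one-line proof, which reads in its entirety: ``Compatibility of the Kummer map with traces and residues.'' Your two steps---reducing via the norm/trace compatibility of the Kummer map to the constant scheme $Z_r\langle p(t)\rangle$, and then invoking the classical identification of $\res_\infty\circ\partial_r$ with $\ord_\infty\bmod\ell^r$ on a complete discretely valued field---are exactly the two compatibilities the paper is gesturing at.
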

\begin{proof}
Compatibility of the  Kummer map with traces and residues.
\end{proof}

%
\subsection{Computation of the residue at $\infty$ of 
the elliptic Soul\'e element}
%
Recall the residue map 
$$
\res_\infty:H^1(\wh{Y}(N)_\infty,\Lambda_r(\sH_r\langle t\rangle)(1))
\to H^0(\infty, \Lambda_r(Z_r\langle p(t)\rangle))
\isom \Lambda_r(Z_r\langle p(t)\rangle)
$$
from Definition \ref{res-r-def} and the elements 
$$
\cE\cS_{c,r}^{\langle t\rangle}\in H^1(\wh{Y}(N)_\infty,\Lambda_r(\sH_r\langle t\rangle)(1))
$$ 
defined in \ref{ES-def} and 
$$
B_{2,c,r}^{\langle p(t)\rangle}\in \Lambda_r(Z_r\langle p(t)\rangle)
$$
defined in \ref{Bernoulli-c-r-def}.

The residue of the elliptic Soul\'e elements will be deduced 
from the following fundamental result.
\begin{theorem}\label{ES-c-r-res}
With the above notation one has
$$
\res_{\infty}(\cE\cS_{c,r}^{\langle t\rangle})=B_{2,c,r}^{\langle p(t)\rangle}.
$$
In particular, taking the inverse limit one has
$$
\res_\infty(\cE\cS^{\langle t\rangle}_c)=B_{2,c}^{\langle p(t)\rangle}.
$$
\end{theorem}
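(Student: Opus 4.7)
The plan is to translate the residue via the Kummer--residue compatibility of Lemma \ref{compatibility-res-Kummer} into an order-of-vanishing computation for the Kato unit, and then evaluate that order using the explicit $q$-expansion of Corollary \ref{theta-evaluation}. Setting $\theta_r := {_c\vartheta_\cE}|_{\cE[\ell^r]\langle t\rangle} \in \Gm(\cE[\ell^r]\langle t\rangle)$, Lemma \ref{compatibility-res-Kummer} gives
$$
\res_\infty(\cE\cS_{c,r}^{\langle t\rangle}) = \res_\infty(\partial_r\theta_r) = \ord_\infty(p_{r*}\theta_r) \bmod \ell^r
$$
in $\Lambda_r[Z_r\langle p(t)\rangle]$, where $\ord_\infty$ is the order in the uniformizer $q^{1/N}$. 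The task thereby reduces to evaluating $\ord_\infty(p_{r*}\theta_r)(a)$ for each $a \in Z_r\langle p(t)\rangle$.

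Using the Tate parametrization I would write a point $Q$ in the fiber of $p_r$ over $a$ as $Q = q_\tau^{a/\ell^r N}\zeta_{\ell^r N}^b$ for a suitable $b$. Applying Corollary \ref{theta-evaluation} with $N$ there replaced by $\ell^r N$ yields
$$
{_c\vartheta(\tau, Q)} = q_\tau^{\frac{1}{2}(c^2 B_2(\{a/\ell^r N\}) - B_2(\{ca/\ell^r N\}))} \cdot U_{a,b},
$$
where $U_{a,b}$ bundles the root of unity $(-\zeta_{\ell^r N}^b)^{(c-c^2)/2}$, the rational factor $(1 - q_\tau^{a/\ell^r N}\zeta_{\ell^r N}^b)^{c^2}(1 - q_\tau^{ca/\ell^r N}\zeta_{\ell^r N}^{cb})^{-1}$, and the two $\wt\gamma$-terms. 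Each of these is a unit in the completed local ring at $\infty$: the $\wt\gamma$-product is a product of factors congruent to $1$ modulo $q_\tau$; the denominator $1 - q_z^c$ is invertible because $Q \notin \cE[c]$ by the hypothesis $(c, 6\ell N) = 1$; and $1 - q_z$ is a unit because $Q \neq e$ forces $(a,b) \neq (0,0)$, so either $a$ has a nonzero representative (and $q_z$ has positive $q_\tau$-order) or $a = 0$ with $\zeta_{\ell^r N}^b \neq 1$. In particular $\ord_{q_\tau}({_c\vartheta(\tau, Q)})$ depends only on $a$, not on $b$.

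Finally, the map $p_r : \cE[\ell^r]\langle t\rangle \to Z_r\langle p(t)\rangle$ is surjective with each fiber a torsor under $\mu_{\ell^r N}\cap \cE[\ell^r] = \mu_{\ell^r}$ of size $\ell^r$, so taking the norm $p_{r*}$ multiplies the $q_\tau$-order by $\ell^r$; converting to the genuine uniformizer $q^{1/N}$ via $\ord_\infty = N\cdot \ord_{q_\tau}$ introduces a further factor of $N$. One obtains
$$
\ord_\infty(p_{r*}\theta_r)(a) = \frac{\ell^r N}{2}\bigl(c^2 B_2(\{a/\ell^r N\}) - B_2(\{ca/\ell^r N\})\bigr) = B_{2,c,r}^{\langle p(t)\rangle}(a)
$$
by Definition \ref{Bernoulli-c-r-def}. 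Reducing modulo $\ell^r$ gives the first identity, and the second follows by passing to the inverse limit in $r$ (both sides being defined as such limits). The main obstacle is careful bookkeeping: verifying unit-status for every auxiliary factor of $_c\vartheta$ at $\infty$, particularly in the degenerate case $p(t) = 0$ where the first Tate coordinate $a$ may vanish, and tracking the normalizations in the conversion $\ord_\infty = N\cdot \ord_{q_\tau}$.
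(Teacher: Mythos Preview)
Your proposal is correct and follows essentially the same strategy as the paper: reduce via Lemma~\ref{compatibility-res-Kummer} to computing $\ord_\infty\circ p_*({_c\vartheta_\cE})$, then read off the order from the explicit $q$-expansion of Corollary~\ref{theta-evaluation}. The only cosmetic difference is that the paper performs the computation after an explicit base change to $\wh{Y}(\ell^rN)_\infty$ (where the torsor trivializes and the uniformizer is $q^{1/\ell^rN}$), tracking the ramification factor $\ell^r$ via diagram~\eqref{ord-comm-diag}, whereas you work with fractional $q_\tau$-orders and convert at the end via $\ord_\infty = N\cdot\ord_{q_\tau}$; the two bookkeepings are equivalent, and you are in fact more explicit than the paper in verifying that the auxiliary factors $(1-q_z)^{c^2}(1-q_z^c)^{-1}$ and $\wt\gamma_{q_\tau}(\cdots)$ are units at the cusp.
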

\begin{proof}
Recall that $\cE\cS_{c,r}^{\langle t\rangle}=\partial_r({_c\vartheta_{\cE}})$
so that Lemma \ref{compatibility-res-Kummer} implies that
we have to compute $\ord_\infty\verk p_*({_c\vartheta_{\cE}})$. In order to do
this we perform a base change from $\wh{Y}(N)_\infty$ to 
$\wh{Y}(\ell^rN)_\infty$. We introduce the shorter notation
$$
T_r:=\wh{Y}(\ell^rN)_\infty=\Spec \bbQ(\zeta_{\ell^rN})((q^{1/\ell^rN}))
$$
for $r\ge 0$. Over $T_r$ the scheme $\cE[\ell^r]\langle t\rangle$ is
isomorphic to the constant scheme
$$
Z_r^2\langle t\rangle :=\{(x,y)\in (\bbZ/\ell^rN)^2\mid
[\ell^r](x,y)=t\}
$$ 
and the map
$p:Z_r^2\langle t\rangle\to Z_r\langle p(t)\rangle$
is simply given by the projection $\pr_1$ 
onto the first coordinate: $(x,y)\mapsto x$.
The base change from $T_0$ to $T_r$ induces a commutative diagram
\begin{equation}\label{ord-comm-diag}
\xymatrix{
\Gm(Z_r^2\langle t\rangle_{T_r})\ar[r]^{\pr_{1*}}&
\Gm(Z_r\langle \pr_1(t)\rangle_{T_r})\ar[r]^/0,2cm/{\ord_\infty}&
\bbZ[Z_r\langle \pr_1(t)\rangle]\\
\Gm(\cE[\ell^r]\langle t\rangle_{T_0})\ar[u]\ar[r]^{p_*}&
\Gm(Z_r\langle p(t)\rangle_{T_0})\ar[u]\ar[r]^/0,2cm/{\ord_\infty}&
\bbZ[Z_r\langle p(t)\rangle]\ar[u]_{\ell^r},
}
\end{equation}
where the right vertical map is the multiplication of the coefficients
with $\ell^r$. The commutativity follows from the fact that the 
morphism $T_r\to T_0$ is ramified of degree $\ell^r$ in $q^{1/N}$.
Moreover, as $\pr_1:Z_r^2\langle t\rangle\to Z_r\langle \pr_1(t)\rangle$
is unramified one has a commutative diagram
$$
\xymatrix{\Gm(Z_r^2\langle t\rangle_{T_r})
\ar[r]^/0,2cm/{\ord_\infty}\ar[d]^{\pr_{1*}}& 
\bbZ[Z_r^2\langle t\rangle]\ar[d]^{\pr_{1!}}\\
\Gm(Z_r\langle \pr_1(t)\rangle_{T_r})\ar[r]^/0,2cm/{\ord_\infty}&
\bbZ[Z_r\langle \pr_1(t)\rangle].
}
$$
For each $(x,y)\in Z_r^2\langle t\rangle$ we
now have to calculate the order of $(x,y)^*{_c\vartheta_\cE}$ at $\infty$.
For this we can work on $Y(\ell^rN)(\bbC)$. 
By Corollary \ref{theta-evaluation} the function 
$(x,y)^*{_c\vartheta_\cE}$
is explicitly given by 
$$
q_\tau^{\frac{1}{2}(c^2B_2(\{\frac{x}{\ell^rN}\})-B_2(\{\frac{cx}{\ell^rN}\}))}
(-\zeta_{\ell^rN}^y)^{\frac{c-c^2}{2}}\frac{(1-q_\tau^{\frac{x}{\ell^rN}} \zeta_{\ell^rN}^y)^{c^2}}
{(1-q_\tau^{\frac{cx}{\ell^rN}} \zeta_{\ell^rN}^{cy})}
\frac{\wt\gamma_{q_\tau}(q_\tau^{\frac{x}{\ell^rN}}\zeta_{\ell^rN}^y)^{c^2}}
{\wt\gamma_{q_\tau}(q_\tau^{\frac{cx}{\ell^rN}}\zeta_{\ell^rN}^{cy})}.
$$
As the uniformizing parameter for $Y(\ell^rN)(\bbC)$
at $\infty$ is $q^{1/\ell^rN}_\tau$,
we get 
\begin{equation*}
\ord_\infty((x,y)^*{_c\vartheta_\cE})
=\frac{\ell^rN}{2}(c^2B_2(\{\frac{x}{\ell^rN}\})-B_2(\{\frac{cx}{\ell^rN}\}))
=B_{2,c,r}^{\langle p(t)\rangle}(x).
\end{equation*}
To compute $\pr_{1!}\verk \ord_\infty((x,y)^*{_c\vartheta_\cE})$ observe 
that for a fixed $x\in Z_r\langle \pr_1(t)\rangle$ 
there are $\ell^r$ elements $y$ with $(x,y)\in Z_r^2\langle t\rangle$.
As $\ord_\infty((x,y)^*{_c\vartheta_\cE})$ is independent of $y$ this gives
$$
\pr_{1!}\verk \ord_\infty((x,y)^*{_c\vartheta_\cE})
=\ell^rB_{2,c,r}^{\langle p(t)\rangle}(x).
$$
With diagram \eqref{ord-comm-diag} we finally get
$$
\ord_\infty\verk p_*({_c\vartheta_{\cE}})
=B_{2,c,r}^{\langle p(t)\rangle}\in \bbZ[Z_r\langle p(t)\rangle].
$$
\end{proof}

From Theorem \ref{ES-c-r-res} we will deduce  a formula
for the residue of the elliptic Soul\'e elements.

Recall the elliptic Soul\'e element
$$
_c e_{k}(t)=\mom^k_t(\cE\cS_c^{\langle t\rangle})\in H^1(Y(N),\TSym^k\sH(1))
$$
from Definition \ref{e-k-def} and consider 
$$
\frac{1}{N^k}{_ce}_k(t)=\wt\mom^k_t(\cE\cS_c^{\langle t\rangle})\in
H^1(Y(N),\Sym^k\sH_{\bbQ_\ell}(1)).
$$
In Definition \ref{residue-Qell-def} and \eqref{residue-Qell-def-variant}
we have defined the residue map
$$
\res_\infty:H^1(Y(N),\Sym^k\sH_{\bbQ_\ell}(1))\to 
H^0(\infty, \bbQ_\ell)\isom \bbQ_\ell.
$$
In the next theorem we identify $\cE[N]\isom (\bbZ/N\bbZ)^2$.
\begin{theorem}\label{res-of-ell-Soule}
Let $t=(a,b)\in \cE[N](Y(N))\setminus\{e\}$, then 
$$
\res_\infty(_c e_{k}(t))=
\frac{N^{k+1}}{k!(k+2)}(c^{2}B_{k+2}(\{\frac{a}{N}\})-
c^{-k}B_{k+2}(\{\frac{ca}{N}\}))
$$
In particular, if $c\equiv 1\mod{N}$ one gets
$$
\res_\infty(_c e_{k}(t))=
\frac{N^{k+1}}{k!(k+2)}\frac{c^{k+2}-1}{c^k}
B_{k+2}(\{\frac{a}{N}\}).
$$
\end{theorem}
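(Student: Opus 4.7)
The plan is to assemble the theorem from three ingredients developed earlier: the identification $_c e_k(t) = \mom^k_t(\cE\cS_c^{\langle t\rangle})$ from Proposition \ref{ell-soule-as-mom}, the compatibility square between the residue and the modified moment map in Lemma \ref{res-comparison}, and the residue computation $\res_\infty(\cE\cS_c^{\langle t\rangle}) = B_{2,c}^{\langle p(t)\rangle}$ from Theorem \ref{ES-c-r-res}. Since $\res_\infty$ is defined on $\Sym^k \sH_{\bbQ_\ell}$-coefficients (Definition \ref{residue-Qell-def}), I first transport $_c e_k(t)$ across the canonical isomorphism $\TSym^k\sH_{\bbQ_\ell} \isom \Sym^k\sH_{\bbQ_\ell}$ to the normalized class $\wt\mom^k_t(\cE\cS_c^{\langle t\rangle}) = N^{-k}\,{_c e_k(t)} \in H^1(Y(N), \Sym^k\sH_{\bbQ_\ell}(1))$, so that it suffices to compute $\res_\infty$ of this normalized class and multiply the answer by $N^k$.

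Invoking the commutative square of Lemma \ref{res-comparison}, the residue is then identified with $\wt\mom^k_{p(t)}(\res_\infty(\cE\cS_c^{\langle t\rangle})) = \wt\mom^k_{p(t)}(B_{2,c}^{\langle p(t)\rangle})$, where I have used Theorem \ref{ES-c-r-res}. The second assertion of Lemma \ref{res-comparison} rewrites $\wt\mom^k_{p(t)}$ on $\Lambda(Z\langle p(t)\rangle)$ as $\tfrac{1}{N^k k!}\mom^k_{p(t)}$, so I am reduced to the explicit moment computation \eqref{mom-of-Bernoulli} of the Bernoulli measure. Writing $t = (a,b)$ so that $p(t) = a$, this yields
$$
\wt\mom^k_{p(t)}(B_{2,c}^{\langle p(t)\rangle}) = \frac{1}{N^k k!} \cdot \frac{N^{k+1}}{c^k(k+2)}\bigl(c^{k+2} B_{k+2}(\{a/N\}) - B_{k+2}(\{ca/N\})\bigr),
$$
and multiplying by $N^k$ delivers the asserted formula. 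The specialization at $c \equiv 1 \pmod N$ is immediate since then $\{ca/N\} = \{a/N\}$ and $c^2 - c^{-k} = (c^{k+2}-1)/c^k$.

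There is no real obstacle; the only thing demanding care is the bookkeeping of three intertwined normalizations — the factor $N^k$ distinguishing $\wt\mom^k_t$ from $\mom^k_t$, the factor $k!$ in $\wt\mom^k_{p(t)}$ coming from the canonical map $\Sym^k\bbZ_\ell \to \TSym^k\bbZ_\ell$ that multiplies the generator by $k!$ (see the proof of Lemma \ref{res-comparison}), and the factor $N^{k+1}/(c^k(k+2))$ from the Bernoulli moments. Once these are tracked correctly, the computation is mechanical, and the formula drops out.
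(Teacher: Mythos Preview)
Your proof is correct and follows essentially the same route as the paper: invoke Proposition \ref{ell-soule-as-mom}, pass through the commutative square of Lemma \ref{res-comparison}, apply Theorem \ref{ES-c-r-res}, and finish with the Bernoulli moment formula \eqref{mom-of-Bernoulli}. The only difference is that you spell out the normalization bookkeeping more explicitly, which is harmless.
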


\begin{proof}[Proof of Theorem \ref{res-of-ell-Soule}]
By  Proposition \ref{ell-soule-as-mom}, Lemma \ref{res-comparison} and
Theorem \ref{ES-c-r-res} one has
\begin{align*}
\res_\infty({_c e}_{k}(t))&=N^k\res_\infty(\wt\mom_t^k(\cE\cS^{\langle t\rangle}_c))\\
&=\frac{1}{k!}\mom_a^k(\res_\infty(\cE\cS^{\langle t\rangle}_c))\\
&=\frac{1}{k!}\mom_a^k(B_{2,c}^{\langle a\rangle})\\
&=\frac{N^{k+1}}{k!(k+2)}(c^{2}B_{k+2}(\{\frac{a}{N}\})-
c^{-k}B_{k+2}(\{\frac{ca}{N}\})),
\end{align*}
where the last equality is formula \eqref{mom-of-Bernoulli}.
\end{proof}
\begin{corollary}\label{eis-residue}
Let $S=Y(N)$ and consider the residue map
from 
$$
\res_\infty:H^1(Y(N),\Sym^k\sH_{\bbQ_\ell}(1))\to H^0(\infty, \bbQ_\ell)\isom \bbQ_\ell,
$$
then if $t=(a,b)\in (\bbZ/N\bbZ)^2\setminus\{(0,0)\}$ one has
$$
\res_{\infty}(\Eis^k_{\bbQ_\ell}(t))=\frac{-N^k}{k!(k+2)}B_{k+2}(\{\frac{a}{N}\}).
$$ 
\end{corollary}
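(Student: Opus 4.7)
The plan is to deduce this corollary by combining the two main results already proved: Theorem \ref{pol-eis}, which expresses a rational multiple of the elliptic Soulé element $_c e_k(t)$ as a $\bbZ_\ell$-linear combination of Eisenstein classes, and Theorem \ref{res-of-ell-Soule}, which computes the residue of $_c e_k(t)$ explicitly in terms of Bernoulli polynomials. Applying $\res_\infty$ to the identity in Theorem \ref{pol-eis} converts the left-hand side (whose residue we know) into a linear combination of residues of Eisenstein classes; solving for $\res_\infty(\Eis^k_{\bbQ_\ell}(t))$ then gives the formula, up to arithmetic bookkeeping.

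Concretely, the first step is to pick an auxiliary integer $c>1$ with $(c,6\ell N)=1$ and $c\equiv 1\pmod N$. Such $c$ obviously exists (e.g.\ take $c$ in the congruence class of $1$ modulo $N$ that is coprime to $6\ell$, via Dirichlet or an elementary construction). For this choice of $c$ one has $[c]t=t$ in $\cE[N]$, so Theorem \ref{pol-eis} collapses to
\[
\frac{1}{N^k}\,{_c e_k(t)} \;=\; \frac{-(c^2-c^{-k})}{N^{k-1}}\,\Eis^k_{\bbQ_\ell}(t)
\;=\;\frac{-(c^{k+2}-1)}{N^{k-1}c^k}\,\Eis^k_{\bbQ_\ell}(t).
\]
Applying $\res_\infty:H^1(Y(N),\Sym^k\sH_{\bbQ_\ell}(1))\to \bbQ_\ell$ to both sides and using the special case $c\equiv 1\pmod N$ of Theorem \ref{res-of-ell-Soule}, one obtains
\[
\frac{1}{N^k}\cdot \frac{N^{k+1}(c^{k+2}-1)}{k!(k+2)c^k}\,B_{k+2}(\{a/N\})
\;=\;\frac{-(c^{k+2}-1)}{N^{k-1}c^k}\,\res_\infty(\Eis^k_{\bbQ_\ell}(t)).
\]
Since $c>1$ the factor $(c^{k+2}-1)/c^k$ is nonzero and cancels, and solving for the residue yields exactly
\[
\res_\infty(\Eis^k_{\bbQ_\ell}(t)) \;=\; \frac{-N^k}{k!(k+2)}\,B_{k+2}(\{a/N\}).
\]

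There is no real obstacle here: the key inputs have been done, and the only care needed is the arithmetic (choosing $c$ and checking that the prefactor $(c^{k+2}-1)/c^k$ is nonzero, so that one may divide). One should also briefly note that the residue map on $Y(N)$ factors through the residue on the completion $\wh Y(N)_\infty$, so that Theorem \ref{res-of-ell-Soule} (phrased via the moment-map formalism over the Tate curve) indeed applies. If one preferred to avoid the restriction $c\equiv 1\pmod N$, the same argument works with the general form of Theorems \ref{pol-eis} and \ref{res-of-ell-Soule}: the terms involving $B_{k+2}(\{ca/N\})$ match up on both sides and cancel, leaving the same formula, but the congruence choice of $c$ makes the bookkeeping cleanest.
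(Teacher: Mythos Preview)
Your proof is correct and follows exactly the paper's own argument: the paper's proof is the one-line statement that this is Theorem \ref{pol-eis} together with Theorem \ref{res-of-ell-Soule} in the case $c\equiv 1\pmod N$. Your write-up simply makes the arithmetic and the cancellation of the nonzero factor $(c^{k+2}-1)/c^k$ explicit.
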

\begin{proof}
This is Theorem \ref{pol-eis} together with Corollary \ref{res-of-ell-Soule}
in the case $c\equiv 1\mod{N}$. 
\end{proof}
\begin{remark}
The formula differs by a minus sign from the one in \cite{Huber-Kings99} as we have
a different uniformization of the elliptic curve. 
\end{remark}

%
%
\section{The evaluation of the cup-product construction for elliptic 
Soul\'e elements}
%
%

%
\subsection{A different description of the cup-product construction}
%

Consider over $\wh{Y}(N)_\infty$ the sheaf $\Sym^k\sH_{\bbQ_\ell}(1)$ and
the diagram
$$
\wh{Y}(N)_\infty\xrightarrow{j}\wh{X}(N)_\infty\xleftarrow{\infty}\infty.
$$
Recall from Corollary \ref{monodromy-sym} the isomorphisms
\begin{align*}
\bbQ_\ell(k+1)\isom \infty^*j_*\Sym^k\sH_{\bbQ_\ell}(1)&&\mbox{and}&&
\infty^*R^1j_*\Sym^k\sH_{\bbQ_\ell}(1)\isom \bbQ_\ell.
\end{align*}
The Leray spectral sequence for $Rj_*$ induces an exact sequence
\begin{multline}
0\to H^1(\wh{X}(N)_\infty,j_*\Sym^k\sH_{\bbQ_\ell}(1))\to
H^1(\wh{Y}(N)_\infty,\Sym^k\sH_{\bbQ_\ell}(1))\\
\xrightarrow{\res_\infty}
H^0(\infty,\bbQ_\ell)\to 0
\end{multline}
and we consider the Eisenstein class 
$$
\Eis_{\bbQ_\ell}^k(\psi)\in H^1(\wh{Y}(N)_\infty,\Sym^k\sH_{\bbQ_\ell}(1)).
$$
The next result gives a different description of the cup-product construction.
\begin{theorem}[\cite{Huber-Kings99} Theorem 2.4.1, \cite{Huber-pune}
Theorem 4.2.1]\label{Dir-as-evaluation}
Assume that $\res_\infty(\Eis_{\bbQ_\ell}^k(\psi))=0$ so that one can
consider
$$
\Eis_{\bbQ_\ell}^k(\psi)\in H^1(\wh{X}(N)_\infty,j_*\Sym^k\sH_{\bbQ_\ell}(1)).
$$
Then 
$$
\Dir_\ell(\psi)=\infty^*\Eis_{\bbQ_\ell}^k(\psi)
$$
in $H^1(\infty,\infty^*j_*\Sym^k\sH_{\bbQ_\ell}(1))\isom H^1(\infty,\bbQ_\ell(k+1))$.
\end{theorem}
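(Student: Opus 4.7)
My plan is to derive the theorem from the compatibility of the cup product with the Leray filtration for $j\colon\wh{Y}(N)_\infty\hookrightarrow\wh{X}(N)_\infty$, using the identifications of $\infty^*j_*$ and $\infty^*R^1j_*$ on $\Sym^k\sH_{\bbQ_\ell}(1)$ given by Corollary \ref{monodromy-sym}. Write $\sF:=\Sym^k\sH_{\bbQ_\ell}$ throughout.

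First I would invoke proper base change on the henselian trait $\wh{X}(N)_\infty$ (closed point $\infty$) to obtain a canonical isomorphism
\[
H^1(\wh{X}(N)_\infty, j_*\sF(1))\;\xrightarrow{\isom}\;H^1(\infty,\infty^*j_*\sF(1))\;\isom\;H^1(\infty,\bbQ_\ell(k+1)).
\]
The Leray spectral sequence in degree $1$ collapses to a short exact sequence whose quotient is $H^0(\infty,\bbQ_\ell)$ via $\infty^*R^1j_*\sF(1)\isom\bbQ_\ell$, so the hypothesis $\res_\infty\Eis^k_{\bbQ_\ell}(\psi)=0$ lifts $\Eis^k_{\bbQ_\ell}(\psi)$ uniquely to a class $\tilde\alpha\in H^1(\wh{X}(N)_\infty,j_*\sF(1))$, and under the above isomorphism this $\tilde\alpha$ is precisely $\infty^*\Eis^k_{\bbQ_\ell}(\psi)$.

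Next I would establish the key compatibility: writing $\alpha=\Eis^k_{\bbQ_\ell}(\psi)$ and $\beta=\Eis^k_{\bbQ_\ell}(\phi_\infty)$ with $\res_\infty\beta=1$, the identity
\[
\res_\infty(\alpha\cup\beta)\;=\;\langle\tilde\alpha,\res_\infty\beta\rangle\;=\;\langle\tilde\alpha,1\rangle\in H^1(\infty,\bbQ_\ell(k+1))
\]
holds, where $\langle-,-\rangle$ is the pairing induced on stalks at $\infty$ by the morphism of sheaves $j_*\sF(1)\otimes R^1j_*\sF(1)\to R^1j_*\bbQ_\ell(k+2)$ coming from the Weil pairing $\sF\otimes\sF\to\bbQ_\ell(k)$ combined with the cup product structure on the distinguished triangle $j_*\sF(1)\to Rj_*\sF(1)\to R^1j_*\sF(1)[-1]\to$. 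This is a formal consequence of the cup product respecting the Leray filtration together with the fact that $\tilde\alpha$ already lies in Leray filtration level one.

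The final step is to identify the sheaf-level pairing at $\infty$. Under the isomorphisms $\infty^*j_*\sF(1)\isom\bbQ_\ell(k+1)$ (coming from $\iota$), $\infty^*R^1j_*\sF(1)\isom\bbQ_\ell$ (coming from $p\colon\sH\to\bbQ_\ell$), and $\infty^*R^1j_*\bbQ_\ell(k+2)\isom\bbQ_\ell(k+1)$ (purity), the induced pairing should be the natural multiplication $\bbQ_\ell(k+1)\otimes\bbQ_\ell\to\bbQ_\ell(k+1)$. For $k=0$ this reduces to the Weil pairing inducing a perfect duality between $\iota(\bbQ_\ell(1))\subset\sH$ and its cokernel, which is essentially Proposition \ref{monodromy}; for general $k$ one inducts along the monodromy filtration on $\Sym^k\sH$ whose graded pieces are the Tate twists $\bbQ_\ell(i)$, $0\le i\le k$. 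Combining everything, $\res_\infty(\alpha\cup\beta)=\tilde\alpha\cdot 1=\infty^*\Eis^k_{\bbQ_\ell}(\psi)$. The main obstacle is precisely this last identification: getting the correct normalization of the sheaf-level pairing on $(j_*\sF(1))_\infty\otimes(R^1j_*\sF(1))_\infty$, compatible both with the Tate twists and with the connecting map in the distinguished triangle, so that no spurious factors or signs appear.
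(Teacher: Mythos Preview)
The paper does not give its own proof of this theorem: it is quoted from \cite{Huber-Kings99} and \cite{Huber-pune} and used as a black box input for the subsequent evaluation of $\Dir_\ell(\psi)$. So there is nothing in the present paper to compare your argument against.

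That said, your outline is the standard way to prove such a statement and is essentially what the cited references do. The three ingredients you identify are correct: (i) the Leray sequence for $j$ on the henselian trait $\wh{X}(N)_\infty$ reduces to a short exact sequence, and the hypothesis $\res_\infty\alpha=0$ lifts $\alpha$ uniquely into $H^1(\wh{X}(N)_\infty,j_*\sF(1))\isom H^1(\infty,\bbQ_\ell(k+1))$; (ii) multiplicativity of the Leray filtration forces $\res_\infty(\alpha\cup\beta)$ to be the pairing of $\tilde\alpha$ with $\res_\infty\beta$ under the map $j_*\sF(1)\otimes R^1j_*\sF(1)\to R^1j_*\bbQ_\ell(k+2)$ induced by the Poincar\'e pairing on $\Sym^k\sH_{\bbQ_\ell}$; (iii) under the identifications of Corollary~\ref{monodromy-sym} and purity, this pairing is the obvious one $\bbQ_\ell(k+1)\otimes\bbQ_\ell\to\bbQ_\ell(k+1)$.

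The point you flag as the ``main obstacle'' is genuinely the only nontrivial verification. It is handled most cleanly not by induction on $k$ through the monodromy filtration as you suggest, but by observing that the Poincar\'e pairing on $\Sym^k\sH_{\bbQ_\ell}$ restricts to a perfect pairing between the monodromy invariants $\bbQ_\ell(k)\isom\Sym^k(\iota\bbQ_\ell(1))$ and the coinvariants $\Sym^k(\sH/\iota\bbQ_\ell(1))\isom\bbQ_\ell$, because the Weil pairing already places $\iota(\bbQ_\ell(1))$ and $p^*\bbQ_\ell$ in duality (Proposition~\ref{monodromy}). Once this is said, the normalization is forced and no spurious factor appears; any sign is absorbed into the convention for $\res_\infty$.
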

Recall that 
$$
\Eis_{\bbQ_\ell}^k(\psi)=\sum_{t\in \cE[N]\setminus \{e\}}\psi(t)
\Eis_{\bbQ_\ell}^k(t)
$$
and note that it is not possible to 
evaluate the individual classes $\Eis_{\bbQ_\ell}^k(t)$ at $\infty$ as 
$\res_\infty(\Eis_{\bbQ_\ell}^k(t))\neq 0$.

The idea for the evaluation is as follows: Using 
Theorem \ref{pol-eis} we have 
$$
{_c{e}_k(t)}=
\wt\mom^k_t(\cE\cS_c^{\langle t\rangle})=
-N
(c^2\Eis_{\bbQ_\ell}^k(t)-c^{-k}\Eis_{\bbQ_\ell}^k({[c]t})).
$$
Although $\cE\cS_c^{\langle t\rangle}$ still can not be evaluated at
$\infty$, we will define an auxiliary class $\cB\cS_c^{\langle t\rangle}$
in Definition \ref{BS-def} which has the same residue as 
$\cE\cS_c^{\langle t\rangle}$. The difference 
$$
\cM\cE\cS_c^{\langle t\rangle}:=\cE\cS_c^{\langle t\rangle}-
\cB\cS_c^{\langle t\rangle}
$$
has then residue zero and can be evaluated at $\infty$. For
this we use
the description of $\cM\cE\cS_c^{\langle t\rangle}$ 
by an explicit function via the Kummer map. The evaluation 
at $\infty$ is then just the evaluation of the function at $q=0$,
where $q$ is the local parameter at $\infty$. We conclude 
by comparing the resulting function with the one defining the 
Soul\'e-Deligne classes.

%
\subsection{The auxiliary class $\cB\cS_c^{\langle t\rangle}$}
%

Recall from \eqref{p-r-def} the finite morphism
$$
p_r:\cE[\ell^r]\langle t\rangle\to Z_r\langle p(t)\rangle
$$
and recall that $\wh{Y}(N)_\infty=\Spec \bbQ(\zeta_N)((q^{1/N}))$.

On $\cE[\ell^r]\langle p(t)\rangle$ consider the function
$$
Bp_{2,c,r}^{\langle t\rangle}(x):=
\frac{N}{2}(c^2B_2(\{\frac{p_r(x)}{\ell^rN}\})-B_2(\{\frac{cp_r(x)}{\ell^rN}\}))
=\frac{1}{\ell^r}B_{2,c,r}^{\langle p(t)\rangle}(p_r(x)),
$$
which defines an element in $\Lambda_r(\sH_r\langle t\rangle)$, hence a
global section
$$
Bp_{2,c,r}^{\langle t\rangle}\in 
H^0(\wh{Y}(N)_\infty,\Lambda_r(\sH_r\langle t\rangle)).
$$ 
\begin{lemma}
The elements 
\begin{equation*}
Bp_{2,c,r}^{\langle t\rangle}\in
H^0(\wh{Y}(N)_\infty,\Lambda_r(\sH_r\langle t\rangle))
\end{equation*}
are norm-compatible, i.e., one can define
$$
Bp_{2,c}^{\langle t\rangle}:=
\prolim_r
Bp_{2,c,r}^{\langle t\rangle}\in
H^0(\wh{Y}(N)_\infty,\Lambda(\sH\langle t\rangle)).
$$
\end{lemma}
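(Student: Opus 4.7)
The plan is to reduce the claim to the norm-compatibility of the cyclotomic Bernoulli measure $B_{2,c}^{\langle p(t)\rangle} \in \Lambda(Z\langle p(t)\rangle)$ established in \eqref{Bernoulli-measure-def}, using the tautological relation
\[
Bp_{2,c,r}^{\langle t\rangle} \;=\; \frac{1}{\ell^r}\, p_r^* B_{2,c,r}^{\langle p(t)\rangle}
\]
built into the definition.

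First I would pin down the relevant transition maps. Over $\wh{Y}(\ell^rN)_\infty$ the level structure at the cusp identifies $\cE[\ell^rN] \isom \mu_{\ell^rN} \oplus \bbZ/\ell^rN\bbZ$ via $(a,b) \mapsto q^{a/\ell^rN}\zeta_{\ell^rN}^b$, with $p_r$ the projection onto the second factor. Under this identification, the multiplication-by-$\ell$ map $[\ell]:\cE[\ell^{r+1}N] \to \cE[\ell^rN]$ induces on the $\bbZ/\ell^rN$-factor reduction modulo $\ell^rN$, not multiplication by $\ell$. Consequently the square
\[
\begin{CD}
\cE[\ell^{r+1}]\langle t\rangle @>p_{r+1}>> Z_{r+1}\langle p(t)\rangle\\
@V[\ell]VV @VV\lambda_rV\\
\cE[\ell^r]\langle t\rangle @>p_r>> Z_r\langle p(t)\rangle
\end{CD}
\]
commutes; the left-hand vertical fibers have size $\ell^2$, the right-hand ones have size $\ell$, and $p_{r+1}$ sends each left fiber onto the corresponding right fiber $\ell$-to-one.

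With this in place, I would compute the trace of $Bp_{2,c,r+1}^{\langle t\rangle}$ along $[\ell]$ at a point $\bar x \in \cE[\ell^r]\langle t\rangle$ with $\bar z := p_r(\bar x)$ by rewriting it as a sum over the right-hand fiber weighted by the multiplicity $\ell$, and then invoke the already-known norm-compatibility of $B_{2,c}^{\langle p(t)\rangle}$ (i.e., the distribution relation $\sum_{k=0}^{\ell-1} B_2(x + k/\ell) = \ell^{-1} B_2(\ell x)$ applied to the $\ell$ lifts of $\bar z$ to $Z_{r+1}\langle p(t)\rangle$). The powers of $\ell$ collapse to give $\frac{1}{\ell^r} B_{2,c,r}^{\langle p(t)\rangle}(\bar z) = Bp_{2,c,r}^{\langle t\rangle}(\bar x)$, proving norm-compatibility and allowing the inverse limit $Bp_{2,c}^{\langle t\rangle} := \prolim_r Bp_{2,c,r}^{\langle t\rangle}$ to be formed.

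The only real obstacle is the notational bookkeeping that $[\ell]$ on $\cE[\ell^rN]$ corresponds at the cusp to reduction modulo $\ell^rN$ on the $\bbZ/\ell^rN$-factor. This identification is already implicit in the proof of Theorem \ref{ES-c-r-res}, after which the argument is essentially a direct transfer of the cyclotomic distribution relation through $p_r$.
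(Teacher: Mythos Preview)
Your proof is correct and follows essentially the same idea as the paper: both reduce the norm-compatibility of $Bp_{2,c,r}^{\langle t\rangle}$ to the already-known norm-compatibility of the Bernoulli measure $B_{2,c}^{\langle p(t)\rangle}$ on $Z\langle p(t)\rangle$, using that $Bp_{2,c,r}^{\langle t\rangle}$ is pulled back from the $\bbZ/\ell^rN\bbZ$-direction and picking up a factor of $\ell$ from the $\mu_\ell$-part of the fiber of $[\ell]$.

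The only difference is organizational. The paper factors the transition map $\cE[\ell^{r+1}N]\to\cE[\ell^rN]$ through the pushout $\wt\cE_{r+1}$ along $p_1:\cE[\ell]\to\bbZ/\ell\bbZ$, so that $\varrho_!$ (with fibers $\mu_\ell$) multiplies the pulled-back function by $\ell$, and then $\sigma_!$ realizes the distribution relation for $B_{2,c}$. You instead work with the commutative square relating $p_r,p_{r+1},[\ell],\lambda_r$ and count fibers directly (left fibers of size $\ell^2$ mapping $\ell$-to-one onto right fibers of size $\ell$). The paper's pushout argument is intrinsic over $\wh{Y}(N)_\infty$, whereas you invoke the splitting of $\cE[\ell^rN]$ at the cusp; but since you are only verifying an equality of sections, checking on geometric fibers (or after faithfully flat base change to $\wh{Y}(\ell^rN)_\infty$) is perfectly legitimate, and the fiber-cardinality claim is a geometric-point statement anyway. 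A small notational slip: in your level-structure formula the roles of the $\mu$- and $\bbZ$-factors are swapped relative to the paper's convention, but this does not affect the argument.
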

\begin{proof}
Consider the push-out by $p_1$
$$
\begin{CD}
0@>>> \cE[\ell]@>>>\cE[\ell^{r+1}N]@>>>\cE[\ell^rN]@>>> 0\\
@.@Vp_1VV@V\varrho VV@V=VV\\
0@>>> \bbZ/\ell\bbZ@>>>\wt\cE_{r+1}@>\sigma >>\cE[\ell^rN]@>>>0\\
@.@V=VV@VVV@VVV\\
0@>>> \bbZ/\ell\bbZ@>>>\bbZ/\ell^{r+1}N\bbZ@>>> \bbZ/\ell^rN\bbZ@>>>0.
\end{CD}
$$
This induces on the fibres over $t\in\cE[N]$
$$
\Lambda_{r+1}(\sH_{r+1}\langle t\rangle)\xrightarrow{\varrho_!}
\Lambda_{r+1}(\wt\sH_{r+1}\langle t\rangle)\xrightarrow{\sigma_!}
\Lambda_{r+1}(\sH_{r+1}\langle t\rangle)
$$
where $\wt\sH_{r+1}\langle t\rangle$ is the sheaf associated
to the fibre of $\wt\cE_{r+1}$ over $t$. As $Bp_{2,c,r+1}^{\langle t\rangle}$ is
a pull-back from $\bbZ/\ell^{r+1}N\bbZ$ the map $\varrho_!$ multiplies
the element with the cardinality of the fibres of $p_1$, which is $\ell$.
Application of $\sigma_!$ to $\ell Bp_{2,c,r+1}^{\langle t\rangle}$ gives
by the norm-compatibility of $B_{2,c,r+1}^{\langle p(t)\rangle}$ exactly
$Bp_{2,c,r}^{\langle t\rangle}$, which is the
desired result.
\end{proof}
\begin{lemma}\label{BS-def}
Let  $\eta_r$ be the invertible function on $\cE[\ell^r]\langle t\rangle$
$$
\eta_r(x):=(q^{1/N})^{Bp_{2,c,r}^{\langle t\rangle}(x)}=
q^{\frac{1}{\ell^rN}B_{2,c,r}^{\langle p(t)\rangle}(p_r(x))}.
$$
Then the class 
$$
\cB\cS_{c,r}^{\langle t\rangle}:=\partial_r(\eta_r)\in
H^1(\wh{Y}(N)_\infty,\Lambda_r(\sH_r\langle t\rangle)(1))
$$
is the image of 
$$
\partial_r(q^{1/N})\otimes Bp_{2,c,r}^{\langle t\rangle}\in
H^1(\wh{Y}(N)_\infty,\Lambda_r(1))\otimes 
H^0(\wh{Y}(N)_\infty,\Lambda_r(\sH_r\langle t\rangle))
$$
under the cup-product. In particular, one can define 
\begin{equation}
\cB\cS_c^{\langle t\rangle}:=\prolim_r\cB\cS_{c,r}^{\langle t\rangle}
\in H^1(\wh{Y}(N)_\infty, \Lambda(\sH\langle t\rangle)(1)).
\end{equation}
\end{lemma}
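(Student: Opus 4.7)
The plan is to verify the cup-product identity at each finite level $r$ and then assemble the levels into the inverse limit. The first step is to unwind what the cup product on the left-hand side actually computes. Writing $p := p_{r,t}$, the coefficient pairing sits at the sheaf level as the projection-formula isomorphism
$$\mu_{\ell^r} \otimes p_{*}\bbZ/\ell^r\bbZ \isom p_{*}\mu_{\ell^r} = \Lambda_r(\sH_r\langle t\rangle)(1),$$
which is valid because $p^{*}\mu_{\ell^r} = \mu_{\ell^r}$. Since $p$ is finite, Leray gives $H^1(\wh{Y}(N)_\infty, p_{*}\mu_{\ell^r}) \isom H^1(\cE[\ell^r]\langle t\rangle, \mu_{\ell^r})$, and under this identification the cup product
$$H^1(\wh{Y}(N)_\infty, \Lambda_r(1)) \otimes H^0(\wh{Y}(N)_\infty, \Lambda_r(\sH_r\langle t\rangle)) \longrightarrow H^1(\wh{Y}(N)_\infty, \Lambda_r(\sH_r\langle t\rangle)(1))$$
sends $\alpha \otimes s \mapsto p^{*}\alpha \cdot s$, with $s$ regarded as a locally constant $\bbZ/\ell^r\bbZ$-valued function on $\cE[\ell^r]\langle t\rangle$ via $H^0(\wh{Y}(N)_\infty, p_{*}\bbZ/\ell^r\bbZ) = H^0(\cE[\ell^r]\langle t\rangle, \bbZ/\ell^r\bbZ)$.

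The second step is to specialise to $\alpha = \partial_r(q^{1/N})$ and $s = Bp_{2,c,r}^{\langle t\rangle}$ and to work component by component on $\cE[\ell^r]\langle t\rangle$. On a connected component where $Bp_{2,c,r}^{\langle t\rangle}$ takes the constant value $n \in \bbZ/\ell^r\bbZ$, multiplicativity of the Kummer boundary together with $p^{*}(q^{1/N}) = q^{1/N}$ yields
$$n \cdot \partial_r(p^{*}q^{1/N}) \;=\; \partial_r\bigl((p^{*}q^{1/N})^n\bigr) \;=\; \partial_r(\eta_r),$$
and patching over the components gives $p^{*}\partial_r(q^{1/N}) \cdot Bp_{2,c,r}^{\langle t\rangle} = \partial_r(\eta_r)$, which is the first assertion.

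For the passage to the inverse limit I will appeal to functoriality. The preceding lemma provides compatibility of $(Bp_{2,c,r}^{\langle t\rangle})_{r \ge 1}$ under the transition maps of $\Lambda(\sH\langle t\rangle)$, and $(\partial_r(q^{1/N}))_{r \ge 1}$ is compatible under reduction modulo $\ell^r$. Because the cup product is functorial in both variables---commuting with the trace on $p_{r,t,*}\bbZ/\ell^r\bbZ$ and with reduction of coefficients---the classes $\cB\cS_{c,r}^{\langle t\rangle}$ form a projective system, and Lemma \ref{h-1-prolim} assembles them into the desired element of $H^1(\wh{Y}(N)_\infty, \Lambda(\sH\langle t\rangle)(1))$.

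The main subtlety I anticipate is the well-definedness of $\eta_r$ itself: because $Bp_{2,c,r}^{\langle t\rangle}$ takes values in $\bbZ/\ell^r\bbZ$, the exponent requires an integer lift and $\eta_r$ is therefore only a section of $\Gm / \Gm^{\ell^r}$. One must check that the Kummer class $\partial_r(\eta_r)$ is independent of the chosen lift, which is immediate since the Kummer boundary of an $\ell^r$-th power vanishes. Once this is recorded, the remainder of the argument is a direct unwinding of the projection formula and the multiplicativity of the Kummer map.
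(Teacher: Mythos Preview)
The paper gives no proof of this lemma; it is stated and immediately followed by the next statement, so the author evidently regards it as a direct unwinding of definitions. Your argument supplies exactly those details correctly: the projection-formula identification of the cup product $H^1(\wh Y(N)_\infty,\mu_{\ell^r})\otimes H^0(\wh Y(N)_\infty,p_*\bbZ/\ell^r\bbZ)\to H^1(\cE[\ell^r]\langle t\rangle,\mu_{\ell^r})$ as $(\alpha,s)\mapsto p^*\alpha\cdot s$, the componentwise computation via multiplicativity of the Kummer boundary, and the passage to the inverse limit using the norm compatibility of $Bp_{2,c,r}^{\langle t\rangle}$ proved in the preceding lemma together with the reduction compatibility of $\partial_r(q^{1/N})$.

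Your closing remark on the well-definedness of $\eta_r$ is a genuine point that the paper glosses over. Since $Bp_{2,c,r}^{\langle t\rangle}$ is only specified as a class in $\bbZ/\ell^r\bbZ$, the function $\eta_r$ depends on a lift; your observation that $\partial_r$ annihilates $\ell^r$-th powers is precisely what makes the Kummer class well-defined, and it is worth recording.
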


\begin{lemma}\label{BS-res}
One has 
$$
\res_\infty(\cB\cS_c^{\langle t\rangle})= B_{2,c}^{\langle p(t)\rangle}.
$$
\end{lemma}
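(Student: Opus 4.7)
The plan is to mirror the proof of Theorem \ref{ES-c-r-res}, with $\eta_r$ in place of the theta function $_c\vartheta_\cE$; the simpler, explicit form of $\eta_r$ makes the order at the cusp transparent, so the argument is essentially a bookkeeping exercise with norms, uniformizers, and ramification indices.

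First I work at finite level $r$. By Lemma \ref{compatibility-res-Kummer} applied to the invertible function $\eta_r\in\Gm(\cE[\ell^r]\langle t\rangle)$, the assertion at level $r$ reduces to showing
$$
\ord_\infty(p_{r*}\eta_r)=B_{2,c,r}^{\langle p(t)\rangle}\in\bbZ[Z_r\langle p(t)\rangle],
$$
after which Lemma \ref{h-1-prolim}, combined with the norm-compatibility of the $Bp_{2,c,r}^{\langle t\rangle}$ established just before Definition \ref{BS-def}, allows passage to the inverse limit over $r$.

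To compute $\ord_\infty(p_{r*}\eta_r)$ I base change from $T_0:=\wh{Y}(N)_\infty$ to $T_r:=\wh{Y}(\ell^rN)_\infty$. Over $T_r$ the torsor $\cE[\ell^r]\langle t\rangle$ trivializes to the constant scheme $Z_r^2\langle t\rangle$, the finite morphism $p_r$ becomes the first projection $\pr_1$, and diagram \eqref{ord-comm-diag} from the proof of Theorem \ref{ES-c-r-res} shows that $\ord_\infty$ commutes with this base change up to a factor of $\ell^r$ (the ramification of $T_r/T_0$ in the parameter $q^{1/N}$). The pullback of $\eta_r$ to $T_r$ is by construction
$$
(x,y)\mapsto q^{\frac{1}{\ell^rN}B_{2,c,r}^{\langle p(t)\rangle}(x)},
$$
which depends only on $x$. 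Since the fibre of $\pr_1$ has cardinality $\ell^r$,
$$
\pr_{1*}\eta_r(x)=q^{\frac{1}{N}B_{2,c,r}^{\langle p(t)\rangle}(x)},
$$
and because $q^{1/\ell^rN}$ is the uniformizer of $T_r$ this gives $\ord_\infty(\pr_{1*}\eta_r)(x)=\ell^rB_{2,c,r}^{\langle p(t)\rangle}(x)$.

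Dividing by the base-change factor $\ell^r$ from \eqref{ord-comm-diag} yields $\ord_\infty(p_{r*}\eta_r)=B_{2,c,r}^{\langle p(t)\rangle}$ on $T_0$; reducing modulo $\ell^r$ and taking the inverse limit over $r$ then gives the lemma. There is no genuine obstacle here: the only subtlety is the interplay of the three factors of $\ell^r$ (the fibre cardinality of $\pr_1$, the ramification index of $T_r/T_0$, and the normalization $Bp_{2,c,r}^{\langle t\rangle}=\ell^{-r}B_{2,c,r}^{\langle p(t)\rangle}\verk p_r$), which have been arranged so as to cancel exactly and produce $B_{2,c,r}^{\langle p(t)\rangle}$ rather than a rescaled variant.
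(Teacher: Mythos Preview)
Your argument is correct, but it takes a longer detour than the paper. You recompute $\res_\infty(\cB\cS_{c,r}^{\langle t\rangle})$ from scratch by mimicking the proof of Theorem \ref{ES-c-r-res}: pull back to $T_r$, evaluate $\ord_\infty$ on the explicit function $\eta_r$, track the three factors of $\ell^r$, and descend. This works, and the bookkeeping you describe is accurate.

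The paper instead exploits the cup-product description already established in Lemma \ref{BS-def}: since $\cB\cS_{c,r}^{\langle t\rangle}=\partial_r(q^{1/N})\cup Bp_{2,c,r}^{\langle t\rangle}$, and since $\res_\infty$ (Definition \ref{res-r-def}) factors through $p_{r!}$, one only needs $p_{r!}(Bp_{2,c,r}^{\langle t\rangle})=\ell^r Bp_{2,c,r}^{\langle t\rangle}=B_{2,c,r}^{\langle p(t)\rangle}$ together with $\res_\infty(\partial_r(q^{1/N}))=1$. This collapses your base-change-and-norm computation into a single line. What your approach buys is independence from Lemma \ref{BS-def}: you never use the cup-product reinterpretation, only the raw definition $\cB\cS_{c,r}^{\langle t\rangle}=\partial_r(\eta_r)$ and Lemma \ref{compatibility-res-Kummer}. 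What the paper's approach buys is brevity and a cleaner separation of concerns, since the $\ell^r$-cancellation you carefully track is already encoded in the identity $p_{r!}(Bp_{2,c,r}^{\langle t\rangle})=B_{2,c,r}^{\langle p(t)\rangle}$.
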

\begin{proof}
The residue $\res_\infty$ factors through 
$p_{r!}:\Lambda_r(\sH_r\langle t\rangle)\to \Lambda_r(Z_r\langle p(t)\rangle)$
and by definition 
$$
p_{r!}(Bp_{2,c,r}^{\langle t\rangle})=\ell^r Bp_{2,c,r}^{\langle t\rangle}=
B_{2,c,r}^{\langle p(t)\rangle}.
$$
Using the cup-product representation of $\cB\cS_{c,r}^{\langle t\rangle}$
gives the desired result. 
\end{proof}
Consider the moment maps
$$
\mom_t^k:H^1(\wh{Y}(N)_\infty,\Lambda(\sH\langle t\rangle)(1))\to
H^1(\wh{Y}(N)_\infty,\TSym^k\sH(1)).
$$
\begin{definition}
We define 
\begin{equation*}
_c b_k(t):=\mom_t^k(\cB\cS_c^{\langle t\rangle})\in
H^1(\wh{Y}(N)_\infty,\TSym^k\sH(1)).
\end{equation*}
For a function $\psi:\cE[N]\setminus \{e\}\to \bbQ$ we let
\begin{equation*}
_c b_k(\psi):=\sum_{t\in \cE[N]\setminus \{e\}}\psi(t){_c} b_k(t).
\end{equation*}
\end{definition}

\begin{proposition}\label{BS-evaluation}
Let $\psi$ be a function such that
$\res_\infty(_c b_k(\psi))=0$, then
$$
_c b_k(\psi)=0
$$
in $H^1(\wh{Y}(N)_\infty,\TSym^k\sH(1))$.
\end{proposition}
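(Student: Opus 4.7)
The plan is to write $_c b_k(\psi)$ explicitly as a cup product via Lemma \ref{BS-def}, and then to argue that classes of this specific form in $H^1(\wh{Y}(N)_\infty,\TSym^k\sH(1))$ are detected faithfully by the residue at $\infty$.

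First, applying the sheaf homomorphism $\mom_t^k$ to the cup-product decomposition $\cB\cS_c^{\langle t\rangle}=\partial(q^{1/N})\cup Bp_{2,c}^{\langle t\rangle}$ of Lemma \ref{BS-def}, and using that $\mom_t^k$ commutes with cup products against classes of the base, I obtain
\[
_c b_k(\psi)\;=\;\partial(q^{1/N})\cup M_\psi,\qquad
M_\psi\;:=\;\sum_{t}\psi(t)\,\mom_t^k\bigl(Bp_{2,c}^{\langle t\rangle}\bigr)\in H^0\bigl(\wh{Y}(N)_\infty,\TSym^k\sH\bigr).
\]

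Second, I analyze $H^1(\wh{Y}(N)_\infty,\TSym^k\sH(1))$ via the Leray spectral sequence for $j:\wh{Y}(N)_\infty\hookrightarrow\wh{X}(N)_\infty$. The argument of Corollary \ref{monodromy-sym}, applied inductively to $\TSym^k\sH$, identifies $\infty^*j_*\TSym^k\sH(1)$ with $\bbZ_\ell(k+1)$ (generated by the $k$-th divided power of the canonical inertia-fixed element of $\sH$), yielding an exact sequence
\[
0\to H^1\bigl(\bbQ(\zeta_N),\bbZ_\ell(k+1)\bigr)\xrightarrow{\iota}H^1\bigl(\wh{Y}(N)_\infty,\TSym^k\sH(1)\bigr)\xrightarrow{\res_\infty}H^0\bigl(\bbQ(\zeta_N),\bbZ_\ell\bigr)
\]
in which $\res_\infty$ coincides with the residue map of the hypothesis.

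The decisive step is to show that $\partial(q^{1/N})\cup M_\psi$ has vanishing image in the ``unramified'' subgroup $\iota(H^1(\bbQ(\zeta_N),\bbZ_\ell(k+1)))$. For this I will use that $q$ is transcendental over the residue field $\bbQ(\zeta_N)$: fixing once and for all a compatible system of roots $q^{1/(N\ell^r)}$ in an algebraic closure of $F:=\bbQ(\zeta_N)((q^{1/N}))$, every $\gamma\in G_{\bbQ(\zeta_N)}$ extends uniquely to an automorphism of $\bar F$ fixing these roots, which defines a section $G_{\bbQ(\zeta_N)}\hookrightarrow G_F$ of the projection modulo inertia. With respect to this splitting, the Kummer cocycle $\kappa$ representing $\partial(q^{1/N})$ vanishes on $G_{\bbQ(\zeta_N)}$, hence so does the cocycle $g\mapsto\kappa(g)M_\psi$ representing $\partial(q^{1/N})\cup M_\psi$. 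This shows that $_c b_k(\psi)$ has trivial unramified component.

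Conclusion: since $_c b_k(\psi)$ lies in the kernel of $\iota$, it is detected faithfully by $\res_\infty$, and the assumption $\res_\infty(_c b_k(\psi))=0$ forces $_c b_k(\psi)=0$.

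The hardest part is establishing the vanishing of the unramified component in the third step; the remainder is a routine combination of the Leray spectral sequence with the cup-product representation of $\cB\cS_c^{\langle t\rangle}$.
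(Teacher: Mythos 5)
Your route is genuinely different from the paper's. The paper also writes $_c b_k(\psi)=\partial(q^{1/N})\cup M_\psi$ with $M_\psi=\sum_t\psi(t)\mom_t^k(Bp_{2,c}^{\langle t\rangle})\in H^0(\wh{Y}(N)_\infty,\TSym^k\sH)$, but then it simply observes that $p\colon\sH\to\bbZ_\ell$ induces an isomorphism $H^0(\wh{Y}(N)_\infty,\TSym^k\sH)\isom\bbZ_\ell$ by weight reasons, under which $M_\psi\mapsto\res_\infty({_cb_k(\psi)})=0$; so $M_\psi=0$ and the cup product is trivially zero. No Galois section, no analysis of the cohomology group $H^1(\wh{Y}(N)_\infty,\TSym^k\sH(1))$ is needed. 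What your argument buys is a conceptual picture (``the Kummer class of the uniformizer is purely ramified''); what it costs is that the weight input does not disappear, it just moves.

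And that is where you have a gap. After establishing that the restriction $s^*({_cb_k(\psi)})$ along your chosen section $s\colon G_{\bbQ(\zeta_N)}\hookrightarrow G_F$ vanishes, you assert that elements with vanishing residue are ``detected faithfully by $\res_\infty$''. Spelled out, the claim is: if $x\in H^1(\wh{Y}(N)_\infty,\TSym^k\sH(1))$ satisfies $\res_\infty(x)=0$ and $s^*(x)=0$, then $x=0$. From $\res_\infty(x)=0$ you get $x=\iota(y)$ for some $y\in H^1(\bbQ(\zeta_N),\bbZ_\ell(k+1))$, and $s^*\iota(y)$ is the image of $y$ under the map
\[
H^1\bigl(\bbQ(\zeta_N),\bbZ_\ell(k+1)\bigr)\longrightarrow H^1\bigl(\bbQ(\zeta_N),\TSym^k\sH(1)\big|_s\bigr)
\]
induced by the inclusion $\bbZ_\ell(k+1)=(\TSym^k\sH(1))^I\hookrightarrow\TSym^k\sH(1)$. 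You need this map to be injective, and this is not automatic: its kernel is the image of the connecting map from $H^0\bigl(\bbQ(\zeta_N),\TSym^k\sH(1)/\bbZ_\ell(k+1)\big|_s\bigr)$. That $H^0$ does vanish, because the quotient has graded pieces $\bbZ_\ell(j)$ with $1\le j\le k$ and these have no $G_{\bbQ(\zeta_N)}$-invariants — but this is precisely the weight argument, and it must be stated. As written, the sentence ``lies in the kernel of $\iota$'' (it should be the kernel of $s^*$, and the class lies in the \emph{image} of $\iota$) combined with ``detected faithfully'' papers over exactly the step that closes the proof. Once you add the weight input, your argument is correct, but then you have done strictly more work than the paper, which kills the $H^0$-coefficient $M_\psi$ itself and never needs to split the decomposition group.
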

\begin{proof}
From Lemma \ref{BS-def} we see that ${_c} b_k(t)$ is a cup-product
$$
{_c} b_k(t)=(\prolim_r\partial_r(q^{1/N}))\cup 
\mom_t^k(Bp_{2,c}^{\langle t\rangle}),
$$
where $\mom_t^k(Bp_{2,c}^{\langle t\rangle})\in 
H^0(\wh{Y}(N)_\infty,\TSym^k\sH)$. The map 
$p:\sH\to\bbZ_\ell$ induces an isomorphism
$$
H^0(\wh{Y}(N)_\infty,\TSym^k\sH)\isom 
H^0(\wh{Y}(N)_\infty,\bbZ_\ell)\isom \bbZ_\ell
$$
because of weight reasons. The image of $\mom_t^k(\sum_{t\in \cE[N]\setminus \{e\}}\psi(t)Bp_{2,c}^{\langle t\rangle})$
under this isomorphism is just $\res_\infty(_c b_k(\psi))$, which
is zero by assumption.
\end{proof}

%
\subsection{Evaluation at $\infty$ of the modified elliptic Soul\'e element}
%

We modify the elliptic Soul\'e element ${_c e_k(t)}$ by
subtracting the element ${_c} b_k(t)$. The resulting element has
no residue at $\infty$ and hence can be evaluated.

\begin{definition}\label{me-def}
We let 
$$
\cM\cE\cS_{c,r}^{\langle t\rangle}:=\cE\cS_{c,r}^{\langle t\rangle}-
\cB\cS_{c,r}^{\langle t\rangle}\in 
H^1(\wh{Y}(N)_\infty,\Lambda_r(\sH_r\langle t\rangle)(1))
$$
and $\cM\cE\cS_{c}^{\langle t\rangle}:=\prolim_r \cM\cE\cS_{c,r}^{\langle t\rangle}$. Define 
$\varepsilon_r:={_c\vartheta_{\cE}}\eta_r^{-1}\in \Gm(\cE[\ell^r]\langle t\rangle)$ so
that 
$$
\partial_r(\varepsilon_r)=\cM\cE\cS_{c,r}^{\langle t\rangle}.
$$
Let 
$$
_c\wt{me}_k(t):=
\wt\mom^k_t(\cM\cE\cS_{c}^{\langle t\rangle})
=\frac{1}{N^k}({_ce_k(t)}-{_cb_k(t)})
\in 
H^1(\wh{Y}(N)_\infty,\Sym^k\sH_{\bbQ_\ell}(1)).
$$
\end{definition}
By construction, the residue at $\infty$ of $_c\wt{me}_k(t)$ is zero:
\begin{lemma}\label{res-of-me}
One has $\res_\infty(\cM\cE\cS_{c}^{\langle t\rangle})=0$ hence
$$
\res_\infty(_c\wt{me}_k(t))=0,
$$
so that one can consider $_c\wt{me}_k(t)$ as a class in 
$$
H^1(\wh{X}(N)_\infty,j_*\Sym^k\sH_{\bbQ_\ell}(1)).
$$
\end{lemma}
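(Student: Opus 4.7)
The strategy is to combine the two residue computations already obtained: Theorem \ref{ES-c-r-res} tells us that $\res_\infty(\cE\cS_c^{\langle t\rangle}) = B_{2,c}^{\langle p(t)\rangle}$ in $\Lambda(Z\langle p(t)\rangle)$, and Lemma \ref{BS-res} tells us that $\res_\infty(\cB\cS_c^{\langle t\rangle}) = B_{2,c}^{\langle p(t)\rangle}$ as well. Since the residue map $\res_\infty : H^1(\wh Y(N)_\infty, \Lambda(\sH\langle t\rangle)(1)) \to \Lambda(Z\langle p(t)\rangle)$ of Definition \ref{res-r-def} is $\bbZ_\ell$-linear, subtracting gives
\[
\res_\infty(\cM\cE\cS_c^{\langle t\rangle}) = \res_\infty(\cE\cS_c^{\langle t\rangle}) - \res_\infty(\cB\cS_c^{\langle t\rangle}) = 0,
\]
which is the first claim.

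For the second claim, I invoke the compatibility of the residue map with the moment map provided by Lemma \ref{res-comparison}: the diagram relating the residues on $H^1(\wh Y(N)_\infty, \Lambda(\sH\langle t\rangle)(1))$ and on $H^1(\wh Y(N)_\infty, \Sym^k\sH_{\bbQ_\ell}(1))$ via $\wt\mom^k_t$ and $\wt\mom^k_{p(t)}$ commutes. Since by Definition \ref{me-def} we have ${_c\wt{me}_k(t)} = \wt\mom^k_t(\cM\cE\cS_c^{\langle t\rangle})$, applying this compatibility yields
\[
\res_\infty({_c\wt{me}_k(t)}) = \wt\mom^k_{p(t)}(\res_\infty(\cM\cE\cS_c^{\langle t\rangle})) = \wt\mom^k_{p(t)}(0) = 0.
\]

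Finally, to realize ${_c\wt{me}_k(t)}$ as a class in $H^1(\wh X(N)_\infty, j_*\Sym^k\sH_{\bbQ_\ell}(1))$, I use the exact sequence
\[
0 \to H^1(\wh X(N)_\infty, j_*\Sym^k\sH_{\bbQ_\ell}(1)) \to H^1(\wh Y(N)_\infty, \Sym^k\sH_{\bbQ_\ell}(1)) \xrightarrow{\res_\infty} H^0(\infty, \bbQ_\ell) \to 0
\]
coming from the Leray spectral sequence for $Rj_*$ together with the identifications $\bbQ_\ell(k+1) \isom \infty^*j_*\Sym^k\sH_{\bbQ_\ell}(1)$ and $\infty^*R^1j_*\Sym^k\sH_{\bbQ_\ell}(1) \isom \bbQ_\ell$ of Corollary \ref{monodromy-sym}. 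The vanishing of $\res_\infty({_c\wt{me}_k(t)})$ then gives the desired lift.

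There is essentially no obstacle: every ingredient has already been established. The only thing to be careful about is ensuring that the two residue computations match on the nose (not merely up to a constant) — but both Theorem \ref{ES-c-r-res} and Lemma \ref{BS-res} have been calibrated precisely so that the right-hand side is $B_{2,c}^{\langle p(t)\rangle}$ with the same normalization, so the subtraction really does produce zero.
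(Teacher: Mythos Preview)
Your proof is correct and follows exactly the same route as the paper: the paper's own proof simply cites Lemma \ref{res-comparison}, Theorem \ref{ES-c-r-res}, and Lemma \ref{BS-res}, which are precisely the three ingredients you invoke (subtracting the two matching residues, then using the moment-residue compatibility). Your version just spells out the logic in more detail, including the Leray exact sequence justifying the final lift.
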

\begin{proof}
This follows from Lemma \ref{res-comparison},
Theorem \ref{ES-c-r-res} and Lemma \ref{BS-res}.
\end{proof}
We now want to evaluate
$$
\infty^*(_c\wt{me}_k(t))\in H^1(\infty, \bbQ_\ell(k+1))
$$
in terms of Soul\'e-Deligne elements. Recall that over $\wh{Y}(N)_\infty$
one has an exact sequence
$$
0\to \mu_N\xrightarrow{\iota}\cE[N]\xrightarrow{p}\bbZ/N\bbZ\to 0.
$$
\begin{theorem}\label{me-evaluation} Let $t$ be a non-zero $N$-torsion section of $\cE$. 
Let $_c\wt{me}_k(t)$ be the element defined in \ref{me-def}. If $p(t)\neq 0$
one has $\infty^*(_c\wt{me}_k(t))=0$. If $p(t)=0$, $t$ is in the image
of $\iota$ and will be considered as an $N$-th root of
unity. Then the formula
\begin{multline*}
\infty^*(_c\wt{me}_k(t))=\\
\frac{1}{2k!N^k}\left(
c^2(\wt c_{k+1}(t)+(-1)^k\wt c_{k+1}(t^{-1}))+ c^{-k}(\wt c_{k+1}(ct)+(-1)^k\wt c_{k+1}(ct^{-1}))\right)
\end{multline*}
holds in $H^1(\infty,\bbQ_\ell(k+1))$.
\end{theorem}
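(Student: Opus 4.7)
The plan is to compute $\infty^\ast(_c\wt{me}_k(t))$ by making the function $\varepsilon_r = {_c\vartheta_\cE}/\eta_r$ (from Definition~\ref{me-def}) explicit at the cusp. By Lemma~\ref{res-of-me} the class $\cM\cE\cS_c^{\langle t\rangle}$ has vanishing residue at $\infty$, so it descends to $H^1(\wh X(N)_\infty,j_\ast\Sym^k\sH_{\bbQ_\ell}(1))$ and admits an honest pull-back by $\infty$. At finite level, the construction of $\eta_r$ in Lemma~\ref{BS-def} was engineered precisely so that $\mathrm{ord}_\infty\circ p_{r,\ast}(\varepsilon_r)=0$; combined with Lemma~\ref{compatibility-res-Kummer} this means $\varepsilon_r$ extends to an invertible function on the closure of $\cE[\ell^r]\langle t\rangle$ in the Tate curve, and functoriality of Kummer with respect to specialization gives $\infty^\ast\partial_r(\varepsilon_r)=\partial_r(\varepsilon_r|_{q=0})$.

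To evaluate $\varepsilon_r|_{q=0}$ explicitly, I would base-change to $\wh Y(\ell^r N)_\infty$ where $\cE[\ell^r]\langle t\rangle$ splits into copies indexed by $(x,y)\in(\bbZ/\ell^r N\bbZ)^2$ reducing to $t=(a,b)$ modulo $N$. Corollary~\ref{theta-evaluation} gives on each component
$$
\varepsilon_r = (-\zeta_{\ell^r N}^{y})^{(c-c^2)/2}\;\frac{(1-q^{x/\ell^r N}\zeta_{\ell^r N}^{y})^{c^2}}{1-q^{cx/\ell^r N}\zeta_{\ell^r N}^{cy}}\;\frac{\wt\gamma_{q}(q^{x/\ell^r N}\zeta_{\ell^r N}^{y})^{c^2}}{\wt\gamma_{q}(q^{cx/\ell^r N}\zeta_{\ell^r N}^{cy})},
$$
in which the Bernoulli-type $q$-powers cancel against $\eta_r$ by construction and the $\wt\gamma$-factors tend to $1$ as $q\to0$.

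If $p(t)\neq 0$ then $x\not\equiv 0\pmod N$ for every lift, so $q^{x/\ell^r N}\to 0$ and $\varepsilon_r|_{q=0}$ is simply a root of unity $(-\zeta_{\ell^r N}^{y})^{(c-c^2)/2}$. Its Kummer class on the constant fiber $\cE[\ell^r]\langle t\rangle|_\infty$ is unramified and factors through the invariants of monodromy $\iota(\bbZ_\ell(1))\subset\sH$ of Proposition~\ref{monodromy}; applying $\wt\mom_t^k$ then projects onto the quotient $p\colon\sH\to\bbZ_\ell$ for the $\TSym^k$-direction and by Corollary~\ref{monodromy-sym} lands in the wrong weight for $k\geq1$, giving $0$ in $H^1(\infty,\bbQ_\ell(k+1))$. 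If $p(t)=0$, then $t\in\iota(\mu_N)$ and $x\equiv 0\pmod N$ for every lift: the Tate uniformization identifies the fiber $\cE[\ell^r]\langle t\rangle|_\infty$ with $\mu_{\ell^r}\langle t\rangle\times \bbZ/\ell^r\bbZ$, on which $\varepsilon_r|_{q=0}$ depends only on the $\mu$-coordinate and equals (up to a root of unity) the restriction of $_c\Xi$ from \eqref{Xi-def} along $\zeta\mapsto\zeta_{\ell^rN}^y$. Under this identification the $\bbZ/\ell^r$-factor contributes only to weight $0$ (and vanishes as above), while the $\mu_{\ell^r}\langle t\rangle$-part, by Definition~\ref{CS-def}, produces exactly $\cC\cS_{c,r}^{\langle t\rangle}$. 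Applying Proposition~\ref{mom-of-S} converts the resulting push-forward into $c^2\wt c_{k+1}(t)+c^{-k}\wt c_{k+1}(ct)$-type combinations, and the symmetrization $t\leftrightarrow t^{-1}$ together with the sign $(-1)^k$ comes from combining both branches of the fiber (those lying over $t$ and over $-t$) via the involution $[-1]$ on $\cE$, using Lemma~\ref{e-of-minus-t} and Lemma~\ref{Eis-parity}.

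The main obstacle will be in the $p(t)=0$ case: one must carefully identify the structure of the fiber $\cE[\ell^r]\langle t\rangle|_\infty$ under the Tate uniformization, show that the $\bbZ/\ell^r$-direction really does decouple under the moment map, and keep the extraneous roots-of-unity factors $(-\zeta_{\ell^r N}^y)^{(c-c^2)/2}$ under control so that they ultimately drop out (or recombine symmetrically between the $t$ and $t^{-1}$ branches) to give exactly the symmetrized combination in the stated formula, with the correct normalizing constant $\frac{1}{2k!N^k}$ as dictated by Lemma~\ref{res-comparison}.
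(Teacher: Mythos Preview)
Your overall strategy---specialize $\varepsilon_r$ at $q=0$ using Corollary~\ref{theta-evaluation} and compare with $_c\Xi$---is the same as the paper's. But two key mechanisms are misidentified, and as written the argument would not close.

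First, the handling of the special fibre. The paper does not work with the full closure of $\cE[\ell^r]\langle t\rangle$ and then argue that the $\bbZ/\ell^r$-direction ``decouples''; instead it proves that $\iota_!:\Lambda(\sT)\to\infty^*j_*\Lambda(\sH)$ is an isomorphism (this is exactly the monodromy-invariants statement of Proposition~\ref{monodromy} promoted to the Iwasawa sheaf), and uses the splitting $\iota^*$ to reduce everything to $\mu_{\ell^r}\langle t\rangle$. This gives the $p(t)\neq 0$ case for free, since $\mu_{\ell^r}\langle t\rangle$ is then empty. Your weight argument for $p(t)\neq 0$ is backwards: a class lying in the image of $\iota$ is precisely what \emph{survives} under $\infty^*j_*\Sym^k\sH(1)\isom\bbZ_\ell(k+1)$, not what dies. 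The reason the $p(t)\neq 0$ contribution vanishes is structural (empty fibre of $\iota$), not a weight cancellation after applying $\wt\mom_t^k$.

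Second, the origin of the symmetrization and the factor $\tfrac{1}{2}$. You attribute it to combining ``branches over $t$ and $-t$'' via Lemmas~\ref{e-of-minus-t} and~\ref{Eis-parity}, but the statement concerns a single fixed $t$ and those lemmas play no role. In the paper the awkward root-of-unity factor $(-\beta)^{(c-c^2)/2}$ is eliminated by squaring: one checks the algebraic identity
\[
\left((-\beta)^{(c-c^2)/2}\,\frac{(1-\beta)^{c^2}}{1-\beta^c}\right)^{2}
=\frac{(1-\beta)^{c^2}(1-\beta^{-1})^{c^2}}{(1-\beta^c)(1-\beta^{-c})}
={_c\Xi}(\beta)\cdot{_c\Xi}(\beta^{-1}),
\]
which upon applying $\partial_r$ yields
$2\,\infty^*\iota^*(\cM\cE\cS_{c,r}^{\langle t\rangle})
=\cC\cS_{c,r}^{\langle t\rangle}+[-1]^*\cC\cS_{c,r}^{\langle t^{-1}\rangle}$.
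The $(-1)^k$ then comes from the functoriality of $\mom^k$ under $[-1]_!$ (as in the proof of Lemma~\ref{e-of-minus-t}, but applied to $\cC\cS$), and the $\tfrac{1}{2}$ from undoing the squaring. Without this trick you have no clean way to dispose of the factor $(-\beta)^{(c-c^2)/2}$.
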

The proof of this theorem is given in the next section. 

\begin{remark}\label{c-rem}
In fact one can show that in $H^1(\infty,\bbQ(k+1))$ the identity $\wt c_{k+1}(t^{-1})=(-1)^k\wt c_{k+1}(t)$ holds
(see for example \cite{Deligne-groupe-fondamental} 3.14.) but we do not 
need this fact.
\end{remark}
The consequences for the evaluation of the cup-product construction are as
follows. Identify $\cE[N]\isom (\bbZ/N\bbZ)^2$ and recall that 
$$
\Eis^k_{\bbQ_\ell}(\psi)=\sum_{(a,b)\in (\bbZ/N\bbZ)^2\setminus \{(0,0)\}}
\psi(a,b){\Eis^k_{\bbQ_\ell}(a,b)}.
$$
\begin{corollary}\label{Dir-formula}
With the above notations suppose that $\psi$ is a function with
$\res_\infty(\Eis^k_{\bbQ_\ell}(\psi))=0$.
Then
$$ 
\Dir_\ell(\psi)=\infty^*(\Eis^k_{\bbQ_\ell}(\psi))=\frac{-1}{k!N}\sum_{b\in \bbZ/N\bbZ\setminus \{0\}}
\psi(0,b)\wt c_{k+1}(\zeta_N^b)
$$
where $\zeta_N=e^{2\pi i/N}$.
\end{corollary}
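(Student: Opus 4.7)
The plan is to assemble three main ingredients already in place: the reinterpretation of the cup-product construction in Theorem~\ref{Dir-as-evaluation}, the comparison between Eisenstein classes and elliptic Soul\'e elements in Theorem~\ref{pol-eis}, and the explicit evaluation at the cusp given by Theorem~\ref{me-evaluation}. The remaining work is to combine these results correctly and to verify that the $c$-dependent prefactors cancel.

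First, I would fix an auxiliary integer $c>1$ with $c\equiv 1 \bmod N$ and $(c,6\ell)=1$; such a $c$ exists by Dirichlet. With this choice $[c]t=t$ for every $N$-torsion section $t$, so Theorem~\ref{pol-eis} specializes to $\wt\mom^k_t(\cE\cS^{\langle t\rangle}_c)=-\tfrac{c^{k+2}-1}{N^{k-1}c^k}\Eis_{\bbQ_\ell}^k(t)$. Weighting by $\psi(t)$ and summing over $t\in\cE[N]\setminus\{e\}$ yields
$$
\sum_t\psi(t)\,\wt\mom^k_t(\cE\cS^{\langle t\rangle}_c)=-\frac{c^{k+2}-1}{N^{k-1}c^k}\,\Eis_{\bbQ_\ell}^k(\psi).
$$

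Next, using the decomposition $\cM\cE\cS_c^{\langle t\rangle}=\cE\cS_c^{\langle t\rangle}-\cB\cS_c^{\langle t\rangle}$ from Definition~\ref{me-def}, I would show that the $\cB\cS$-contribution vanishes under the weighted sum. Theorem~\ref{ES-c-r-res} and Lemma~\ref{BS-res} give that $\cE\cS_c^{\langle t\rangle}$ and $\cB\cS_c^{\langle t\rangle}$ both have residue $B_{2,c}^{\langle p(t)\rangle}$ at $\infty$. By Lemma~\ref{res-comparison}, this propagates to
$$
\res_\infty\Bigl(\sum_t\psi(t)\,\wt\mom^k_t(\cB\cS^{\langle t\rangle}_c)\Bigr)=\res_\infty\Bigl(\sum_t\psi(t)\,\wt\mom^k_t(\cE\cS^{\langle t\rangle}_c)\Bigr)=0,
$$
the last equality coming from the previous display and the hypothesis $\res_\infty(\Eis_{\bbQ_\ell}^k(\psi))=0$. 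Proposition~\ref{BS-evaluation} then forces this sum to vanish, whence
$$
\sum_t\psi(t)\,{_c\wt{me}_k(t)}=-\frac{c^{k+2}-1}{N^{k-1}c^k}\,\Eis_{\bbQ_\ell}^k(\psi).
$$

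Both sides have trivial residue at $\infty$ by Lemma~\ref{res-of-me}, so I may apply $\infty^*$ and invoke Theorem~\ref{Dir-as-evaluation} to identify the right-hand side with $-\tfrac{c^{k+2}-1}{N^{k-1}c^k}\Dir_\ell(\psi)$. On the left, Theorem~\ref{me-evaluation} shows that only sections $t$ with $p(t)=0$ contribute, i.e.\ those of the form $t=\iota(\zeta_N^b)$ with $b\neq 0$. By Lemma~\ref{Eis-parity} I may replace $\psi$ by its $(-1)^k$-parity component, so $\psi(0,-b)=(-1)^k\psi(0,b)$, which means that after applying Theorem~\ref{me-evaluation} and the change of variable $b\mapsto -b$ the two summands in each bracket collapse into $2\wt c_{k+1}(\zeta_N^b)$ after weighting. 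The main (but entirely elementary) obstacle is then to verify that the remaining $c$-dependent prefactor on the left is exactly $\tfrac{c^{k+2}-1}{c^k k! N^k}$, so that it cancels against $-\tfrac{c^{k+2}-1}{N^{k-1}c^k}$ on the right and leaves the claimed coefficient $-\tfrac{1}{k!N}$ in front of $\sum_{b\neq 0}\psi(0,b)\wt c_{k+1}(\zeta_N^b)$, independently of the auxiliary $c$.
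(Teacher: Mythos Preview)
Your proposal is correct and follows essentially the same route as the paper's proof: fix $c\equiv 1\bmod N$, use Theorem~\ref{pol-eis} to pass from $\Eis^k_{\bbQ_\ell}(\psi)$ to the elliptic Soul\'e sum, invoke Proposition~\ref{BS-evaluation} to kill the auxiliary $_cb_k(\psi)$ contribution (via the residue hypothesis), apply Theorem~\ref{me-evaluation} so that only the $p(t)=0$ terms survive, use the parity from Lemma~\ref{Eis-parity} and the substitution $b\mapsto -b$ to collapse the brackets, and finally cancel the common factor $(c^{k+2}-1)/c^k=c^2-c^{-k}$. The only cosmetic difference is the order in which you state the reduction to $_c\wt{me}_k$ versus the relation to $\Eis^k_{\bbQ_\ell}$.
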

\begin{proof}
By assumption we have $\res_\infty(\Eis^k_{\bbQ_\ell}(\psi))=0$ which
implies $\res_\infty({_cb_k(\psi)})=0$.
It follows from Proposition \ref{BS-evaluation} 
that 
$$
\frac{1}{N^k}\infty^*(_ce_k(\psi))=\sum_{(a,b)\in (\bbZ/N\bbZ)^2\setminus \{(0,0)\}}
\psi(a,b)
\infty^*(_c\wt{me}_k(a,b)).
$$
We now apply Theorem \ref{me-evaluation} and observe that 
$\infty^*(_c\wt{me}_k(a,b))=0$, if $a\neq 0$. 
By Lemma \ref{Eis-parity} we can also assume right away that 
$\psi(-t)=(-1)^k\psi(t)$. Then one has
$$
\sum_{b\in \bbZ/N\bbZ\setminus \{0\}}
\psi(0,b)\wt c_{k+1}(\zeta_N^b)=
\sum_{b\in \bbZ/N\bbZ\setminus \{0\}}
\psi(0,b)(-1)^k\wt c_{k+1}(\zeta_N^{-b})
$$
and
$$
\sum_{b\in \bbZ/N\bbZ\setminus \{0\}}
\psi(0,b)\wt c_{k+1}(\zeta_N^{cb})=
\sum_{b\in \bbZ/N\bbZ\setminus \{0\}}
\psi(0,b)(-1)^k\wt c_{k+1}(\zeta_N^{-cb})
$$
by substituting $b\mapsto -b$. If we use this in the formula of
Theorem \ref{me-evaluation} in the case of $c\equiv 1\mod{N}$ we get
$$
\frac{1}{N^k}\infty^*(_ce_k(\psi))=
\frac{c^2-c^{-k}}{k!N^k}\sum_{b\in \bbZ/N\bbZ\setminus \{0\}}
\psi(0,b)
\wt c_{k+1}(\zeta_N^b).
$$
On the other hand by Theorem \ref{pol-eis} for $c\equiv 1\mod{N}$
$$
\frac{1}{N^k}\infty^*(_ce_k(\psi))=
\frac{-(c^2-c^{-k})}{N^{k-1}}\Eis^k_{\bbQ_\ell}(\psi),
$$
which gives the desired result.
\end{proof}

%
\subsection{Proof of Theorem \ref{me-evaluation}}
%
We need to introduce some more notation.
Over $\wh{Y}(N)_\infty$ one has
$$
\begin{CD}
\mu_{\ell^rN}@>\iota_r>>\cE[\ell^rN]@>p_r>> \bbZ/\ell^rN\bbZ\\
@V[\ell^r]VV@VV[\ell^r]V@VV[\ell^r]V\\
\mu_N@>\iota>> \cE[N]@>p>>\bbZ/N\bbZ.
\end{CD}
$$
\begin{definition}
We denote by 
$\mu_{\ell^r}\langle t\rangle\xrightarrow{\iota_r}\cE[\ell^r]\langle t\rangle$
the fibre over the $N$-torsion section $t$. 
We let $\sT_r\langle t\rangle$ be the sheaf associated to 
$\mu_{\ell^r}\langle t\rangle$ and define
$$
\sT\langle t\rangle:=\prolim_r\sT_r\langle t\rangle.
$$
In the case $t=0$ we write $\sT:=\sT\langle 0\rangle$.
\end{definition}
Note that $\mu_{\ell^r}\langle t\rangle$ is empty if $p(t)\neq 0$.
The maps $\iota_r$
induce a map of sheaves
\begin{equation}
\iota_!:\Lambda(\sT\langle t\rangle)\to \Lambda(\sH\langle t\rangle).
\end{equation}
On the other hand, pull-back by $\iota_r$ gives a map of sheaves
\begin{equation}
\iota^*:\Lambda(\sH\langle t\rangle)\to \Lambda(\sT\langle t\rangle)
\end{equation}
which is a splitting of $\iota_!$. On the other hand the maps 
$p_{r,t}:\cE[\ell^r]\langle t\rangle\to Z_r\langle p_r(t)\rangle$
give rise to 
\begin{equation}
p_!:\Lambda(\sH\langle t\rangle)\to \Lambda(Z\langle p(t)\rangle).
\end{equation}
\begin{proposition}
The morphisms $\iota_!$ and $p_!$ induce isomorphisms
$$
\Lambda(\sT)\isom \infty^*j_*\Lambda(\sH)
$$
and
$\infty^*R^1j_*\Lambda(\sH)\isom\Lambda(Z)$.
\end{proposition}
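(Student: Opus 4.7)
The plan is to reduce both isomorphisms to a stalk computation at the cusp $\infty$, using the standard identifications $\infty^{*} j_{*} \sF \cong \sF_{\ol\eta}^{I}$ and $\infty^{*} R^{1} j_{*} \sF \cong H^{1}(I, \sF_{\ol\eta})$, where $\ol\eta$ is a geometric generic point of $\wh Y(N)_\infty$ and $I$ is the (tame) inertia group at $\infty$; since the residue characteristic is zero, the $\ell$-part of $I$ is topologically $\bbZ_{\ell}(1)$. By Lemma \ref{relation-to-Iwasawa-modules}, the stalk $\Lambda(\sH)_{\ol\eta}$ equals $\Lambda(V)$ for $V := T_{\ell} \cE_{q}$, and similarly for $\Lambda(\sT)_{\ol\eta}$ and $\Lambda(Z)_{\ol\eta}$.

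By Proposition \ref{monodromy}, $V$ sits in the monodromy exact sequence $0 \to \sT \to V \to Z \to 0$ with $\sT \cong \bbZ_{\ell}(1)$ the monodromy invariants, and a topological generator $\sigma$ of $I$ acts by the shear $v \mapsto v + \chi_{\ell}(\sigma) p(v)$. Fixing a generator $e_{1}$ of $\sT$ and a lift $e_{2}$ of a generator of $Z$, and writing $S := [e_{1}] - 1$ and $T := [e_{2}] - 1$, we get $\Lambda(V) \cong \bbZ_{\ell}[[S,T]]$ with $\sigma(S) = S$ and $\sigma(T) = (1+S)(1+T) - 1$; in particular $(\sigma - 1) T = S(1+T)$. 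Both maps $\iota_{!}$ and $p_{!}$ are $I$-equivariant, since $I$ acts trivially on $\sT$ and on $Z$.

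For the first isomorphism, $\iota_{!}$ is the inclusion $\bbZ_{\ell}[[S]] \hookrightarrow \bbZ_{\ell}[[S,T]]$ and lies in $\Lambda(V)^{I}$. To see it realizes all of $\Lambda(V)^{I}$: if $f \in \Lambda(V)$ satisfies $f(S, T) = f(S, T + S(1+T))$, then extracting the $S^{1}$-coefficient of $f(S, T + S(1+T)) - f(S, T) = 0$ gives $(1+T)\, \partial_{T} f = 0$ in $\bbZ_{\ell}[[S,T]]$; since both $1+T$ and the coefficient ring $\bbZ_{\ell}[[S]]$ are $\bbZ_{\ell}$-torsion-free, this forces $f \in \bbZ_{\ell}[[S]] = \Lambda(\sT)$.

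For the second isomorphism, $H^{1}(I, \Lambda(V)) = \Lambda(V) / (\sigma - 1)\Lambda(V)$ since $I$ is procyclic. The map $p_{!}$ has kernel $S \Lambda(V)$ and contains $(\sigma - 1)\Lambda(V)$ (as $\sigma \equiv \id \pmod{S}$), so it descends to a surjection $H^{1}(I, \Lambda(V)) \twoheadrightarrow \Lambda(Z)$. The core of the argument, and the main obstacle, is showing injectivity, i.e.\ $(\sigma - 1)\Lambda(V) = S \Lambda(V)$. From the identity $(\sigma - 1)(U^{n}) = ((1+S)^{n} - 1)\, U^{n}$ with $U := 1 + T \in \Lambda(V)^{\times}$, one obtains $S U^{n}$ as a $\Lambda(\sT)$-multiple of $(\sigma - 1) U^{n}$ for $n$ coprime to $\ell$ (since $((1+S)^{n} - 1)/S = n + O(S)$ is then a unit in $\bbZ_{\ell}[[S]]$); the general case follows by a successive $S$-adic approximation, using that the $(\sigma - 1)(U^{n})$ for $n$ varying through $\bbZ_{\ell}$ topologically generate $S\Lambda(V)$ as a $\Lambda(\sT)$-module. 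This identification is then compatible with the Tate twists used throughout the paper, yielding $p_{!}: \infty^{*}R^{1}j_{*}\Lambda(\sH) \isom \Lambda(Z)$.
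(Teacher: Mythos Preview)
Your approach—passing to a geometric generic point, identifying $\infty^{*}j_{*}$ and $\infty^{*}R^{1}j_{*}$ with $I$-invariants and $I$-coinvariants of the stalk $\Lambda(V)\cong\bbZ_{\ell}[[S,T]]$, and computing these by hand—is a natural alternative to the paper's route, which instead filters $\Lambda(\sH)$ by powers of the augmentation ideal and invokes Corollary~\ref{monodromy-sym} on the graded pieces $\Sym^{k}\sH$. For the first isomorphism your argument is essentially correct; the only imprecision is that ``extracting the $S^{1}$-coefficient'' of $f(S,T+S(1+T))-f(S,T)=0$ literally yields $(1+T)\,\partial_{T}f_{0}=0$ for $f_{0}:=f|_{S=0}$, not $(1+T)\,\partial_{T}f=0$; one then iterates on successive $S$-coefficients to conclude each $f_{i}$ is constant.

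For the second isomorphism there is a genuine gap. Your ``successive $S$-adic approximation'' cannot establish $(\sigma-1)\Lambda(V)=S\Lambda(V)$: on the $S$-adic associated graded, the induced map on $S\Lambda(V)/S^{2}\Lambda(V)\cong\bbZ_{\ell}[[T]]$ is $f_{0}\mapsto(1+T)\,\partial_{T}f_{0}$, and this operator is \emph{not} surjective—for instance $1$ is not in its image, since that would force $f_{0}'=(1+T)^{-1}$ and hence $f_{0}=\log(1+T)\notin\bbZ_{\ell}[[T]]$. This same computation shows concretely that $S\notin(\sigma-1)\Lambda(V)$, so the equality you assert fails as stated, and with it the identification of $H^{1}(I,\Lambda(V))$ with $\Lambda(Z)$ via $p_{!}$. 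Consequently the claim that ``the $(\sigma-1)(U^{n})$ for $n$ varying through $\bbZ_{\ell}$ topologically generate $S\Lambda(V)$ as a $\Lambda(\sT)$-module'' is not only unjustified but cannot hold.

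The paper sidesteps this by never computing coinvariants of the full Iwasawa algebra. It works instead with the unipotent quotients $\Lambda(\sH)^{(k)}=\Lambda(\sH)/I(\sH)^{k+1}$ and argues by induction on $k$ using Corollary~\ref{monodromy-sym}; the relevant filtration is the $(S,T)$-adic (augmentation) one, not the $S$-adic one. If you want to salvage the direct approach, replace the $S$-adic step by an induction along the augmentation filtration—which is exactly what the paper does.
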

\begin{proof}
Let $I(\sT)$, $I(\sH)$ and $I(Z)$ be the augmentation ideals of 
$\Lambda(\sT)$, $\Lambda(\sH)$ and $\Lambda(Z)$ respectively and 
$\Lambda(\sT)^{(k)}$ etc. the quotient by the $k+1$-power of the
augmentation ideal. Then by induction on $k$
and Corollary
\ref{monodromy-sym} one has a commutative diagram 
$$
\begin{CD}
0@>>> \infty^*j_*\Sym^k\sH@>>> \infty^*j_*\Lambda(\sH)^{(k)}@>>> \infty^*j_*\Lambda(\sH)^{(k-1)} \\
@.@AA\isom A@AA\iota_! A@AA\isom A\\
0@>>>\bbZ_\ell(k)@>>>\Lambda(\sT)^{(k)}@>>> \Lambda(\sT)^{(k-1)}@>>> 0
\end{CD}
$$
which implies that the injective morphism $\iota_!$ is also surjective.
In the same way one shows $p_!:\infty^*R^1j_*\Lambda(\sH)\isom\Lambda(Z)$.
\end{proof}
With this result the Leray spectral sequence for $Rj_*$ gives
$${\small
\xymatrix{
&&H^1(\wh{Y}(N)_\infty, \Lambda(\sH\langle t\rangle)(1))\ar[d]_{[N]_!}\ar[r]&
H^0(\infty, \Lambda(Z\langle t\rangle))\ar[d]_{[N]_!}\\
0\ar[r]&H^1(\wh{X}(N)_\infty,j_*\Lambda(\sH)(1))\ar[r]&
H^1(\wh{Y}(N)_\infty, \Lambda(\sH)(1))\ar[r]^/.7em/{\res_\infty}&
H^0(\infty, \Lambda(Z)).
}
}$$
Recall that we want to compute 
$$
\infty^*(_c\wt{me}_k(t))=\infty^*\wt\mom_t^k(\cM\cE\cS_c^{\langle t\rangle})=\infty^*
\wt\mom^k\verk[N]_!(\cM\cE\cS_c^{\langle t\rangle}).
$$
From Lemma \ref{res-of-me} we get that 
\begin{equation}
[N]_!(\cM\cE\cS_c^{\langle t\rangle})\in 
H^1(\wh{X}(N)_\infty,j_*\Lambda(\sH)(1)).
\end{equation}
Consider the commutative diagram
$$
\CD
H^1(\wh{X}(N)_\infty, \Lambda(\sT)(1))@>\iota_!>>
H^1(\wh{X}(N)_\infty, j_*\Lambda(\sH)(1))
\\
@V\wt\mom^k VV@VV\wt\mom^k V\\
H^1(\wh{X}(N)_\infty,\bbQ_\ell(k+1))@>>>
H^1(\wh{X}(N)_\infty, j_*\Sym^k\sH_{\bbQ_\ell}(1))\\
@VV{\infty^*}V@VV{\infty^*}V\\
H^1(\infty,\bbQ_\ell(k+1))@=
H^1(\infty,\bbQ_\ell(k+1)).
\endCD
$$
As $\iota_!$ has the splitting $\iota^*$ it follows that we have
\begin{equation}\label{wtme-formula}
\infty^*(_c\wt{me}_k(t))=\infty^*\wt\mom^k\verk\iota^*\verk
[N]_!(\cM\cE\cS_c^{\langle t\rangle})=\wt\mom^k_t\verk\infty^*\iota^*
(\cM\cE\cS_c^{\langle t\rangle}).
\end{equation}
At finite level we have
$$
\begin{CD}
\Gm(\cE[\ell^r]\langle t\rangle)@>\iota^*>>\Gm(\mu_{\ell^r}\langle t\rangle)\\
@V\partial_r VV@VV\partial_r V\\
H^1(\wh{Y}(N)_\infty, \Lambda_r(\sH_r\langle t\rangle)(1))@>\iota^*>>
H^1(\wh{Y}(N)_\infty, \Lambda_r(\mu_{\ell^r}\langle t\rangle)(1))
\end{CD}
$$
which implies that we have with the notation in Definition \ref{me-def}
$$
\iota^*
(\cM\cE\cS_{c,r}^{\langle t\rangle})=\partial_r(\iota^*(\varepsilon_r)).
$$
\begin{lemma}
The function $\iota^*(\varepsilon_r)\in \Gm(\mu_{\ell^r}\langle t\rangle)$
extends to a function on  $\mu_{\ell^r}\langle t\rangle$ over all of 
$\wh{X}(N)_\infty$ (also denoted by $\iota^*(\varepsilon_r)$).
The special fibre $\infty^*\iota^*(\varepsilon_r)$ is the function
\begin{align*}
\infty^*\iota^*(\varepsilon_r):\mu_{\ell^r}\langle t\rangle&\to\Gm\\
\beta&\mapsto (-\beta)^{\frac{c-c^2}{2}} \frac{(1-\beta)^{c^2}}{(1-\beta^c)}.
\end{align*}
\end{lemma}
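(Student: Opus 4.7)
The plan is to use the explicit $q$-expansion of the Kato unit ${_c\vartheta_\cE}$ on the Tate curve (Corollary \ref{theta-evaluation}) and match it term-by-term against the explicit power $\eta_r^{-1}$ of the uniformizer $q^{1/N}$ that defines $\varepsilon_r$. The whole point of subtracting $\cB\cS_c^{\langle t\rangle}$ was to kill the leading $q$-power of ${_c\vartheta_\cE}$ in the residue direction; restricting to $\mu_{\ell^r}\langle t\rangle$ should kill all remaining $q$-dependence up to a power series in $q$ with constant term $1$, so that evaluation at $q=0$ is immediate.

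First I would observe that $\mu_{\ell^r}\langle t\rangle$ is nonempty only when $p(t)=0$, so $t=\iota(\zeta)$ for some $\zeta\in\mu_N$, and a point of $\mu_{\ell^r}\langle t\rangle$ is of the form $\iota_r(\beta)$ for $\beta\in\mu_{\ell^rN}$ with $\beta^{\ell^r}=\zeta$. In the analytic uniformization, $\iota_r(\beta)$ corresponds to $q_z=\beta$ in the Tate curve $\cE_q=\Gm/q^\bbZ$; in particular $p_r(\iota_r(\beta))=0$. Using this in the definition of $Bp_{2,c,r}^{\langle t\rangle}$ gives
\[
Bp_{2,c,r}^{\langle t\rangle}(\iota_r(\beta))=\tfrac{N}{2}\bigl(c^2B_2(0)-B_2(0)\bigr)=\tfrac{N(c^2-1)}{12},
\]
which is an integer because $(c,6)=1$ implies $c^2\equiv 1\pmod{24}$. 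Hence $\eta_r(\iota_r(\beta))=(q^{1/N})^{N(c^2-1)/12}=q^{(c^2-1)/12}$.

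Next I would substitute $q_z=\beta$ into Kato's formula from Theorem \ref{modular-units}(4):
\[
{_c\vartheta}(\tau,\iota_r(\beta))
=q^{(c^2-1)/12}(-\beta)^{(c-c^2)/2}\frac{(1-\beta)^{c^2}}{1-\beta^c}\,\wt\gamma_{q}(\beta)^{c^2}\wt\gamma_{q}(\beta^c)^{-1}.
\]
Taking the quotient $\varepsilon_r={_c\vartheta_\cE}\,\eta_r^{-1}$ cancels the factor $q^{(c^2-1)/12}$ exactly. The remaining expression involves $(1-\beta)^{c^2}/(1-\beta^c)$, which is a unit in $\bbQ(\zeta_N)$ (since $\beta^{\ell^r}=\zeta\neq 1$ and $(c,\ell N)=1$ forces $\beta,\beta^c\neq 1$), and the factors $\wt\gamma_q(\beta)^{c^2}\wt\gamma_q(\beta^c)^{-1}$, which lie in $1+q\bbZ[\zeta_{\ell^rN}][[q^{1/N}]]$ by the product definition $\wt\gamma_q(t)=\prod_{n\ge 1}(1-q^nt)(1-q^nt^{-1})$. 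Consequently $\iota^*(\varepsilon_r)$ belongs to $\cO(\mu_{\ell^r}\langle t\rangle\times_{\wh Y(N)_\infty}\wh X(N)_\infty)^\times$; specialising $q\mapsto 0$ kills $\wt\gamma_q$ to the value $1$ and yields the claimed formula.

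I do not expect a genuine obstacle here; the work is bookkeeping. The only two points that need care are (i) verifying that $N(c^2-1)/12\in\bbZ$ so that $\eta_r$ really is an algebraic function on $\mu_{\ell^r}\langle t\rangle_{\wh X(N)_\infty}$, which follows from $(c,6)=1$, and (ii) keeping straight that although Kato's formula is analytic over $\bbC$, both sides of the computed equality are algebraic functions already defined over $\bbQ(\zeta_N)[[q^{1/N}]]$, so the identity descends from the $q$-expansion comparison.
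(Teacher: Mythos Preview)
Your proof is correct and follows essentially the same route as the paper: both restrict the explicit $q$-expansion of ${_c\vartheta_\cE}$ from Theorem~\ref{modular-units}(4)/Corollary~\ref{theta-evaluation} to $\mu_{\ell^r}\langle t\rangle$ (where $a=0$), observe that the $q^{(c^2-1)/12}$ factor is cancelled by $\eta_r^{-1}$, and note that the $\wt\gamma_q$ factors lie in $1+q\cdot(\text{power series})$ so that setting $q=0$ gives the stated formula. Your version is in fact slightly more careful than the paper's, making explicit the computation of $Bp_{2,c,r}^{\langle t\rangle}$ on $\mu_{\ell^r}\langle t\rangle$ and the integrality of $N(c^2-1)/12$.
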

\begin{proof}
Note first that there is nothing to show if $p(t)\neq 0$. 
So we assume that $p(t)=0$. That $\iota^*(\varepsilon_r)$ extends 
can be checked after a base extension which adjoins all 
$\ell^rN$-th roots of unity. Then by definition and Corollary
\ref{theta-evaluation}
$\iota^*(\varepsilon_r)$ has the form
$$
\iota^*(\varepsilon_r)(\beta)=
(-\beta)^{\frac{c-c^2}{2}} \frac{(1-\beta)^{c^2}}{(1-\beta^c)}
\frac{\wt\gamma_q(\beta)^{c^2}}{\wt\gamma_q(\beta^c)},
$$
where $\beta \in\mu_{\ell^r}\langle t\rangle$ and 
$$
\wt\gamma_q(\beta)=\prod_{n> 0}(1-q^n\beta)(1-q^n\beta^{-1}).
$$
From this formula it is clear that $\iota^*(\varepsilon_r)$ makes sense
for $q=0$, i.e., extends to $\mu_{\ell^r}\langle t\rangle$ over all of 
$\wh{X}(N)_\infty$. Putting $q=0$ gives the explicit form of
$\infty^*\iota^*(\varepsilon_r)$ as claimed.
\end{proof}
\begin{lemma} Assume $p(t)=0$ so that $t$ is in the image of $\iota$
and will be considered as an $N$-th root of unity.  
In $H^1(\infty, \Lambda_r(\sT_r\langle t\rangle)(1))$ one has
the identity
$$
2\infty^*\iota^*(\cM\cE\cS_{c,r}^{\langle t\rangle})=
\cC\cS_{c,r}^{\langle t\rangle}
+[-1]^*\cC\cS_{c,r}^{\langle t^{-1}\rangle},
$$
which gives in the limit 
$$
2\infty^*\iota^*(\cM\cE\cS_{c}^{\langle t\rangle})=
\cC\cS_{c}^{\langle t\rangle}
+[-1]^*\cC\cS_{c}^{\langle t^{-1}\rangle}
$$
in $H^1(\infty, \Lambda(\sT\langle t\rangle)(1))$.
\end{lemma}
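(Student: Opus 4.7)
The plan is to reduce the cohomological identity to a multiplicative identity between explicit functions on $\mu_{\ell^r}\langle t\rangle$ over the cusp, and then apply the Kummer map. Since $\cM\cE\cS_{c,r}^{\langle t\rangle}=\partial_r(\varepsilon_r)$ with $\varepsilon_r={_c\vartheta_\cE}\eta_r^{-1}$, and since $\partial_r$ commutes with the pull-backs $\iota^*$ and $\infty^*$, one has
$$
\infty^*\iota^*(\cM\cE\cS_{c,r}^{\langle t\rangle})=\partial_r(\infty^*\iota^*\varepsilon_r),
$$
and the preceding lemma gives the fully explicit formula
$$
(\infty^*\iota^*\varepsilon_r)(\beta)=(-\beta)^{(c-c^2)/2}\,\frac{(1-\beta)^{c^2}}{1-\beta^c}
$$
for $\beta\in\mu_{\ell^r}\langle t\rangle$. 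The target is therefore the identity
$$
2\,\partial_r(\infty^*\iota^*\varepsilon_r)=\cC\cS_{c,r}^{\langle t\rangle}+[-1]^*\cC\cS_{c,r}^{\langle t^{-1}\rangle}
$$
in $H^1(\infty,\Lambda_r(\sT_r\langle t\rangle)(1))$, with the inverse limit version following formally.

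The key step is the purely multiplicative computation $(\infty^*\iota^*\varepsilon_r)^2={_c\Xi}\cdot[-1]^*({_c\Xi})$ as functions on $\mu_{\ell^r}\langle t\rangle$, where I use that $c$ is odd (forced by $(c,6\ell N)=1$). Squaring the explicit formula gives
$$
(\infty^*\iota^*\varepsilon_r)(\beta)^2=(-\beta)^{c-c^2}\,\frac{(1-\beta)^{2c^2}}{(1-\beta^c)^2},
$$
and $c-c^2=c(1-c)$ is even, so $(-\beta)^{c-c^2}=\beta^{c-c^2}$. On the other hand, using $1-\beta^{-1}=-\beta^{-1}(1-\beta)$, $1-\beta^{-c}=-\beta^{-c}(1-\beta^c)$, and the fact that $c^2$ is odd, a straightforward manipulation yields
$$
{_c\Xi}(\beta^{-1})=\frac{(1-\beta^{-1})^{c^2}}{1-\beta^{-c}}=\beta^{c-c^2}\,\frac{(1-\beta)^{c^2}}{1-\beta^c},
$$
so ${_c\Xi}(\beta)\cdot{_c\Xi}(\beta^{-1})=\beta^{c-c^2}\cdot (1-\beta)^{2c^2}/(1-\beta^c)^2$, matching the squared expression above.

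The last step translates to Kummer classes. Applying $\partial_r$ to the identity of invertible functions and using the additivity $\partial_r(fg)=\partial_r(f)+\partial_r(g)$, together with the naturality of the Kummer map with respect to the isomorphism $[-1]\colon\mu_{\ell^r}\langle t\rangle\xrightarrow{\isom}\mu_{\ell^r}\langle t^{-1}\rangle$, produces
$$
2\,\partial_r(\infty^*\iota^*\varepsilon_r)=\partial_r({_c\Xi}|_{\mu_{\ell^r}\langle t\rangle})+[-1]^*\partial_r({_c\Xi}|_{\mu_{\ell^r}\langle t^{-1}\rangle}),
$$
which by Definition \ref{CS-def} is exactly $\cC\cS_{c,r}^{\langle t\rangle}+[-1]^*\cC\cS_{c,r}^{\langle t^{-1}\rangle}$. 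The limiting identity in $H^1(\infty,\Lambda(\sT\langle t\rangle)(1))$ follows upon taking $\prolim_r$, which is compatible with $\partial_r$, $\iota^*$, $\infty^*$, and $[-1]^*$ by construction of the norm-compatible elements.

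The main potential obstacle is bookkeeping of signs and the half-integer exponent $(c-c^2)/2$: one must check that $c$ odd forces $(c-c^2)/2\in\bbZ$ and $(-\beta)^{c-c^2}=\beta^{c-c^2}$, which is precisely where the assumption $(c,6\ell N)=1$ (hence $c$ odd) enters in an essential way. Everything else is bookkeeping: the commutativity of $\partial_r$ with $\iota^*$ is formal, $\partial_r$ with $\infty^*$ uses that $\infty^*\iota^*\varepsilon_r$ is the restriction of a regular function on the extended torsor over $\wh X(N)_\infty$ (as guaranteed by the previous lemma), and compatibility with $[-1]^*$ follows from functoriality.
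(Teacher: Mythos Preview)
Your proof is correct and follows essentially the same approach as the paper: both reduce to the multiplicative identity $(\infty^*\iota^*\varepsilon_r)^2={_c\Xi}\cdot({_c\Xi}\circ[-1])$ on $\mu_{\ell^r}\langle t\rangle$ and then apply the Kummer map, using that $[-1]:\mu_{\ell^r}\langle t\rangle\isom\mu_{\ell^r}\langle t^{-1}\rangle$. Your write-up is more explicit about the algebra (in particular the role of $c$ being odd in handling the sign $(-\beta)^{c-c^2}$), whereas the paper simply records the resulting identity and leaves the verification to the reader.
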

\begin{proof}
A direct computation gives 
$$
\left((-\beta)^{\frac{c-c^2}{2}} \frac{(1-\beta)^{c^2}}{(1-\beta^c)}\right)^2=
\frac{(1-\beta)^{c^2}(1-\beta^{-1})^{c^2}}{(1-\beta^c)(1-\beta^{-c})}
$$
which implies
$$
\infty^*\iota^*(\varepsilon_r^2)={_c\Xi}\cdot(_c\Xi\verk[-1]),
$$
where $_c\Xi$ is the function defined in \eqref{Xi-def}. Thus, applying 
the Kummer map $\partial_r$ one has
\begin{align*}
2\infty^*\iota^*(\cM\cE\cS_{c,r}^{\langle t\rangle})&=
\cC\cS_{c,r}^{\langle t\rangle}+[-1]^*\cC\cS_{c,r}^{\langle t^{-1}\rangle}
\end{align*}
because 
$[-1]:\mu_{\ell^r}\langle t\rangle\isom \mu_{\ell^r}\langle t^{-1}\rangle$.
\end{proof}
To conclude the proof of Theorem \ref{me-evaluation}, we have to
compute 
$$
\wt\mom^k_t(\cC\cS_{c}^{\langle t\rangle}+[-1]^*\cC\cS_{c}^{\langle t^{-1}\rangle}).
$$
In the definition of $\wt\mom_t^k$ we use the inverse of the identification
$\bbQ_\ell(k)\isom\Sym^k\bbQ_\ell(1)\isom \TSym^k\bbQ_\ell(1)\isom \bbQ(k)$
which maps $1\mapsto k!$. Note also that, as in the proof of Proposition
\ref{mom-of-S}, one has
$$
\mom_t^k([-1]^*\cC\cS_{c}^{\langle t^{-1}\rangle})=(-1)^k\wt c_{k+1}(t^{-1}).
$$
With formula \eqref{wtme-formula} and Proposition \ref{mom-of-S} 
one now gets:
\begin{corollary}
One has in $H^1(\infty,\bbQ_\ell(k+1))$ the identity
\begin{multline*}
\infty^*(_c\wt{me}_k(t))=\\
\frac{1}{2k!N^k}\left(
c^2(\wt c_{k+1}(t)+(-1)^k\wt c_{k+1}(t^{-1}))+ c^{-k}(\wt c_{k+1}(ct)+(-1)^k\wt c_{k+1}(ct^{-1}))\right).
\end{multline*}
\end{corollary}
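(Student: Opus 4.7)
The plan is to combine formula \eqref{wtme-formula}, which reduces $\infty^*({_c\wt{me}}_k(t))$ to applying $\wt\mom_t^k$ to $\infty^*\iota^*(\cM\cE\cS_c^{\langle t\rangle})$, with the identification established in the preceding lemma, namely $2\infty^*\iota^*(\cM\cE\cS_c^{\langle t\rangle}) = \cC\cS_c^{\langle t\rangle} + [-1]^*\cC\cS_c^{\langle t^{-1}\rangle}$. This rewrites the quantity to be computed as
\[
\infty^*({_c\wt{me}}_k(t)) \;=\; \tfrac{1}{2}\,\wt\mom_t^k\bigl(\cC\cS_c^{\langle t\rangle}+[-1]^*\cC\cS_c^{\langle t^{-1}\rangle}\bigr),
\]
so everything reduces to evaluating the two moments on the right in terms of the modified cyclotomic Soul\'e--Deligne elements. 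The case $p(t)\neq 0$ will fall out immediately: the fibre $\mu_{\ell^r}\langle t\rangle$ of $\iota_r$ is empty, so $\sT\langle t\rangle$ is empty and the whole expression vanishes.

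For $p(t)=0$ (so $t$ is an $N$-th root of unity) I would evaluate each summand separately. The first, $\wt\mom_t^k(\cC\cS_c^{\langle t\rangle})$, is handled directly by Proposition \ref{mom-of-S}, which supplies $\mom_t^k(\cC\cS_c^{\langle t\rangle})=c^2\wt c_{k+1}(t)-c^{-k}\wt c_{k+1}(ct)$, and passing from $\mom_t^k$ to $\wt\mom_t^k$ inserts the normalization $\frac{1}{k!N^k}$ via Lemma \ref{res-comparison}. The second summand requires one additional piece of functoriality: since $[-1]$ is an isomorphism $\sT\langle t\rangle\isom\sT\langle t^{-1}\rangle$, one has $[-1]_!\verk[-1]^*=\id$, and the commutative square
\[
\begin{CD}
\Lambda(\sT\langle t\rangle)@>[-1]_!>>\Lambda(\sT\langle t^{-1}\rangle)\\
@V\mom_t^kVV @VV\mom_{t^{-1}}^kV\\
\bbZ_\ell(k)@>(-1)^k>>\bbZ_\ell(k)
\end{CD}
\]
(the specialization $c=-1$ of the square used in the proof of Proposition \ref{mom-of-S}) gives $\mom_t^k([-1]^*\cC\cS_c^{\langle t^{-1}\rangle})=(-1)^k\mom_{t^{-1}}^k(\cC\cS_c^{\langle t^{-1}\rangle})$. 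A second application of Proposition \ref{mom-of-S}, now with $\alpha=t^{-1}$, expands the right-hand side as $(-1)^k\bigl(c^2\wt c_{k+1}(t^{-1})-c^{-k}\wt c_{k+1}(ct^{-1})\bigr)$, and this produces the $(-1)^k$-twisted block of the asserted formula.

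Summing the two contributions, halving, and reassembling will yield the claimed identity. The one routine-but-delicate point will be bookkeeping the overall prefactor $\tfrac{1}{2k!N^k}$: it combines the $\tfrac{1}{2}$ from the preceding lemma, the $\tfrac{1}{N^k}$ inherited from the $[N]_!$ in Definition \ref{mom-t-k-defn}, and the $\tfrac{1}{k!}$ arising when one inverts the canonical map $\Sym^k\to\TSym^k$ in passing from $\mom^k_t$ to $\wt\mom^k_t$, as recorded in Lemma \ref{res-comparison}. Beyond this bookkeeping no step looks obstructed: the entire argument is a formal assembly of \eqref{wtme-formula}, the preceding lemma, and two invocations of Proposition \ref{mom-of-S}.
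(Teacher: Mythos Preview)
Your approach is exactly the paper's: reduce via \eqref{wtme-formula} to $\tfrac{1}{2}\wt\mom_t^k$ of the sum from the preceding lemma, handle the $[-1]^*$ term by the same functoriality square used in the proof of Proposition~\ref{mom-of-S}, apply that proposition twice, and track the $\tfrac{1}{k!N^k}$ normalization. One small caveat: carrying your computation through gives $c^2(\cdots)-c^{-k}(\cdots)$ with a minus sign (consistent with Proposition~\ref{mom-of-S}), whereas the displayed statement has a plus; this appears to be a typo in the paper's formula rather than a flaw in your argument.
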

This proves Theorem \ref{me-evaluation}.
\bibliography{kings}
\bibliographystyle{alpha}
\end{document}